\theoremstyle{plain}
\newtheorem{theorem}{Theorem}[section]
\newtheorem{conjecture}[theorem]{Conjecture}
\newtheorem{proposition}[theorem]{Proposition}
\newtheorem{lemma}[theorem]{Lemma}
\newtheorem{corollary}[theorem]{Corollary}
\newtheorem{problem}[theorem]{Problem}
\theoremstyle{definition}
\newtheorem{definition}[theorem]{Definition}
\theoremstyle{remark}
\newtheorem{remark}[theorem]{Remark}
\newtheorem{example}[theorem]{Example}
\def\Z{\mathbb{Z}}
\def\R{\mathbb{R}}
\def\C{\mathbb{C}}
\def\I{\mathcal{I}}
\def\S{\mathcal{S}}
\def\A{\mathcal{A}}
\def\Vol{\mathrm{Vol}}
\def\Mat{\mathit{Mat}}
\title{Arrangements of equal minors in the positive Grassmannian}
\author{Miriam Farber \and Alexander Postnikov}
\address{Department of Mathematics, Massachusetts Institute of Technology,
77 Massachusetts Avenue, Cambridge MA 02139}
\email{mfarber@mit.edu}
\email{apost@math.mit.edu}
\thanks{M.F.\ is supported in part by the NSF graduate research fellowship grant 1122374.
A.P.\ is supported in part by the NSF grant DMS-1362336.}
\subjclass[2010]{Primary 05E}
\date{January 28, 2015}
\keywords{Totally positive matrices, the positive Grassmannian,
minors, Pl\"ucker coordinates, matrix completion problem, arrangements of equal minors,
weakly separated sets, sorted sets, triangulations, thrackles,
cluster algebras, plabic graphs, the Laurent phenomenon,
the affine Coxeter arrangement,
alcoved polytopes,
 hypersimplices, the Eulerian
numbers, honeycombs, chain reactions of mutations, Gr\"obner bases, octahedron recurrence, Schur positivity.}
\begin{document}

\begin{abstract}
We discuss arrangements of equal minors of totally positive matrices.  More
precisely, we investigate the structure of equalities
and inequalities between the minors.  We show that arrangements of equal
minors of largest value are in bijection with {\it sorted sets,} which earlier
appeared in the context of {\it alcoved polytopes\/} and Gr\"obner bases.
Maximal  arrangements of this form correspond to simplices of the alcoved
triangulation of the hypersimplex; and the number of such arrangements equals
the {\it Eulerian number}.  On the other hand, we prove in many cases that
arrangements of equal minors of smallest value are exactly {\it weakly
separated sets.}   Weakly separated sets, originally introduced by Leclerc and
Zelevinsky, are closely related to the {\it positive Grassmannian\/} and the
associated {\it cluster algebra.}    However, we also construct examples of arrangements of
smallest minors which are not weakly separated using {\it chain reactions\/}
of mutations of {\it plabic graphs.}
\end{abstract}



\maketitle

\section{Introduction}
\label{sec:in}

In this paper, we investigate possible equalities and inequalities between minors of totally positive matrices.
This study is closely related to
Leclerc-Zelevinsky's weakly separated sets \cite{LZ, OPS},
Fomin-Zelevinsky's cluster algebras \cite{FZ1, FZ2},
combinatorics of the positive Grassmannian \cite{Pos2},
alcoved polytopes \cite{LP} and triangulations of hypersimplices,
as well as other topics.

\medskip

One motivation for the study of equal minors came from a variant of the {\it matrix completion problem.}
This is the problem about completing missing
entries of a partial matrix so that the resulting matrix satisfies a certain
property (e.g., it is positive definite or totally positive).  Completion
problems arise in various of applications, such as statistics, discrete
optimization, data compression, etc.

Recently, the following variant of the completion problem
was investigated in
\cite{FFJM} and \cite{FRS}.  It is well-known that one can
``slightly perturb'' a totally nonnegative
matrix (with all nonnegative minors)
and obtain a totally positive matrix (with all strictly positive minors).
It is natural to ask how to do this in a minimal way.
In other words, one would like to find the {\it minimal\/} number of matrix entries that one
needs to change in order to get a totally positive matrix.
The most degenerate totally nonnegative matrix all of whose entries are positive is the matrix filled
with all 1's.  The above question for this matrix can be equivalently reformulated as follows:
What is the {\it maximal\/} number of equal entries in a totally positive matrix?
(One can always rescale all equal matrix entries to 1's.)
It is then natural to ask about the maximal number of equal minors in a totally positive matrix.

In \cite{FFJM, FRS}, it was shown that the maximal number of
equal entries in a totally positive $n\times n$ matrix is $\Theta(n^{4/3})$,
and that the maximal number of equal $2\times 2$-minors in a
$2\times n$ totally positive matrix is $\Theta(n^{4/3})$.
It was also shown that the maximal number of equal $k\times k$ minors
in a $k\times n$ totally positive matrix is $O(n^{k-{k\over k+1}})$.
The construction is based on the famous
Szemer\'edi-Trotter theorem \cite{ST} (conjectured by Erd\"os) about the maximal number
of point-line incidences in the plane.

\medskip

Another motivation came from the study of combinatorics of the {\it positive
Grassmannian\/} \cite{Pos2}.  The nonnegative part of the Grassmannian $Gr(k,n)$
can be subdivided into {\it positroid cells,} which are defined by setting some subset
of the Pl\"ucker coordinates (the  maximal minors) to zero, and requiring the
other Pl\"ucker coordinates to be strictly positive.
The positroid cells and the corresponding arrangements of zero and
positive Pl\"ucker coordinates were combinatorially characterized in \cite{Pos2}.

We can introduce the finer subdivision of the nonnegative part of the Grassmannian,
where the strata are defined by all possible equalities and inequalities between the Pl\"ucker
coordinates.  This is a ``higher analog'' of the positroid stratification.
A natural question is:  How to extend the combinatorial constructions from
\cite{Pos2} to this ``higher positroid stratification'' of the Grassmannian?

One would like to get an explicit combinatorial description of all possible collections of equal minors.
In general, this seems to be a hard problem, which is still far from the complete solution.
However, in cases of minors of smallest and largest values, the problem leads to the structures
that have a nice combinatorial description.

In this paper we show that arrangements of equal minors of largest value are
exactly {\it sorted sets.}   Such sets correspond to the simplices of the alcoved
triangulation of the hypersimplex \cite{Sta, LP}.  They appear in the study of Gr\"obner bases
\cite{Stu} and in the study of alcoved polytopes \cite{LP}.

On the other hand, we show that arrangements of equal minors of smallest value
include {\it weakly separated sets\/} of Leclerc-Zelevinsky \cite{LZ}.   Weakly
separated sets are closely related to the positive Grassmannian and {\it plabic graphs} \cite{OPS, Pos2}.
In many cases, we prove that arrangements of smallest minors are exactly weakly separated sets.

However, we also construct examples of arrangements of smallest minors which are not weakly
separated,  and make a conjecture on the structure of such arrangements.
We construct these examples using certain {\it chain reactions\/} of mutations of plabic graphs,
and also vizualize them geometricaly using square pyramids and octahedron/tetrahedron moves.

\medskip

We present below the general outline of the paper.
In Section~\ref{sec:from_Mat_to_Gr}, we discuss the positive Grassmannian $Gr^+(k,n)$.
In Section~\ref{sec:arr_minors}, we define arrangements of minors.
As a warm-up, in Section~\ref{sec:k=2},
we consider the case of the positive Grassmannian $Gr^+(2,n)$.
In this case, we show that maximal arrangements of smallest minors are in bijection
with triangulations of the $n$-gon, while the arrangements of largest minors are in bijection with
thrackles, which are the graphs where every pair of edges intersect.
In Section~\ref{sec:WS_sorted}, we define weakly separated sets and sorted sets.   They generalize
triangulations of the $n$-gon and thrackles.
We formulate our main result (Theorem~\ref{thm:sorted}) on arrangements of largest minors,
which says that these arrangements coincide with sorted sets.
We also give results
(Theorems~\ref{thm:WSeparated} and~\ref{thm:weakly_separated_123}) and
Conjecture~\ref{conj:max_smallest} on arrangements of smallest minors,
that relate these arrangements with weakly separated sets.
In Section~\ref{sec:inequalities_products_minors}, we use Skandera's inequalities \cite{Ska}
for products of minors to prove one direction ($\Rightarrow$) of Theorems~\ref{thm:sorted}
and~\ref{thm:weakly_separated_123}.
In Section~\ref{sec:cluster}, we discuss the cluster algebra associated with the Grassmannian.
According to \cite{OPS, Pos2},
maximal weakly separated sets form clusters of this cluster algebra.
We use Fomin-Zelevinsky's Laurent phenomenon \cite{FZ1} and
the positivity result of Lee-Schiffler \cite{LS} to prove Theorem~\ref{thm:WSeparated}.
In Section~\ref{sec:construction_largest}, we prove the other direction ($\Leftarrow$)
of Theorem~\ref{thm:sorted}.  In order to do this, for any sorted set,
we show how to construct an element of the Grassmannian, that is
a matrix with needed equalities and inequalites between the minors.
We actually show that any torus orbit on the positive Grassmannian
$Gr^+(k,n)$ contains the Eulerian number $A(n-1,k-1)$ of such special elements
(Theorem~\ref{thm:torus_action}).
We give examples for $Gr^+(3,5)$ and $Gr^+(3,6)$ that can be described as certain labellings
of vertices of the regular pentagon and hexagon by positive numbers.
The proof of Theorem~\ref{thm:torus_action} is based on the theory of alcoved polytopes \cite{LP}.
In Section~\ref{sec:sort_closed}, we extend the results on arrangements of largest minors
in a more general context of sort-closed sets.  In this setting, the number of maximal arrangements
of largest minors equals the normalized volume of the corresponding alcoved polytope.
In Section~\ref{sec:matrix_entries}, we discuss equalities between matrix entries in a totally
positive matrix, which is a special case of the construction from the previous section.
In Section~\ref{sce:nonnegative_Grass},
we discuss the case of the nonnegative Grassmannian $Gr^{\geq }(2,n)$.
If we allow some minors to be zero, then we can actually achieve a larger number ($\simeq n^2/3$)
of equal positive minors.
In Section~\ref{sec:not_weakly_separated}, we construct examples of arrangements of smallest
minors for $Gr^+(4,8)$ and $Gr^+(5,10)$, which are not weakly separated.
We formulate Conjecture~\ref{conjecture:minimal_minors} on the structure of pairs of
equal smallest minors, and prove it for $Gr^+(k,n)$ with $k\leq 5$.
Our construction uses plabic graphs, especially honeycomb plabic graph that have mostly
hexagonal faces.  We describe certain chain reactions of mutations (square moves) for these graphs.
We also give a geometric vizualization of these chain reactions using square pyramids.
In Section~\ref{sec:final_remarks}, we give a few final remarks.

\section{From totally positive matrices to the positive Grassmannian}
\label{sec:from_Mat_to_Gr}


A matrix is called {\it totally positive\/} (resp., {\it totally nonnegative})
if all its minors, that is, determinants of square submatrices (of all sizes),
are positive (resp., nonnegative).  The notion of total positivity was
introduced by Schoenberg \cite{Sch} and Gantmacher and Krein \cite{GK} in the
1930s. Lusztig \cite{Lu1, Lu2} extended total positivity in the general
Lie theoretic setup
and defined the positive part for a reductive Lie group $G$ and
a generalized partial flag manifold $G/P$.

For $n\geq k\geq 0$, the {\it Grassmannian} $Gr(k,n)$ (over $\R$) is the space of $k$-dimensional linear subspaces in $\R^n$.
It can be identified with the space of real $k\times n$ matrices of rank $k$ modulo row operations.
(The rows of a matrix span a $k$-dimensional subspace in $\R^n$.)
The maximal $k\times k$ minors of $k\times n$ matrices form projective coordinates on the Grassmannian, called
the {\it Pl\"ucker coordinates.}
We will denote the Pl\"ucker coordinates by $\Delta_I$, where $I$ is a $k$-element subset in $[n]:=\{1,\dots,n\}$ corresponding
to the columns of the maximal minor.  These coordinates on $Gr(k,n)$ are not algebraically independent; they satisfy the
Pl\"ucker relations.

In \cite{Pos2},
the {\it positive Grassmannian\/} $Gr^+(k,n)$ was described as the subset of the Grassmannian $Gr(k,n)$ such
that all the Pl\"ucker coordinates are simultaneously positive: $\Delta_I >0$ for all $I$.
(Strictly speaking, since the $\Delta_I$ are projective coordinates defined up to rescaling,
one should say ``all $\Delta_I$ have the same sign.'')
Similarly, the {\it nonnegative Grassmannian\/} $Gr^{\geq} (k,n)$ was defined by the condition $\Delta_I\geq 0$ for all $I$.
This construction agrees with Lusztig's general theory of total positivity.
(However, this is a nontrivial fact that Lusztig's positive part of $Gr(k,n)$ is the same as $Gr^+(k,n)$
defined above.)


The space of totally positive (totally nonnegative) $k\times m$ matrices $A=(a_{ij})$ can be embedded
into the  positive (nonnegative) Grassmannian $Gr^+(k,n)$ with $n = m+k$,
as follows, see \cite{Pos2}.    The element of the Grassmannian $Gr(k,n)$ associated with a $k\times m$ matrix $A$
is represented by the $k\times n$ matrix
$$
\phi(A) =
\left(
\begin{array}{cccccccccc}
1 & 0 & \cdots & 0 & 0 & 0 & (-1)^{k-1} a_{k1} & (-1)^{k-1} a_{k2} & \cdots & (-1)^{k-1} a_{km} \\
\vdots & \vdots & \ddots & \vdots & \vdots & \vdots & \vdots & \vdots & \ddots & \vdots\\
0 & 0 & \cdots & 1 & 0 & 0 & a_{31} & a_{32} & \cdots & a_{3m}\\
0 & 0 & \cdots & 0 &1 & 0 & -a_{21} & - a_{22} & \cdots & -a_{2m} \\
0 & 0 & \cdots & 0 & 0 & 1 & a_{11} & a_{12} & \cdots & a_{1m}
\end{array}
\right).
$$
Under the map $\phi$, all minors (of all sizes) of the $k\times m$ matrix $A$ are equal to the {\it maximal\/} $k\times k$-minors
of the extended $k\times n$ matrix $\phi(A)$.
More precisely, let $\Delta_{I,J}(A)$ denotes the minor of the $k\times m$ matrix $A$ in row set
$I=\{i_1,\dots,i_r\}\subset [k]$ and column set
$J=\{j_1,\dots,j_r\}\subset[m]$;
and let $\Delta_K(B)$ denotes the maximal $k\times k$ minor of a $k\times n$ matrix $B$ in column set $K\subset[n]$, where $n=m+k$.
Then
$$
\Delta_{I,J}(A) = \Delta_{([k]\setminus\{k+1-i_r,\dots,k+1-i_1\})\cup \{j_1+k,\dots,j_r+k\}}(\phi(A)).
$$

This map is actually a {\it bijection\/} between the space of totally positive $k\times m$ matrices and
the positive Grassmannian $Gr^+(k,n)$.  It also identifies the space of totally nonnegative $k\times m$
matrices with the subset of the totally nonnegative Grassmannian $Gr^{\geq } (k,n)$ such that the Pl\"ucker coordinate
$\Delta_{[k]}$ is nonzero.  Note, however, that the whole totally nonnegative Grassmannian $Gr^{\geq } (k,n)$
is strictly bigger than the space of totally nonnegative $k\times m$ matrices,
and it has a more subtle combinatorial structure.


This construction allows us to reformulate questions about equalities and inequalities between minors (of various sizes)
in terms of analogous questions for the positive Grassmannian, involving only maximal $k\times k$ minors
(the Pl\"ucker coordinates).   One immediate technical simplification is that, instead
of minors with two sets of indices (for rows and columns), we will use the Pl\"ucker coordinates $\Delta_I$ with one
set of column indices $I$.  More significantly, the reformulation of the problem in terms of the Grassmannian unveils
{\it symmetries\/} which are hidden on the level of matrices.

Indeed, the positive Grassmannian $Gr^+(k,n)$ possesses the {\it cyclic symmetry.}  Let $[v_1,\dots,v_n]$ denotes a point in
$Gr(k,n)$ given by $n$ column vectors $v_1,\dots,v_n\in \R^k$.
Then the map
$$
[v_1,\dots,v_n]\mapsto [(-1)^{k-1}\, v_n, v_1,v_2,\dots,v_{n-1}]
$$
preserves the positive Grassmannian $Gr^+(k,n)$.  This defines the action of the cyclic group $\Z/n\Z$ on the positive
Grassmannian $Gr^+(k,n)$.

We will see that all combinatorial structures that appear in
the study of the positive Grassmannian and arrangements of equal minors
have the cyclic symmetry related to this action of $\Z/n\Z$.

\section{Arrangements of minors}
\label{sec:arr_minors}

\begin{definition}
Let $\I=(\I_0,\I_1, \dots,\I_l)$ be an ordered set-partition of the set $[n]\choose k$ of all $k$-element subsets in $[n]$.
Let us subdivide the nonnegative Grassmannian $Gr^{\geq}(k,n)$ into the strata $S_\I$ labelled by such ordered set partitions
$\I$ and given by the conditions:
\begin{enumerate}
\item
$\Delta_I = 0$ for $I\in \I_0$,
\item
$\Delta_I = \Delta_J$ if $I,J\in \I_i$,
\item
$\Delta_I < \Delta_J$ if $I\in \I_i$ and $J\in \I_j$ with $i<j$.
\end{enumerate}

An {\it arrangement of minors\/} is an ordered set-partition $\I$ such that the stratum $S_\I$ is not empty.
\end{definition}

\begin{problem}
Describe combinatorially all possible arrangements of minors in $Gr^{\geq}(k,n)$.
Investigate the geometric and the combinatorial structure of the stratification
$Gr^{\geq}(k,n)=\bigcup S_\I$.
\end{problem}

For $k=1$, this stratification is equivalent to the subdivision of the linear space $\R^n$
by the hyperplanes $x_i=x_j$, which forms the {\it Coxeter arrangement\/} of type A,
also known as the {\it braid arrangement.}
The structure of the Coxeter arrangement is well studied.
Combinatorially, it is equivalent of the face structure of the {\it permutohedron.}

For $k\geq 2$, the above problem seems to be quite nontrivial.

\cite{Pos2} described the cell structure of the nonnegative Grassmannian $Gr^{\geq}(k,n)$,
which is equivalent to the description of possible sets $\I_0$.
This description already involves quite rich and nontrivial combinatorial structures.
It was shown that possible $\I_0$'s are in bijection with various
combinatorial objects: positroids, decorated permutations, L-diagrams, Grassmann necklaces, etc.
The stratification of $Gr^{\geq} (k,n)$ into the strata $S_\I$
is a finer subdivision of the {\it positroid stratification\/} studied
in \cite{Pos2}.  It should lead to even more interesting combinatorial objects.

In the present paper, we mostly discuss the case of the positive Grassmannian
$Gr^+(k,n)$, that is, we assume that $\I_0 = \emptyset$.   We concentrate on a
combinatorial description of possible sets $\I_1$ and $\I_l$.
In Section~\ref{sce:nonnegative_Grass} we also discuss some results for the nonnegative
Grassmannian $Gr^{\geq}(k,n)$.


\begin{definition}
We say that a subset $\mathcal{J}\subset{[n]\choose k}$ is
an {\it arrangement of smallest minors\/} in $Gr^{+}(k,n)$, if there exists a nonempty stratum $S_{\I}$ such that
$\I_0=\emptyset$ and $\I_1=\mathcal{J}$.

We also say that $\mathcal{J}\subset{[n]\choose k}$ is an {\it arrangement of largest minors\/} in $Gr^{+}(k,n)$
if there exists a nonempty stratum $S_{\I}$ such that $\I_0=\emptyset$ and $\I_l=\mathcal{J}$.
%
\end{definition}

\medskip

As a warm-up, in the next section we describe all possible arrangements of smallest and largest minors
in the case $k=2$.  We will treat the general case in the subsequent sections.

\section{Case $k=2$: triangulations and thrackles}
\label{sec:k=2}

In the case $k=2$, one can identify 2-element sets $I=\{i,j\}$ that label the Pl\"ucker coordinates $\Delta_I$ with the edges
$\{i,j\}$ of the complete graph $K_n$ on the vertices $1,\dots,n$.
A subset in $[n]\choose 2$ can be identified with a subgraph $G\subset K_n$.

Let us assume that the vertices $1,\dots,n$ are arranged on the circle in the clockwise order.

\begin{definition}
For distinct $a,b,c,d\in[n]$, we say that the two edges $\{a,b\}$ and $\{c,d\}$ are {\it non-crossing}
if the corresponding straight-line chords $[a,b]$
and $[c,d]$ in the circle do not cross each other.
Otherwise, if the chords $[a,b]$ and $[c,d]$ cross each other, we say that the edges
$\{a,b\}$ and $\{c,d\}$ are {\it crossing.}
\end{definition}

For example, the two edges $\{1,4\}$ and $\{2,3\}$ are non-crossing; while the edge $\{1,3\}$ and $\{2,4\}$ are crossing.

\begin{theorem}
A nonempty subgraph $G\subset K_n$ corresponds to an arrangement of smallest minors in $Gr^+(2,n)$ if and only if every pair of edges in $G$
is non-crossing, or they share a common vertex.
\label{thm:non_crossing}
\end{theorem}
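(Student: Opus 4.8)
The plan is to prove both directions by relating the Plücker coordinates $\Delta_{\{i,j\}}$ of a point in $Gr^+(2,n)$ to an explicit configuration of $n$ vectors in $\R^2$, and then translating the conditions $\Delta_I = \Delta_J$ (smallest value) and $\Delta_I > \Delta_J$ into geometric statements about areas of triangles. Recall that a point of $Gr^+(2,n)$ may be represented by a $2\times n$ matrix whose columns $v_1,\dots,v_n\in\R^2$, read in the cyclic order $1,\dots,n$, all lie in the open upper half-plane (or, after rescaling, on a convex curve) with strictly increasing slopes, so that every $2\times 2$ minor $\Delta_{\{i,j\}} = \det(v_i,v_j)>0$ for $i<j$. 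Geometrically $\Delta_{\{i,j\}}$ is (twice) the area of the triangle with vertices $0, v_i, v_j$.

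For the direction ($\Leftarrow$): suppose $G$ is a subgraph in which every two edges are non-crossing or share a vertex. I would show directly that $G$ is an arrangement of smallest minors by exhibiting a point of $Gr^+(2,n)$ where exactly the Plücker coordinates indexed by edges of $G$ achieve the common minimum value. The natural construction is to place the points $v_1,\dots,v_n$ very close together on a tiny arc of a convex curve near a single point, so that the value of $\Delta_{\{i,j\}}$ is, to leading order, governed by how ``short'' the chord $[i,j]$ is; more carefully, one perturbs so that the minors indexed by edges of $G$ are all equal to a small $\varepsilon$ and all other minors are strictly larger. Since $G$ is noncrossing-or-sharing, its edges form a planar graph drawn on the circle, which is exactly the type of combinatorial object that can be realized this way. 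A clean implementation is to use the bijection in Section~\ref{sec:from_Mat_to_Gr} to pass to a totally positive matrix and build it from the point configuration; alternatively one invokes the more general realization result (Theorem~\ref{thm:WSeparated}) once weakly separated sets have been defined, noting that for $k=2$ "weakly separated" coincides with "every pair of edges noncrossing or sharing a vertex." I would prefer the self-contained geometric construction here since it is elementary.

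For the direction ($\Rightarrow$): suppose two edges $\{a,b\}$ and $\{c,d\}$ are crossing, say with cyclic order $a<c<b<d$; I must show they cannot both be smallest minors in any point of $Gr^+(2,n)$, i.e. we cannot have $\Delta_{\{a,b\}} = \Delta_{\{c,d\}} \le \Delta_I$ for all $I$. The key tool is the three-term Plücker relation (equivalently a Skandera-type inequality for products of minors, cf. Section~\ref{sec:inequalities_products_minors}):
\begin{equation*}
\Delta_{\{a,b\}}\,\Delta_{\{c,d\}} = \Delta_{\{a,c\}}\,\Delta_{\{b,d\}} + \Delta_{\{a,d\}}\,\Delta_{\{b,c\}}.
\end{equation*}
All six minors here are positive, and $\{a,c\},\{b,d\},\{a,d\},\{b,c\}$ are the four "non-crossing" pairs among $a,c,b,d$. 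If $\Delta_{\{a,b\}}$ and $\Delta_{\{c,d\}}$ were both minimal, the left side would be at most $m^2$ where $m$ is the common minimum, while the right side is a sum of two products each $\ge m^2$, hence at least $2m^2 > m^2$ — a contradiction. This handles any single crossing pair inside $G$, and since "$G$ is an arrangement of smallest minors" forces every pair of its edges to be non-crossing or share a vertex, we are done.

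The main obstacle is the ($\Leftarrow$) direction: constructing, for an arbitrary noncrossing-or-sharing graph $G$, a genuine point of the positive Grassmannian in which precisely the edges of $G$ index the minimal Plücker coordinates and nothing else does. Getting the equalities exact (not just approximate) and simultaneously keeping all other minors strictly larger requires a careful choice of the convex point configuration — e.g. taking points on a parabola $v_i = (1, t_i, )$-style curve and solving for the parameters, or a limiting/perturbation argument with explicit error control. Everything else (the slope normalization for $Gr^+(2,n)$, the three-term relation, the counting inequality) is routine.
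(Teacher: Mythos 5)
Your ($\Rightarrow$) argument is correct and coincides with the paper's: for $a<c<b<d$ the three-term Pl\"ucker relation gives $\Delta_{ab}\Delta_{cd}=\Delta_{ac}\Delta_{bd}+\Delta_{ad}\Delta_{bc}$, and if the crossing pair both equalled the minimum $m$ the right-hand side would be at least $2m^2>m^2$, a contradiction. The problem is the ($\Leftarrow$) direction, which is where the real content of the theorem lies, and there your proposal has a genuine gap. The construction you sketch --- cluster $v_1,\dots,v_n$ on a tiny convex arc so that $\Delta_{ij}$ is governed by how short the chord $[i,j]$ is --- cannot work as described: a noncrossing graph $G$ (in particular any triangulation, which always contains the long chord $\{1,n\}$ and other diagonals) has edges of very different ``chord lengths,'' and in such a configuration $\Delta_{1n}$ is the largest minor, not a smallest one. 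To equalize the minors on the edges of $G$ you must at least rescale columns, i.e.\ take $v_i=\lambda_i(1,t_i)$ with $\Delta_{ij}=\lambda_i\lambda_j(t_j-t_i)$, and then actually produce $\lambda_i,t_i$ solving the equalities while certifying that \emph{every} non-edge minor is strictly larger; ``one perturbs so that the minors indexed by edges of $G$ are all equal and all others larger'' restates the goal rather than proving it. Your fallback of citing Theorem~\ref{thm:WSeparated} (for $k=2$ weak separation is exactly noncrossing-or-sharing) is not circular and would close the statement, but it defers to a later result whose proof uses the cluster-algebra Laurent phenomenon and Lee--Schiffler positivity, so as written you have not given a proof of the $k=2$ case.

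For comparison, the paper's route is elementary and explicit: extend $G$ to a triangulation $\tilde G$ of the $n$-gon, and use Proposition~\ref{prop:construction_for_triangulation}, which constructs by induction (inserting at an ``ear'' $i$ the column $\frac{x_{i,i+1}}{x_{i-1,i+1}}v_{i-1}+\frac{x_{i-1,i}}{x_{i-1,i+1}}v_{i+1}$) a $2\times n$ matrix whose minors on the edges of $\tilde G$ take prescribed positive values $x_{ij}$ and whose every other $2\times 2$ minor is a Laurent polynomial in the $x_{ij}$ with positive integer coefficients and at least two monomials. Setting $x_{ij}=1$ on edges of $G$ and $x_{ij}=1+\epsilon$ on $\tilde G\setminus G$ then makes the edges of $G$ exactly the set of minimal minors for small $\epsilon>0$, since each remaining minor is close to a value at least $2$. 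Some argument of this type --- an exact solution of the equalities together with uniform control of all the other minors --- is the missing piece in your proposal.
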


\begin{theorem}
A nonempty subgraph $H\subset K_n$ corresponds to an arrangement of largest minors in $Gr^+(2,n)$ if and only if every pair of edges in $H$
is crossing, or they share a common vertex.
\label{thm:crossing}
\end{theorem}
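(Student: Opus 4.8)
This is an ``if and only if'' statement whose two halves have very different flavors. The forward direction ($\Rightarrow$) should follow at once from the three-term Plücker relation, while the reverse direction ($\Leftarrow$) requires, for each thrackle $H$, an explicit point of $Gr^+(2,n)$ realizing it, and that is where the real work lies. For ($\Rightarrow$), suppose $H$ is an arrangement of largest minors: there is a point of $Gr^+(2,n)$ with $\Delta_{ij}=M$ for all $\{i,j\}\in H$ and $0<\Delta_{ij}<M$ otherwise. Assume for contradiction that $H$ contains two vertex-disjoint non-crossing edges, and list their four endpoints in circular order as $p_1<p_2<p_3<p_4$. Being disjoint and non-crossing, the two edges form one of the pairings $\{p_1p_2,\,p_3p_4\}$ or $\{p_1p_4,\,p_2p_3\}$ --- exactly the two pairings that occur as products on the right-hand side of
$$\Delta_{p_1p_3}\,\Delta_{p_2p_4}=\Delta_{p_1p_2}\,\Delta_{p_3p_4}+\Delta_{p_1p_4}\,\Delta_{p_2p_3}.$$
Since all Plücker coordinates are positive, the right-hand side is strictly greater than $M^2$, whereas $\Delta_{p_1p_3}\le M$ and $\Delta_{p_2p_4}\le M$ force the left-hand side to be at most $M^2$; contradiction. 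Hence every pair of edges of $H$ crosses or shares an endpoint. (For crossing edges no contradiction arises, since they occur only as the single product $\Delta_{p_1p_3}\Delta_{p_2p_4}$ on the \emph{left} of such a relation; this is the precise reason the dichotomy ``crossing vs.\ non-crossing'' governs the answer.) One could equally extract ($\Rightarrow$) from Skandera's inequalities for products of minors, as in the general case.

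For ($\Leftarrow$), given a thrackle $H$ I want a point $[v_1,\dots,v_n]\in Gr^+(2,n)$ whose maximal minors are exactly those indexed by $H$. The plan is to use the parametrization $v_i=c_i(\cos\alpha_i,\sin\alpha_i)$ with $c_i>0$ and $-\tfrac{\pi}{2}<\alpha_1<\cdots<\alpha_n<\tfrac{\pi}{2}$, so that $\Delta_{ij}=c_ic_j\sin(\alpha_j-\alpha_i)>0$ for all $i<j$; then to choose the angles $\alpha_i$ so that, on the vertices used by $H$, the values $\sin(\alpha_j-\alpha_i)$ with $\{i,j\}\in H$ are all equal and strictly exceed every $\sin(\alpha_l-\alpha_k)$ with $\{k,l\}\notin H$ supported on those vertices, exploiting that in a thrackle the edges are long chords that can be arranged coherently around the circle (each $H$-edge wants to subtend a central angle close to $\pi/2$, corrected in a controlled way according to how many other $H$-edges cross it); and finally to take the remaining scalings $c_i$ small and perturb slightly inside $Gr^+(2,n)$ so that every non-$H$ minor drops strictly below the common maximum while the $H$-minors stay tied. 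Equivalently one may set $\Delta_I=M$ for $I\in H$, solve the Plücker relations for the remaining coordinates, and check that a positive solution with all other coordinates $<M$ exists. Alternatively, this realizability is the $k=2$ instance of the general construction behind Theorem~\ref{thm:sorted} (and Theorem~\ref{thm:torus_action}), once one verifies that the sorted subsets of ${[n]\choose 2}$ are precisely the thrackles.

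The forward direction is essentially automatic; the main obstacle is the construction in ($\Leftarrow$), namely proving that \emph{every} thrackle --- not just the edge-maximal ones --- is realizable. This demands a sufficiently concrete description of thrackles in convex position to make the choice of angles and column scalings consistent, together with the verification that all non-thrackle minors can simultaneously be pushed strictly below the common maximal value without leaving $Gr^+(2,n)$ --- the same difficulty that, for general $k$, is resolved by the explicit constructions of Sections~\ref{sec:construction_largest} and~\ref{sec:sort_closed}.
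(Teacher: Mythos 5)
Your forward direction is correct and is essentially the paper's argument: from the three-term Pl\"ucker relation $\Delta_{p_1p_3}\Delta_{p_2p_4}=\Delta_{p_1p_2}\Delta_{p_3p_4}+\Delta_{p_1p_4}\Delta_{p_2p_3}$ with all coordinates positive, two disjoint non-crossing edges cannot both carry the largest value. The problem is the reverse direction, which in your write-up remains a plan rather than a proof. You propose to choose angles so that the $H$-edges ``subtend a central angle close to $\pi/2$, corrected in a controlled way,'' then to take some scalings small and ``perturb slightly'' so that all non-$H$ minors drop strictly below the tied maximum, but none of this is carried out, and the key difficulty is exactly there: making \emph{all} minors indexed by $H$ exactly equal while \emph{every} other minor is strictly smaller requires knowing the precise structure of thrackles. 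The paper does this by first classifying maximal thrackles (Proposition~\ref{prop:odd_stars}: an odd $(2r+1)$-star with leaves attached, $n$ edges in total), then realizing a maximal thrackle explicitly by taking the columns $v_1,\dots,v_n$ to be vertices and suitably marked side-points of a regular $2(2r+1)$-gon, where one checks that a pair attains the maximal parallelogram area $\Vol(v_i,v_j)$ precisely when $\{i,j\}$ is an edge of the thrackle; and finally handling non-maximal thrackles by a controlled deformation of this configuration (rescaling the vectors at leaves, and sliding a marked point along a side of the polygon parallel to $v_{j'}$ so as to decrease $\Delta_{ij}$ while preserving $\Delta_{ij'}$). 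Your suggestion to ``take the remaining scalings $c_i$ small'' does not substitute for this: shrinking a column shrinks every minor through that column, including $H$-minors, so the perturbations must be chosen with the star-plus-leaves structure in hand, which your sketch never establishes.

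Your fallback --- that the claim is the $k=2$ case of Theorem~\ref{thm:sorted} via the observation that sorted pairs in ${[n]\choose 2}$ are exactly the crossing-or-sharing-a-vertex pairs --- is a legitimate reduction and not circular (the paper's proof of Theorem~\ref{thm:sorted} does not use Theorem~\ref{thm:crossing}), but it defers all of the content to the torus-action and alcoved-polytope machinery of Theorem~\ref{thm:torus_action}, which you also do not prove. So as it stands, the realizability half of the theorem --- the half you yourself identify as ``where the real work lies'' --- is missing; the paper supplies it with the explicit $m$-gon construction and perturbation argument described above, which is also considerably more elementary than the general-$k$ route.
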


In one direction ($\Rightarrow$), both Theorems~\ref{thm:crossing} and~\ref{thm:non_crossing}
easily follow from the {\it 3-term Pl\"ucker relation\/} for the Pl\"ucker coordinates
$\Delta_{ij}$ in $Gr^+(2,n)$:
$$
\Delta_{ac} \, \Delta_{bd} = \Delta_{ab}\,\Delta_{cd}+ \Delta_{ad}\,\Delta_{bc},\quad
\textrm{for } a<b<c<d.
$$
Here all the $\Delta_{ij}$ should be strictly positive.
Indeed, if $\Delta_{ac} = \Delta_{bd}$ then some of the minors $\Delta_{ab},\Delta_{bc},\Delta_{cd},\Delta_{ad}$
should be strictly smaller than $\Delta_{ac} = \Delta_{bd}$.
Thus the pair of crossing edges $\{a,c\}$ and $\{b,d\}$ cannot belong to an arrangement of smallest minors.
On the other hand, if, say, $\Delta_{ab} = \Delta_{cd}$, then $\Delta_{ac}$ or $\Delta_{bd}$ should be strictly greater than
 $\Delta_{ab} = \Delta_{cd}$.  Thus the pair of non-crossing edges $\{a,b\}$ and $\{c,d\}$
cannot belong to an arrangement of largest minors.  Similarly, the pair of non-crossing edges $\{a,d\}$ and $\{b,c\}$ cannot
belong to an arrangement of largest minors.

In order to prove Theorems~\ref{thm:crossing} and~\ref{thm:non_crossing}
it remains to show that, for any nonempty subgraph of $K_n$ with no crossing (resp., with no
non-crossing) edges, there exists an element of $Gr^+(2,n)$ with the
corresponding arrangement of equal smallest (resp., largest) minors.
We will give explicit constructions of $2\times n$ matrices that represent
such elements of the Grassmannian.  Before we do this, let us discuss triangulations and thrackles.

\medskip

When we say that $G$ is a ``maximal'' subgraph of $K_n$ satisfying some property, we mean that it is maximal by inclusion of
edge sets, that is, there is no other subgraph of $K_n$ satisfying this property whose edge set contains the edge set of $G$.

Clearly, maximal subgraphs $G\subset K_n$ without crossing edges correspond to {\it triangulations\/} of the $n$-gon.
Such graphs contain all the ``boundary'' edges $\{1,2\}$, $\{2,3\}$,\dots, $\{n-1,n\}$, $\{n,1\}$ together with some $n-3$ non-crossing diagonals
that subdivide the $n$-gon into triangles,
see Figure~\ref{fig:triangulation_thrackle} (the graph on the left-hand side) and
Figure~\ref{fig:triangulations}.
Of course, the number of triangulations of the $n$-gon is the famous
{\it Catalan number\/} $C_{n-2} = {1\over n-1} {2(n-2)\choose n-2}$.

\begin{figure}[h]
\qquad \qquad
\includegraphics[height=1in,width=1in]{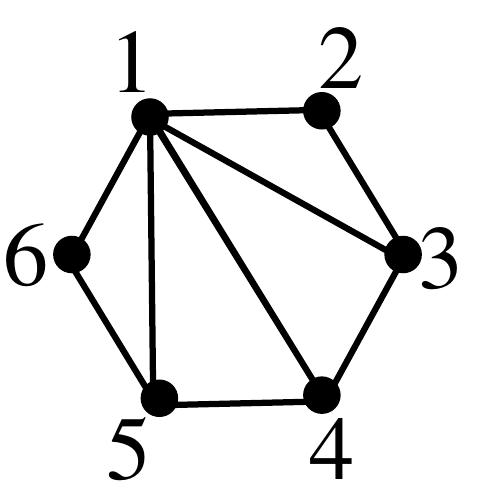}
\qquad \qquad
\includegraphics[height=1in,width=1in]{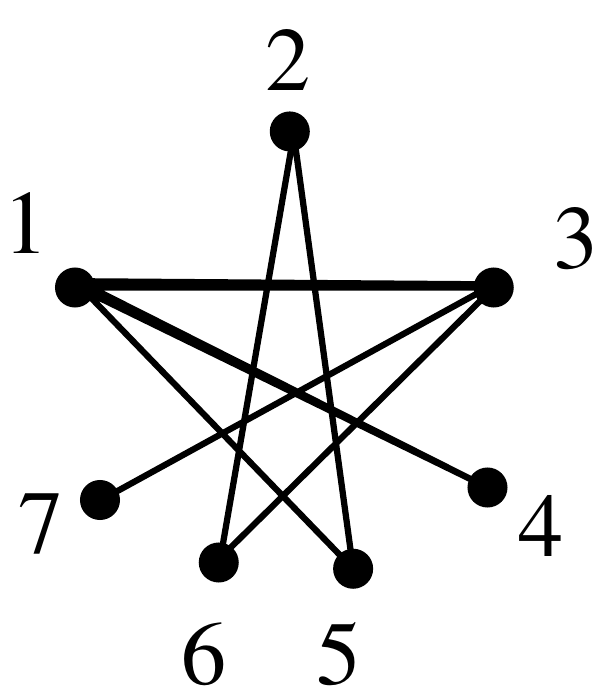}
\caption{
A {\it triangulation\/} (left) and a {\it thrackle\/} (right).
The edges of the triangulation correspond to
the arrangement of smallest minors
$\Delta_{12}=\Delta_{23}=\Delta_{34}=\Delta_{45}=\Delta_{56}=\Delta_{16}=\Delta_{13}=\Delta_{14}=\Delta_{15}$
in the positive Grassmannian $Gr^+(2,6)$;
while the edges of the thrackle correspond to the arrangement
of largest minors
$\Delta_{13}=\Delta_{14}=\Delta_{15}=\Delta_{25}=\Delta_{26}=\Delta_{36}=\Delta_{37}$
in $Gr^+(2,7)$.
This thrackle is obtained from the $5$-star by adding two leaves.
}
\label{fig:triangulation_thrackle}
\end{figure}

\begin{figure}[h]
\includegraphics[height=0.8in,width=.8in]{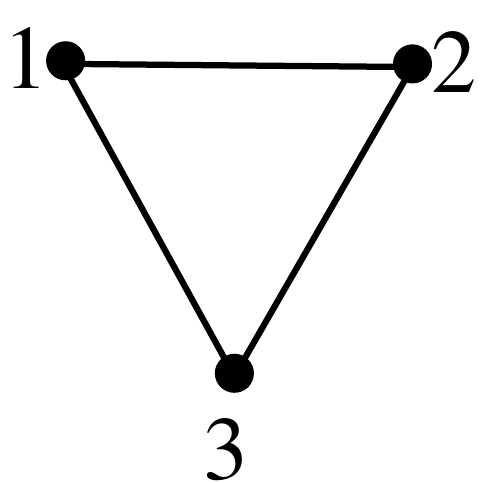}
\qquad
\includegraphics[height=0.8in,width=.8in]{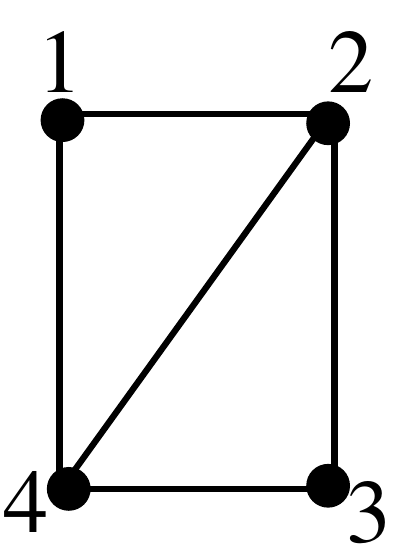}
\quad
\includegraphics[height=0.8in,width=.8in]{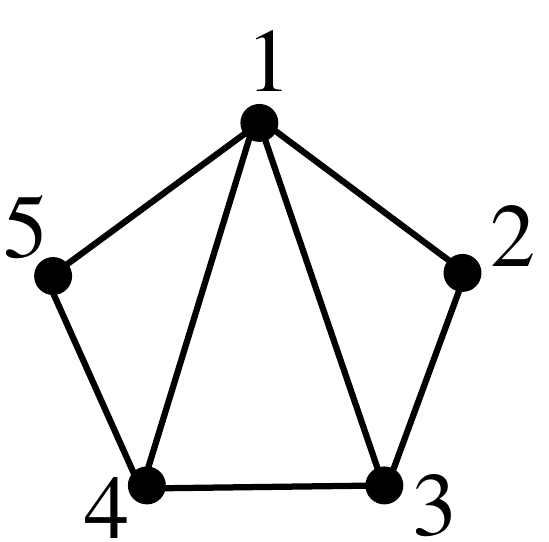}\\[.2in]
\includegraphics[height=0.8in,width=.8in]{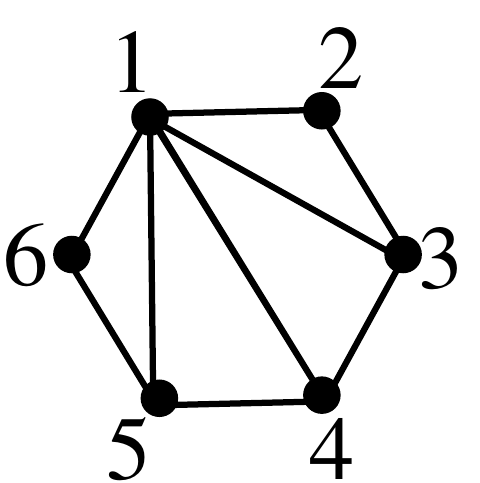}
\qquad
\includegraphics[height=0.8in,width=.8in]{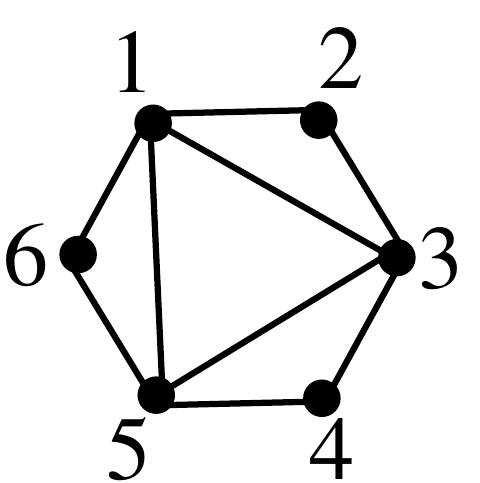}
\quad
\includegraphics[height=0.8in,width=.8in]{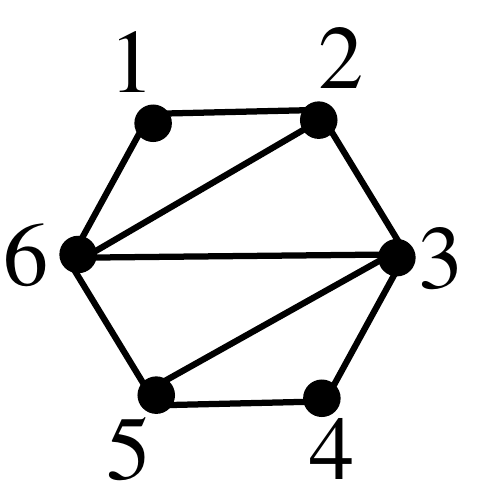}
\caption{All triangulations of $n$-gons for $n=3,4,5,6$ (up to rotations and reflections).}
\label{fig:triangulations}
\end{figure}

\begin{definition}
Let us call subgraphs $G\subset K_n$ such that every pair of edges in $G$ is crossing or shares a common vertex
{\it thrackles}\footnote{Our thrackles are a special case of Conway's thrackles.
The latter are not required to have vertices arranged on a circle.}.
\end{definition}

For an odd number $2r+1\geq 3$, let the {\it $(2r+1)$-star\/} be the subgraph of $K_{2r+1}$ such that each vertex $i$ is connected
by edges with the vertices $i+r$ and $i+r+1$, where the labels of vertices are taken modulo $2r+1$.
We call such graphs {\it odd stars.} Clearly, odd stars are thrackles.

We can obtain more thrackles by attaching some leaves to vertices of an odd star, as follows.
As before, we assume that the vertices $1,\dots,2r+1$ of the $(2r+1)$-star are arranged on a circle.
For each $i\in[2r+1]$, we can insert some number $k_i\geq 0$ of vertices arranged on the circle between the
vertices $i+r$ and $i+r+1$ (modulo $2r+1$)
and connect them by edges with the vertex $i$.  Then we should relabel all vertices of the
obtained graph by the numbers $1,\dots,n$ in the clockwise order starting from any vertex, where
$n=(2r+1)+\sum k_i$.  For example, the graph shown Figure~\ref{fig:triangulation_thrackle} (on the right-hand side)
is obtained from the $5$-star by adding two leaves.
More examples of thrackles are shown in Figure~\ref{fig:more_thrackles}.

\begin{figure}[h]
\includegraphics[height=0.8in,width=.8in]{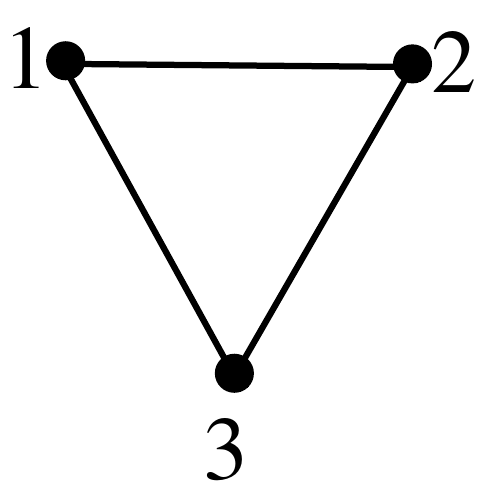}%
\qquad\quad
\includegraphics[height=0.8in,width=.8in]{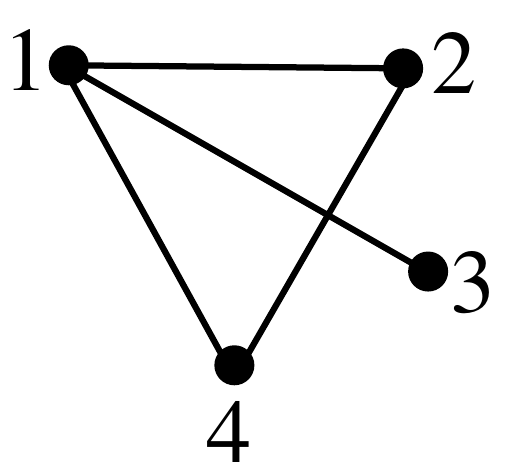}%
\qquad
\includegraphics[height=0.8in,width=.8in]{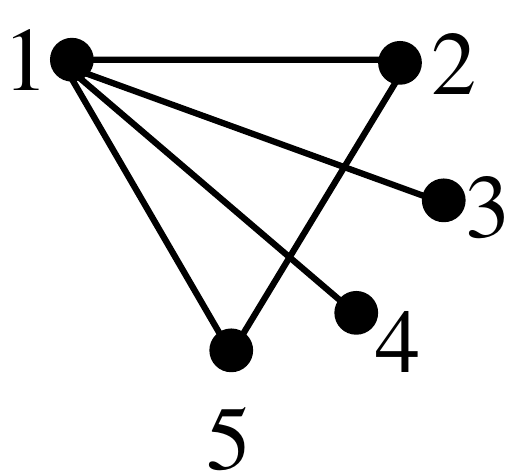}%
\qquad
\includegraphics[height=0.8in,width=.8in]{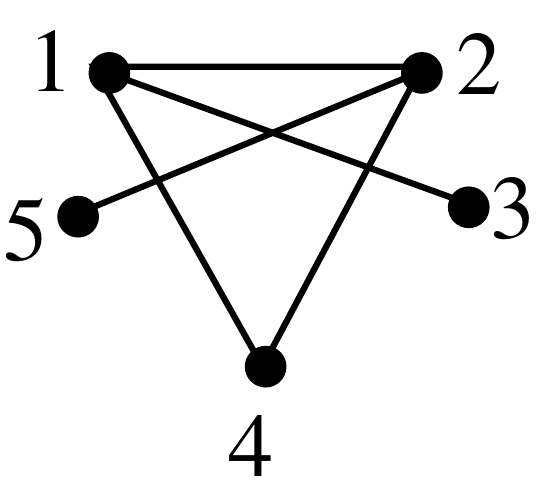}%
\qquad
\includegraphics[height=0.8in,width=.8in]{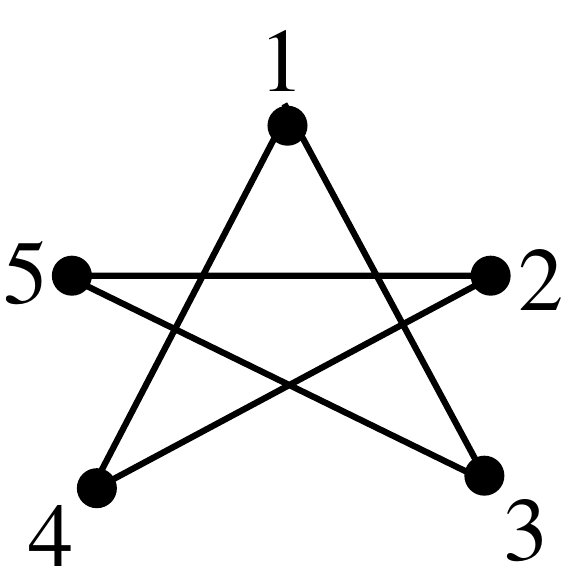}%
\qquad
\includegraphics[height=0.8in,width=.8in]{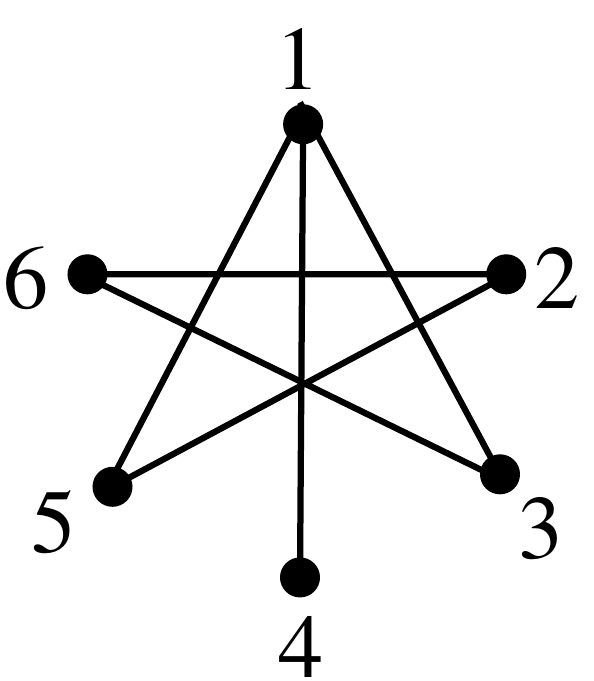}%
\qquad
\includegraphics[height=0.8in,width=.8in]{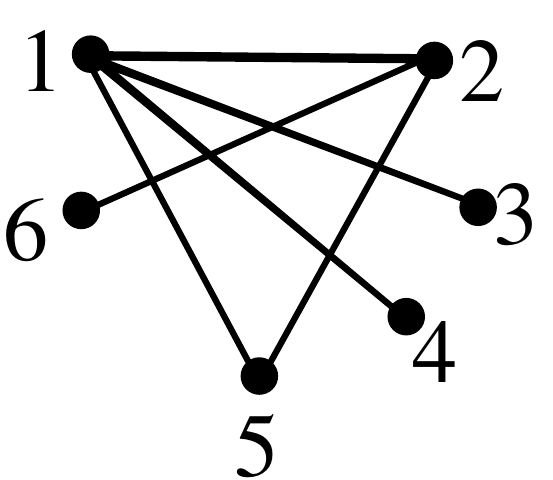}%
\qquad
\includegraphics[height=0.8in,width=.8in]{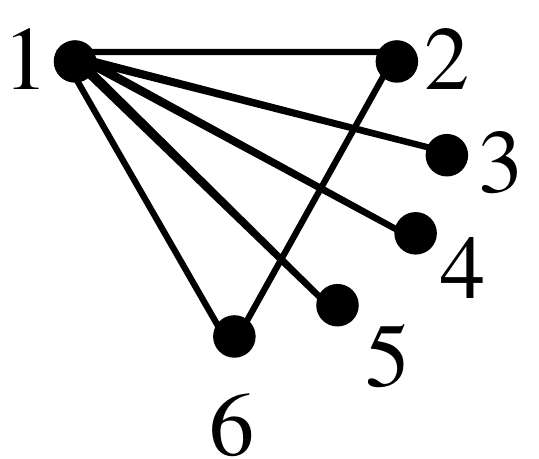}%
\qquad
\includegraphics[height=0.8in,width=.8in]{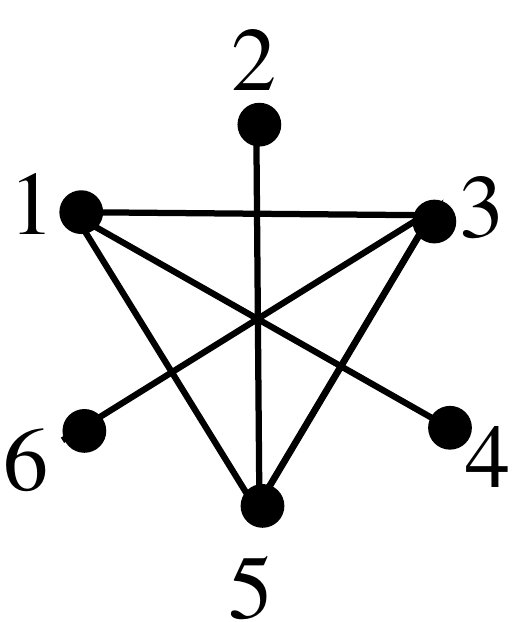}%
\qquad\quad
\caption{All maximal thrackles with 3, 4, 5, and 6 vertices (up to rotations and reflections).
These thrackles are obtained from the 3-star (triangle) and the 5-star by adding leaves.}
\label{fig:more_thrackles}
\end{figure}


We leave the proof of the following claim as an exercise for the reader.

\begin{proposition}
\label{prop:odd_stars}
Maximal thrackles in $K_n$ have exactly $n$ edges.  They are obtained from an odd star by
attaching some leaves, as described above.
The number of  maximal thrackles in $K_n$ is $2^{n-1}-n$.
\end{proposition}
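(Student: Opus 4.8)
The plan is to argue entirely on the combinatorial side. Identify a $2$-subset $\{i,j\}$ with the chord of the $n$-gon joining $i$ and $j$, and recall that $\{a,b\}$ and $\{c,d\}$ cross precisely when exactly one of $c,d$ lies on each of the two open arcs into which $a$ and $b$ split the remaining $n-2$ vertices. The engine of the whole argument is the following \emph{Key Lemma}: if $\{a,b\}$ is an edge of a thrackle $G\subseteq K_n$, then every edge of $G$ is incident to $a$, or incident to $b$, or crosses $\{a,b\}$; equivalently, $G$ has no edge with both endpoints on the same arc of $\{a,b\}$. This is immediate from the definition of a thrackle together with the crossing criterion.

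Next I would prove that every thrackle on $[n]$ has at most $n$ edges, by induction on $n$. If $G$ has a vertex of degree $\le 1$, delete it and apply the inductive hypothesis. Otherwise every vertex has degree $\ge 2$, and I claim $G$ is $2$-regular: if some $v$ had degree $\ge 3$, then applying the Key Lemma to the edges at $v$ shows first that $N(v)$ is a contiguous arc of the circle (a vertex strictly between two circularly consecutive neighbours of $v$ would have no admissible edge), and then, applying it to the two edges joining $v$ to the ends of that arc, that an interior neighbour of $v$ has degree $1$ --- contradicting minimum degree $\ge 2$. A $2$-regular thrackle is a disjoint union of cycles; an even cycle cannot be drawn as a thrackle, and two distinct cycles of a thrackle would each have to alternate across every edge of the other (forcing both to be even), so it is a single odd cycle, which in its circular order is exactly an odd star. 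Hence such a $G$ has $n$ edges and $n$ is odd; in particular a thrackle with no vertex of degree $\le 1$ is an odd star on all $n$ vertices.

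Now I would set up a bijection. For each $C\subseteq[n]$ of odd size $2r+1\ge 3$, the construction described above (the $(2r+1)$-star on $C$, with each vertex outside $C$ attached to the vertex of $C$ --- the \emph{apex} of its gap --- joined in the star to both ends of that gap) yields a graph $G_C$ with $(2r+1)+\sum k_i=n$ edges. I would check $G_C$ is a thrackle: the star edges pairwise cross or share a vertex by a direct arc count, and for a leaf edge $\{v,x\}$ one uses that $\{v,x\}$ lies in the region bounded by the two star edges $\{c_j,x\},\{c_{j+1},x\}$ at the apex $x$ and the gap arc, so any edge of $G_C$ sharing no vertex with $\{v,x\}$ and meeting no vertex strictly inside that gap which failed to cross $\{v,x\}$ would fail to cross one of those two star edges --- impossible. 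Being a thrackle with $n$ edges, $G_C$ is maximal by the previous paragraph. Conversely, let $G$ be a maximal thrackle; I would show $G=G_C$ with $C=\{v:\deg_G v\ge 2\}$, by induction on $n$ (base case $n=3$: the only maximal thrackle is $K_3=G_{[3]}$). First, $G$ has no isolated vertex: an isolated $v$ would make $G$ maximal also on $[n]\setminus\{v\}$, hence of the form $G_{C'}$ by induction, and then the edge joining $v$ to the apex of its gap could be added --- a contradiction. If $\deg_G v\ge 2$ for all $v$, then $G$ is the odd star $=G_{[n]}$ by the previous paragraph. If $G$ has a leaf $v$ with neighbour $u$, then $G-v$ is again a maximal thrackle on $n-1$ vertices, so $G-v=G_{C'}$ by induction; since $G=G_{C'}\cup\{u,v\}$ is a thrackle, the region argument forces $u$ to be the apex of the gap of $G_{C'}$ containing $v$, i.e.\ $G=G_{C'}$ regarded on $[n]$. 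Since $C=\{v:\deg_G v\ge 2\}$ recovers $C$ from $G_C$, maximal thrackles in $K_n$ correspond bijectively to odd subsets of $[n]$ of size $\ge 3$, so their number is $\sum_{r\ge 1}\binom{n}{2r+1}=\bigl(\sum_{j\ \mathrm{odd}}\binom{n}{j}\bigr)-n=2^{n-1}-n$, and each $G_C$ has $n$ edges.

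The main obstacle is the converse direction, and inside it the leaf‑peeling step: one must verify that after deleting a degree‑$1$ vertex $v$ with neighbour $u$ the graph $G-v$ is still \emph{maximal} (so the inductive hypothesis applies), and dispose of the degenerate possibility that $\{u,v\}$ is a whole connected component, which for $n\ge 3$ cannot occur in a maximal thrackle. Both points are handled with the Key Lemma --- it controls which edges of $G-v$ can be incident to $u$ or cross $\{u,v\}$, hence shows that an extension of $G-v$ would also extend $G$ --- but this bookkeeping, together with making ``the re‑attachment of $v$ is forced'' precise via the region argument, is where the real work lies.
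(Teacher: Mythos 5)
Since the paper leaves Proposition~\ref{prop:odd_stars} as an exercise, there is no official proof to compare against; judging your outline on its own terms, the skeleton is sound and most of the steps you spell out do check: the Key Lemma, the argument that a thrackle of minimum degree $\geq 2$ is $2$-regular (contiguity of $N(v)$ plus the ``interior neighbour has degree $1$'' contradiction), hence the bound of at most $n$ edges by peeling low-degree vertices; the construction $G_C$, its verification as a thrackle via the region at the apex (the implicit order star--star, then leaf--star, then leaf--leaf avoids circularity), its maximality from the edge bound; and the count $\sum_{j\geq 3,\ j\ \mathrm{odd}}{n\choose j}=2^{n-1}-n$ once the bijection is established. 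However, two load-bearing steps are missing rather than merely compressed.

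First, in the minimum-degree-$\geq 2$ case you assert both that an even cycle cannot be a thrackle and that a Hamiltonian odd-cycle thrackle ``in its circular order is exactly an odd star.'' These two claims carry essentially all the structural content of that case and neither is a formality (the boundary $n$-gon is a Hamiltonian odd cycle for odd $n$ and is far from a thrackle, so the second claim genuinely uses the thrackle condition). Both can be proved at once, bypassing the cycle decomposition entirely, by an arc-balance count: for an edge $\{i,j\}$ of a $2$-regular thrackle with all $n$ vertices covered, the Key Lemma says no edge lies inside either open arc, so the $2|S|$ edge-endpoints in an arc $S$ come from the $n-3$ edges disjoint from $\{i,j\}$ (each crossing, one endpoint per arc) plus at most two edges through $i$ or $j$; hence $2|S|\geq n-3$ for both arcs and the arc sizes differ by at most one. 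For $n$ even this forces every edge to be the unique ``diametral'' chord at each of its endpoints, contradicting $2$-regularity; for $n$ odd it forces every edge to lie in the $n$-element family $\{i,i+r\}$, $n=2r+1$, and an $n$-edge, $2$-regular subgraph of that family is the whole odd star. Second, in the converse induction the step you yourself flag---that deleting a leaf $v$ with neighbour $u$ from a maximal thrackle leaves a \emph{maximal} thrackle on $[n]\setminus\{v\}$, and that $\{u,v\}$ cannot be a whole component---is not actually argued. Without it the inductive hypothesis cannot be invoked, and ``the Key Lemma controls the bookkeeping'' is a hope, not a proof: you must show that an edge $e\subseteq[n]\setminus\{v\}$ compatible with every edge of $G-v$ is necessarily compatible with $\{u,v\}$ (or that its existence lets you add some edge to $G$ itself), which is not immediate since $e$ need not interact with $u$ at all. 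Your forced-reattachment argument (only the apex of $v$'s gap can receive the leaf) is correct and provable by the region argument you sketch, but the maximality-of-$G-v$ claim is the genuine gap in this direction; until it is written out, the classification ``maximal $\Rightarrow$ odd star plus leaves'' is unproved.
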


Remark that the number $2^{n-1}-n$ is the {\it Eulerian number\/} $A(n-1,1)$, that is, the number of
permutations $w_1,\dots,w_{n-1}$ of size $n-1$ with exactly one descent $w_{i-1}> w_i$.


Theorems~\ref{thm:non_crossing} and~\ref{thm:crossing}
imply the following results.

\begin{corollary}
Maximal arrangements of smallest minors in $Gr^+(2,n)$ correspond to
triangulations of the $n$-gon.   They contain exactly $2n-3$ minors.
The number of such maximal arrangements is the Catalan number $C_{n-2}={1\over n-1} {2(n-2)\choose n-2}$.
\end{corollary}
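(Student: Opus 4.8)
The plan is to read everything off of Theorem~\ref{thm:non_crossing}. That theorem says a nonempty $G\subset K_n$ corresponds to an arrangement of smallest minors in $Gr^+(2,n)$ precisely when no two edges of $G$ cross (edges sharing a vertex being automatically permitted). Identifying a subset $\mathcal{J}\subset{[n]\choose 2}$ with the corresponding edge set, a subset is an arrangement of smallest minors exactly when it is crossing-free, and since this identification is the identity on subsets of ${[n]\choose 2}$, it carries inclusion of arrangements to inclusion of edge sets. Hence maximal arrangements of smallest minors correspond bijectively to maximal (by edge inclusion) crossing-free subgraphs of $K_n$, and it remains to identify the latter and count them.

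The first real step is to show that the maximal crossing-free subgraphs of $K_n$ are exactly the triangulations of the convex $n$-gon. I would first note that a boundary edge $\{i,i+1\}$ (indices mod $n$) crosses no chord at all, since the segment $[i,i+1]$ lies on the boundary of the polygon; consequently every maximal crossing-free $G$ contains all $n$ boundary edges. The remaining edges of such a $G$ then form a maximal crossing-free family of diagonals, and a crossing-free family of diagonals of a convex $n$-gon is maximal if and only if it subdivides the polygon into triangles, i.e.\ forms a triangulation. Conversely, every triangulation is crossing-free and admits no further non-crossing diagonal. This yields the asserted correspondence between maximal arrangements of smallest minors and triangulations of the $n$-gon.

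For the cardinality, a triangulation of a convex $n$-gon consists of the $n$ boundary edges together with exactly $n-3$ pairwise non-crossing diagonals (by a routine induction: removing an ``ear'' triangle deletes one vertex and one diagonal), so it has $n+(n-3)=2n-3$ edges; hence each maximal arrangement of smallest minors contains exactly $2n-3$ minors. Finally, the number of triangulations of a convex $n$-gon is the classical Catalan count $C_{n-2}=\frac{1}{n-1}{2(n-2)\choose n-2}$, which one recovers from the recursion $C_m=\sum_{j=0}^{m-1}C_jC_{m-1-j}$ obtained by fixing one boundary edge and summing over the apex of the triangle erected on it. Combining the three steps proves the corollary.

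I expect no genuine obstacle here once Theorem~\ref{thm:non_crossing} is available; the only point needing a little care is the equivalence ``maximal crossing-free subgraph $=$ triangulation,'' specifically the observations that boundary edges are always present and that no triangulation admits an extra non-crossing diagonal — both elementary facts about convex polygons.
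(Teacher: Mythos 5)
Your proposal is correct and follows essentially the same route as the paper: the corollary is deduced directly from Theorem~\ref{thm:non_crossing} together with the standard facts (stated just before the corollary in the paper) that maximal crossing-free subgraphs of $K_n$ are exactly the triangulations of the $n$-gon, which have $2n-3$ edges and are counted by $C_{n-2}$. You merely spell out the elementary polygon facts that the paper takes as ``clear,'' which is fine.
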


\begin{corollary}
Maximal arrangements of largest minors in $Gr^+(2,n)$
correspond to maximal thrackles in $K_n$.  They contain exactly $n$ minors.
The number of such maximal arrangements is the Eulerian number
$A(n-1,1) = 2^{n-1}-n$.
\end{corollary}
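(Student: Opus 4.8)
The plan is to prove that the maximal thrackles in $K_n$ are precisely the graphs obtained from an odd star by attaching leaves (call such a graph an \emph{odd star with leaves}); the two assertions about edges follow at once, and the count $2^{n-1}-n$ comes from a short combinatorial argument. First I would dispose of the easy inclusion: an odd star with leaves $T$ on $[n]$ is a thrackle with exactly $n$ edges. The edge count is immediate, since $T$ has $2r+1$ star edges and $\sum_i k_i = n-(2r+1)$ leaf edges. The thrackle property reduces to three interval checks, all routine: any two edges of the $(2r+1)$-star cross or share a vertex (each is a near-diameter joining roughly antipodal vertices); a leaf edge $\{i,\ell\}$, where $\ell$ was inserted in the arc between $i+r$ and $i+r+1$, crosses every star edge not incident to $i$ (again $\{i,\ell\}$ is a near-diameter, $\ell$ lying opposite $i$); and any two leaf edges cross or share a vertex. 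To upgrade ``$n$-edge thrackle'' to ``maximal thrackle'' I would prove the standard bound that every thrackle in $K_n$ has at most $n$ edges: assign to an edge $\{a,b\}$ the middle gap of its shorter arc (with a fixed rule to break ties); if two edges were assigned the same gap, their shorter arcs would be nested (or share an endpoint), forcing one edge to lie strictly inside an arc of the other and hence to be disjoint from it and not crossing it --- contradicting the thrackle property. Thus an $n$-edge thrackle is automatically maximal.

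The substance is the reverse inclusion: every maximal thrackle $G\subset K_n$, $n\ge 3$, is an odd star with leaves. I would induct on $n$, splitting on the minimum degree of $G$. If $G$ had an isolated vertex $v$, then $G$ would be a maximal thrackle in $K_{[n]\setminus v}\cong K_{n-1}$, hence an odd star with leaves by induction; but the gap that $v$ occupies lies in the ``opposite arc'' of some star vertex $w$ (these arcs, one per star vertex, cover every gap), so $G\cup\{v,w\}$ is again an odd star with leaves, hence a thrackle, contradicting maximality of $G$. If $G$ has a vertex $v$ of degree $1$ with edge $\{v,u\}$, the key point is that $H:=G|_{[n]\setminus v}$ is again a maximal thrackle in $K_{n-1}$: any edge $f$ that could be added to $H$ either contains $u$, in which case $G\cup f$ is a thrackle, or, choosing an endpoint $x$ of $f$, a short crossing argument shows that $\{v,x\}$ could be added to $G$ --- either way contradicting maximality of $G$. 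By induction $H$ is an odd star with leaves, and the requirement that $\{v,u\}$ meet every core edge of $H$ forces $u$ to be exactly the core vertex opposite the arc containing $v$: if, for a core vertex $t$, the chord $\{v,t\}$ split the remaining $2s$ core vertices into two unequal parts, then the larger part would contain a core edge disjoint from $\{v,t\}$ and not crossing it. Hence $G$ is an odd star with leaves.

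Finally, if $G$ has minimum degree $\ge 2$, I would show it is the $n$-star (so $n$ is odd). No vertex $w$ can have degree $\ge 3$: if $a_1,\dots,a_k$ ($k\ge 3$) are its neighbours in cyclic order, then every edge of $G$ not incident to $w$ is forced to have one endpoint in the arc before $a_1$ and one in the arc after $a_k$; applying this to a second edge at $a_2$ (which exists since $\deg a_2\ge 2$, with a small case check when its other endpoint is $a_1$ or $a_k$) gives a contradiction, since $a_2$ lies in neither arc. Therefore $G$ is $2$-regular, a disjoint union of cycles; each such cycle is a thrackle, hence an odd cycle realized as an odd star (a cycle in convex position is a thrackle only if it is odd --- a classical fact); and there can be only one cycle, since otherwise every edge of one would cross a fixed edge of another, making that odd cycle bipartite. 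So $G$ spans $[n]$ and is the $n$-star.

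It remains to count. By the two inclusions, maximal thrackles are exactly odd stars with leaves, and such a graph is recovered from its set $S$ of vertices of degree $\ge 2$ (its core), which may be any subset of $[n]$ of odd size $\ge 3$: the $(2r+1)$-star structure on $S$ is then forced, and each remaining vertex is a leaf of the unique core vertex opposite its arc. Since $[n]$ has $2^{n-1}$ subsets of odd size, of which exactly $n$ have size $1$, the number of maximal thrackles is $2^{n-1}-n$. I expect the main obstacle to be the structural step, and within it the two delicate points above: checking that deleting a degree-$1$ vertex preserves maximality, and excluding vertices of degree $\ge 3$; the interval checks and the final count are routine.
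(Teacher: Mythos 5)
Your route is the paper's intended one: the corollary is deduced from Theorem~\ref{thm:crossing} (arrangements of largest minors in $Gr^+(2,n)$ are exactly thrackles) together with Proposition~\ref{prop:odd_stars} (maximal thrackles are odd stars with leaves, have $n$ edges, and number $2^{n-1}-n$), which the paper leaves as an exercise; your proposal is essentially a worked proof of that exercise, and most of it is sound --- the induction split by minimum degree, the exclusion of vertices of degree at least $3$, the single-odd-cycle argument, and the count via odd-size subsets of $[n]$.

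One step fails as stated, though: the ``middle gap'' injection used to prove that every thrackle has at most $n$ edges. That map is not injective on thrackles, because two edges sharing a vertex can be assigned the same gap. Writing $g_i$ for the gap between vertices $i$ and $i+1$ and using the round-down tie-break, the edges $\{1,4\}$ (shorter arc $g_1,g_2,g_3$, middle $g_2$) and $\{1,5\}$ (shorter arc $g_1,g_2,g_3,g_4$, middle again $g_2$) receive the same gap for $n\geq 8$, and since they share the vertex $1$ they can both lie in a thrackle; the mirrored example defeats the opposite tie-break. Your parenthetical ``(or share an endpoint)'' is precisely the case in which no contradiction with thrackleness arises, so the bound does not follow from this assignment. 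This matters because the bound is what you use to conclude that an odd star with leaves, having $n$ edges, is a \emph{maximal} thrackle --- a fact needed for the count $2^{n-1}-n$. The fact is true and can be obtained without the bound by checking directly that no chord $\{x,y\}$ can be added to an odd star with leaves: if both endpoints are core vertices not adjacent in the $(2r+1)$-star, one side of the chord contains at least $r+1$ consecutive core vertices and hence a core edge disjoint from and not crossing $\{x,y\}$; if exactly one endpoint is a core vertex, your own ``unequal split'' argument shows some core edge is missed unless $\{x,y\}$ is a leaf edge; and if neither endpoint is a core vertex, a parity argument (the core is an odd cycle, and a chord meeting none of its vertices cannot cross all edges of an odd cycle) finishes. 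The same parity argument also closes a smaller omission in your degree-one case: before invoking the unequal-split argument you must rule out that $u$ is a leaf of $H$ rather than a core vertex.
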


\medskip
Let us return to the proof of Theorem~\ref{thm:non_crossing}.
The following claim is essentially well-known in the context of
Fomin-Zelevinsky's cluster algebra \cite{FZ1, FZ2}, more specifically, cluster
algebras of finite type $A$.
We will talk more about the connection with cluster algebras in Section~\ref{sec:cluster}.

\begin{proposition}
\label{prop:construction_for_triangulation}
Let $G\subset K_n$ be a graph corresponding to a triangulation of the $n$-gon.
Assign $2n-3$ positive real parameters $x_{ij}$ to the edges $\{i,j\}$ of $G$.

There exists a unique $2\times n$ matrix $A$ such that the minors $\Delta_{ij}(A)$ corresponding
to the edges $\{i,j\}$ of $G$ are $\Delta_{ij}(A) = x_{ij}$,
and that $a_{11}=1$ and $a_{21}=a_{1n}=0$.

All other minors $\Delta_{ab}(A)$ are Laurent polynomials in the $x_{ij}$ with positive integer coefficients
with at least two  monomials.
\end{proposition}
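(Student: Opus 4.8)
The plan is to establish all three assertions at once by induction on $n$, peeling off one vertex of the triangulation at each step. Write $[u,w]$ for the determinant of the $2\times 2$ matrix with columns $u,w$, so that if $v_1,\dots,v_n$ denote the columns of a $2\times n$ matrix $A$, then $\Delta_{pq}(A)=[v_p,v_q]$ for $p<q$. The normalization $a_{11}=1,\ a_{21}=a_{1n}=0$ just says $v_1=(1,0)^T$ and $v_n=(0,c)^T$ for some $c$ (and since $\{1,n\}$ is a polygon edge, $c=x_{1n}>0$).

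The combinatorial input is the standard \emph{ear lemma}: every triangulation of the $n$-gon with $n\ge 4$ has at least two \emph{ears}, i.e.\ triangles $\{i-1,i,i+1\}$ whose sides $\{i-1,i\},\{i,i+1\}$ are polygon edges and whose base $\{i-1,i+1\}$ is an edge of the triangulation, so that the vertex $i$ has degree $2$. I would then note that vertices $1$ and $n$ cannot both be ears: if they were, the triangulation would contain the chords $\{2,n\}$ and $\{1,n-1\}$, which cross (for $n=4$ these are the two diagonals of the square; for $n\ge 5$ the chord $\{2,n\}$ separates vertex $1$ from the vertices $3,\dots,n-1$, one of which is $n-1$). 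Hence there is always an ear $i$ with $2\le i\le n-1$, and deleting vertex $i$ and relabelling $[n]\setminus\{i\}$ in increasing order leaves $v_1$ and $v_n$ untouched, so it preserves the normalization.

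For the base case $n=3$ the unique triangulation is the full triangle, and one checks directly that $v_1=(1,0)^T$, $v_3=(0,x_{13})^T$, $v_2=(x_{23}/x_{13},\,x_{12})^T$ is the unique solution, with no further minors to consider. For the inductive step, pick an ear $i\in\{2,\dots,n-1\}$ of the triangulation $T$, with neighbours $i-1,i+1$, and let $T'$ be the induced triangulation of the $(n-1)$-gon on $[n]\setminus\{i\}$. Every edge of $T'$ is an edge of $T$ (in particular $\{i-1,i+1\}$), so the parameters are defined on all edges of $T'$, and the inductive hypothesis gives a unique $2\times(n-1)$ matrix $A'$ with columns $(v_j)_{j\ne i}$ realising $T'$ with the same normalization, all of whose non-edge minors are Laurent polynomials in the parameters with positive integer coefficients and at least two monomials. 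Since $[v_{i-1},v_{i+1}]=x_{i-1,i+1}>0$, the columns $v_{i-1},v_{i+1}$ form a basis of $\R^2$; writing $v_i=\alpha v_{i-1}+\beta v_{i+1}$, the two required equalities $[v_{i-1},v_i]=x_{i-1,i}$ and $[v_i,v_{i+1}]=x_{i,i+1}$ force $\beta=x_{i-1,i}/x_{i-1,i+1}$ and $\alpha=x_{i,i+1}/x_{i-1,i+1}$, both positive, so $v_i$ exists and is unique. Reinserting it as column $i$ yields the desired $A$; conversely, deleting column $i$ from any solution gives a solution of the smaller problem, hence $A'$ by induction, after which $v_i$ is forced — this is the uniqueness.

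It remains to verify the Laurent property for a non-edge $\{a,b\}$ of $T$. If $i\notin\{a,b\}$ then $\{a,b\}$ is a non-edge of $T'$ and $\Delta_{ab}(A)=\Delta_{ab}(A')$, handled by induction. If $b=i$, then $a\notin\{i-1,i,i+1\}$ (those pairs are edges of $T$), so $a<i-1$ or $a>i+1$; expanding $v_i=\alpha v_{i-1}+\beta v_{i+1}$ inside the determinant, and using that $i-1,i+1$ lie on the same side of $a$, gives $\Delta_{ai}(A)=\alpha\,\Delta'+\beta\,\Delta''$ with \emph{positive} signs, where $\Delta',\Delta''$ are the minors of $A'$ on the pairs $\{a,i-1\},\{a,i+1\}$. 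Each of $\Delta',\Delta''$ is either the single parameter $x$ on that edge of $T'$ or, by induction, a positive-integer Laurent polynomial with at least two monomials; $\alpha,\beta$ are Laurent monomials with coefficient $1$; and since every coefficient is strictly positive there is no cancellation, so $\Delta_{ai}(A)$ is a positive-integer Laurent polynomial whose monomial set is the union of those of the two summands. That union has at least two elements: if either summand has two monomials this is clear, and if both are single monomials $x_{i-1,i+1}^{-1}x_{i,i+1}x_{a,i-1}$ and $x_{i-1,i+1}^{-1}x_{i-1,i}x_{a,i+1}$, they differ because the variable $x_{i,i+1}$ divides the first but not the second (using $\{i,i+1\}\ne\{i-1,i\}$ and $a\ne i$). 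I expect the only real friction here to be the Plücker sign bookkeeping in the expansion and the "at least two monomials" count; the rest is a routine induction. A variant would solve for the non-edge minors one at a time via the $3$-term Plücker relation $\Delta_{ac}=(\Delta_{ab}\Delta_{cd}+\Delta_{ad}\Delta_{bc})/\Delta_{bd}$ attached to a diagonal flip, which makes the Laurent/positivity statement transparent but is less direct for the existence and uniqueness of $A$ itself.
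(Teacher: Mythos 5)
Your proof is correct and follows essentially the same argument as the paper: induction on $n$, removing an ear $i\in\{2,\dots,n-1\}$, writing $v_i$ as the forced positive combination $\alpha v_{i-1}+\beta v_{i+1}$, and expanding the new minors to get positivity and the two-monomial count. The only differences are cosmetic (base case $n=3$ rather than $n=2$, plus explicit justifications of the interior-ear existence and the "at least two monomials" step that the paper leaves implicit).
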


\begin{remark}
Without the conditions $a_{11}=1$ and $a_{21}=a_{1n}=0$, the matrix $A$ is
unique modulo the left $SL_2$-action.  Thus it is unique as an element of the
Grassmannian $Gr(2,n)$.  We added this condition to fix a concrete matrix
that represents this element of the Grassmannain.
\end{remark}


\begin{proof}
We construct $A$ by induction on $n$.  For $n=2$, we have
$A=\begin{pmatrix} 1 & 0 \\ 0 & x_{12}\end{pmatrix}$.

Now assume that $n\geq 3$.  For any triangulation of the $n$-gon, there is a vertex $i\in\{2,\dots,n-1\}$ which is adjacent to
only one triangle of the triangulation.
This means that the graph $G$ contains the edges $\{i-1,i\}$, $\{i,i+1\}$, $\{i-1,i+1\}$.   Let $G'$ be the graph obtained from $G$
by removing the vertex $i$ together with the 2 adjacent edges $\{i-1,i\}$ and $\{i,i+1\}$;  it corresponds to a triangulation
of the $(n-1)$-gon (with vertices labelled by $1,\dots,i-1,i+1,\dots,n$).
By the induction hypothesis, we have already constructed a $2\times (n-1)$ matrix
$A'=(v_1,\dots, v_{i-1},v_{i+1},\dots,v_n)$ for the graph $G'$ with the required properties,
where $v_1,\dots,v_{i-1},v_{i+1},\dots, v_n$ are the column vectors of $A'$.
Let us take the $2\times n$ matrix
$$
A=(v_1,\dots,v_{i-1}, {x_{i,i+1}\over x_{i-1,i+1}}\, v_{i-1} + {x_{i-1,i}\over x_{i-1,i+1}}\, v_{i+1}, v_{i+1}, \dots, v_n).
$$

One easily checks that the matrix $A$ has the required properties.
Indeed,
all $2\times 2$ minors of $A$ whose indices do not include $i$ are the same as the corresponding minors of $A'$.
We have $\Delta_{i-1,i}(A)= {x_{i-1,i}\over x_{i-1,i+1}}\, \det(v_{i-1}, v_{i+1}) = x_{i-1,i}$
and $\Delta_{i,i+1}(A)= {x_{i,i+1}\over x_{i-1,i+1}}\, \det(v_{i-1}, v_{i+1}) = x_{i,i+1}$.
Also, for $j\ne i\pm 1$, the minor $\Delta_{ij}(A)$ equals
${x_{i,i+1}\over x_{i-1,i+1}}\, \det(v_{i-1},v_j)+ {x_{i-1,i}\over x_{i-1,i+1}}\, \det(v_{i+1},v_j)$,
which is a positive integer Laurent polynomial with at least two terms.

The uniqueness of $A$ also easily follows by induction.  By the induction hypothesis,
the graph $G'$ uniquely defines the matrix $A'$.  The columns $v_{i-1}$ and $v_{i+1}$ of $A'$ are linearly independent
(because all $2\times 2$ minors of $A'$ are strictly positive).   Thus the $i$th column of $A$ is a linear combination
$\alpha \, v_{i-1} + \beta\, v_{i+1}$.  The conditions $\Delta_{i-1,i}(A)=x_{i-1, i}$ and
$\Delta_{i,i+1}(A)=x_{i, i+1}$ imply that $\beta= x_{i-1,i}/\det(v_{i-1}, v_{i+1}) = x_{i-1,i}/x_{i-1,i+1}$ and
$\alpha= x_{i,i+1}/\det(v_{i-1}, v_{i+1}) = x_{i,i+1}/x_{i-1,i+1}$.
\end{proof}


\begin{example}
Let us give some examples of matrices $A$ corresponding to triangulations.
Assume for simplicity that all $x_{ij}=1$.  According to the above construction, these matrices are obtained,
starting from the identity $2\times 2$ matrix, by repeatedly inserting sums of adjacent columns between these columns.
The matrices corresponding to the triangulations from Figure~\ref{fig:triangulations}
(in the same order) are
$$
\begin{pmatrix}
1 & 1 & 0\\
0 & 1 & 1
\end{pmatrix}
\quad
\begin{pmatrix}
1 & 1 & 1 & 0\\
0 & 1 & 2 & 1
\end{pmatrix}
\quad
\begin{pmatrix}
1 & 3 & 2 & 1 & 0\\
0 & 1 & 1 & 1 & 1
\end{pmatrix}
$$
$$
\begin{pmatrix}
1 & 4 & 3 & 2 & 1 & 0\\
0 & 1 & 1 & 1 & 1 & 1
\end{pmatrix}
\quad
\begin{pmatrix}
1 & 3 & 2 & 3 & 1 & 0\\
0 & 1 & 1 & 2 & 1 & 1
\end{pmatrix}
\quad
\begin{pmatrix}
1 & 1 & 1 & 2 & 1 & 0\\
0 & 1 & 2 & 5 & 3 & 1
\end{pmatrix}
$$
\end{example}

\begin{remark}
The inductive step in the construction of matrix $A$ given in the proof of
Proposition~\ref{prop:construction_for_triangulation} depends on a choice
of ``removable'' vertex $i$.  However, the resulting matrix $A$ is independent on
this choice.  One can easily prove this directly from the construction using
a variant of ``Diamond Lemma.''
\end{remark}





We can now finish the proof of Theorem~\ref{thm:non_crossing}

\begin{proof}[Proof of Theorem~\ref{thm:non_crossing}]
Let $G\subset K_n$ be a graph with no crossing edges.  Let us pick a maximal graph $\tilde G\subset K_n$
without crossing edges (i.e., a triangulation of the $n$-gon)
that contains all edges $G$.  Construct the matrix $A$ for the graph $\tilde G$
as in Proposition~\ref{prop:construction_for_triangulation} with
$$
x_{ij}=\left\{\begin{array}{cl}
1 & \textrm{if $\{i,j\}$ is an edge of $G$}\\
1+\epsilon &\textrm{if $\{i,j\}$ is an edge of $\tilde G\setminus G$}
\end{array}\right.
$$
where $\epsilon>0$ is a small positive number.

The minors of $A$ corresponding to the edges of $G$ are equal to 1, the minors corresponding to the edges
of $\tilde G\setminus G$ are slightly bigger than 1, and all other $2\times 2$
minors are bigger than 1 (if $\epsilon$ is sufficiently small) because
they are positive integer Laurent polynomials in the $x_{ij}$ with at least two
terms.
\end{proof}

Let us now prove Theorem~\ref{thm:crossing}.

\begin{proof}[Proof of Theorem~\ref{thm:crossing}]
For a thrackle $G$, we need to construct a $2\times n$ matrix $B$ such
that all $2\times 2$ minors of $B$ corresponding to edges of $G$ are equal to
each other, and all other $2\times 2$ minors are strictly smaller.

First, we consider the case of maximal thrackles.
According to Proposition~\ref{prop:odd_stars}, a maximal thrackle
$G$ is obtained from an odd star by attaching some leaves to its
vertices.  Assume that it is the $(2r+1)$-star with $k_i\geq 0$ leaves attached
to the $i$th vertex, for $i=1,\dots, 2r+1$.  We have $n=(2r+1)+\sum k_i$.


Let $m=2(2r+1)$.  Consider a regular $m$-gon with center at the origin.
To be more specific, let us take the $m$-gon with the vertices
$u_i = (\cos(2\pi {i \over m}), \sin(2\pi {i\over m} ))$, for $i = 1,\dots,m$.
Let us mark the $k_i$ points on the side  $[u_{i+r}, u_{i+r+1}]$ of the $m$-gon
that subdivide this side into $k_i+1$ equal parts, for $i=1,\dots,m$.
(Here the indices are taken modulo $m$.  We assume that $k_{i+m/2}=k_i$.)
Let $v_1,\dots,v_n,-v_1,\dots, - v_n$ be all vertices of the $m$-gon and all marked points ordered counterclockwise
starting from $v_1=u_1$.
(In order to avoid confusion between edges of the graph $G$ and edges of the $m$-gon, we
use the word ``side'' of the latter.)

For example, Figure~\ref{fig:10gon} shows the 10-gon (with extra marked points) that corresponds
to the thrackle shown on Figure~\ref{fig:triangulation_thrackle}.

\begin{figure}[h]
\begin{picture}(0,0)%
\includegraphics{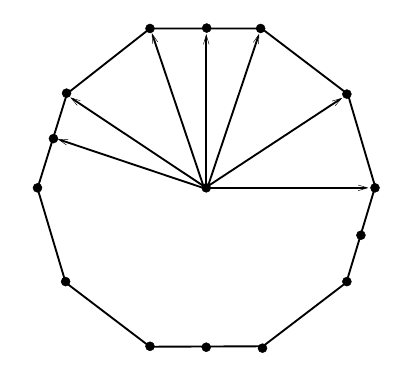}%
\end{picture}%
\setlength{\unitlength}{1184sp}%
\begingroup\makeatletter\ifx\SetFigFont\undefined%
\gdef\SetFigFont#1#2#3#4#5{%
  \reset@font\fontsize{#1}{#2pt}%
  \fontfamily{#3}\fontseries{#4}\fontshape{#5}%
  \selectfont}%
\fi\endgroup%
\begin{picture}(6233,6052)(2401,-6710)
\put(5596,-4191){$0$}
\put(8619,-3684){$v_1$}
\put(8101,-2146){$v_2$}
\put(6413,-901){$v_3$}
\put(5491,-841){$v_4$}
\put(4531,-909){$v_5$}
\put(2771,-2087){$v_6$}
\put(2581,-2889){$v_7$}
\put(2016,-3715){$-v_1$}
\put(2429,-5267){$-v_2$}
\put(4021,-6667){$-v_3$}
\put(8417,-4533){$-v_7$}
\put(8022,-5505){$-v_6$}
\put(6307,-6602){$-v_5$}
\put(5192,-6632){$-v_4$}
\end{picture}
\caption{The 10-gon that corresponds to the thrackle (from Figure~\ref{fig:triangulation_thrackle})
obtained by attaching two leaves 4 and 7 to the 5-star with vertices 1, 2, 3, 5, 6.
The vertices $v_1,v_2,v_3,v_5,v_6$ of the $10$-gon correspond to the vertices of the 5-star,
and the points $v_4$ and $v_7$ on the sides of the $10$-gon correspond to the leaves
of the thrackle.}
\label{fig:10gon}
\end{figure}

We claim that the $2\times n$-matrix $B$ with the column vectors $v_1,\dots,v_n$ has the
needed equalities and inequalities between minors.
Indeed, the minor $\Delta_{ij}(B)$ equals the volume $\Vol(v_i,v_i)$ of the parallelogram generated by the vectors $v_i$ and $v_j$,
for $i<j$.
If $\{i,j\}$ is an edge of the thrackle $G$, then at least one of the vectors $v_i$ or $v_j$, say $v_i$,
is a vertex of the $m$-gon, and (the end-point of) the other vector $v_j$ lies on the side of the $m$-gon
which is farthest from the line spanned by the vector $v_i$.
In this case, the volume $\Vol(v_i,v_j)$ has the maximal possible value.
(It is equal to the half distance between a pair of opposite sides of the $m$-gon, which is equal to
$\sin({\pi r\over 2r+1})$.)

Otherwise, if $\{i,j\}$ is not an edge of the thrackle, then the volume $\Vol(v_i,v_j)$
is strictly smaller than the maximal volume.
Indeed, if $i$ is not a leaf of the thrackle (that is, $v_i$ is a vertex of the $m$-gon)
then $v_j$ does not belong to the side of the $m$-gon which is farthest from the line spanned by $v_i$,
so $\Vol(v_i,v_j)$ is smaller than the maximal value.   On the other hand, if $i$ is a leaf of the thrackle
(that is, $v_i$ lies on a side of the $m$-gon), then there is a unique vertex $v_{j'}$, $j'>i$, of the $m$-gon which lies as far from the line spanned by $v_i$ as possible.
If $j'=j$ then we can use the same argument as above,
and if $j'\ne j$, then $\Vol(v_i,v_j)<\Vol(v_i,v_{j'})$.

This proves the theorem for maximal thrackles.

Let us now assume that $G$ is not a maximal thrackle.  Pick a maximal thrackle $\tilde G$ that contains $G$.
Construct the vectors $v_1, \dots,v_n$ for $\tilde G$ as described above.
Let us show how to slightly modify the vectors $v_i$ so that some minors (namely, the minors
corresponding to the edges in
$\tilde G\setminus G$) become smaller, while the minors corresponding to the edges of $G$ remain the same.

Suppose that we want remove an edge $\{i,j\}$ from $\tilde G$, that is, $\{i,j\}$ is not an edge of
$G$.  If this edge is a leaf of $\tilde G$, then one of its vertices, say $i$, has degree 1,
and $v_i$ is a marked point on a side of the $m$-gon, but not a vertex of the $m$-gon.
If we rescale the vector $v_i$, that is, replace it by the vector $\alpha\, v_i$, then the minor $\Delta_{ij}$
will be rescaled by the same factor $\alpha$, while all other minors corresponding to edges of $\tilde G$
will remain the same.  If we pick the factor $\alpha$ to be slightly smaller than $1$,
then this will make the minor $\Delta_{ij}$ smaller.  Actually, this argument shows that we can independently
rescale the minors for all leaves of $\tilde G$.

Now assume that $\{i,j\}$ is not a leaf of $\tilde G$.  Then $\tilde G$ contains two non-leaf edges $\{i,j\}$ and
$\{i,j'\}$ incident to $i$.  If $G$ does not contain both edges $\{i,j\}$ and $\{i,j'\}$ then we can also rescale
the vector $v_i$ by a factor $\alpha$ slightly smaller than 1.  This will make both minors $\Delta_{ij}$ and $\Delta_{ij'}$ smaller.
If $G$ does not contain the edge $\{i,j\}$ but contains the edge $\{i,j'\}$, then we
can slightly move the point $v_i$ along one of the two sides of the $m$-gon which is parallel
to the vector $v_{j'}$
towards the other vertex of this side of the $m$-gon.
This will make the minor $\Delta_{ij}$ smaller but preserve the minor $\Delta_{ij'}$.
This deformation of $v_i$ will also modify the minors for the leaves incident to $i$.
However, as we showed above, we can always
independently rescale the minors for all leaves of $\tilde G$ and make them equal to any values.

This shows that we can slightly decrease the values of minors for any subset of edges of $\tilde G$.
This finishes the proof.
\end{proof}

\section{Weakly separated sets and sorted sets}
\label{sec:WS_sorted}

In this section, we show how to extend triangulations and thrackles to the case of general $k$.
\medskip

As before, we assume that the vertices $1,\dots,n$ are arranged on the circle in the clockwise order.

\begin{definition}
Two $k$-element sets $I,J\in {[n]\choose k}$ are called {\it weakly separated\/} if their set-theoretic differences
$I\setminus J = \{a_1,\dots,a_r\}$
and $J\setminus I=\{b_1,\dots,b_r\}$ are separated from each other by some diagonal in the circle,
i.e., $a_1 < \cdots < a_s < b_1< \dots < b_r < a_{s+1} < \cdots < a_r$ (or the same inequalities with $a$'s
and $b$'s switched).

A subset of $[n]\choose k$ is called {\it weakly separated\/} if every two
elements in it are weakly separated.
\end{definition}

Weakly separated sets were originally introduced by
Leclerc-Zelevinsky \cite{LZ} in the study of quasi-commuting quantum minors.
It was conjectured in \cite{LZ} that all maximal (by containment) weakly
separated sets have the same number of elements (the {\it Purity Conjecture}),
and that they can be obtained from each other by a sequence of {\it mutations.}
The purity conjecture was proved independently by Danilov-Karzanov-Koshevoy
\cite{DKK} and in \cite{OPS}.

\cite{OPS} presented a bijection between maximal weakly separated sets
and {\it reduced plabic graphs.}  The latter appear in the study of the
positive Grassmannian \cite{Pos2}.  Leclerc-Zelevinsky's purity conjecture and
the mutation connectedness conjecture follow from the properties of plabic
graphs proved in \cite{Pos2}.

More precisely, it was shown in \cite{OPS}, cf. \cite{DKK}, that
any maximal by containment weakly separated subset of $[n] \choose k$ has
exactly $k(n-k)+1$ elements. We will talk more about the connection between weakly separated sets and plabic graphs in Section~\ref{sec:not_weakly_separated}.

\begin{definition}
Two $k$-element sets $I,J\in {[n]\choose k}$ are called {\it sorted\/} if their set-theoretic differences $I\setminus J =
\{a_1,\dots,a_r\}$
and $J\setminus I=\{b_1,\dots,b_r\}$ are interlaced on the circle, i.e., $a_1<b_1<a_2<b_2<\cdots<a_r<b_r$
(or the same inequalities with $a$'s and $b$'s switched).

A subset of $[n]\choose k$ is called {\it sorted\/} if every two elements in it are sorted.
\end{definition}

Sorted sets appear in the study of Gr\"obner bases \cite{Stu} and in the theory of {\it alcoved polytopes\/} \cite{LP}.
Any maximal (by containment) sorted subset in $[n]\choose k$ has exactly $n$ elements.
Such subsets were identified with simplices of the {\it alcoved triangulation\/}
of the hypersimplex $\Delta_{k,n}$, see~\cite{LP, Stu}.
The number of maximal sorted subsets in $[n]\choose k$ equals the {\it Eulerian number\/}
$A(n-1,k-1)$, that is, the number of permutations $w$ of size $n-1$ with exactly $k-1$ descents, $\mathrm{des}(w) = k-1$.
(Recall, that a {\it descent\/} in a permutation $w$ is an index $i$ such that $w(i)>w(i+1)$.)
An explicit bijection between sorted subsets in $[n]\choose k$ and permutations of size $n-1$
with $k-1$ descents was constructed in \cite{LP}.

\begin{remark}
For $k=2$, a pair $\{a,b\}$ and $\{c,d\}$ is weakly separated if the edges $\{a,b\}$ and $\{c,d\}$ of $K_n$ are non-crossing
or share a common vertex.  On the other hand, a pair $\{a,b\}$ and $\{c,d\}$ is sorted
if the edges $\{a,b\}$ and $\{c,d\}$ of $K_n$ are crossing or share a common vertex.
Thus maximal weakly separated subsets in $[n]\choose 2$ are exactly the
graphs corresponding to triangulations of the $n$-gon,
while sorted subsets in $[n]\choose 2$ are exactly thrackles discussed in Section~\ref{sec:k=2}.
\end{remark}

Here is our main result on arrangements of largest minors.

\begin{theorem}
A nonempty subset of $[n]\choose k$ is an arrangement of largest minors
in $Gr^+(k,n)$ if and only if it is a sorted subset.
Maximal arrangements of largest minors contain exactly $n$ elements.
The number of maximal arrangements of largest minors
in $Gr^+(k,n)$ equals the Eulerian number $A(n-1,k-1)$.
\label{thm:sorted}
\end{theorem}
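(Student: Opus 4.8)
The plan is to prove both directions. For the direction ($\Rightarrow$), suppose $\mathcal{J}\subset\binom{[n]}{k}$ is an arrangement of largest minors. The key tool is the analog of the 3-term Pl\"ucker relation used in the $k=2$ case, which here must come from a quadratic Pl\"ucker-type relation among $\Delta_I$: if $I,J$ are \emph{not} sorted, then $I\setminus J$ and $J\setminus I$ fail to interlace, and one can find an exchange of a single pair of elements producing sets $I',J'$ with $I'\cup J' = I\cup J$, $I'\cap J' = I\cap J$, and such that there is a Pl\"ucker relation $\Delta_I\Delta_J = \sum_{m} \Delta_{I_m}\Delta_{J_m}$ (with positive terms, since we are in $Gr^+(k,n)$) in which $\Delta_{I'}\Delta_{J'}$ appears as one of the monomials on the right. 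Then $\Delta_I\Delta_J > \Delta_{I'}\Delta_{J'}$ forces $\max(\Delta_I,\Delta_J) > \min(\Delta_{I'},\Delta_{J'})$, but actually we need $\max(\Delta_I,\Delta_J) > $ something, so not both $I$ and $J$ can be in $\mathcal{J}$ unless the relation is trivial. To make this clean I would invoke Skandera's inequalities \cite{Ska} for products of minors, exactly as the introduction announces is done in Section~\ref{sec:inequalities_products_minors}: Skandera's inequalities give, for any non-sorted pair $I,J$, an inequality $\Delta_I\Delta_J \geq \Delta_{I'}\Delta_{J'}$ with $\{I',J'\}$ sorted-er (closer to interlacing) — iterating, $\Delta_I\Delta_J$ is bounded below by a product of Pl\"uckers of a sorted pair of the same union and intersection, which is strictly larger unless $I,J$ were already sorted; hence if both $\Delta_I$ and $\Delta_J$ equal the common largest value $M$, we get $M^2 \geq \Delta_{I'}\Delta_{J'}$ with at least one factor $> M$, a contradiction once the other factor is $\geq$ something comparable — one has to be slightly careful, but the mechanism is: a non-sorted pair in an arrangement of largest minors contradicts Skandera. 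So $\mathcal{J}$ must be sorted.

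For the direction ($\Leftarrow$), I must show every nonempty sorted $\mathcal{J}$ actually arises. By the purity statement quoted from \cite{LP}, it suffices to handle maximal sorted sets (size $n$, the vertex sets of simplices in the alcoved triangulation of $\Delta_{k,n}$), and then degenerate: choose a maximal sorted $\tilde{\mathcal J}\supset\mathcal{J}$, build a point of $Gr^+(k,n)$ with $\Delta_I$ equal to a common value for $I\in\tilde{\mathcal J}$, then perturb so that the $\Delta_I$, $I\in\tilde{\mathcal J}\setminus\mathcal{J}$, drop slightly below that value while those in $\mathcal{J}$ stay put and all others stay below — exactly the $\epsilon$-perturbation argument of Theorem~\ref{thm:crossing}. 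The construction of the maximal-sorted point is the content of Theorem~\ref{thm:torus_action} (proved in Section~\ref{sec:construction_largest} via alcoved polytopes): to a simplex of the alcoved triangulation of the hypersimplex one associates, via its $n$ vertices, an explicit positive matrix whose maximal minors $\Delta_I$ are all equal precisely for $I$ in the corresponding sorted set and strictly smaller otherwise. I would cite that theorem for this half. The enumeration statements then follow immediately: maximal arrangements $=$ maximal sorted sets $=$ simplices of the alcoved triangulation of $\Delta_{k,n}$, which have $n$ vertices and number $A(n-1,k-1)$ by \cite{LP}.

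The main obstacle is the $(\Rightarrow)$ direction: producing, for each non-sorted pair $I,J$, the \emph{right} Pl\"ucker/Skandera inequality that certifies $I,J$ cannot coexist among the largest minors. The subtlety is that a single application of a 3-term-type relation may not suffice when $|I\setminus J| = r \geq 2$ — one needs either an induction on $r$ (peeling off one transposition toward interlacing at a time, keeping all intermediate minors positive and controlling the inequality direction) or a direct appeal to the full strength of Skandera's monomial-positivity result, which expresses $\Delta_I\Delta_J - \Delta_{I'}\Delta_{J'}$ as a nonnegative combination, hence strictly positive in the totally positive regime. I expect the clean write-up routes everything through Skandera, so the real work is matching the combinatorics of "non-sorted" to the combinatorics of the exchange pairs appearing in Skandera's inequalities; this is precisely what Section~\ref{sec:inequalities_products_minors} is set up to do, so in this plan I would defer to it and only sketch the reduction.
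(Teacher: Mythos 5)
Your plan is essentially the paper's own proof: the ($\Rightarrow$) direction is exactly the Skandera-based argument of Section~\ref{sec:inequalities_products_minors} (Theorem~\ref{thm:Skandera_strict} and Corollary~\ref{cor:Skandera}), and the ($\Leftarrow$) direction, the cardinality statement, and the Eulerian count are obtained, as you propose, from Theorem~\ref{thm:torus_action} (via the alcoved triangulation of the hypersimplex from \cite{LP}) together with completing a sorted set to a maximal one and an $\epsilon$-perturbation, which the paper realizes by re-solving the torus-rescaling linear system with values $1$ on $\mathcal{J}$ and $1-\epsilon$ on $\tilde{\mathcal{J}}\setminus\mathcal{J}$ and invoking continuity. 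One small correction: the Skandera inequality runs the other way from what you wrote in places --- the \emph{sorted} pair has the strictly larger product, $\Delta_{I'}\Delta_{J'} > \Delta_I\Delta_J$ on $Gr^+(k,n)$ --- and it is precisely this direction that forces $\Delta_{I'}$ or $\Delta_{J'}$ to exceed the alleged largest value, giving the contradiction in one step (no iteration on $|I\setminus J|$ is needed).
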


Regarding arrangements of smallest minors, we will show the following.




\begin{theorem}
Any nonempty weakly separated set in $[n]\choose k$ is an arrangement of smallest minors in $Gr^+(k,n)$.
\label{thm:WSeparated}
\end{theorem}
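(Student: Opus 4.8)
The plan is to prove that any nonempty weakly separated set $\mathcal{J}\subset \binom{[n]}{k}$ is realized as the set of smallest minors of some point of $Gr^+(k,n)$, by using the connection between maximal weakly separated sets and clusters of the Grassmannian cluster algebra, together with the Laurent phenomenon and the Lee--Schiffler positivity result. First I would reduce to the case where $\mathcal{J}$ is a maximal weakly separated set: by \cite{OPS} (cf.\ \cite{DKK}) every weakly separated set extends to a maximal one $\widetilde{\mathcal{J}}$ of size $k(n-k)+1$, and once we can realize $\widetilde{\mathcal{J}}$ with all its minors equal to some common smallest value, we can perturb to push the minors indexed by $\widetilde{\mathcal{J}}\setminus\mathcal{J}$ slightly above that value, exactly as in the proofs of Theorems~\ref{thm:non_crossing} and~\ref{thm:crossing} — provided the perturbation keeps all the non-cluster Pl\"ucker coordinates strictly larger, which is where positivity of the Laurent expansions will be needed.

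The core step is therefore: given a maximal weakly separated set $\widetilde{\mathcal{J}}$, construct $X\in Gr^+(k,n)$ with $\Delta_I(X) = \Delta_J(X)$ for all $I,J\in\widetilde{\mathcal{J}}$ and $\Delta_K(X) > \Delta_I(X)$ for all $K\notin\widetilde{\mathcal{J}}$. By \cite{OPS, Pos2}, $\widetilde{\mathcal{J}}$ is a cluster of the cluster algebra structure on the homogeneous coordinate ring of $Gr(k,n)$; that is, the Pl\"ucker coordinates $\{\Delta_I \mid I\in\widetilde{\mathcal{J}}\}$ form a set of algebraically independent generators, and every other Pl\"ucker coordinate $\Delta_K$ is a Laurent polynomial in them. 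By the Laurent phenomenon of Fomin--Zelevinsky \cite{FZ1} combined with the positivity theorem of Lee--Schiffler \cite{LS}, each such $\Delta_K$ is a Laurent polynomial with \emph{nonnegative} (in fact positive integer) coefficients in the cluster variables $\Delta_I$, $I\in\widetilde{\mathcal{J}}$. Moreover I would argue that each such Laurent polynomial has at least two monomials: $\Delta_K$ is not itself a cluster variable of this cluster, and a cluster monomial that equalled a single Laurent monomial in another cluster would force a degenerate relation contradicting algebraic independence (alternatively, one sees two terms directly from the exchange relation obtained by a single mutation out of $\widetilde{\mathcal{J}}$ toward $K$). Now set all the cluster variables equal: choose $X$ so that $\Delta_I(X) = c$ for every $I\in\widetilde{\mathcal{J}}$, for a fixed constant $c>0$ — this is possible and gives a genuine point of $Gr^+(k,n)$ because the cluster variables are free coordinates on a torus inside the open cell, so any positive assignment is attained. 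Then for $K\notin\widetilde{\mathcal{J}}$, $\Delta_K(X)$ is a positive-coefficient Laurent polynomial with $\geq 2$ terms evaluated at all variables $=c$, hence a sum of $\geq 2$ positive powers of $c$; by homogeneity (all these Laurent monomials have the same total degree, since $\Delta_K$ is homogeneous of degree $1$ in the Pl\"ucker grading) each monomial equals $c$ times a positive integer $\geq 1$, so $\Delta_K(X) \geq 2c > c$. Thus $\widetilde{\mathcal{J}}$ is exactly the arrangement of smallest minors at $X$.

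Finally I would complete the reduction: to realize a non-maximal $\mathcal{J}$, take $\widetilde{\mathcal{J}}\supset\mathcal{J}$ maximal, and instead of setting all cluster variables to the same $c$, set $\Delta_I(X) = c$ for $I\in\mathcal{J}$ and $\Delta_I(X) = c(1+\epsilon)$ for $I\in\widetilde{\mathcal{J}}\setminus\mathcal{J}$, with $\epsilon>0$ small. The minors in $\mathcal{J}$ now equal $c$; those in $\widetilde{\mathcal{J}}\setminus\mathcal{J}$ equal $c(1+\epsilon)>c$; and for $K\notin\widetilde{\mathcal{J}}$, continuity together with the strict inequality $\Delta_K > c$ at $\epsilon=0$ established above guarantees $\Delta_K(X) > c$ for $\epsilon$ sufficiently small. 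Hence $\mathcal{J}$ is the set of smallest minors. The main obstacle I anticipate is not the perturbation bookkeeping but invoking the cluster-algebra input cleanly: one must know that \emph{every} maximal weakly separated set is a cluster (the $\Rightarrow$ direction of \cite{OPS}), that the associated cluster variables are algebraically independent and can be freely and positively prescribed on $Gr^+(k,n)$, and that the Lee--Schiffler positivity applies to the Grassmannian cluster algebra in the generality needed; assembling these precisely — and verifying the ``at least two monomials'' claim for every non-cluster Pl\"ucker coordinate — is the delicate part of the argument.
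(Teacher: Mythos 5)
Your proposal follows essentially the same route as the paper: identify a maximal weakly separated set containing $\mathcal{J}$ with a cluster via plabic graphs \cite{OPS, Pos2}, invoke the Laurent phenomenon \cite{FZ1} and Lee--Schiffler positivity \cite{LS} to get that every non-cluster Pl\"ucker coordinate is a positive Laurent polynomial with at least two terms in the cluster variables, set the cluster variables equal to force strict inequalities for all other minors, and perturb to handle non-maximal $\mathcal{J}$. The argument is correct, and in fact you make explicit two points (the $1,1+\epsilon$ perturbation for non-maximal sets and the realizability of arbitrary positive cluster values) that the paper treats only implicitly.
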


\begin{theorem}
For $k=1,2,3, n-1, n-2, n-3$,
a nonempty subset of $[n]\choose k$ is an arrangement of smallest minors in $Gr^+(k,n)$ if and only if it is
a weakly separated subset.
Maximal arrangements of smallest minors contain exactly $k(n-k)+1$ elements.
\label{thm:weakly_separated_123}
\end{theorem}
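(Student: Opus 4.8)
The plan is to combine three ingredients. First, the direction ($\Leftarrow$) is already covered: any weakly separated set is an arrangement of smallest minors by Theorem~\ref{thm:WSeparated}, which holds for all $k$ and $n$. So the whole content is the direction ($\Rightarrow$): for the listed values of $k$, an arrangement of smallest minors must be weakly separated. Second, by the cyclic and complementation symmetries of $Gr^+(k,n)$ (the map sending $\Delta_I$ to $\Delta_{[n]\setminus I}$ identifies $Gr^+(k,n)$ with $Gr^+(n-k,n)$, and both preserve weak separation and the notion of arrangement of smallest minors), the cases $k=n-1,n-2,n-3$ reduce to $k=1,2,3$. The case $k=1$ is vacuous (any two singletons are weakly separated), and $k=2$ is exactly Theorem~\ref{thm:non_crossing} together with the $k=2$ Remark identifying non-crossing pairs with weakly separated pairs. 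So the real work is $k=3$.

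For $k=3$, the key step is a local obstruction: I would show that if $\mathcal{J}\subset\binom{[n]}{3}$ contains a pair $\{I,J\}$ that is \emph{not} weakly separated, then $\mathcal{J}$ is not an arrangement of smallest minors. This is the analog of the $3$-term Plücker argument used for $k=2$. Concretely, a non-weakly-separated pair of $3$-sets in general position has symmetric differences $I\setminus J=\{a_1,a_2,a_3\}$, $J\setminus I=\{b_1,b_2,b_3\}$ that are cyclically interlaced (or "crossed" in the appropriate sense); after intersecting out the common part $I\cap J$, one reduces to a small Grassmannian — $Gr^+(3,6)$ when $|I\cap J|=0$, and smaller cases like $Gr^+(2,5)$, $Gr^+(1,4)$ when $I\cap J$ is larger — and one must exhibit, for each such minimal crossed pair $\{I,J\}$, a Plücker-type relation (or an inequality among products of minors) forcing some third minor $\Delta_K$ with $K\in\binom{[n]}{3}$ to satisfy $\Delta_K < \Delta_I = \Delta_J$ whenever $\Delta_I=\Delta_J$. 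Here I would lean on Skandera's inequalities for products of minors \cite{Ska}, exactly as the paper does in Section~\ref{sec:inequalities_products_minors} to prove the ($\Rightarrow$) direction of Theorems~\ref{thm:sorted} and~\ref{thm:weakly_separated_123}; indeed this theorem's ($\Rightarrow$) direction is presumably established there, so the argument sketched here is precisely a case analysis of the minimal crossed configurations.

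The main obstacle is the finite but non-trivial case check for $k=3$: one must classify all pairs of $3$-subsets that fail weak separation up to the cyclic/reflection/complementation symmetries, reduce each to a minimal ambient size by removing the common indices, and for each minimal type produce the right Skandera-type inequality certifying that equality $\Delta_I=\Delta_J$ forces a strictly smaller minor in $\mathcal{J}$'s label set. Because in this range $k\le 3$ or $n-k\le 3$ the number of orbits of crossed pairs is small (essentially the "hexagon" crossing in $Gr^+(3,6)$ and its degenerations), the check is manageable; the subtlety is ensuring the smaller minor that is forced down is itself a $k$-subset of $[n]$ (not an artifact of the reduction) so that it genuinely obstructs membership in $\mathcal{J}$. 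Finally, the count "$k(n-k)+1$ elements in a maximal arrangement" then follows immediately: by the two directions just established, maximal arrangements of smallest minors coincide with maximal weakly separated sets, and by \cite{OPS, DKK} the latter all have exactly $k(n-k)+1$ elements.
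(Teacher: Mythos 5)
Your proposal is correct and follows essentially the same route as the paper: the ($\Leftarrow$) direction via Theorem~\ref{thm:WSeparated}, duality to reduce $k=n-1,n-2,n-3$ to $k=1,2,3$, and for $k=3$ a case analysis of non-weakly-separated pairs (intersection of size $\geq 2$ is automatic, size $1$ via the $3$-term Pl\"ucker relation, disjoint pairs reduced up to cyclic symmetry to the two types in $\binom{[6]}{3}$ and handled by strict Skandera inequalities such as $\Delta_I\Delta_J>\Delta_{\{1,2,3\}}\Delta_{\{4,5,6\}}$), with the cardinality statement then following from the purity of maximal weakly separated collections.
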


Note that the symmetry $Gr(k,n)\simeq Gr(n-k,n)$ implies that the cases $k=1,2,3$ are
equivalent to the cases $k=n-1,n-2,n-3$.


In Section~\ref{sec:not_weakly_separated}, we will construct, for $k\geq 4$,
examples of arrangements of smallest minors which are not weakly separated.
We will describe the conjectural structure of such arrangements (Conjecture~\ref{conjecture:minimal_minors})
and prove it for $k=4, 5, n-4, n-5$.

These examples show that it is not true in general that all maximal (by containment) arrangements of
smallest minors are weakly separated.  However, the following  conjecture says
that maximal by {\it size\/}
arrangements of smallest minors are exactly maximal weakly separated sets.




\begin{conjecture}
Any arrangement of smallest minors in $Gr^+(k,n)$ contains at most $k(n-k)+1$ elements.
Any arrangement of smallest minors in $Gr^+(k,n)$ with $k(n-k)+1$ elements
is a (maximal) weakly separated set in $[n]\choose k$.
\label{conj:max_smallest}
\end{conjecture}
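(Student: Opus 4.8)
The plan is to reduce Conjecture~\ref{conj:max_smallest} to a single size bound and then attack that bound. Given the purity of weakly separated sets --- every maximal weakly separated subset of ${[n]\choose k}$ has exactly $k(n-k)+1$ elements \cite{DKK, OPS} --- the conjecture is \emph{equivalent} to the statement $(\star)$: every arrangement of smallest minors in $Gr^+(k,n)$ that is not weakly separated has at most $k(n-k)$ elements. Indeed, given $(\star)$, an arrangement of smallest minors $\mathcal{J}$ that \emph{is} weakly separated has $|\mathcal{J}| \le k(n-k)+1$ by purity, with equality forcing $\mathcal{J}$ to be a maximal weakly separated set (a weakly separated set of size $k(n-k)+1$ cannot be enlarged), while one that is \emph{not} weakly separated has $|\mathcal{J}| \le k(n-k) < k(n-k)+1$; together these are precisely the two assertions of the conjecture, and the reverse implication is immediate. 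So the whole problem is to bound non-weakly-separated arrangements of smallest minors by $k(n-k)$, and Theorem~\ref{thm:WSeparated} shows the value $k(n-k)+1$ is attained, so the bound is sharp.

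I would take the case $k=2$ as a template. There, Theorem~\ref{thm:non_crossing} identifies arrangements of smallest minors with the non-crossing subgraphs of $K_n$, so the bound $2n-3 = k(n-k)+1$ becomes the combinatorial fact that a graph on $n$ cyclically arranged vertices with no crossing chords has at most $2n-3$ edges, and $(\star)$ is vacuous because every such arrangement is already weakly separated. For general $k$ the first ingredient must be a sharp description of which pairs $\{I,J\}$ can coexist in an arrangement of smallest minors: this is Conjecture~\ref{conjecture:minimal_minors}, established in the paper for $k \le 5$ and $k \ge n-5$, which says that a non-weakly-separated pair has a rigid shape and is realized only inside the chain-reaction / square-pyramid configurations of Section~\ref{sec:not_weakly_separated}. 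The content of $(\star)$ is then that the presence of even one such pair in $\mathcal{J}$ forces $|\mathcal{J}|$ strictly below $k(n-k)+1$.

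To prove this I would pursue a plabic-graph model for arbitrary arrangements of smallest minors, extending the Oh--Postnikov--Speyer bijection \cite{OPS} between maximal weakly separated sets and reduced plabic graphs: one would attach to $\mathcal{J}$ a generalized plabic graph whose faces are labelled by the sets in $\mathcal{J}$, built so that reduced plabic graphs (which have exactly $k(n-k)+1$ faces) correspond precisely to weakly separated $\mathcal{J}$, while every non-weakly-separated $\mathcal{J}$ arises from a graph that a chain reaction of square moves has degenerated, collapsing at least one face --- whence $|\mathcal{J}| \le k(n-k)$. The chain reactions and square pyramids of Section~\ref{sec:not_weakly_separated} are the natural source of such degenerations, and the crux would be to show that each chain reaction which creates a non-weakly-separated pair destroys a face. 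A cruder dimension count in a cluster chart, in the spirit of the proof of Theorem~\ref{thm:WSeparated} --- writing each $\Delta_I$, $I\in\mathcal{J}$, as a subtraction-free Laurent polynomial in the coordinates $\{\Delta_W : W\in\mathcal{W}\}$ of a fixed cluster $\mathcal{W}$, and bounding the rank of the differentials of the equations $\Delta_I = \Delta_J$ at a point of $S_\I$ (using positivity and the three-term Pl\"ucker relation to exclude binomial relations among distinct Pl\"ucker coordinates) --- should already recover the weaker bound $|\mathcal{J}| \le k(n-k)+1$ for \emph{all} arrangements, but it cannot by itself separate the extremal case.

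The main obstacle, I expect, is precisely this passage from the pairwise structure (which we control for small $k$) to the global bound $(\star)$. For $k \ge 4$ the chain reactions genuinely escape weak separation, so no purely local device --- neither the three-term Pl\"ucker relation nor Skandera-type two-monomial inequalities \cite{Ska} --- can settle $(\star)$; one must understand how a non-weakly-separated pair interacts with all the remaining minors of $\mathcal{J}$ at once. For $k \le 3$ and $k \ge n-3$ the conjecture is already a theorem, immediate from Theorem~\ref{thm:weakly_separated_123} (arrangements of smallest minors are then automatically weakly separated) together with purity; and for $k = 4, 5$ and $k = n-4, n-5$, where the non-weakly-separated pairs are classified, $(\star)$ should be reachable by a finite case analysis: enumerate the local configurations that the pair structure allows, and check in each case that it cannot be completed to an arrangement of $k(n-k)+1$ smallest minors.
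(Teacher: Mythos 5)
The statement you were given is stated in the paper as Conjecture~\ref{conj:max_smallest} and is left open there: the paper proves it only in the cases covered by Theorem~\ref{thm:weakly_separated_123} (for $k\leq 3$ or $k\geq n-3$, where every arrangement of smallest minors is weakly separated and purity applies), and Section~\ref{sec:not_weakly_separated} shows by example that for $k\geq 4$ the conjecture has genuine content. So there is no proof in the paper to compare against, and your text is, by its own account, a program rather than a proof. The one solid piece in it is the reduction: granting the purity theorem of \cite{DKK, OPS}, Conjecture~\ref{conj:max_smallest} is indeed equivalent to the assertion $(\star)$ that every non-weakly-separated arrangement of smallest minors has at most $k(n-k)$ elements, and your argument for that equivalence is correct.

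Both halves of what remains, however, are genuine gaps rather than details. First, you claim a ``cruder dimension count'' in a cluster chart ``should already recover'' the bound $|\mathcal{J}|\leq k(n-k)+1$ for all arrangements --- but that bound is literally the first assertion of the conjecture, and the count does not go through as described: nonemptiness of the stratum $S_\I$ places no upper bound on the number of imposed equalities $\Delta_I=\Delta_J$ unless one shows that the differentials of those $|\mathcal{J}|-1$ functions are linearly independent at some point of the stratum, and a priori the stratum could be zero-dimensional or cut out very degenerately. Ruling out identities between two individual Pl\"ucker coordinates via positivity or the three-term relation says nothing about the rank of a large family of such differentials; establishing that rank is exactly the hard content, not a routine verification, so this step begs the question. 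Second, the key step $(\star)$ is addressed only by an undefined device: ``generalized plabic graphs'' attached to arbitrary arrangements, with the assertion that a chain reaction creating a non-weakly-separated pair ``collapses at least one face.'' No such object is constructed and no lemma relating face counts to $|\mathcal{J}|$ is formulated, so nothing is proved here. Finally, the suggestion that for $k=4,5$ the bound ``should be reachable by a finite case analysis'' is not finite as stated ($n$ is unbounded), and the pairwise classification of Conjecture~\ref{conjecture:minimal_minors} (proved in the paper for $k\leq 5$) constrains only pairs, not how a non-weakly-separated pair interacts with the remaining $k(n-k)-1$ minors that an extremal arrangement would have to contain --- which is precisely the obstacle you yourself identify. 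In short: the framing and the reduction are sound, but neither assertion of the conjecture is established.
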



In order to prove
Theorems~\ref{thm:sorted} and~\ref{thm:weakly_separated_123} in one direction ($\Rightarrow$),
we need to show that, for a pair of elements $I$ and $J$ in an arrangement of largest (smallest) minors,
the pair $I,J$ is sorted (weakly separated).

In order to prove these claims in the other direction ($\Leftarrow$)
and also Theorem~\ref{thm:WSeparated}, it is enough to construct, for each sorted (weakly separated)
subset, matrices with the corresponding collection of equal largest (smallest) minors.

\medskip

In Section~\ref{sec:inequalities_products_minors}, we discuss
inequalities between products of minors and use them to prove
Theorems~\ref{thm:sorted} and~\ref{thm:weakly_separated_123} in one direction ($\Rightarrow$).
That is, we show arrangements of largest (smallest) minors should be sorted (weakly separated).
In Section~\ref{sec:cluster}, we prove Theorem~\ref{thm:WSeparated} (and hence the other direction ($\Leftarrow$) of
Theorem~\ref{thm:weakly_separated_123}) using the theory
of cluster algebras.
In Section~\ref{sec:construction_largest}, we prove the other direction ($\Leftarrow$) of
Theorem~\ref{thm:sorted} using the theory of alcoved polytopes \cite{LP}.

\section{Inequalities for products of minors}
\label{sec:inequalities_products_minors}

As we discussed in Section~\ref{sec:k=2}, in the case $k=2$, in one direction, our
results follow from the inequalities for products of minors
of the form $\Delta_{ac} \, \Delta_{bd} > \Delta_{ab}\, \Delta_{cd}$ and
$\Delta_{ac} \, \Delta_{bd} > \Delta_{ad}\, \Delta_{bc}$, for $a<b<c<d$.

There are more general inequalities of this form found by Skandera \cite{Ska}.  \medskip

For $I,J\in{[n]\choose k}$ and an interval $[a,b]:=\{a,a+1,\dots,b\}\subset [n]$, define
$$
r(I,J; a,b) = | \left(|(I\setminus J)\cap [a,b]| - |(J\setminus I)\cap [a,b]|\right)|.
$$
Notice that the pair $I,J$ is sorted if and only if $r(I,J;a,b)\leq 1$ for all $a$ and $b$.
In a sense, $r(I,J; a,b)$ is a measure of ``unsortedness'' of the pair $I,J$.

\begin{theorem} {\rm Skandera \cite{Ska}} \
For $I,J,K,L\in{[n]\choose k}$, the products of the Pl\"ucker coordinates satisfy the inequality
$$
\Delta_I \, \Delta_J  \geq \Delta_K\,\Delta_L
$$
for all points of the nonnegative Grassmannian $Gr^{\geq}(k,n)$,
if and only if the multiset union of $I$ and $J$ equals to the multiset union of $K$ and $L$; and,
for any interval $[a,b]\subset [n]$, we have
$$
r(I,J;a,b)\leq r(K,L;a,b).
$$
\label{thm:skandera}
\end{theorem}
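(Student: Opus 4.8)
The plan is to reduce the inequality to a combinatorial statement about paths in a planar network, via the Lindström–Gessel–Viennot (LGV) lemma, and then establish a path-swapping bijection governed by the ``unsortedness'' statistics $r(I,J;a,b)$. First I would recall that a totally nonnegative $k\times n$ matrix (equivalently, a point of $Gr^{\geq}(k,n)$ in a fixed affine chart) can be realized as the path matrix of a planar acyclic directed network $N$ with $k$ sources and $n$ sinks, so that each Plücker coordinate $\Delta_I$ equals the sum of weights of families of $k$ vertex-disjoint paths from the sources to the sink-set $I$. Then the product $\Delta_I\,\Delta_J$ is a generating function over pairs of such path families, one landing on $I$ and one on $J$; taking two disjoint copies of $N$ stacked in parallel, this is a generating function over $2k$-tuples of paths. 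The condition that the multiset union of $I,J$ equals that of $K,L$ is exactly what is needed for the sink multisets to match, so that a weight-preserving injection from $(K,L)$-configurations into $(I,L)$-configurations... sorry, into $(I,J)$-configurations can even be contemplated.

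The key step is the construction of this injection. Given a pair of disjoint path families with combined sink multiset fixed, one looks at the ``interleaving pattern'' of the two families along the strip: at each horizontal position the two families occupy certain heights, and a local swap (of the sort used to prove monotonicity of Schur-type expressions) moves endpoints between the two copies of $N$. The obstruction to pushing an endpoint from one copy to the other in a weight-preserving way is precisely a discrepancy in how many endpoints of $I\setminus J$ versus $J\setminus I$ lie in an initial or final segment — this is where $r(I,J;a,b)$ enters. The condition $r(I,J;a,b)\le r(K,L;a,b)$ for all intervals $[a,b]$ guarantees that every local swap needed to transform a $(K,L)$-configuration into an $(I,J)$-configuration is admissible, i.e. the ``room'' to perform the swap (measured by the slack between the two $r$-values) is always nonnegative. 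Conversely, if $r(I,J;a,b) > r(K,L;a,b)$ for some interval, one exhibits an explicit totally positive point (built, e.g., from the constructions of Section~\ref{sec:k=2} for $k=2$ and extended, or directly from a specific network) where $\Delta_I\Delta_J < \Delta_K\Delta_L$, and similarly if the multiset unions differ one scales a single column to break the inequality. This reverse direction is comparatively easy once the first is in hand.

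I expect the main obstacle to be the bookkeeping in the path-swapping bijection: formulating the correct notion of an admissible local move, showing the moves can be ordered (a confluence / diamond-lemma type argument) so that the composite map is well-defined and injective, and verifying that the slack $r(K,L;a,b)-r(I,J;a,b)$ is exactly consumed in the right amount at each step. A cleaner alternative, which I would pursue in parallel, is to cite Skandera's original argument \cite{Ska} for the hard direction and only reprove the portion actually needed for Theorems~\ref{thm:sorted} and~\ref{thm:weakly_separated_123}: namely that if $I,J$ is \emph{not} sorted then there exist $K,L$ with $r(K,L;a,b)\le r(I,J;a,b)$ everywhere and strict inequality somewhere, forcing $\Delta_I\Delta_J > \Delta_K\Delta_L$, so that $I,J$ cannot be in an arrangement of largest minors; and the analogous specialization for weak separation. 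That restricted statement needs only the ``$\Rightarrow$'' half of the $r$-criterion, which follows from iterating the three-term Plücker relation rather than the full LGV machinery, and is enough for the applications in this paper.
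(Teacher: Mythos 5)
First, a point of calibration: the paper does not actually prove this statement. It is quoted as Skandera's theorem, and the only content the paper adds is the remark that the formulation in terms of Pl\"ucker coordinates on $Gr^{\geq}(k,n)$ is equivalent, via the embedding $\phi:\Mat(k,n-k)\to Gr(k,n)$ of Section~\ref{sec:from_Mat_to_Gr}, to \cite[Theorem 4.2]{Ska} (the translation being left as an exercise). So your fallback option --- cite \cite{Ska} for the hard direction --- is exactly what the paper does, and if you take that route the only thing you must supply is the dictionary between minors of arbitrary size of a totally nonnegative matrix and maximal minors of $\phi(A)$, including checking that the interval statistic $r(I,J;a,b)$ matches Skandera's condition under this relabelling.

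Your primary route, however, has a genuine gap: the weight-preserving injection from $(K,L)$-path-configurations to $(I,J)$-path-configurations is not constructed, and that injection \emph{is} the theorem. You correctly identify that the multiset condition makes the sink sets comparable and that the obstruction to moving endpoints between the two families is an interval-counting discrepancy, but the assertion that the condition $r(I,J;a,b)\le r(K,L;a,b)$ for all intervals guarantees a well-defined, injective composite of local swaps is precisely the hard combinatorial content of Skandera's argument (which in \cite{Ska} goes through a careful analysis of crossings of path families, closely tied to Temperley--Lieb immanants as in \cite{RS}); naming the needed confluence/diamond-lemma bookkeeping does not supply it. The converse direction is also only gestured at: you would need, for each interval witnessing $r(I,J;a,b)>r(K,L;a,b)$, an explicit totally nonnegative point with $\Delta_I\Delta_J<\Delta_K\Delta_L$, and no such family of examples is produced. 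Finally, be careful with the claim that the portion needed for Theorems~\ref{thm:sorted} and~\ref{thm:weakly_separated_123} ``follows from iterating the three-term Pl\"ucker relation'': the paper uses the 3-term relation only when $|I\cap J|\ge k-1$ (e.g.\ the $k=2$ case and the $|I\cap J|=1$ subcase for $k=3$); for disjoint pairs such as $I=\{1,3,5\}$, $J=\{2,4,6\}$ it invokes the Skandera inequality $\Delta_{135}\Delta_{246}>\Delta_{123}\Delta_{456}$ directly, and it is not at all clear that a naive iteration of 3-term relations yields the sorted-dominance inequality of Corollary~\ref{cor:Skandera} in general --- that dominance is essentially the theorem you are trying to avoid proving. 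So either carry out the path-swapping construction in full, or drop the sketch and cite \cite{Ska} outright, as the paper does, supplying only the equivalence check.
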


\begin{remark}
Skandera's result \cite{Ska} is given in terms of minors (of arbitrary sizes) of
totally nonnegative matrices.  Here we reformulated this result in terms of
Pl\"ucker coordinates (i.e., {\it maximal\/} minors) on the nonnegative
Grassmannian using the map $\phi:\Mat(k,n-k)\to Gr(k,n)$ from
Section~\ref{sec:from_Mat_to_Gr}.  We also used a different notation to express
the condition for  the sets $I,J,K,L$.
We leave it as an exercise for the reader to check that above Theorem~\ref{thm:skandera}
is equivalent to \cite[Theorem 4.2]{Ska}.
\end{remark}

Roughly speaking, this theorem says that the product
of minors $\Delta_I \, \Delta_J$ should be ``large'' if the pair $I, J$  is ``close'' to being sorted;
and the product should be ``small'' if the pair $I, J$  is ``far'' from being sorted.


Actually, we need a similar result with {\it strict\/} inequalities.  It also
follows from results of Skandera's work \cite{Ska}.

\begin{theorem} {\rm cf.\ \cite{Ska}} \
Let $I,J,K,L\in{[n]\choose k}$ be subsets such that $\{I,J\}\ne\{K,L\}$.
The products of the Pl\"ucker coordinates satisfy the strict inequality
$$
\Delta_I \, \Delta_J  > \Delta_K\,\Delta_L
$$
for all points of the positive Grassmannian $Gr^{+}(k,n)$,
if and only if the multiset union of $I$ and $J$ equals to the multiset union of $K$ and $L$; and,
for any interval $[a,b]\subset [n]$, we have
$$
r(I,J;a,b)\leq r(K,L;a,b).
$$
\label{thm:Skandera_strict}
\end{theorem}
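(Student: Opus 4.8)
The plan is to obtain Theorem~\ref{thm:Skandera_strict} from the non-strict Theorem~\ref{thm:skandera} together with two further observations: that $Gr^{+}(k,n)$ is dense in $Gr^{\geq}(k,n)$, and that Skandera's proof of Theorem~\ref{thm:skandera} actually certifies the inequality $\Delta_I\Delta_J\geq\Delta_K\Delta_L$ by a \emph{subtraction-free} expansion of the difference $\Delta_I\Delta_J-\Delta_K\Delta_L$, rather than merely establishing its nonnegativity. The first observation handles the "only if" direction; the second gives the "if" direction.

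For the "only if" direction, suppose $\Delta_I\Delta_J>\Delta_K\Delta_L$ holds at every point of $Gr^{+}(k,n)$. Both sides are homogeneous of degree $2$ in the Pl\"ucker coordinates, so the comparison is well defined projectively. Since $Gr^{\geq}(k,n)$ is the closure of $Gr^{+}(k,n)$, passing to the limit yields the weak inequality $\Delta_I\Delta_J\geq\Delta_K\Delta_L$ throughout $Gr^{\geq}(k,n)$, and Theorem~\ref{thm:skandera} then forces the multiset union of $I$ and $J$ to equal that of $K$ and $L$, and $r(I,J;a,b)\leq r(K,L;a,b)$ for every interval $[a,b]\subset[n]$. (We are already given $\{I,J\}\neq\{K,L\}$, so there is nothing more to check.)

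For the "if" direction, assume the multiset and $r$-conditions hold and $\{I,J\}\neq\{K,L\}$. I would reexamine Skandera's argument for Theorem~\ref{thm:skandera}: under precisely these hypotheses it writes the difference $\Delta_I\Delta_J-\Delta_K\Delta_L$ — equivalently, via the embedding $\phi:\Mat(k,n-k)\to Gr(k,n)$ of Section~\ref{sec:from_Mat_to_Gr}, the corresponding difference of products of minors of a totally nonnegative matrix — as a sum, with nonnegative integer coefficients, of products $\Delta_A\Delta_B$ of Pl\"ucker coordinates. (This is typically produced by induction along a saturated chain $\{K,L\}=\{I_0,J_0\}\gtrdot\{I_1,J_1\}\gtrdot\cdots\gtrdot\{I_m,J_m\}=\{I,J\}$ in the partial order on pairs with fixed multiset union given by the interval statistics $r(\,\cdot\,;a,b)$, each covering step governed by a generalized "incidence" Pl\"ucker relation; it could also be a single global expansion.) The crucial point is that this sum is \emph{nonempty}: since $\{I,J\}\neq\{K,L\}$, the difference $\Delta_I\Delta_J-\Delta_K\Delta_L$ is not the zero function on $Gr(k,n)$ (distinct products of pairs of Pl\"ucker coordinates are distinct regular functions, e.g.\ by unique factorization in the Pl\"ucker coordinate ring, in which each $\Delta_M$ is prime). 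On $Gr^{+}(k,n)$ every Pl\"ucker coordinate is strictly positive, so a nonempty nonnegative combination of products of Pl\"ucker coordinates is strictly positive there; hence $\Delta_I\Delta_J>\Delta_K\Delta_L$ on $Gr^{+}(k,n)$. (Equivalently, in the chain version, $\Delta_I\Delta_J\geq\cdots\geq\Delta_K\Delta_L$ with at least one inequality strict on the positive part.)

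The main obstacle is extracting Skandera's result in exactly this form. His statement is phrased for minors of arbitrary size of totally nonnegative matrices, so one must transport it through $\phi$ and verify that his combinatorial hypothesis matches the conditions "multiset union of $I,J$ equals that of $K,L$" and "$r(I,J;a,b)\leq r(K,L;a,b)$ for all intervals $[a,b]$" — the translation flagged as an exercise after Theorem~\ref{thm:skandera}. The substantive point to isolate from his proof is that the weak inequality is established \emph{constructively and subtraction-freely} (as a manifestly nonnegative combination of products of minors), not merely via a sign-reversing involution; granting that, strictness on the positive Grassmannian is essentially automatic from positivity of all Pl\"ucker coordinates there together with nonvanishing of the difference.
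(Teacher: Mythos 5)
Your ``only if'' direction is exactly the paper's: pass to the closure $Gr^{\geq}(k,n)$ and invoke Theorem~\ref{thm:skandera}; and your remark that $\Delta_I\,\Delta_J-\Delta_K\,\Delta_L$ cannot vanish identically when $\{I,J\}\ne\{K,L\}$ (via factoriality of the Pl\"ucker ring, so that distinct degree-two monomials are distinct functions) is a legitimate justification of a step the paper only asserts. The gap is in how you certify strictness in the ``if'' direction: you assume that Skandera's argument yields a subtraction-free expansion of $\Delta_I\,\Delta_J-\Delta_K\,\Delta_L$ as a nonnegative integer combination of products $\Delta_A\,\Delta_B$ of Pl\"ucker coordinates. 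That is not what \cite{Ska} provides, and it is a substantially stronger claim than the weak inequality: Skandera's positivity comes from a Lindstr\"om-type planar-network interpretation (and, in the related Rhoades--Skandera work, from Temperley--Lieb immanants, which are not products of minors), not from a positive expansion in degree-two Pl\"ucker monomials; you flag this extraction yourself as ``the main obstacle,'' but without it your argument does not close.

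The paper's proof uses the weaker statement that \cite[Corollary 3.3]{Ska} actually supplies. Represent the point of $Gr^{+}(k,n)$ by a totally positive matrix arising from an acyclic planar network with strictly positive edge weights; then $\Delta_I\,\Delta_J-\Delta_K\,\Delta_L$ equals a weighted sum over \emph{certain} families of directed paths in the network, and each such family has strictly positive weight. Consequently, if the difference vanished at a single point of $Gr^{+}(k,n)$, there would be no admissible path families at all, so the difference would vanish identically on $Gr^{+}(k,n)$ --- which, by your own nonvanishing observation, forces $\{I,J\}=\{K,L\}$, a contradiction. So your skeleton (weak inequality from Theorem~\ref{thm:skandera}, plus a manifestly positive certificate, plus nonvanishing when $\{I,J\}\ne\{K,L\}$) matches the paper, but the certificate to extract from \cite{Ska} is the path-family interpretation of the difference, not a subtraction-free Pl\"ucker expansion, whose existence you have not established.
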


\begin{proof} In one direction ($\Rightarrow$), the result directly follows from
Theorem~\ref{thm:skandera}.  Indeed, the nonnegative Grassmannian $Gr^{\geq} (k,n)$ is the
closure of the positive Grassmannian $Gr^+(k,n)$.  This implies that, if $\Delta_I \, \Delta_J  > \Delta_K\,\Delta_L$
on $Gr^{+}(k,n)$, then $\Delta_I \, \Delta_J  \geq \Delta_K\,\Delta_L$ on $Gr^{\geq}(k,n)$.

Let us show how to prove the other direction ($\Leftarrow$) using results of \cite{Ska}.
Every totally positive (nonnegative) matrix $A=(a_{ij})$ can be obtained from an acyclic directed planar
network with positive (nonnegative) edge weights, cf.\ \cite{Ska}.   The matrix entries $a_{ij}$ are sums
of products of edge weights over directed paths from the $i$th source to the $j$th sink of a network.

Theorem~\ref{thm:skandera} implies the weak inequality $\Delta_I \, \Delta_J  \geq  \Delta_K\,\Delta_L$.
Moreover, \cite[Corollary 3.3]{Ska} gives a combinatorial interpretation of the difference
$\Delta_I \, \Delta_J  - \Delta_K\,\Delta_L$ as a weighted sum over {\it certain} families of
directed paths in a network.

In case of totally {\it positive} matrices, all edge weights, as well as weights of all families of paths,
are strictly positive.   It follows that, if $\Delta_I \, \Delta_J  - \Delta_K\,\Delta_L = 0$, for
{\it some\/} point of $Gr^+(k,n)$,  then there are no families of paths satisfying the condition of
\cite[Corollary 3.3]{Ska}, and thus $\Delta_I \, \Delta_J  - \Delta_K\,\Delta_L = 0$, for
{\it all\/} points of $Gr^+(k,n)$.

However, the only case when we have the equality
$\Delta_I \, \Delta_J = \Delta_K\,\Delta_L$ for all points of $Gr^+(k,n)$
is when $\{I,J\} = \{K,L\}$.
\end{proof}

Theorem~\ref{thm:Skandera_strict} implies the following corollary.

\begin{definition}
For $I,J\in{[n]\choose k}$, define their {\it sorting\/}  $I', J'$ by taking
the multiset union $I\cup J = \{a_1 \leq a_2 \leq \cdots \leq a_{2k}\}$ and
setting $I' = \{a_1,a_3,\dots, a_{2k-1}\}$ and $J' = \{a_2, a_4,\dots,a_{2k}\}$.
\label{def:sorting}
\end{definition}

\begin{corollary}\label{cor:Skandera}
Let $I,J\in{[n]\choose k}$ be a pair which is not sorted,
and let $I',J'$ be the sorting of the pair $I, J$.   Then we have the
strict inequality $ \Delta_{I'} \, \Delta_{J'} > \Delta_I \Delta_J$
for points of the positive Grassmannian $Gr^+(k,n)$.
\end{corollary}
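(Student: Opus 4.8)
The plan is to derive Corollary~\ref{cor:Skandera} as a direct application of Theorem~\ref{thm:Skandera_strict} to the quadruple $(I',J',I,J)$, where $I',J'$ is the sorting of the pair $I,J$ in the sense of Definition~\ref{def:sorting}. So I need to verify the two hypotheses of Theorem~\ref{thm:Skandera_strict} with the roles ``$I,J$'' played by $I',J'$ and ``$K,L$'' played by $I,J$: first, that the multiset union $I'\cup J'$ equals the multiset union $I\cup J$; second, that $r(I',J';a,b)\le r(I,J;a,b)$ for every interval $[a,b]\subset[n]$; and third, that $\{I',J'\}\ne\{I,J\}$, which is where the hypothesis that $I,J$ is \emph{not} sorted gets used.

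The first point is immediate from the definition: writing $I\cup J=\{a_1\le a_2\le\cdots\le a_{2k}\}$, the sorting takes $I'$ to be the odd-indexed $a$'s and $J'$ the even-indexed ones, so $I'\cup J'=\{a_1,\dots,a_{2k}\}=I\cup J$ as multisets. (One should note that, since $I$ and $J$ are genuine $k$-subsets, one must check $I'$ and $J'$ are again honest $k$-element sets, i.e.\ have no repeated elements; this holds because if $a_{2i-1}=a_{2i+1}$ then $a_{2i}$ is squeezed between them and equals both, forcing three equal values in the multiset $I\cup J$, but $I\cup J$ is a union of two \emph{sets} and hence each value occurs with multiplicity at most $2$ — so no element is repeated in $I'$ or in $J'$.) The third point is the content of the remark following Definition~\ref{def:sorting}: by construction the pair $I',J'$ is sorted (between consecutive elements of $I'$ there is exactly one element of $J'$ and vice versa), so if $\{I',J'\}=\{I,J\}$ then $\{I,J\}$ would be sorted, contradicting the hypothesis.

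The main step — though still routine — is the inequality $r(I',J';a,b)\le r(I,J;a,b)$ for all intervals $[a,b]$. The cleanest way I would argue this is: since $I'\cup J'=I\cup J$ as multisets, for any interval $[a,b]$ we have $|(I'\setminus J')\cap[a,b]|+|(J'\setminus I')\cap[a,b]|$ and $|(I\setminus J)\cap[a,b]|+|(J\setminus I)\cap[a,b]|$ may differ, but what matters is the \emph{signed} count. Concretely, let $m=|(I\cup J)\cap[a,b]|$ counted with multiplicity; among these $m$ elements (which are a contiguous block $a_p,a_{p+1},\dots,a_{p+m-1}$ of the sorted list), the sorting $I',J'$ distributes them alternately between $I'$ and $J'$, so $\bigl||(I'\setminus J')\cap[a,b]|-|(J'\setminus I')\cap[a,b]|\bigr|\le 1$, hence $r(I',J';a,b)\le 1$; whereas $r(I,J;a,b)$ is a nonnegative integer, so the inequality is automatic whenever $r(I,J;a,b)\ge 1$. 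The only remaining case is $r(I,J;a,b)=0$, and here I must show $r(I',J';a,b)=0$ as well: $r(I,J;a,b)=0$ means the block $a_p,\dots,a_{p+m-1}$ contains equally many ``$I$-labels'' and ``$J$-labels'', in particular $m$ is even; but an even contiguous block, when split alternately, also yields equal counts for $I'$ and $J'$, so $r(I',J';a,b)=0$ too. This case analysis — noting $r(I',J';\cdot,\cdot)\in\{0,1\}$ always, and handling $r(I,J;a,b)=0$ separately — is the heart of the verification.

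Having checked all three hypotheses, Theorem~\ref{thm:Skandera_strict} applied to $\{I',J'\}$ versus $\{I,J\}$ yields the strict inequality $\Delta_{I'}\,\Delta_{J'}>\Delta_I\,\Delta_J$ on $Gr^+(k,n)$, which is exactly the claim. I do not anticipate a genuine obstacle here — the corollary is essentially a dictionary translation — but the one place to be careful is confirming that the parity/alternation argument for $r(I',J';a,b)\le r(I,J;a,b)$ really covers the boundary interval cases (e.g.\ when $[a,b]$ splits a run of equal values in $I\cup J$), which is why I would phrase the argument in terms of the contiguous block $a_p,\dots,a_{p+m-1}$ of the sorted multiset rather than in terms of $I,J$ directly.
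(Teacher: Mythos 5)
Your proposal is correct and follows the paper's own route: the paper proves this corollary by applying Theorem~\ref{thm:Skandera_strict} to the pair $\{I',J'\}$ versus $\{I,J\}$, with the one-line justification $r(I',J';a,b)\leq r(I,J;a,b)$. You have simply spelled out the details the paper leaves implicit (the multiset-union check, the alternation/parity argument for the $r$-inequality, and the observation that $\{I',J'\}\ne\{I,J\}$ because a sorting is always sorted), and those details are all verified correctly.
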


\begin{proof}
We have $r(I',J';a,b)\leq r(I,J;a,b)$.
\end{proof}

This result easily implies one direction of Theorem~\ref{thm:sorted}.

\begin{proof}[Proof of Theorem~\ref{thm:sorted} in the $\Rightarrow$ direction.]
We need to show that a pair $I, J$, which is not sorted, cannot belong to an arrangement of largest minors.
If a pair $I,J$, which is not sorted, belongs to an arrangement of largest minors, we have
$\Delta_I = \Delta_J=a$, and the inequality $\Delta_{I'} \, \Delta_{J'} > \Delta_I \Delta_J$ implies that
$\Delta_{I'}$ or $\Delta_{J'}$ is greater than $a$.
\end{proof}

Using a similar argument, we can also prove the same direction of Theorem~\ref{thm:weakly_separated_123}.

\begin{proof}[Proof of Theorem~\ref{thm:weakly_separated_123} in the $\Rightarrow$ direction.]
The Grassmannian $Gr(k,n)$ can be identified with $Gr(n-k,n)$ so that the Pl\"ucker coordinates $\Delta_I$
in $Gr(k,n)$ map to the Pl\"ucker coordinates $\Delta_{[n]\setminus I}$ in $Gr(n-k,n)$.
This duality reduces the cases $k=n-1,n-2,n-3$ to the cases $k=1,2,3$.

The case $k=1$ is trivial.  The case $k=2$ is covered by
Theorem~\ref{thm:non_crossing}.
It remains to prove the claim in the case
$k=3$.

We need to show that a pair $I,J\in {[n]\choose 3}$, which is not weakly separated, cannot belong to an
arrangement of smallest minors in the positive Grassmannian $Gr^+(3,n)$.

If $|I\cap J| \geq 2$, then $I$ and $J$ are weakly separated.   If $|I\cap J|=1$, say $I\cap J=\{e\}$,
then the result follows from the 3-terms Pl\"ucker relation
$$
\Delta_{\{a,c,e\}} \, \Delta_{\{b,d,e\}} = \Delta_{\{a,b,e\}}\,\Delta_{\{c,d,e\}}+
\Delta_{\{a,d,e\}}\,\Delta_{\{b,c,e\}},\quad
\textrm{for } a<b<c<d,
$$
as in the $k=2$ case (Theorem~\ref{thm:non_crossing}).

Thus we can assume that $I\cap J=\emptyset$.  Without loss of generality, we can assume that
$I\cup J=\{1,2,3,4,5,6\}$.  Up to the cyclic symmetry, and up to switching $I$ with $J$, there are only 2 types of
pairs $I, J$ which are not weakly separated:
$$
I=\{1,3,5\}, \ J=\{2,4,6\}\qquad \textrm{and} \qquad I=\{1,2,4\}, \ J=\{3,5,6\}.
$$

In both cases, we have strict Skandera's inequalities (Theorem~\ref{thm:Skandera_strict}):
$$
\begin{array}{l}
\Delta_{\{1,3,5\}}\, \Delta_{\{2,4,6\}} > \Delta_{\{1,2,3\}}\, \Delta_{\{4,5,6\}}\\[.1in]
\Delta_{\{1,2,4\}}\, \Delta_{\{3,5,6\}} > \Delta_{\{1,2,3\}}\, \Delta_{\{4,5,6\}}.
\end{array}
$$

This shows that, if $\Delta_I = \Delta_J=a$, then there exists $\Delta_K<a$.
Thus a pair $I,J$, which is not weakly separated, cannot belong to an arrangement of smallest minors.
\end{proof}

\section{Cluster algebra on the Grassmannian}
\label{sec:cluster}

In this section we prove Theorem~\ref{thm:WSeparated}
using cluster algebras.

\medskip

The following statement follows from results of \cite{OPS, Pos2}.

\begin{theorem}
Any maximal weakly separated subset $S\subset {[n]\choose k}$ corresponds to $k(n-k)+1$
algebraically independent Pl\"ucker coordinates $\Delta_I$, $I\in S$.
Any other Pl\"ucker coordinate $\Delta_J$ can be uniquely expressed in terms of
the $\Delta_I$, $I\in S$, by a subtraction-free rational expression.
\label{thm:cluster}
\end{theorem}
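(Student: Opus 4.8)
The plan is to invoke the dictionary between maximal weakly separated collections and reduced plabic graphs established in \cite{OPS}, together with the description of the cluster algebra structure on the homogeneous coordinate ring of $Gr(k,n)$ due to Scott \cite{Sco} (itself built on \cite{FZ1}). First I would recall that, by \cite{OPS}, a maximal weakly separated subset $S\subset{[n]\choose k}$ is precisely the set of face labels of a reduced plabic graph $G$ of type $(k,n)$, and that such a graph gives a seed in the Grassmannian cluster algebra whose cluster variables are exactly the Pl\"ucker coordinates $\{\Delta_I : I\in S\}$. Since the number of faces of a reduced plabic graph for $Gr(k,n)$ is $k(n-k)+1$, and the coordinate ring of the Grassmannian (as a cluster algebra) has this rank, the $\Delta_I$, $I\in S$, are algebraically independent: they form a transcendence basis of the fraction field, because a cluster in a cluster algebra of rank $r$ always consists of $r$ algebraically independent elements (otherwise no Laurent expansion could hold). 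This disposes of the first assertion.

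For the second assertion, fix any $J\in{[n]\choose k}$ with $J\notin S$. The Pl\"ucker coordinate $\Delta_J$ is itself a cluster variable in the Grassmannian cluster algebra (every Pl\"ucker coordinate appears in some seed — again by \cite{Sco}, or by repeatedly applying three-term Pl\"ucker relations / mutations starting from $S$). By the \emph{Laurent phenomenon} of Fomin--Zelevinsky \cite{FZ1}, $\Delta_J$ is a Laurent polynomial in the cluster variables $\Delta_I$, $I\in S$, with coefficients in $\Z$. By the positivity theorem of Lee--Schiffler \cite{LS} (to which the paper already appeals), these Laurent polynomials have nonnegative integer coefficients. In particular $\Delta_J$ is expressed as a ratio of two polynomials in the $\Delta_I$, $I\in S$, each with nonnegative coefficients; that is, by a subtraction-free rational expression. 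Uniqueness is immediate from algebraic independence: two rational expressions in algebraically independent elements that agree as functions must be identical as rational functions.

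The main obstacle, and the only point requiring care, is the passage between the combinatorial input (maximal weakly separated sets) and the algebraic input (seeds of the Grassmannian cluster algebra): one must cite precisely that maximal weakly separated collections coincide with the cluster-variable sets of the seeds attached to reduced plabic graphs, which is the content of \cite{OPS} combined with \cite{Pos2} and \cite{Sco}. Once that identification is in hand, algebraic independence is formal, the Laurent phenomenon is \cite{FZ1}, and subtraction-freeness is \cite{LS}; I would simply assemble these. A small point worth stating explicitly: since we work in $Gr^+(k,n)$ all the $\Delta_I$ are positive, so the subtraction-free expression is genuinely well-defined (no division by zero), which is exactly what will be used in Section~\ref{sec:cluster} to prove Theorem~\ref{thm:WSeparated}.
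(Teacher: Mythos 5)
Your proposal is correct, and it rests on the same combinatorial backbone as the paper's proof: maximal weakly separated collections are the face-label sets of reduced plabic graphs \cite{OPS}, all such graphs for the permutation $i\mapsto i+k$ are connected by square moves \cite{Pos2}, and every $J\in{[n]\choose k}$ occurs as a face label of some such graph. Where you diverge is in how subtraction-freeness is obtained. The paper never invokes the Laurent phenomenon or Lee--Schiffler positivity for this theorem: it observes that each square move replaces $\Delta_{\{a,c\}\cup R}$ by $\bigl(\Delta_{\{a,b\}\cup R}\Delta_{\{c,d\}\cup R}+\Delta_{\{a,d\}\cup R}\Delta_{\{b,c\}\cup R}\bigr)/\Delta_{\{a,c\}\cup R}$, which is already subtraction-free, and a composition of subtraction-free substitutions along a chain of moves from $S$ to a graph containing $J$ is again subtraction-free; positivity of Laurent coefficients is deferred to the next statement (Theorem~\ref{thm:cluster_Laurent}), which is exactly where \cite{FZ1} and \cite{LS} enter. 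Your route instead proves the stronger Laurent-positivity statement first and reads off subtraction-freeness from it; this is valid (the Grassmannian cluster algebra is skew-symmetric and $\Delta_J$ is a cluster variable, as the paper itself uses later), but it imports heavier machinery than the theorem needs, whereas the paper's argument is elementary given move-connectivity. For algebraic independence you cite the general fact that a cluster is a transcendence basis (via Scott's cluster structure), while the paper gives a short self-contained dimension count: all Pl\"ucker coordinates are rational in the $\Delta_I$, $I\in S$, so a relation among these $k(n-k)+1$ projective coordinates would force $\dim Gr(k,n)<k(n-k)$. Both are fine; your uniqueness remark (rational expressions in algebraically independent elements are unique) correctly supplies a point the paper leaves implicit.
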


In the following proof we use plabic graphs from \cite{Pos2}.  See
Section~\ref{sec:not_weakly_separated} below for more details on plabic graphs.

\begin{proof}
In \cite{OPS}, maximal weakly subsets of ${[n]\choose k}$ were identified with labels
of faces of reduced plabic graphs for the top cell of $Gr^+(k,n)$.
(This labelling of faces is described in Section~\ref{sec:not_weakly_separated} of the current
paper in the paragraph after Definition~\ref{def:decorated_strand_perm}.)

According to \cite{Pos2}, all reduced plabic graphs for top cell can be obtained from each other
by a sequence of {\it square moves,} that correspond to {\it mutations\/} of weakly separated sets.

A mutation has the following form.  For $1\leq a<b<c<d\leq n$ and $R\in {[n]\choose k-2}$ such
that $\{a,b,c,d\}\cap R=\emptyset$, if a maximal weakly separated set $S$ contains
$\{a,b\}\cup R$,
$\{b,c\}\cup R$,
$\{c,d\}\cup R$,
$\{a,d\}\cup R$, and
$\{a,c\}\cup R$, then we can replace $\{a,c\}\cup R$ in $S$ by $\{b,d\}\cup R$.
In terms of the Pl\"ucker coordinates $\Delta_I$, $I\in S$, a mutation means that we replace
$\Delta_{\{a,c\}\cup R}$ by
$$
\Delta_{\{b,d\}\cup R} = {
\Delta_{\{a,b\}\cup R}\, \Delta_{\{c,d\}\cup R} +
\Delta_{\{a,d\}\cup R}\, \Delta_{\{b,c\}\cup R}
\over \Delta_{\{a,c\}\cup R}}.
$$

Since any $J\in{[n]\choose k}$ appears as a face label of some plabic graph for the top cell,
it follows that any Pl\"ucker coordinate $\Delta_J$  can be expressed in terms
the $\Delta_I$, $I\in S$, by a sequence of rational subtraction-free transformation of this form.

The fact that the $\Delta_I$, $I\in S$, are algebraically independent follows from dimension consideration.
Indeed, we have $|S| = k(n-k)+1$, and all Pl\"ucker coordinates (which are projective coordinates on the Grassmannian
$Gr(k,n)$)
can be expressed in terms of the $\Delta_I$, $I\in S$.   If there was an algebraic relation among the $\Delta_I$, $I\in S$,
it would imply that $\dim Gr(k,n)< k(n-k)$.   However, $\dim Gr(k,n) = k(n-k)$.
\end{proof}

This construction fits in the general framework of Fomin-Zelevinsky's {\it
cluster algebras} \cite{FZ1}.
For a maximal weakly separated set $S\subset {[n]\choose k}$,
the Pl\"ucker coordinates $\Delta_I$, $I\in S$,
form an initial seed of the cluster algebra associated with the Grassmannian.
It is the cluster algebra whose quiver is the dual graph of the
plabic graph associated with $S$.
This cluster algebra was studied by Scott \cite{Sc}.


According to the general theory of cluster algebras, the subtraction-free
expressions mentioned in Theorem~\ref{thm:cluster} are actually Laurent
polynomials, see \cite{FZ1}.  This property is called the {\it Laurent phenomenon}.
In \cite{FZ1}, Fomin and Zelevinsky
conjectured that these Laurent polynomials have positive integer coefficients.
This conjecture was recently proven by Lee and Schiffler in \cite{LS}, for
skew-symmetric cluster algebras.  Note that the cluster algebra associated with the Grassmannian
$Gr(k,n)$ is skew-symmetric.

The Laurent phenomenon and the result of Lee-Schiffler \cite{LS} imply the following claim.

\begin{theorem}
The rational expressions from Theorem~\ref{thm:cluster} that express the $\Delta_J$ in terms
of the $\Delta_I$, $I\in S$, are Laurent polynomials with nonnegative integer coefficients
that contain at least 2 terms.
\label{thm:cluster_Laurent}
\end{theorem}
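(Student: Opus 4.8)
The plan is to combine three ingredients that are already in place: the cluster structure on the Grassmannian recorded in Theorem~\ref{thm:cluster} (and \cite{Sc, OPS, Pos2}), the Laurent phenomenon of Fomin--Zelevinsky \cite{FZ1}, and the positivity theorem of Lee--Schiffler \cite{LS}. First I would set up the cluster-algebraic language precisely: by \cite{OPS} the maximal weakly separated set $S$ is the collection of face labels of a reduced plabic graph for the top cell of $Gr^+(k,n)$, and the Pl\"ucker coordinates $\Delta_I$, $I\in S$, form the cluster of an initial seed of the cluster algebra $\mathcal{A}$ attached to $Gr(k,n)$ (Scott \cite{Sc}); its exchange matrix is the signed adjacency matrix of the quiver dual to that plabic graph, so $\mathcal{A}$ is skew-symmetric. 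Moreover, every $\Delta_J$ with $J\in{[n]\choose k}$ occurs as a face label of some reduced plabic graph for the top cell, hence is a cluster variable of $\mathcal{A}$, reachable from the seed $S$ by a chain of square moves (mutations) --- this is exactly what the proof of Theorem~\ref{thm:cluster} already shows.

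Given this, the Laurent phenomenon \cite{FZ1} yields at once that each $\Delta_J$ is a Laurent polynomial in $\{\Delta_I\}_{I\in S}$ with integer coefficients, and the Lee--Schiffler theorem \cite{LS}, applicable because $\mathcal{A}$ is skew-symmetric, upgrades these to \emph{nonnegative} integer coefficients. I would remark here that nonnegativity does \emph{not} follow formally from the subtraction-free expressions of Theorem~\ref{thm:cluster} together with the Laurent phenomenon, since a subtraction-free rational function can be a Laurent polynomial with a negative coefficient (e.g.\ $(x^3+1)/(x(x+1)) = x - 1 + x^{-1}$); so the appeal to \cite{LS} is essential.

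The remaining point --- that the expansion has at least two monomials --- is, given nonnegativity, equivalent to the statement that $\Delta_J$ is not a Laurent monomial $\prod_{I\in S}\Delta_I^{c_I}$. Suppose it were. Comparing degrees in the grading of the homogeneous coordinate ring $R$ of $Gr(k,n)$ in which each Pl\"ucker coordinate has degree $1$ forces $\sum_{I\in S}c_I=1$; if every $c_I\ge 0$ this means $\Delta_J=\Delta_I$ for a single $I\in S$, contradicting $J\notin S$. Hence some $c_I<0$, and clearing denominators gives an identity $\Delta_J\cdot\prod_{c_I<0}\Delta_I^{-c_I}=\prod_{c_I>0}\Delta_I^{c_I}$ in $R$. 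Now $\{\Delta_J=0\}$ is an irreducible divisor of $Gr(k,n)$ --- it is the special Schubert variety $\{W : \dim(W\cap\langle e_i : i\notin J\rangle)\ge 1\}$ --- and for $I\neq J$ the divisor $\{\Delta_I=0\}$, being an irreducible divisor distinct from $\{\Delta_J=0\}$ (distinct nondegenerate hyperplane sections), cannot contain it; so at a generic point $p$ of $\{\Delta_J=0\}$ one has $\Delta_I(p)\neq 0$ for all $I\neq J$. Evaluating the identity at a cone point over $p$ gives $0$ on the left and a nonzero product of the $\Delta_I(p)$, $I\in S$, on the right, a contradiction. Therefore $\Delta_J$ is not a monomial, and the proof is complete.

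I expect the only step requiring genuine argument to be this last one, ruling out monomial expansions; the Laurent property and positivity are direct invocations of \cite{FZ1} and \cite{LS}. As an alternative to the Schubert-divisor argument, one can instead cite the general cluster-algebra fact (via $F$-polynomials, or nonvanishing of the denominator vector) that a cluster variable not belonging to a given cluster is never a Laurent monomial in that cluster; I would include whichever version reads more cleanly.
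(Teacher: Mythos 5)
Your proposal follows the same route as the paper: identify $\{\Delta_I\}_{I\in S}$ with an initial cluster of the (skew-symmetric) Grassmannian cluster algebra via plabic graphs, then invoke the Laurent phenomenon of \cite{FZ1} together with Lee--Schiffler positivity \cite{LS} to get Laurent polynomials with nonnegative integer coefficients; the paper simply asserts the theorem as an immediate consequence of these two results. The one place where you go beyond the paper is the ``at least two terms'' claim, which the paper leaves unjustified: your argument for it is correct --- homogeneity of degree one forces a would-be monomial expression to be either a single $\Delta_I$ (impossible since the Pl\"ucker coordinates are distinct, indeed linearly independent, and $J\notin S$) or to involve a negative exponent, which is ruled out by evaluating at a generic point of the irreducible Schubert divisor $\{\Delta_J=0\}$, where every $\Delta_I$ with $I\in S$ is nonzero (one could also argue both cases at once this way, or cite the standard denominator-vector fact that a non-initial cluster variable is never a Laurent monomial in a given cluster). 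Your side remark that positivity does not follow formally from subtraction-freeness plus Laurentness is also well taken and explains why the appeal to \cite{LS} is genuinely needed, exactly as in the paper.
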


Theorem~\ref{thm:cluster} implies that any maximal weakly separated subset $S$ uniquely defines a point $A_S$
in the positive Grassmannian $Gr^+(k,n)$ such that the Pl\"ucker coordinates $\Delta_I$, for all $I\in S$,
are equal to each other. Moreover, Theorem~\ref{thm:cluster_Laurent} implies that
all other Pl\"ucker coordinates $\Delta_J$ are strictly greater than the $\Delta_I$, for
$I\in S$.    This proves Theorem~\ref{thm:WSeparated}, (and hence the other
direction ($\Leftarrow$) of Theorem~\ref{thm:weakly_separated_123}).

We can now reformulate Conjecture~\ref{conj:max_smallest} as follows.

\begin{conjecture}
Any point in $Gr^+(k,n)$ with a maximal (by size) arrangement of smallest equal minors has the form $A_S$,
for some maximal weakly separated subset $S\subset {[n]\choose k}$.
\end{conjecture}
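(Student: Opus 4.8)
The plan is to split the statement into the two assertions of Conjecture~\ref{conj:max_smallest}: \textbf{(i)} every arrangement of smallest minors $\mathcal{J}$ in $Gr^+(k,n)$ satisfies $|\mathcal{J}|\le k(n-k)+1$, and \textbf{(ii)} if $|\mathcal{J}|=k(n-k)+1$ then $\mathcal{J}$ is weakly separated. Granting (i) and (ii), the conjecture follows immediately: by the purity theorem of \cite{OPS,DKK} a weakly separated set of size $k(n-k)+1$ is a maximal weakly separated set $S$, and by Theorem~\ref{thm:cluster} the coordinates $\Delta_I$ ($I\in S$) are algebraically independent and determine every other Pl\"ucker coordinate, so the locus in $Gr^+(k,n)$ on which all $\Delta_I$ ($I\in S$) take a common value consists of a single point; by Theorem~\ref{thm:WSeparated} and Theorem~\ref{thm:cluster_Laurent} that point is exactly $A_S$. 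Hence the whole problem is (i) and (ii). Note that one cannot reduce (i) to a statement about pairs of minors: arrangements of smallest minors are not closed under taking subsets, already for $k=2$ by Theorem~\ref{thm:non_crossing}.

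For (i), the idea is to prove the bound by an induction paralleling the plabic-graph proof of purity, while carrying along the strict minor inequalities. Fix a point $x$ realizing $\mathcal{J}$ and normalize the common smallest value to $1$, so $\Delta_I(x)=1$ for $I\in\mathcal{J}$ and $\Delta_J(x)>1$ otherwise. Local constraints come from the 3-term Pl\"ucker relations and, more generally, from Skandera's strict inequalities: for instance, whenever $\{a,b\}\cup R,\{b,c\}\cup R,\{c,d\}\cup R,\{a,d\}\cup R\in\mathcal{J}$ (with $a<b<c<d$ and $R\in{[n]\choose k-2}$ disjoint from them), the relation $\Delta_{\{a,c\}\cup R}\,\Delta_{\{b,d\}\cup R}=2$ forces at least one of $\{a,c\}\cup R,\{b,d\}\cup R$ out of $\mathcal{J}$; Corollary~\ref{cor:Skandera} and Theorem~\ref{thm:Skandera_strict} rule out further local patterns. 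One would then try to promote these local restrictions to the global bound by induction on $n$, using the standard deletion/contraction operations on $Gr^+(k,n)$ and the cyclic symmetry, along the lines of \cite{OPS}, but transporting the inequality data at $x$ along these operations. The paper executes this for $k\le 5$ (and dually for $k\ge n-5$) through an analysis of chain reactions of square moves in honeycomb plabic graphs. A second possible route is tropical: an arrangement of smallest minors should be the set of coordinates on which a positive tropical Pl\"ucker vector is extremal, reducing (i) to bounding the dimension of a face of the positive tropical Grassmannian (the positive Dressian), where a purity statement may be within reach.

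For (ii), the idea is to invoke the classification of non-weakly-separated pairs of equal smallest minors, Conjecture~\ref{conjecture:minimal_minors} (proved in the paper for $k\le 5$). Suppose $|\mathcal{J}|=k(n-k)+1$ but some pair $I,J\in\mathcal{J}$ is not weakly separated. By Conjecture~\ref{conjecture:minimal_minors}, such a pair forces $\mathcal{J}$ to contain a specific octahedron-type subconfiguration of $k$-sets; the goal is to show this is incompatible with $|\mathcal{J}|$ being maximal. The natural mechanism is the square-pyramid / octahedron--tetrahedron picture of Section~\ref{sec:not_weakly_separated}: the subconfiguration carries a square pyramid, and applying an octahedron move produces a nearby point of $Gr^+(k,n)$ at which a strictly larger family of minors is tied for smallest, contradicting (i). Equivalently, one argues that the equalities imposed by the octahedron subconfiguration, on top of the $k(n-k)+1$ equalities defining $\mathcal{J}$, overdetermine the realizing point unless some of the sets involved coincide, which contradicts non-separation.

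The main obstacle is part (i) for $k\ge 6$ (equivalently $6\le k\le n-6$). Purity for weakly separated sets is a combinatorial fact accessible through plabic graphs, whereas here the constraints are algebraic inequalities at one positive point, and there is no known way to package the chain reactions of Section~\ref{sec:not_weakly_separated} into a size bound; moreover part (ii) presently depends on Conjecture~\ref{conjecture:minimal_minors}, which is itself only established for $k\le 5$. A proof in full generality would likely require either the tropical reformulation above or a genuinely new structural understanding of how non-weakly-separated arrangements of smallest minors can be enlarged.
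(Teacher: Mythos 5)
The statement you are addressing is not proved in the paper: it is the unnumbered conjecture at the end of Section~\ref{sec:cluster}, which the authors explicitly present as a reformulation of Conjecture~\ref{conj:max_smallest}, and it remains open. Your opening reduction is correct and is precisely the paper's own observation: granting the size bound (i) and the statement (ii) that a maximal-size arrangement is weakly separated, purity together with Theorem~\ref{thm:cluster} (which gives the uniqueness of the point at which all $\Delta_I$, $I\in S$, coincide) identifies the realizing point with $A_S$. So your proposal correctly isolates what has to be proven, but it does not prove it: everything after the reduction is a strategy sketch whose decisive steps you yourself flag as open --- (i) for $k\ge 6$, and (ii) resting on Conjecture~\ref{conjecture:minimal_minors}, which the paper establishes only for $k\le 5$. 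There is no proof in the paper to match your plan against, and your plan does not close the gap.

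Beyond the admitted gaps, two specific assertions would fail as written. First, the side remark that arrangements of smallest minors ``are not closed under taking subsets, already for $k=2$'' is false: by Theorem~\ref{thm:non_crossing} the $k=2$ arrangements are exactly the nonempty graphs with pairwise non-crossing (or vertex-sharing) edges, a property inherited by every nonempty subgraph, and similarly for $k\le 3$ by Theorem~\ref{thm:weakly_separated_123}. Second, in your route to (ii) you claim that Conjecture~\ref{conjecture:minimal_minors} forces a non-weakly-separated pair $I,J\in\mathcal{J}$ to drag a specific octahedron-type subconfiguration into $\mathcal{J}$; the conjecture asserts nothing of the sort --- it only characterizes which pairs can occur in \emph{some} arrangement of smallest minors and gives no information about what else the ambient arrangement must contain (the explicit $Gr^+(4,8)$ example in Section~\ref{sec:not_weakly_separated} is built by a specific honeycomb construction, not deduced from the pair alone). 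Likewise, the octahedron/tetrahedron moves are moves between plabic graphs, i.e.\ between maximal weakly separated collections (equivalently, surfaces in $\R^3$); they are not deformations of points of $Gr^+(k,n)$, and nothing in the paper supports the claim that such a move produces ``a nearby point at which a strictly larger family of minors is tied for smallest.'' Hence the contradiction mechanism proposed for (ii) has no foundation, and the proposal should be read as a partially flawed research plan for a statement that, in the paper, is and remains a conjecture.
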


\section{Constructions of matrices for arrangements of largest minors}
\label{sec:construction_largest}

In this section, we prove the other direction ($\Leftarrow$) of Theorem~\ref{thm:sorted}.
In the previous sections, we saw that the points in $Gr^+(k,n)$
with a maximal arrangement of smallest equal minors have a very rigid structure.
On the other hand, the cardinality of a maximal arrangement of {\it largest\/} minors is $n$, which is much smaller
than the conjectured cardinality $k(n-k)+1$ of a maximal arrangement of smallest minors.
Maximal arrangements of largest minors impose fewer conditions on points of $Gr^+(k,n)$ and
have much more flexible structure.
Actually, one can get {\it any\/} maximal arrangement of largest minors from {\it any} point of $Gr^+(k,n)$ by the torus action.

The {\it ``positive torus''\/}
$\R_{>0}^n$ acts on the positive Grassmannian $Gr^+(k,n)$ by rescaling the coordinates in $\R^n$.
(The group $\R_{>0}^n$ is the {\it positive part\/} of the complex torus $(\C\setminus\{0\})^n$.)
In terms of $k\times n$ matrices this action is given by rescaling the columns of the matrix.


\begin{theorem}
{\rm (1)}
For any point $A$ in $Gr^+(k,n)$ and any maximal sorted subset $S\subset {[n]\choose k}$, there is a unique
point $A'$ of $Gr^+(k,n)$ obtained from $A$ by the torus action (that is, by rescaling the columns of the $k\times n$
matrix $A$) such that the Pl\"ucker coordinates $\Delta_I$, for all $I\in S$, are equal to each other.

\medskip
\noindent
{\rm (2)}
All other Pl\"ucker coordinates $\Delta_{J}$, $J\not\in S$,
for the point $A'$ are strictly less than the $\Delta_I$, for $I\in S$.
\label{thm:torus_action}
\end{theorem}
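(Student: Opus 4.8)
The plan is to translate the problem into the combinatorics of the alcoved triangulation $\mathcal{T}$ of the hypersimplex $\Delta_{k,n}=\mathrm{conv}\{e_I:I\in{[n]\choose k}\}\subset\R^n$, where $e_I:=\sum_{i\in I}e_i$. Recall from \cite{LP} that the maximal simplices of $\mathcal{T}$ are exactly $\sigma_S:=\mathrm{conv}\{e_I:I\in S\}$ for $S$ a maximal sorted subset of ${[n]\choose k}$, that $\mathcal{T}$ is unimodular, and that $|S|=n$. Part~(1) is then elementary linear algebra: the positive torus $\R_{>0}^n$ rescales columns, so $\Delta_I(t\cdot A)=\big(\prod_{i\in I}t_i\big)\Delta_I(A)$, and, fixing $I_0\in S$, the requirement that $\Delta_I(t\cdot A)$ be independent of $I\in S$ is the linear system $\langle\log t,\,e_I-e_{I_0}\rangle=\log\Delta_{I_0}(A)-\log\Delta_I(A)$, $I\in S\setminus\{I_0\}$. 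Since $\sigma_S$ is a maximal simplex of $\mathcal{T}$, the $n$ points $\{e_I:I\in S\}$ are affinely independent, so $\{e_I-e_{I_0}:I\in S\setminus\{I_0\}\}$ is a basis of $\{x:\sum x_i=0\}$; hence the system has a unique solution $\log t$ modulo $\R(1,\dots,1)$. Exponentiating gives a positive torus element, unique up to an overall common rescaling of all columns, which does not change the point of $Gr^+(k,n)$; this produces the asserted unique $A'$, and I write $\lambda$ for the common value of $\Delta_I(A')$, $I\in S$.

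For part~(2) I would prove the stronger assertion that $\mathcal{T}$ is precisely the regular polyhedral subdivision of $\Delta_{k,n}$ induced (as the projection of the upper faces of $\mathrm{conv}\{(e_I,\log\Delta_I(A'))\}$) by the height function $I\mapsto\log\Delta_I(A')$. Granting this, the maximal cell $\sigma_S$ is a face of that upper hull, so the affine functional $\ell$ on the affine hull of $\Delta_{k,n}$ determined by $\ell(e_I)=\log\Delta_I(A')=\log\lambda$ for $I\in S$ (it is determined, since $\{e_I:I\in S\}$ is an affine basis) satisfies $\ell(e_J)>\log\Delta_J(A')$ for every $J$ that is not a vertex of $\sigma_S$, i.e.\ $\Delta_J(A')<\lambda$ for all $J\notin S$, which is exactly part~(2). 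Identifying this regular subdivision with $\mathcal{T}$ amounts to checking that the piecewise-linear interpolation $\hat h$ of $(\log\Delta_I(A'))_I$ over $\mathcal{T}$ is strictly concave, a purely local condition at each interior codimension-one face $F$. Writing $\tau=F\cup\{e_P\}$, $\tau'=F\cup\{e_{P'}\}$ for the two maximal simplices meeting along $F$, the structure of $\mathcal{T}$ from \cite{LP} says that the unique affine circuit supported on the vertices of $\tau\cup\tau'$ is a three-term Pl\"ucker circuit $e_{\{a,b\}\cup R}+e_{\{c,d\}\cup R}=e_{\{a,c\}\cup R}+e_{\{b,d\}\cup R}$ for some $a<b<c<d$ in $[n]$ and $R\in{[n]\setminus\{a,b,c,d\}\choose k-2}$, with $\{P,P'\}$ equal to the non-sorted pair $\{\{a,b\}\cup R,\{c,d\}\cup R\}$ (or the other non-sorted pair $\{\{a,d\}\cup R,\{b,c\}\cup R\}$) and the sorted pair $\{a,c\}\cup R,\,\{b,d\}\cup R$ lying on $F$. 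The strict-concavity inequality at $F$ then reads $\log\Delta_{\{a,c\}\cup R}(A')+\log\Delta_{\{b,d\}\cup R}(A')>\log\Delta_{\{a,b\}\cup R}(A')+\log\Delta_{\{c,d\}\cup R}(A')$ (and similarly with the other non-sorted pair), which follows at once from the three-term Pl\"ucker relation $\Delta_{acR}\,\Delta_{bdR}=\Delta_{abR}\,\Delta_{cdR}+\Delta_{adR}\,\Delta_{bcR}$ and the strict positivity of all Pl\"ucker coordinates on $Gr^+(k,n)$. As this holds at every interior wall, $\hat h$ is strictly concave and the identification is complete.

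The main obstacle is the combinatorial input from \cite{LP} that every flip of the alcoved triangulation of the hypersimplex is a three-term Pl\"ucker flip — equivalently, that the affine circuit at each interior wall of $\mathcal{T}$ has the displayed form, with the two sorted representatives on the wall and a non-sorted pair as the two apices. Establishing this requires the explicit description of $\mathcal{T}$ and careful bookkeeping of which vertices of the relevant octahedron $\{e_{\{x,y\}\cup R}:x\ne y\in\{a,b,c,d\}\}$ are shared and which are exchanged. If one prefers not to use this in packaged form, the same conclusion can be reached by propagating strict concavity outward from $\sigma_S$ along a spanning order of the dual graph of $\mathcal{T}$, invoking at each newly attached simplex the appropriate three-term relation and positivity; the delicate point is then to control the sign of each successive kink. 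The remainder — the linear algebra of part~(1) and the bookkeeping relating minors, heights, and the torus action — is routine.
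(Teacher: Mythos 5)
Your part (1) is the same argument as the paper's: the $n$ vertices $e_I$, $I\in S$, of a maximal simplex of the alcoved triangulation are affinely independent inside the hyperplane $\{\sum x_i=k\}$ (which does not pass through the origin), so the linear system for $\log t$ has a unique solution (the paper normalizes $\Delta_I(A')=1$ and gets a unique $t$; you work modulo the common rescaling direction — same content). For part (2) your route is genuinely different from the paper's, and its overall logic is sound: local strict concavity of the interpolation of $I\mapsto\log\Delta_I(A')$ across every interior wall of $\mathcal{T}$ does imply that the induced regular subdivision is exactly $\mathcal{T}$, and then every $e_J$ with $J\notin S$, being a vertex of the hypersimplex not in $\sigma_S$, must lift strictly below the supporting hyperplane of $\sigma_S$, which is the constant $\log\lambda$; this gives $\Delta_J(A')<\lambda$.

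The genuine gap is exactly the step you flag as "the main obstacle": the claim that at every interior wall of $\mathcal{T}$ the circuit is a three-term Pl\"ucker circuit, with the two apices forming a non-sorted pair and its sorting lying among the wall vertices. This is not stated in \cite{LP} in a citable form, and without it your local condition need not be a three-term Pl\"ucker inequality at all (a priori the circuit could involve several wall vertices with other coefficients, and then positivity of $\Delta$'s alone does not give the required kink sign). The claim is in fact true, and here is one way to close the gap. Two adjacent alcoves are mirror images across their common wall, which lies on some $H_{i,j,r}$; writing the reflection in the coordinates $y_m=x_1+\cdots+x_m$ (where it swaps $y_{i-1}$ and $y_j$ up to a shift) shows that the two apices $e_P,e_{P'}$ differ only in coordinates $i-1,i,j,j+1$, so $P=R\cup\{a,d\}$, $P'=R\cup\{b,c\}$ with $a=i-1$, $b=i$, $c=j$, $d=j+1$ taken cyclically; in particular $|P\setminus P'|=2$ and the sorting of $\{P,P'\}$ is $\{R\cup\{a,c\},\,R\cup\{b,d\}\}$. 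To see that both sorted sets are vertices of the wall, note that a closed alcove is cut out by conditions $r_T\le |\,\cdot\cap T\,|\le r_T+1$ over cyclic intervals $T$, and that no cyclic interval can contain $a$ and $c$ while missing both $a+1$ and $c+1$ (nor contain $b,d$ while missing $a,c$); hence $|(R\cup\{a,c\})\cap T|$ lies weakly between $|P\cap T|$ and $|P'\cap T|$ for every $T$, so $e_{R\cup\{a,c\}}$ satisfies all inequalities of both closed alcoves and lies on $H_{i,j,r}$; being a lattice point of a unimodular simplex it is a vertex, i.e., it belongs to $S\cap S'$, and likewise for $R\cup\{b,d\}$. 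With this lemma your wall condition becomes $\Delta_{acR}\Delta_{bdR}>\Delta_{adR}\Delta_{bcR}$, which indeed follows from the three-term relation and positivity, and the proof goes through.

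For comparison: the paper avoids any analysis of wall circuits (and the regular-subdivision criterion) by introducing the distance $d(S,J)$, the number of hyperplanes $H_{i,j,r}$ separating $P_S$ from $e_J$, showing that sorting any non-sorted pair $I\in S$, $J$ strictly decreases this distance, and inducting on $d(S,J)$ using the strict inequality $\Delta_{I'}\Delta_{J'}>\Delta_I\Delta_J$ for an arbitrary non-sorted pair (a consequence of Skandera's inequalities, Corollary~\ref{cor:Skandera}). Your approach buys a proof that uses only the three-term Pl\"ucker relations (no Skandera input) and yields the stronger statement that every point of $Gr^+(k,n)$ induces the alcoved triangulation as a regular subdivision via its log-Pl\"ucker heights; the price is the wall-structure lemma above, which you must prove rather than quote.
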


The proof of this result is based on geometric techniques of alcoved polytopes and affine
Coxeter arrangements developed in \cite{LP}.

\medskip

Before presenting the proof, let us give some examples of $3\times n$ matrices $A=[v_1,v_2,\dots,v_n]$ with maximal arrangements of largest equal minors.
Here $v_1,\dots,v_n$ are 3-vectors.  Projectively, we can think about the 3-vectors $v_i$ as points in the (projective) plane.
More precisely, let $P\simeq\R^2$
be an affine plane in $\R^3$ that does not pass through the origin $0$.  A point $p$ in the plane $P$ represents
the 3-vector $v$ from the origin $0$ to $p$.  A collection of points $p_1,\dots,p_n\in P$ corresponds to an element
$A=[v_1,\dots,v_n]$ of the {\it positive\/} Grassmannian $Gr^+(3,n)$ if and only if the points $p_1,\dots,p_n$ form vertices
of a {\it convex\/} $n$-gon with vertices labelled in the clockwise order.

Let us now assume that the $n$-gon formed by the points $p_1,\dots,p_n$ is a regular $n$-gon.
Theorem~\ref{thm:torus_action} implies that it is always possible to uniquely rescale
(up to a common factor) the corresponding $3$-vectors by some positive scalars
$\lambda_i$ in order to get any sorted subset in $[n]\choose 3$.
Geometrically, for a triple $I=\{i,j,r\}$, the minor $\Delta_I$ equals the area of the triangle
with the vertices $p_i$, $p_j$, $p_r$ times the product of the scalar factors $\lambda_i$, $\lambda_j$, $\lambda_r$
(times a common factor which can be ignored).
We want to make the largest area of such rescaled triangles to repeat as many times as possible.

\begin{example}
For the regular pentagon, there are the Eulerian number $A(4,2)=11$ rescalings
of vertices that give maximal sorted subsets in $[5]\choose 3$.  For the regular hexagon there are
$A(5,2)=66$ rescalings.  Figures~\ref{fig:pentagon} and~\ref{fig:hexagon} show all these rescalings
up to rotations and reflections.
\begin{figure}[h]
\qquad \qquad
\includegraphics[height=0.8in,width=.8in]{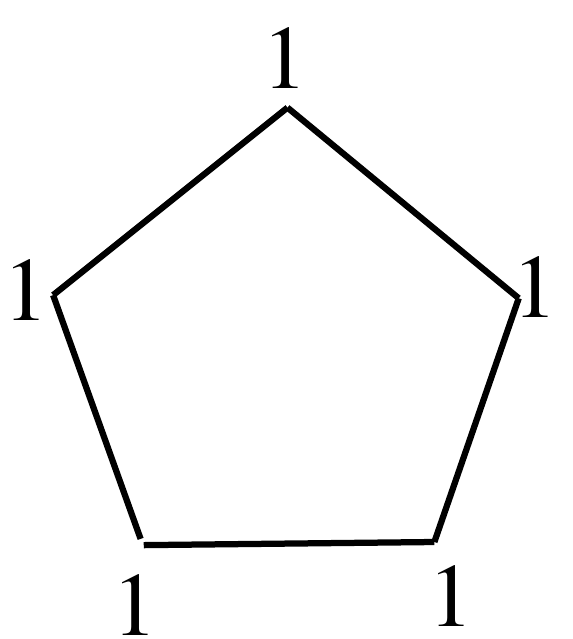}
\qquad \qquad
\includegraphics[height=0.8in,width=.8in]{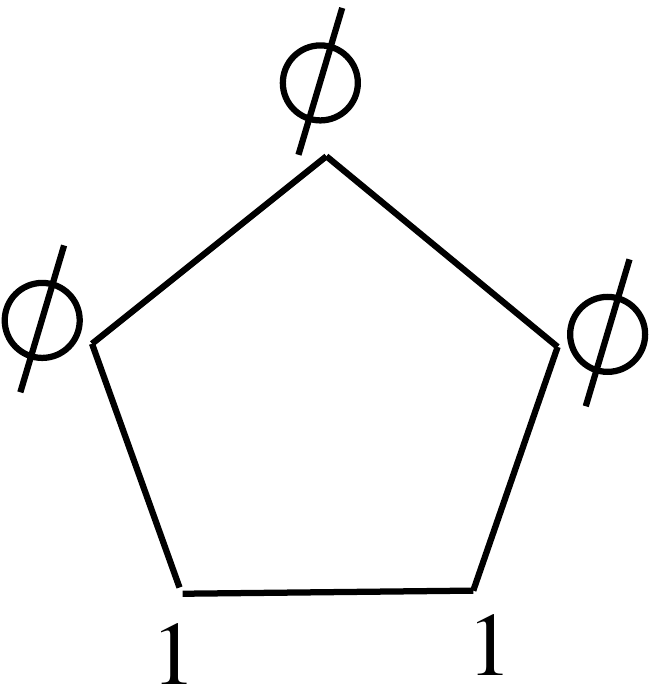}
\qquad \qquad
\includegraphics[height=0.8in,width=.8in]{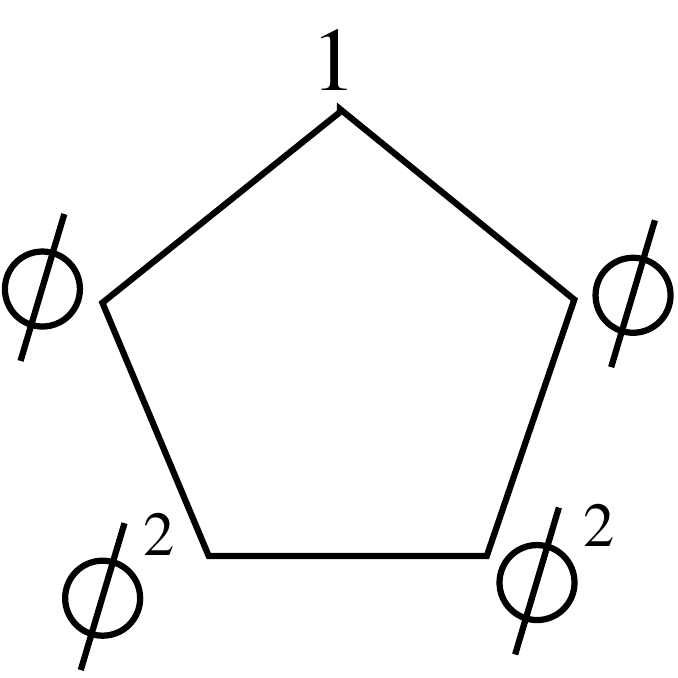}
\qquad \qquad
\caption{For the regular pentagon, there are the Eulerian number $A(4,2)=11$ rescalings that
give maximal sorted subsets in $[5]\choose 3$.
In the first case, all the scalars $\lambda_i$ are $1$.  In the second case,
the $\lambda_i$ are $1,1,\phi,\phi,\phi$.
Here $\phi = (1+\sqrt{5})/2$ is the golden ratio.  (There are 5 rotations of this case.)
In the last case, the $\lambda_i$ are $1,\phi,\phi^2,\phi^2,\phi$. (Again, there are 5 rotations.)
In total, we get $1+ 5 + 5 = 11$ rescalings.
}
\label{fig:pentagon}
\end{figure}
\begin{figure}[ht]
\qquad \qquad
\includegraphics[height=0.8in,width=.8in]{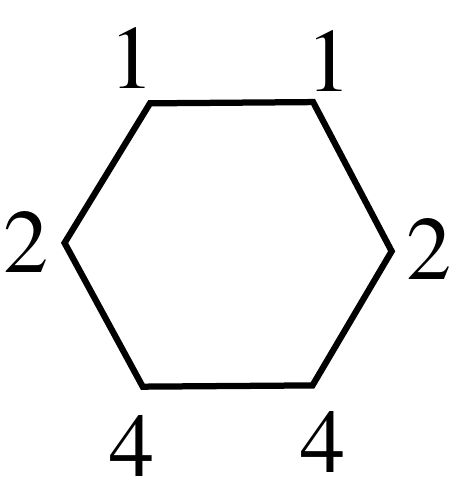}
\qquad \qquad
\qquad \qquad
\includegraphics[height=0.8in,width=1in]{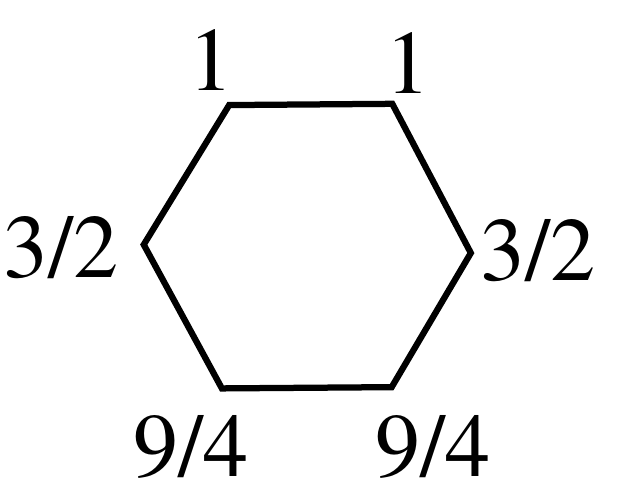}
\qquad \qquad
\qquad \qquad
\includegraphics[height=0.8in,width=.8in]{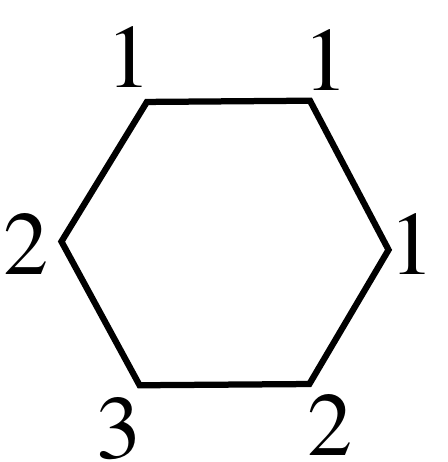}
\qquad \qquad
\qquad \qquad
\includegraphics[height=0.8in,width=0.9in]{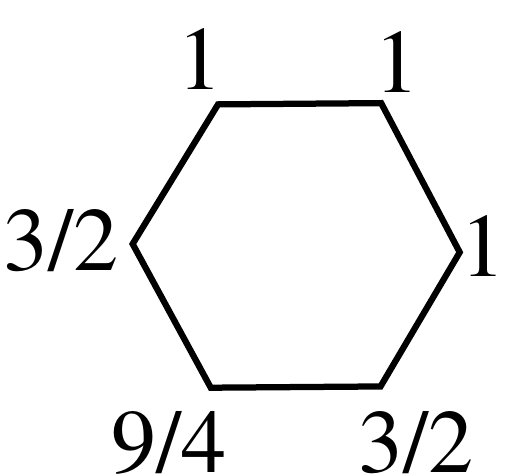}
\qquad \qquad
\qquad \qquad
\includegraphics[height=0.8in,width=.85in]{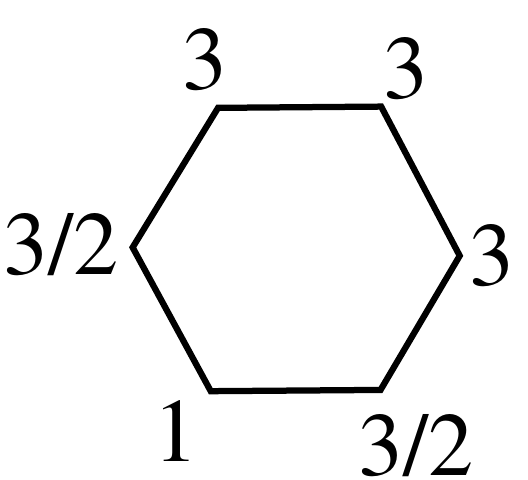}
\qquad \qquad
\qquad \qquad
\includegraphics[height=0.8in,width=1in]{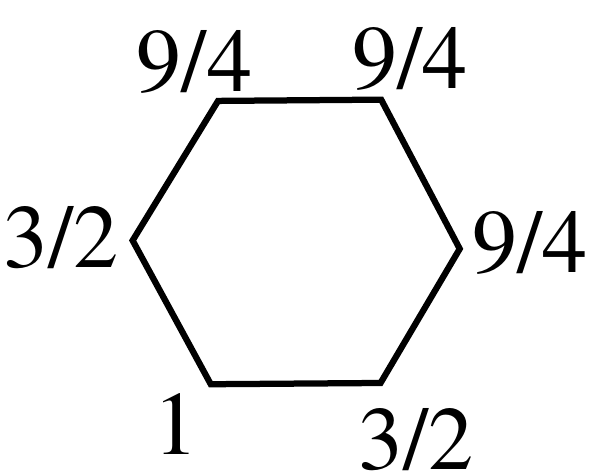}
\qquad \qquad
\qquad \qquad
\includegraphics[height=0.8in,width=1in]{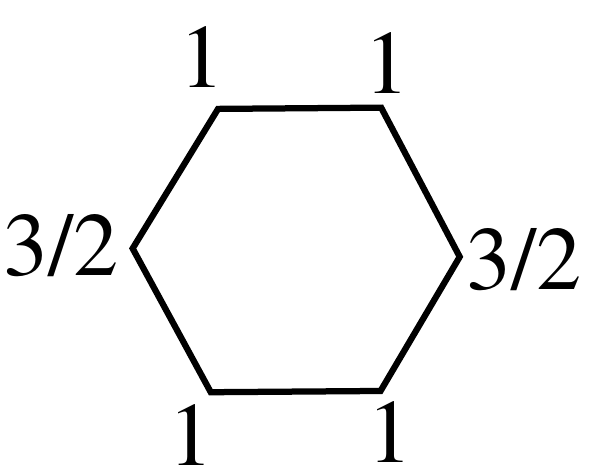}
\qquad \qquad
\qquad \qquad
\includegraphics[height=0.8in,width=.7in]{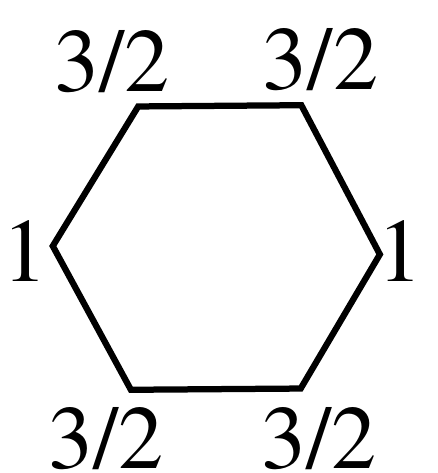}
\qquad \qquad
\qquad \qquad
\includegraphics[height=0.8in,width=0.9in]{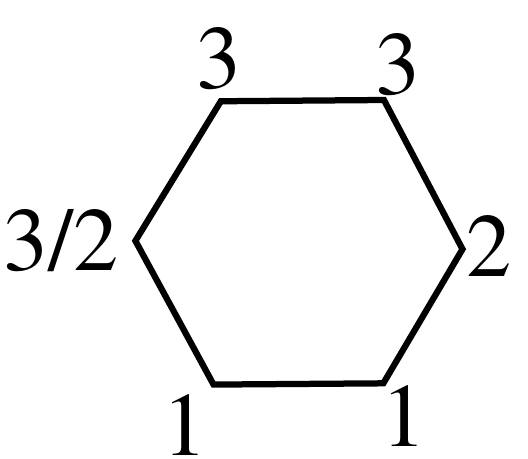}
\qquad \qquad
\qquad \qquad
\includegraphics[height=0.8in,width=0.9in]{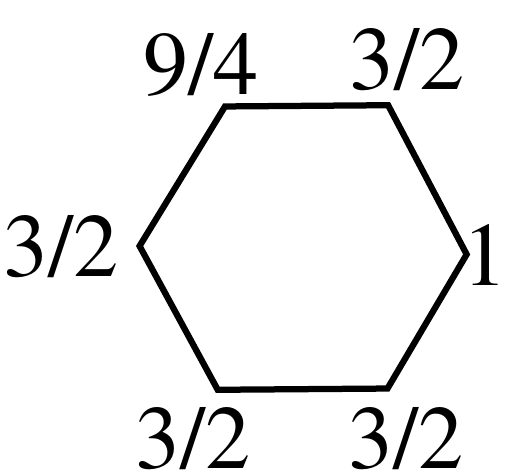}
\qquad \qquad
\qquad \qquad
\caption{For the regular hexagon, there are 10 types of allowed rescalings (up to rotations and reflections) shown
in this figure.  In total, we get the Eulerian number $A(5,2) = 6+6+6+6+6+6+3+3+12+12 = 66$ rescalings.}
\label{fig:hexagon}
\end{figure}
\end{example}

\medskip
Our proof of Theorem~\ref{thm:torus_action} relies on results from \cite{LP}
about hypersimplices and their alcoved triangulations.  Let us first summarize these results.

The hypersimplex $\Delta_{k,n}$ is the $(n-1)$-dimensional polytope
$$
\Delta_{k,n}:=\{(x_1,\ldots,x_n) \mid 0 \leq
x_1 ,\ldots,x_n \leq 1;\, x_1+x_2+\ldots+x_n = k\}.
$$

Let $e_1,\dots,e_n$ be the coordinate vectors in $\R^n$.
For $I\in{[n]\choose k}$, let $e_I = \sum_{i\in I} e_i$ denote the $01$-vector with $k$ ones in positions $I$.
For a subset $S\subset {[n]\choose k}$, let $P_S$ be the polytope defined as the convex hull of $e_I$, for $I\in S$.
Equivalently, $P_S$ has the vertices $e_I$, $I\in S$.
The polytope $P_S$ lies in the affine hyperplane $H=\{x_1+\cdots + x_n =k\}\subset\R^n$.

For $1 \leq i \leq j \leq n$ and an integer $r$, let $H_{i,j,r}$ be the affine
hyperplane $\{x_i+x_{i+1}\cdots + x_j =r\}\subset\R^n$.

\begin{theorem} {\rm \cite{LP}, cf.\ \cite{Sta, Stu}} \
{\rm (1)}  The hyperplanes $H_{i,j,r}$ subdivide the hypersimplex $\Delta_{k,n}$ into
simplices.  This forms a triangulation of the hypersimplex.

\medskip
\noindent
{\rm (2)} Simplices (of all dimensions) in this triangulation of $\Delta_{k,n}$ are in bijection with
sorted sets in $[n]\choose k$.  For a sorted set $S$, the corresponding simplex is $P_S$.

\medskip
\noindent
{\rm (3)}  There are the Eulerian number $A(n-1,k-1)$ of $(n-1)$-dimensional simplices $P_S$ in this triangulation.  They correspond
to $A(n-1,k-1)$ maximal sorted sets $S$ in $[n]\choose k$.
In particular, maximal sorted sets in $[n]\choose k$ have exactly $n$ elements.
\label{thm:LP}
\end{theorem}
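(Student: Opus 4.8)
The plan is to realize $\Delta_{k,n}$ as an \emph{alcoved polytope} for the affine Coxeter arrangement of type $A_{n-1}$ and to read off all three statements from the geometry of alcoves; this is essentially the argument of \cite{LP}, but let me indicate the steps. On the affine hyperplane $H=\{x_1+\cdots+x_n=k\}$ I would introduce the coordinates $y_i=x_1+\cdots+x_i$ for $i=0,1,\dots,n$, so that $y_0=0$ and $y_n=k$ are fixed while $(y_1,\dots,y_{n-1})$ are free. The cube inequalities $0\le x_i\le 1$ turn into $y_{i-1}\le y_i\le y_{i-1}+1$, and the cutting hyperplane $H_{i,j,r}=\{x_i+\cdots+x_j=r\}$ becomes $\{y_j-y_{i-1}=r\}$. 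Hence every hyperplane in play has the form $\{y_a-y_b=\text{integer}\}$, i.e.\ it is a wall of the affine arrangement of type $A_{n-1}$, whose chambers (``alcoves'') are unimodular simplices. Therefore $\Delta_{k,n}$ is a union of such alcoves, and the subdivision it inherits is automatically a triangulation into unimodular simplices. This is part~(1).

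For part~(2) I would first observe that the lattice points of $\Delta_{k,n}$ are exactly the $01$-vectors $e_I$ with $I\in{[n]\choose k}$; since every alcove is a unimodular simplex lying inside $\Delta_{k,n}$, its vertices are lattice points of $\Delta_{k,n}$, hence of the form $e_I$. The crux is then a separation criterion for walls: two lattice points $e_I$ and $e_J$ lie in a common closed alcove if and only if no wall $H_{i,j,r}$ strictly separates them. Unwinding the definition of $H_{i,j,r}$ in terms of the statistic $r(I,J;a,b)$ from Section~\ref{sec:inequalities_products_minors} shows that this is equivalent to $r(I,J;a,b)\le 1$ for all intervals $[a,b]$, i.e.\ to $I$ and $J$ being \emph{sorted}. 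Consequently the vertex set of any alcove is a pairwise-sorted, hence sorted, set; conversely one checks---via an explicit ``staircase'' parametrization of alcoves by the integer parts $\lfloor y_j\rfloor$, equivalently via the bijection between maximal sorted sets and permutations of $[n-1]$ given in \cite{LP}---that each maximal sorted $S$ (with $|S|=n$) has affinely independent vertices $e_I$ spanning exactly one alcove $P_S$, and that lower-dimensional sorted sets index its faces. This yields the claimed bijection and, in particular, that maximal sorted sets have exactly $n$ elements (an $(n-1)$-simplex has $n$ distinct vertices $e_I$).

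For part~(3) it suffices to count top-dimensional alcoves, which equals the normalized volume of $\Delta_{k,n}$ because all alcoves have the same unimodular volume $1/(n-1)!$. Projecting $\Delta_{k,n}$ isomorphically onto its first $n-1$ coordinates identifies it with the cube slab $\{x\in[0,1]^{n-1}\mid k-1\le x_1+\cdots+x_{n-1}\le k\}$ (since $x_n=k-\sum_{i<n}x_i$ lies in $[0,1]$ precisely under these bounds), and this map is unimodular; by the classical Laplace identity the normalized volume of such a slab is the Eulerian number $A(n-1,k-1)$. Hence the triangulation has $A(n-1,k-1)$ maximal simplices $P_S$, matching the count of maximal sorted sets by number of descents.

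The main obstacle is the combinatorial core of part~(2): producing a clean, verifiable description of the vertex set of an alcove and showing it is \emph{exactly} a maximal sorted set. The wall-separation reformulation via $r(I,J;a,b)$ makes the pairwise statement transparent and ties it directly to the setup of Corollary~\ref{cor:Skandera}, but one still needs the global fact that a maximal pairwise-sorted family is genuinely the vertex set of a single alcove (not a strictly larger affinely independent configuration), and this is exactly where the explicit alcove/permutation bookkeeping of \cite{LP} does the work.
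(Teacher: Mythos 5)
The paper does not actually prove this statement: Theorem~\ref{thm:LP} is imported from \cite{LP} (cf.\ \cite{Sta, Stu}) and used later as a black box in the proof of Theorem~\ref{thm:torus_action}, so there is no in-paper argument to compare yours against line by line. Your sketch is the standard alcoved-polytope argument of \cite{LP} itself, and as a reconstruction it is essentially sound but uneven in completeness. Parts (1) and (3) are fine: the substitution $y_i=x_1+\cdots+x_i$ does turn the hyperplanes $H_{i,j,r}$ and the cube facets into walls $y_a-y_b\in\Z$ of the affine type $A_{n-1}$ arrangement, so $\Delta_{k,n}$ is a union of alcoves; and, granting (2), the count of top-dimensional alcoves equals the normalized volume of the slab $\{k-1\le x_1+\cdots+x_{n-1}\le k\}$ of the unit cube, which is the Eulerian number $A(n-1,k-1)$ by the classical computation.

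In part (2), however, what you actually establish is only one inclusion: the wall-separation criterion, rewritten via $r(I,J;a,b)\le 1$, shows that the vertex set of any alcove is pairwise sorted. Two points remain. First, you assert rather than justify that alcove vertices are lattice points of $\Delta_{k,n}$ (hence of the form $e_I$); this is true for this arrangement in the $y$-coordinates, but it needs a line, e.g.\ via the explicit description of alcoves by the integer parts of the $y_j$. Second, and more importantly, the converse — that every maximal sorted set is affinely independent, is exactly the vertex set of a single alcove $P_S$, and that arbitrary sorted sets index the faces — is the combinatorial heart of the theorem, and you explicitly defer it to the ``alcove/permutation bookkeeping'' of \cite{LP}. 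Since the paper itself only cites \cite{LP}, deferring there is a legitimate reading of the citation, but it means your write-up is a guided tour of the source rather than an independent proof; a self-contained argument would have to supply precisely that bijection between maximal sorted sets, alcoves inside $\Delta_{k,n}$, and permutations of $[n-1]$ with $k-1$ descents.
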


The following lemma proves the first part of Theorem~\ref{thm:torus_action}.

\begin{lemma}\label{lemma:torus_action_first_part}
Let $A$ be a point in $Gr^+(k,n)$, and let $S\subset {[n]\choose k}$ be a maximal sorted subset. There is a unique
point $A'$ of $Gr^+(k,n)$ obtained from $A$ by the torus action, such that the Pl\"ucker coordinates $\Delta_I$, for all $I\in S$, are equal to each other.
\end{lemma}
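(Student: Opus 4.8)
The plan is to reduce the statement to a fixed-point/uniqueness problem for a concave function on the hyperplane of torus parameters, using the logarithmic coordinates in which the torus action becomes translation. First I would parametrize the torus action: if $A = [v_1,\dots,v_n]$ and $t = (t_1,\dots,t_n) \in \R_{>0}^n$, then rescaling column $v_i$ by $t_i$ multiplies the Pl\"ucker coordinate $\Delta_I(A)$ by $\prod_{i\in I} t_i$. Passing to logarithms, set $y_i = \log t_i$ and $c_I = \log \Delta_I(A)$; then $\log \Delta_I(A') = c_I + \sum_{i\in I} y_i = c_I + \langle e_I, y\rangle$. Since the overall scaling $t_i \mapsto \lambda t_i$ acts trivially projectively, we may work on the quotient $\R^n / \R(1,\dots,1)$, equivalently restrict to $\sum y_i = 0$. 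The condition we want is that $c_I + \langle e_I, y\rangle$ is the same constant for all $I \in S$; since $S$ is a maximal sorted set with $n$ elements and the $e_I$, $I\in S$, are the vertices of an $(n-1)$-dimensional simplex $P_S$ by Theorem~\ref{thm:LP}(3), the affine span of $\{e_I : I\in S\}$ is the whole hyperplane $H = \{\sum x_i = k\}$.

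The key step is then linear algebra on the simplex $P_S$. The $n$ equations $c_I + \langle e_I, y\rangle = \mu$ (for $I\in S$), with unknowns $y \in \R^n/\R(1,\dots,1)$ (so $n-1$ degrees of freedom) and $\mu \in \R$ (one more), form an $n\times n$ linear system. Because the vectors $e_I$, $I\in S$, affinely span $H$, the map $y \mapsto (\langle e_I, y\rangle)_{I\in S}$ from $\R^n/\R(1,\dots,1)$ to $\R^n$ is injective with image an affine-type subspace complementary to the constants, so the system has a unique solution $(y, \mu)$. This solution $y$ determines $t = (e^{y_1},\dots,e^{y_n})$ uniquely up to the common scaling, hence a unique point $A' \in Gr^+(k,n)$ in the torus orbit of $A$ with all $\Delta_I$, $I\in S$, equal. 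I would spell out the injectivity as follows: if $\langle e_I, y\rangle = \langle e_I, y'\rangle$ for all $I\in S$ then $y - y'$ is orthogonal to the affine span of the $e_I$, which is $H$; the orthogonal complement of the linear span of $\{e_I - e_J\}$ together with the vanishing of $\sum(y_i - y_i')$ forces $y - y' \in \R(1,\dots,1)$ and then $y = y'$ in the quotient.

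I expect the main obstacle to be purely bookkeeping: making precise that ``the torus action on $Gr^+(k,n)$'' descends to the projective coordinates correctly, and that positivity is automatically preserved (which is immediate since all $t_i > 0$ keep every $\Delta_I(A') > 0$), together with the observation that the existence of the solution $y$ never runs into a sign obstruction precisely because we are working in logarithmic coordinates — any real vector $y$ exponentiates to a legitimate positive torus element. One should also note that part (1) of Theorem~\ref{thm:torus_action} is exactly this lemma, and that part (2), the strict inequality $\Delta_J(A') < \Delta_I(A')$ for $J \notin S$, requires the genuinely geometric input from \cite{LP} about the alcoved triangulation of $\Delta_{k,n}$ and is deferred to the subsequent argument; here we only need the affine-independence consequence of Theorem~\ref{thm:LP}(3), namely that $P_S$ is a full-dimensional simplex, to run the linear-algebra uniqueness. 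The proof of the lemma is therefore short: set up the log-linear system, invoke Theorem~\ref{thm:LP}(3) to get that $\{e_I\}_{I\in S}$ affinely spans $H$, conclude unique solvability, and exponentiate.
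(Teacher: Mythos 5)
Your proposal is correct and follows essentially the same route as the paper: pass to logarithmic coordinates so the equal-minor condition becomes a linear system, and deduce unique solvability from the fact (Theorem~\ref{thm:LP}) that the $e_I$, $I\in S$, are the $n$ vertices of a full-dimensional simplex $P_S$ in the hyperplane $\{\sum x_i=k\}$. The only cosmetic difference is bookkeeping of the scaling redundancy: the paper normalizes the common value of the minors to $1$ and inverts the $n\times n$ matrix whose rows are the $e_I$, while you keep the common value $\mu$ as an unknown and work modulo the diagonal $\R(1,\dots,1)$ — equivalent arguments with the same key input.
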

\begin{proof}
Let $t_1,t_2,\ldots,t_n>0$ be a collection of $n$ positive real variables, and let $A'$ be a
matrix that is obtained from $A$ by multiplying the $i$-th column of $A$ by
$t_i$, for each $1 \leq i \leq n$. We will show that we can choose positive
variables $\{t_i\}_{i=1}^{n}$ in such a way that $\Delta_I(A')=1$ for all $I
\in S$. For each $I \in S$, we have
$\Delta_I(A')=\Delta_I(A)\prod_{i \in I}t_i$.

In order to find $A'$ with $\Delta_I(A')=1$ for $I\in S$,
we need to solve the following system of $n$ linear equations
(obtained by taking the logarithm of $\Delta_I(A')=\Delta_I(A)\prod_{i \in I}t_i$):
$$
\sum_{i \in I} z_i = -b_I, \textrm{ for every }I \in S,
$$
where $z_i=\log(t_i)$ and $b_I=\log(\Delta_I(A))$.

This $n\times n$ system has a unique solution $(z_1,\dots,z_n)$ because,
according to Theorem~\ref{thm:LP},
the rows of its matrix are exactly the vertices
of the symplex $P_S$, so the matrix of the system is invertible.

The positive numbers $t_i = e^{z_i}$, $i=1,\dots,n$, give us the needed rescaling constants.
%
\end{proof}

In order prove the second part of Theorem~\ref{thm:torus_action}, let us
define a distance $d(S,J)$  between a maximal sorted set $S$ and
some $J \in {[n]\choose k}$.  Such a function will enable us to provide an
inductive proof.

Let us say that a hyperplane  $H_{i,j,r}=\{x_i+x_{i+1}\cdots + x_j =r\}$
{\it separates\/} a simplex $P_S$ and a point $e_J$ if $P_S$ and $e_J$ are in the two disjoint halfspaces
formed by $H_{i,j,r}$.  Here we allow $H_{i,j,r}$ to
touch the simplex $P_S$ along the boundary, but the point $e_J$ should not lie on
the hyperplane.

For $J \in {[n]\choose k}$, $1 \leq i \leq j \leq n$, let
$$
d_{ij}(S,J):=\#\{r \mid \textrm{ the hyperplane } H_{i,j,r} \textrm{ separates } P_S
\textrm{ and } e_J\}.
$$
Define the {\it distance\/} between $J$ and $S$ as
$$
d(S,J):=\sum\limits_{1 \leq i \leq j \leq n}d_{ij}(S,J).
$$
In other words, $d(S,J)$ is the
total number of hyperplanes $H_{i,j,r}$ that separate $P_S$ and $e_J$.


\begin{lemma}\label{lemma:distance_property}
Let $J \in {[n]\choose k}$ and let $S\subset {[n]\choose k}$ be a maximal sorted subset. Then $d(S,J)=0$
if and only if $J \in S$.
\end{lemma}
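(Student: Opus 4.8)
The plan is to prove both implications of Lemma~\ref{lemma:distance_property} by relating the distance $d(S,J)$ to the defining hyperplanes of the simplex $P_S$ and the location of the vertex $e_J$. First I would record the one fact from Theorem~\ref{thm:LP} that does all the work: since $S$ is a maximal sorted set, $P_S$ is a full-dimensional simplex of the alcoved triangulation of $\Delta_{k,n}$, and its $n$ facets lie on $n$ of the hyperplanes $H_{i,j,r}$ (together with, possibly, facets of the hypersimplex itself, which are of the form $x_i = 0$ or $x_i = 1$, i.e.\ degenerate cases $i=j$ of $H_{i,j,r}$ with $r\in\{0,1\}$). Thus $P_S$ is cut out inside the hyperplane $H=\{x_1+\cdots+x_n=k\}$ by a system of inequalities of the form $\lfloor\,\cdot\,\rfloor \le x_i+\cdots+x_j \le \lceil\,\cdot\,\rceil$, one pair for each relevant pair $(i,j)$; this is exactly the alcoved description from \cite{LP}.

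For the direction ($\Leftarrow$): if $J\in S$ then $e_J$ is a vertex of $P_S$. Any hyperplane $H_{i,j,r}$ that separates $P_S$ and $e_J$ in the strong sense of the definition (the halfspaces disjoint, and $e_J$ strictly on one side) would in particular have $e_J$ strictly off the hyperplane while $P_S$ is weakly on the other side — impossible, since $e_J\in P_S$. Hence $d_{ij}(S,J)=0$ for all $i\le j$ and $d(S,J)=0$.

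For the direction ($\Rightarrow$): suppose $d(S,J)=0$; I want $e_J\in P_S$, which since $P_S$ is a lattice simplex with vertices among the $01$-vectors $e_I$ forces $e_J$ to be one of those vertices, i.e.\ $J\in S$. The point is that $e_J$ lies in the hyperplane $H$ and is a lattice point; and every facet of $P_S$ lies on a hyperplane $H_{i,j,r}$ (in the extended sense including $i=j$). Fix a facet, lying on $H_{i,j,r}$, and let $x_i+\cdots+x_j\le r$ be the corresponding valid inequality for $P_S$ (the other orientation is symmetric). I claim $e_J$ also satisfies $x_i+\cdots+x_j\le r$: otherwise $|J\cap[i,j]| \ge r+1$, and then the hyperplane $H_{i,j,r}$ itself separates $P_S$ (which satisfies $\le r$) from $e_J$ (which lies strictly in $>r$), touching $P_S$ along the facet but not containing $e_J$ — this contributes to $d_{ij}(S,J)$, contradicting $d(S,J)=0$. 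Running over all facets, $e_J$ satisfies every defining inequality of $P_S$, hence $e_J\in P_S$, hence $J\in S$. I should double-check the boundary bookkeeping: the hypersimplex facets $x_i=0$, $x_i=1$ are the $i=j$ cases with $r=0,1$, and the definition of $d_{ij}$ explicitly allows $H_{i,j,r}$ to touch $P_S$ while requiring $e_J$ off it, so these cases are handled uniformly.

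The main obstacle is the ($\Rightarrow$) direction, specifically the clean identification of the facet-supporting hyperplanes of $P_S$ with hyperplanes of the form $H_{i,j,r}$ (including the hypersimplex boundary as degenerate cases) — this is where I am leaning entirely on the alcoved-triangulation description of \cite{LP} quoted in Theorem~\ref{thm:LP}, and I would state explicitly which inequalities cut out $P_S$ before running the separation argument. A secondary subtlety is making sure that "separates" in the weak-touching sense used in the definition still counts the hyperplane through a violated facet inequality: this is fine precisely because the definition only forbids $e_J$ from lying \emph{on} $H_{i,j,r}$, not $P_S$.
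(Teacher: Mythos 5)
Your proof is correct and is essentially the paper's argument: the paper also notes that $J\in S$ makes $e_J$ a vertex of $P_S$ (so nothing separates), and for the converse uses the $n$ facet hyperplanes of the alcove $P_S$, which by the alcoved description are of the form $H_{i,j,r}$, one of which must separate $P_S$ from $e_J$ when $e_J\notin P_S$. You merely run this direction contrapositively (from $d(S,J)=0$ to all facet inequalities holding), which is the same reasoning.
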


\begin{proof}
If $J\in S$, that is, $e_J$ is a vertex of the simplex $P_S$, then $d(S,J)=0$.

Now assume that $e_J$ is not a vertex of $P_S$, so it lies strictly outside of $P_S$.
Consider the $n$ hyperplanes $H_{i,j,r}$ that contain the $n$ facets
of the $(n-1)$-simplex $P_S$.  At least one of these hyperplanes separate $P_S$ and $e_J$,
so $d(S,J)\geq 1$.
\end{proof}

Recall (Definition~\ref{def:sorting}) that the sorting $I', J'$ of a pair $I,J\in{[n]\choose k}$
with the multiset union $I\cup J = \{a_1 \leq a_2 \leq \cdots \leq a_{2k}\}$ is
given by $I' = \{a_1,a_3,\dots, a_{2k-1}\}$ and $J' = \{a_2, a_4,\dots,a_{2k}\}$.

\begin{lemma}\label{lemma:seconddistance_property}
Let $S\subset {[n]\choose k}$ be a maximal sorted subset, let $I \in S$ and $J \in
{[n]\choose k}$, let $I',J'$ be the sorting of $I,J$, and let
$1 \leq i \leq j \leq n$. Then $d_{ij}(S,I')\leq
d_{ij}(S,J)$ and $d_{ij}(S,J')\leq d_{ij}(S,J)$.
\end{lemma}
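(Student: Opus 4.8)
The plan is to reduce the statement to a one-dimensional counting fact about the interval $[i,j]$. Fix $1\le i\le j\le n$ and set $m=i+\cdots+j$ to be the linear functional $x\mapsto x_i+\cdots+x_j$ on the hyperplane $H$. For a subset $K\in\binom{[n]}{k}$, the value $m(e_K)=|K\cap[i,j]|$ is an integer, and $d_{ij}(S,K)$ counts the integers $r$ strictly between $m(e_K)$ and the interval $[\,\min_{I\in S}m(e_I),\ \max_{I\in S}m(e_I)\,]$ of values that $m$ takes on the simplex $P_S$ (one has to be a touch careful about the boundary convention in the definition of ``separates,'' but since $e_K$ is never allowed to lie on $H_{i,j,r}$ and $m(e_K)$ is an integer, $d_{ij}(S,K)$ is exactly the number of integers $r$ with either $r<m(e_K)$ and $r$ below the whole simplex, or $r>m(e_K)$ and $r$ above the whole simplex). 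Writing $a=\lfloor\min_{I\in S}m(e_I)\rfloor$ and $b=\lceil\max_{I\in S}m(e_I)\rceil$ — in fact since $S$ is sorted the values $m(e_I)$, $I\in S$, span a set of \emph{consecutive} integers (this is the content of the $r(I,J;i,j)\le 1$ criterion together with maximality of $S$, via Theorem~\ref{thm:LP}), so $a$ and $b$ are genuinely the min and max of $m$ on vertices of $P_S$ — we get the clean formula
$$
d_{ij}(S,K)=\max(0,\ a-m(e_K))+\max(0,\ m(e_K)-b).
$$
In other words, $d_{ij}(S,K)=\operatorname{dist}\bigl(m(e_K),[a,b]\bigr)$, the distance from the integer $m(e_K)$ to the integer interval $[a,b]$.

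Given this, the lemma becomes: $\operatorname{dist}(m(e_{I'}),[a,b])\le\operatorname{dist}(m(e_J),[a,b])$ and likewise for $J'$, where $I\in S$ so that $m(e_I)\in[a,b]$. By the definition of the sorting (Definition~\ref{def:sorting}), $I'\cup J'=I\cup J$ as multisets and the two values $m(e_{I'})$, $m(e_{J'})$ differ from $m(e_I)$, $m(e_J)$ only in how the elements of $[i,j]$ are distributed: indeed $m(e_{I'})+m(e_{J'})=m(e_I)+m(e_J)=:\sigma$, and from the interlacing description of $I',J'$ one checks that $\{m(e_{I'}),m(e_{J'})\}=\{\lfloor\sigma/2\rfloor,\lceil\sigma/2\rceil\}$ — the sorting \emph{balances} the count inside every interval as evenly as possible. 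So $m(e_{I'})$ and $m(e_{J'})$ both lie (weakly) between $m(e_I)$ and $m(e_J)$ on the number line. Since $m(e_I)\in[a,b]$, any integer lying between $m(e_I)$ and $m(e_J)$ is at distance from $[a,b]$ no greater than $m(e_J)$ is: the function $t\mapsto\operatorname{dist}(t,[a,b])$ is monotone on each side of $[a,b]$ and vanishes on $[a,b]$, and the segment from $m(e_I)$ to $m(e_J)$ meets the half-line beyond $b$ (or beyond $a$) in a sub-segment on which this distance is maximized at the endpoint $m(e_J)$. This gives both inequalities simultaneously.

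The steps, in order: (1) establish the distance reformulation $d_{ij}(S,K)=\operatorname{dist}(m(e_K),[a,b])$, carefully handling the ``touch the boundary'' convention; (2) use Theorem~\ref{thm:LP} to see that for a \emph{maximal} sorted $S$ the values $\{m(e_I):I\in S\}$ form a set of consecutive integers, so $[a,b]$ is the honest value-range and $m(e_I)\in[a,b]$ for $I\in S$; (3) prove the combinatorial identity $\{m(e_{I'}),m(e_{J'})\}=\{\lfloor\sigma/2\rfloor,\lceil\sigma/2\rceil\}$ from the interlacing definition of the sorting, hence $m(e_{I'}),m(e_{J'})\in[\min(m(e_I),m(e_J)),\max(m(e_I),m(e_J))]$; (4) conclude by monotonicity of $t\mapsto\operatorname{dist}(t,[a,b])$ together with $m(e_I)\in[a,b]$.

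The main obstacle I anticipate is step (1): pinning down exactly which hyperplanes $H_{i,j,r}$ count as ``separating'' under the stated convention (the hyperplane may touch $P_S$ but not pass through $e_J$) and confirming this coincides with the integer-distance count — in particular making sure no off-by-one creeps in at $r=a$ or $r=b$, and that the case $m(e_J)\in(a,b)$ non-integer cannot occur since $m(e_J)$ is always an integer. Step (3) is elementary but needs the interlacing pattern spelled out; once those two are secure, step (4) is immediate. Note also that step (2) is where maximality of $S$ is used — for a non-maximal sorted set the values need not be consecutive, but the lemma is only claimed for maximal $S$, so this is fine.
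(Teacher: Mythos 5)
Your proof is correct and is essentially the paper's argument in different packaging: the paper shows directly that every hyperplane $H_{i,j,r}$ separating $P_S$ from $e_{I'}$ (or $e_{J'}$) also separates $P_S$ from $e_J$, using exactly your two key facts --- that $e_I$ is a vertex of $P_S$, so $\alpha=|I\cap[i,j]|$ lies in the value range of $P_S$, and that $|I'\cap[i,j]|,|J'\cap[i,j]|$ are $\lfloor(\alpha+\beta)/2\rfloor,\lceil(\alpha+\beta)/2\rceil$, hence weakly between $\alpha$ and $\beta$ --- and your distance-to-interval formula is just an explicit count of those separating hyperplanes. Your step (2) is dispensable (the formula only needs $a,b$ to be the min and max of $x_i+\cdots+x_j$ over the vertices of $P_S$), and its aside is slightly off: sortedness alone already forces the values $|I\cap[i,j]|$, $I\in S$, to differ by at most one, with no appeal to maximality or to Theorem~\ref{thm:LP}.
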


\begin{proof} In order to show that $d_{ij}(S,I'), d_{ij}(S,J')\leq
d_{ij}(S,J)$, it is enough to show that any hyperplane $H_{i,j,r}$
(for some positive integer $r$) that separates $P_S$ and $e_{I'}$ also separates
$P_S$ and $e_J$ (and similarly for $P_S$ and $e_{J'}$).

Let $\alpha=|I \cap [i,j]|$ and $\beta=|J \cap [i,j]|$,
where $[i,j]=\{i,i+1,\dots,j\}$.
So $e_I$ lies on $H_{i,j,\alpha}$ and $e_J$ lies on $H_{i,j,\beta}$.


By the definition of sorting, the numbers $|I' \cap [i,j]|$ and $|J' \cap
[i,j]|$ are equal to $\lfloor \frac{\alpha+\beta}{2} \rfloor$ and $\lceil
\frac{\alpha+\beta}{2} \rceil$ (not necessarily respectively).
So $e_{I'}$ lies on $H_{i,j, \lfloor \frac{\alpha+\beta}{2} \rfloor}$ or
$H_{i,j, \lceil \frac{\alpha+\beta}{2} \rceil}$; and similarly for $e_{J'}$.

Since both $\lfloor \frac{\alpha+\beta}{2} \rfloor$ and
$\lceil\frac{\alpha+\beta}{2} \rceil$ are weakly between $\alpha$ and $\beta$,
we get the needed claim.
\end{proof}

\begin{lemma}\label{lemma:thirddistance_property}
Let $S\subset {[n]\choose k}$
be a maximal sorted subset, and let $J \in {[n]\choose k}$ such that
$d(S,J)>0$. Then there exists $I \in S$ such that, for the sorting $I',J'$ of the pair $I,J$,
we have the strict inequalities
$d(S,I')<d(S,J)$ and $d(S,J')<d(S,J)$.  \end{lemma}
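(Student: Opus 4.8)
The plan is to find a single hyperplane $H_{i_0,j_0,r_0}$ that actually \emph{separates} $P_S$ and $e_J$ (one exists since $d(S,J)>0$ by Lemma~\ref{lemma:distance_property}), and to produce $I\in S$ whose sorting with $J$ strictly decreases the separation count coming from that particular hyperplane, while never increasing it for any other hyperplane. Since by Lemma~\ref{lemma:seconddistance_property} we already know $d_{ij}(S,I')\le d_{ij}(S,J)$ and $d_{ij}(S,J')\le d_{ij}(S,J)$ for \emph{every} triple $(i,j)$, summing over all $(i,j)$ gives $d(S,I')\le d(S,J)$ and $d(S,J')\le d(S,J)$ for free; so it suffices to find one triple $(i_0,j_0)$ at which both inequalities are \emph{strict}. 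That reduces the lemma to a purely local statement about a single interval $[i_0,j_0]$.

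First I would fix a hyperplane $H:=H_{i_0,j_0,r_0}$ separating $P_S$ from $e_J$, and set $[a,b]:=[i_0,j_0]$, $\beta:=|J\cap[a,b]|$; then $e_J$ lies on $H_{a,b,\beta}$ and all vertices $e_I$ of $P_S$ lie weakly on one side of $H$, say $x_a+\cdots+x_b\le r_0$ (the other case is symmetric), with $\beta\ge r_0+1$. The values $\alpha_I:=|I\cap[a,b]|$ for $I\in S$ therefore all satisfy $\alpha_I\le r_0$. Now choose $I\in S$ achieving the \emph{maximum} value of $\alpha_I$ over $S$ — call this maximum $\alpha^\ast\le r_0$. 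Since $S$ is a maximal sorted set, I expect $\alpha^\ast=r_0$ exactly: the facet hyperplanes of the simplex $P_S$ that bound it from above in the direction $x_a+\cdots+x_b$ are among the $H_{i,j,r}$, and a maximal sorted set is ``full'' enough that the interval-sum coordinate actually attains the value $r_0$ on some vertex (otherwise $H_{a,b,r_0}$ would not be the tightest separating hyperplane of its family, contradicting that it separates). For this $I$ with $\alpha_I=\alpha^\ast$, the sorting $I',J'$ of $I,J$ has $|I'\cap[a,b]|$ and $|J'\cap[a,b]|$ equal to $\lfloor(\alpha^\ast+\beta)/2\rfloor$ and $\lceil(\alpha^\ast+\beta)/2\rceil$; since $\alpha^\ast\le r_0<\beta$, both of these lie in the open range $(\alpha^\ast,\beta]$ and in particular are $\le$ something strictly less than $\beta$ on at least one side and are both $\ge\alpha^\ast$... the point is that $d_{a,b}(S,I')$ and $d_{a,b}(S,J')$ each count hyperplanes $H_{a,b,r}$ with $\alpha^\ast$ (resp. the sorted value) $\le r<\beta$ or $\beta\le r<(\text{sorted value})$, i.e. strictly fewer than the $d_{a,b}(S,J)$ hyperplanes with $r_0\le r<\beta$, because the sorted values are strictly greater than $r_0$. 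Thus the count at $(a,b)$ drops for both $I'$ and $J'$.

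The main obstacle I anticipate is pinning down that the chosen $I$ can be taken to satisfy $\alpha_I=r_0$ (equivalently, that the separating hyperplane can be taken to touch $P_S$), and then carefully bookkeeping the half-open intervals of integers $r$ so that ``strictly greater sorted value than $r_0$'' genuinely translates into ``strictly fewer separating hyperplanes at $(a,b)$'' for \emph{both} $I'$ and $J'$ simultaneously — one must rule out the degenerate possibility $\alpha^\ast=\beta$ (impossible, since $\alpha^\ast\le r_0<\beta$) and handle parity of $\alpha^\ast+\beta$ so that neither of the two sorted values equals $\beta$ on the side that matters. Once the local strict-decrease at $(a,b)$ is established, combining it with the global weak inequalities from Lemma~\ref{lemma:seconddistance_property} finishes the proof: $d(S,I')<d(S,J)$ and $d(S,J')<d(S,J)$.
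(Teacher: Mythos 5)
There is a genuine gap, and it lies exactly at the step you flagged but did not resolve: your rule for choosing $I$ (fix a separating hyperplane $H_{a,b,r_0}$ and take $I\in S$ maximizing $\alpha_I=|I\cap[a,b]|$) does not guarantee that $I$ and $J$ are unsorted, and when they are sorted the conclusion fails for that $I$, since then $\{I',J'\}=\{I,J\}$ and $d(S,J')=d(S,J)$. Concretely, the degenerate case $\beta=\alpha^\ast+1$ is not a parity nuisance that careful bookkeeping removes: in that case the sorted interval-counts are $\alpha^\ast$ and $\beta$ themselves, so nothing drops at $(a,b)$, and your chosen $I$ may be sorted with $J$ outright. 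Example in $Gr(2,4)$: take the maximal sorted set $S=\{12,13,14,24\}$ and $J=\{2,3\}$. The only separating hyperplane is $H_{2,3,1}$ (so $r_0=1$, $\beta=2$), and the maximizers of $|I\cap[2,3]|$ are $12,13,24$, each of which is sorted with $J$; for any of them $d(S,J')=d(S,J)=1$, so no strict inequality. The $I$ that works is $14$, which has the \emph{minimum} value $0$ on $[2,3]$, and neither ``maximize'' nor ``minimize'' is correct in general. Relatedly, your argument that $\alpha^\ast=r_0$ ``otherwise the hyperplane would not be tightest, contradicting that it separates'' is not a contradiction; one can simply replace $r_0$ by $\alpha^\ast$, but that repair does not fix the real problem above.

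The missing idea is the one place where maximality of $S$ is genuinely used: since $d(S,J)>0$ we have $J\notin S$, and if $J$ were sorted with every element of $S$ then $S\cup\{J\}$ would be a larger sorted set, contradicting maximality. So there exists $I\in S$ with $I,J$ \emph{not} sorted, hence an interval $[i,j]$ with $\bigl||I\cap[i,j]|-|J\cap[i,j]|\bigr|\ge 2$; then both $\lfloor(\alpha+\beta)/2\rfloor$ and $\lceil(\alpha+\beta)/2\rceil$ are strictly between $\alpha$ and $\beta$, the hyperplane through $e_{I'}$ (resp.\ $e_{J'}$) separates $P_S$ from $e_J$ but not from $e_{I'}$ (resp.\ $e_{J'}$), and this gives the strict local drop $d_{ij}(S,I')<d_{ij}(S,J)$ and $d_{ij}(S,J')<d_{ij}(S,J)$. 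Your final step — summing the weak inequalities of Lemma~\ref{lemma:seconddistance_property} over all intervals and adding one strict local inequality — is correct and is the same as in the paper; it is the selection of $I$ (hyperplane first, then $I$, with no use of maximality) that breaks down.
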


\begin{proof}
According to Lemma~\ref{lemma:distance_property}, there exists $I\in S$ such that $I$ and $J$ are not
sorted.
This means that there are $1\leq i\leq j \leq n$ such that the numbers
$\alpha=|I \cap [i,j]|$ and $\beta=|J \cap [i,j]|$ differ by at least two.
(We leave it as exercise for the reader to check that $I$ and $J$ are sorted if and only if
$|\alpha-\beta|\leq 1$ for any $1\leq i\leq j\leq n$.)
Therefore, both $\lfloor \frac{\alpha+\beta}{2} \rfloor$ and $\lceil
\frac{\alpha+\beta}{2} \rceil$ are strictly between $\alpha$ and $\beta$.

The point $e_{I'}$ lies on the hyperplane $H_{i,j, \lfloor \frac{\alpha+\beta}{2} \rfloor}$ or on
$H_{i,j, \lceil \frac{\alpha+\beta}{2} \rceil}$. In both cases this hyperplane separates $P_S$ and $e_J$, but does not
separate $P_S$ and $e_{I'}$.  Similarly for $e_{J'}$.
This means that we have the strict inequalities
$d_{ij}(S,I')<d_{ij}(S,J)$ and $d_{ij}(S,J')<d_{ij}(S,J)$.
Also, according to Lemma~\ref{lemma:seconddistance_property}, we have the weak inequalities
$d_{uv}(S,I')\leq d_{uv}(S,J)$ and $d_{uv}(S,J')\leq d_{uv}(S,J)$, for any $1\leq u\leq v\leq n$.
This implies the needed claim.
\end{proof}

We are now ready to prove the second part of Theorem~\ref{thm:torus_action}.

\begin{proof}
Let $A$, $A'$ and $S$ be as in Lemma~\ref{lemma:torus_action_first_part}.
Rescale $A'$ so that $\Delta_I(A')=1$, for $I\in S$.
We want to show that, for any $J \in {[n]\choose k}$ such that $J \notin S$, we
have $\Delta_J(A')<1$.

The proof is by induction.  Start with the base case, that is, with $J$ for which
$d(S,J)=1$. By Lemma~\ref{lemma:thirddistance_property}, there exists $I \in S$
such that $d(S,J'),\ d(S,I')<d(S,J)=1$, and hence $d(S,J')=d(S,I')=0$.
Therefore, by Lemma~\ref{lemma:distance_property}, we have $I',J' \in S$, and
thus $\Delta_{J'}(A')=\Delta_{I'}(A')=\Delta_I(A')=1$.  Applying
Corollary~\ref{cor:Skandera}, we get that
$\Delta_{I}(A')\Delta_{J}(A')<\Delta_{J'}(A')\Delta_{I'}(A')$, so
$1\cdot\Delta_{J}(A') <1 \cdot 1$, and hence $\Delta_{J}(A') <1$, which proves
the base case.

Now assume that the claim holds for any set whose distance
from $S$ is smaller than $d$, and let $J \in S$ such that $d(S,J)=d$. Using
again Lemma~\ref{lemma:thirddistance_property}, we pick $I \in S$ for which
$d(S,J'),\, d(S,I')<d$. By the inductive assumption,
$\Delta_{J'}(A'),\ \Delta_{I'}(A') \leq 1$. Therefore, applying
Corollary~\ref{cor:Skandera}, we get that
$\Delta_{I}(A')\Delta_{J}(A')<\Delta_{J'}(A')\Delta_{I'}(A') \leq 1$, and since
$\Delta_{I}(A')=1$, we get $\Delta_{J}(A')<1$.   We showed that,
for all $J \in {[n]\choose k}$ such that $J \notin S$, we have
$\Delta_{J}(A')<1$, so we are done.  \end{proof}

We can now finish the proof of Theorem~\ref{thm:sorted}.

\begin{proof}[Proof of Theorem~\ref{thm:sorted}]
The $\Rightarrow$ direction was already proven in Section~\ref{sec:inequalities_products_minors}.

For the case of maximal sorted sets, Theorem~\ref{thm:torus_action} implies
the $\Leftarrow$ direction of Theorem~\ref{thm:sorted}.

Suppose that the sorted set $S'$ (given in Theorem~\ref{thm:sorted}) is not
maximal.  Complete it to a maximal sorted set $S$ and rescale the columns of $A$ to get $A'$
as in Theorem~\ref{thm:torus_action} for the maximal sorted set $S$.

We now want to slightly modify $A'$ so that only the subset of minors $\Delta_{I}$,
for $I\in S'$, forms an arrangement of largest minors.

Apply the procedure in the proof Lemma~\ref{lemma:torus_action_first_part} to get
the matrix $A''_\epsilon$ such that
$$
\Delta_I(A''_\epsilon)=
\left\{
\begin{array}{cl}
1 & \textrm{for }I\in S' \\
1-\epsilon &\textrm{for } I\in S\setminus S'.
\end{array}
\right.
$$
Clearly, in the limit $\epsilon \to 0$, we have $A''_\epsilon \to A'$.

Since all minors $\Delta_J(A''_\epsilon)$ are continuous functions of $\epsilon$,
we can take $\epsilon>0$ to be small enough, so that all the minors
$\Delta_J(A''_\epsilon)$, $J\not\in S'$, are strictly less than 1.
This completes the proof of Theorem~\ref{thm:sorted}.
\end{proof}

\section{Sort-closed sets and alcoved polytopes}
\label{sec:sort_closed}

In this section, we extend Theorem~\ref{thm:sorted} about arrangements of largest minors
in a more general context of sort-closed sets.

\medskip

\begin{definition}
For a set $\S\subset{[n]\choose k}$,  a subset $\A\subset \S$ is called
an
{\it arrangement of largest minors} in $\S$ if and only if there exists $A\in Gr^+(k,n)$ such that
all minors $\Delta_I(A)$, for $I\in\A$, are equal to each other; and the minors $\Delta_J$, for
$J\in \S\setminus \A$, are strictly less than the $\Delta_I$, for $I\in \A$.
\end{definition}

As before, the pair $I' = \{a_1,a_3,\dots, a_{2k-1}\}$, $J' = \{a_2, a_4,\dots,a_{2k}\}$
is the sorting of a pair $I,J$ with the multiset union $I\cup J = \{a_1 \leq a_2 \leq \cdots \leq a_{2k}\}$
(Definition~\ref{def:sorting}).


\begin{definition}
A set $\S\subset {[n]\choose k}$ is called {\it sort-closed} if, for any pair $I,J\in \S$,
the elements of the sorted pair $I',J'$ are also in $\S$.
\end{definition}

For $\S\subset{[n]\choose k}$, let $P_\S\in \R^n$ be the polytope with vertices
$e_I = \sum_{i\in I } e_i$ for all $I\in \S$.

\begin{definition} \cite{LP} \ A polytope $P$ that belongs to a hyperplane $x_1+\cdots+x_n=k$ in $\R^n$ is
called {\it alcoved} if it is given by some inequalities of the form $x_i + x_{i+1} + \cdots + x_j \leq l$,
where $i,j\in [n]$ and $l\in\Z$.  (We assume that the indices $i$ of $x_i$ are taken modulo $n$.)
\end{definition}

The hyperplanes $x_i+\cdots +x_j=l$, $l\in \Z$, form the {\it affine Coxeter arrangement\/} of type A.
According to \cite{LP}, these hyperplanes subdivide an alcoved polytope
into unit simplices called {\it alcoves.}  This forms a triangulation of $P$.

Theorem~3.1 from \cite{LP} includes the following claim.

\begin{proposition} \cite{LP} \  A set $\S\subset{[n]\choose k}$ is sort-closed if and only if
the polytope $P_\S$ is alcoved.
\end{proposition}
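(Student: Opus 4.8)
The plan is to prove both directions of the equivalence by analyzing, for a given set $\S\subset{[n]\choose k}$, how the affine Coxeter hyperplanes $H_{i,j,l}=\{x_i+\cdots+x_j=l\}$ (indices mod $n$) interact with the vertices $e_I$, $I\in\S$, and with the sorting operation on pairs. The bridge between the combinatorial notion (sort-closed) and the geometric notion (alcoved) is the elementary observation that, for any interval of coordinates $[i,j]$ (cyclic), the value $x_i+\cdots+x_j$ evaluated at $e_I$ equals $|I\cap[i,j]|$, and that if $I',J'$ is the sorting of $I,J$ then $|I'\cap[i,j]|$ and $|J'\cap[i,j]|$ are $\lfloor(\alpha+\beta)/2\rfloor$ and $\lceil(\alpha+\beta)/2\rceil$ (in some order), where $\alpha=|I\cap[i,j]|$, $\beta=|J\cap[i,j]|$ — this is exactly the computation already carried out in the proof of Lemma~\ref{lemma:seconddistance_property}, now used for all cyclic intervals rather than only ordinary intervals $[i,j]$ with $i\le j$.

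For the direction ($\Leftarrow$): suppose $P_\S$ is alcoved, so $P_\S$ is cut out by inequalities $x_i+\cdots+x_j\le l$. Take $I,J\in\S$ and let $I',J'$ be their sorting. For each defining inequality $x_i+\cdots+x_j\le l$ we have $\alpha=|I\cap[i,j]|\le l$ and $\beta=|J\cap[i,j]|\le l$, hence $\lfloor(\alpha+\beta)/2\rfloor,\lceil(\alpha+\beta)/2\rceil\le l$, so both $e_{I'}$ and $e_{J'}$ satisfy that inequality; since this holds for every facet inequality of $P_\S$, both $e_{I'},e_{J'}\in P_\S$. But $P_\S$ is alcoved, so by \cite{LP} it is triangulated into alcoves whose vertices are $01$-vectors, and the only $01$-vectors in $P_\S$ are its vertices $e_I$, $I\in\S$ (an alcoved polytope of this type has no lattice points with a coordinate equal to $2$ or to $-1$, and its $0/1$ lattice points are precisely the vertices). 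Hence $I',J'\in\S$, so $\S$ is sort-closed.

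For the direction ($\Rightarrow$): suppose $\S$ is sort-closed; we must show $P_\S$ is alcoved, i.e. that $P_\S$ coincides with the polytope $Q$ defined by all inequalities $x_i+\cdots+x_j\le l$ that are valid on $P_\S$ (with the smallest such integer $l$ for each cyclic interval $[i,j]$), together with $x_1+\cdots+x_n=k$. Clearly $P_\S\subseteq Q$. For the reverse inclusion it suffices to show every vertex of $Q$ is some $e_I$ with $I\in\S$; equivalently, since $Q$ is an alcoved polytope and thus \cite{LP} its vertices are among the $e_I$'s, that every $e_I\in Q$ has $I\in\S$. Here is where sort-closedness enters, via a distance/descent argument parallel to Lemmas~\ref{lemma:distance_property}--\ref{lemma:thirddistance_property}: given $e_J\in Q$ with $J\notin\S$, define $d(\S,J)$ as the number of hyperplanes $H_{i,j,l}$ separating $e_J$ from $P_\S$; if $d(\S,J)>0$ one shows, using that $J$ fails to be sorted against some $I\in\S$ (because $e_J$ being outside the convex hull $P_\S$ forces, for at least one facet-defining interval, $|J\cap[i,j]|$ to differ from $|I\cap[i,j]|$ by at least two for some $I\in\S$), that sorting $I,J$ to $I',J'$ strictly decreases the distance in the offending interval while not increasing it elsewhere, so $d(\S,I'),d(\S,J')<d(\S,J)$; by sort-closedness $I',J'\in\S$; descending induction on $d$ down to the base case $d=1$ (where $I',J'$ land in $\S$ directly) yields a contradiction, except that we must be careful: $e_J\in Q$ should force $d(\S,J)=0$, i.e. $J\in\S$ — the induction actually shows that any $e_J$ outside $P_\S$ is separated from $P_\S$ by some valid hyperplane, hence is not in $Q$, which is what we want. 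Cyclic symmetry of the affine Coxeter arrangement means we may restrict to literal intervals after a cyclic rotation, so Lemmas~\ref{lemma:seconddistance_property} and~\ref{lemma:thirddistance_property} apply essentially verbatim.

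The main obstacle I anticipate is the lattice-point bookkeeping at the heart of both directions: verifying cleanly that an alcoved polytope lying in $\{x_1+\cdots+x_n=k\}$ with the cyclic-interval inequalities has its $01$-lattice points equal exactly to its vertex set, and more delicately that a $01$-vector $e_J$ lying outside a hull of the form $P_\S$ (with $\S$ sort-closed) is necessarily cut off by one of the \emph{cyclic-interval} hyperplanes — this is the step that genuinely uses sort-closedness rather than just convexity, and it requires the "unsortedness forces a big discrepancy on some interval" claim, whose proof reduces to the $k=2$ / three-term Plücker style case analysis already implicit in the paper. Everything else is the triangulation machinery of \cite{LP}, which we are entitled to quote. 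I would structure the final writeup as: (i) recall the cyclic-interval evaluation formula and the sorting-interval identity; (ii) prove $\Leftarrow$ in a few lines as above; (iii) prove $\Rightarrow$ by the distance induction, citing Lemmas~\ref{lemma:distance_property}--\ref{lemma:thirddistance_property} in their cyclic form.
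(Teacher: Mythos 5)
The paper does not actually prove this proposition: it is quoted from \cite{LP} (Theorem 3.1 there), so your attempt can only be judged against the mathematics itself. Your ($\Leftarrow$) direction is correct: the identity $\{|I'\cap[i,j]|,|J'\cap[i,j]|\}=\{\lfloor(\alpha+\beta)/2\rfloor,\lceil(\alpha+\beta)/2\rceil\}$ holds for cyclic intervals as well (by complementation on the hyperplane $x_1+\cdots+x_n=k$), so $e_{I'},e_{J'}$ satisfy every defining inequality of $P_\S$; and any $01$-point of $P_\S$ is an extreme point of the cube $[0,1]^n$, hence a vertex of $P_\S$, hence of the form $e_K$ with $K\in\S$ (you do not need the alcove triangulation or any lattice-point bookkeeping for this step).

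The ($\Rightarrow$) direction, however, has a genuine gap. Your reduction to ``every $01$-point $e_J$ of the alcoved envelope $Q$ has $J\in\S$'' is fine, but the induction you propose does not get off the ground. First, if $e_J\in Q$ then by construction no hyperplane $H_{i,j,l}$ separates $e_J$ from $P_\S$, so $d(\S,J)=0$; the implication you need is exactly the base case ``$d(\S,J)=0\Rightarrow J\in\S$'', which in the paper is Lemma~\ref{lemma:distance_property} and is proved there only for a \emph{maximal sorted} set $S$, using that $P_S$ is a full-dimensional simplex all of whose facets lie on interval hyperplanes -- for a general sort-closed $\S$, that facets of $P_\S$ are interval hyperplanes is precisely the statement being proved, so invoking it is circular. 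Second, the engine you propose (``$e_J$ outside $P_\S$ forces $|J\cap[i,j]|$ to differ from $|I\cap[i,j]|$ by at least two on some facet-defining interval'') both presupposes interval-defined facets and is false as stated: take $\S=\{\{1,3\}\}\subset{[4]\choose 2}$, which is sort-closed, and $J=\{2,4\}$; then $J\notin\S$ but $J$ is sorted with the unique element of $\S$ (here $e_J$ is indeed cut off, but by the valid inequality $x_2\le 0$, not by any unsortedness). Third, even when an unsorted $I\in\S$ exists, sort-closedness applies only to pairs lying in $\S$, so ``by sort-closedness $I',J'\in\S$'' is unjustified when $J\notin\S$; the analogous step in Lemma~\ref{lemma:thirddistance_property} rests on maximality of the sorted set (if $J\notin S$ and $S$ is maximal sorted, some member of $S$ is unsorted with $J$), which has no counterpart for a general sort-closed $\S$. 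The missing fact -- for sort-closed $\S$, every $J\notin\S$ is separated from $P_\S$ by some interval hyperplane -- is the real content of the \cite{LP} theorem; its proof there uses the sorting of arbitrary sequences of sets (representations $x=\frac1N(e_{I_1}+\cdots+e_{I_N})$), not merely the pairwise sorting and the distance lemmas of Section~\ref{sec:construction_largest}, and your sketch does not supply an argument for it.
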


For $\S\in{[n]\choose k}$, let $d=d(\S)$ denote the dimension of the polytope $P_\S$.

Let us first consider the case when $d=n-1$, that is, $P_\S$ is a full-dimensional polytope inside
the hyperplane $H=\{x_1+\cdots+x_n=k\}$.

Define the {\it normalized volume\/} $\Vol(P_\S)$ of this polytope
as $(n-1)!$ times the usual $(n-1)$-dimensional Eucledian volume of the projection $p(P_\S)\subset\R^{n-1}$,
where the projection $p$ is given by $p:(x_1,\dots,x_n)\mapsto(x_1,\dots,x_{n-1})$.


\begin{theorem}
\label{thmvolume}
Suppose that $\S\in{[n]\choose k}$ is a sort-closed set.  Assume that $d(\S)=n-1$.
A subset $\A\subset\S$ is an arrangement of largest minors in $\S$ if and only if $\A$ is sorted.

All maximal (by inclusion) arrangements of largest minors in $\S$ contain exactly $n$ elements.

The number of maximal arrangements of largest minors in $\S$ equals $\Vol(P_\S)$.
\end{theorem}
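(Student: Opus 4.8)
The plan is to reduce Theorem~\ref{thmvolume} to the previously established Theorem~\ref{thm:sorted} (equivalently, Theorem~\ref{thm:torus_action}) together with the alcoved-polytope theory of \cite{LP}, exploiting the hypothesis that $\S$ is sort-closed of full dimension $n-1$. The first step is the ``$\Rightarrow$'' direction: if $\A\subset\S$ is an arrangement of largest minors in $\S$, then any pair $I,J\in\A$ must be sorted. This follows verbatim from Corollary~\ref{cor:Skandera}: if $I,J\in\A$ were not sorted, then since $\S$ is sort-closed, the sorting $I',J'$ also lies in $\S$, and the strict inequality $\Delta_{I'}\Delta_{J'}>\Delta_I\Delta_J$ on $Gr^+(k,n)$ forces $\Delta_{I'}$ or $\Delta_{J'}$ to exceed the common value of $\Delta_I=\Delta_J$, contradicting that these are the largest minors among $\S$. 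Hence $\A$ is a sorted subset of $\S$, and in particular (by Theorem~\ref{thm:LP}(3), since sorted sets have at most $n$ elements) $|\A|\le n$.

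The second step is the ``$\Leftarrow$'' direction: given a sorted subset $\A\subset\S$, I would construct a point of $Gr^+(k,n)$ realizing $\A$ as the arrangement of largest minors within $\S$. Here I would mimic the proof of Theorem~\ref{thm:sorted}: first complete $\A$ to a maximal sorted set $S_0$ (which is possible since sorted sets are closed under taking subsets and maximal ones exist), but note $S_0$ need not be contained in $\S$. The cleaner route is to use the alcoved structure directly. Since $P_\S$ is alcoved (by the cited Proposition of \cite{LP}) and full-dimensional, its alcove triangulation has top-dimensional simplices $P_{S}$ indexed by the maximal sorted subsets $S\subset\S$ of size $n$, by Theorem~\ref{thm:LP}. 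Pick such an $S$ containing $\A$. Starting from any $A\in Gr^+(k,n)$, rescale columns via the torus action as in Lemma~\ref{lemma:torus_action_first_part} so that $\Delta_I(A')=1$ for all $I\in S$; this is solvable because the vertices $e_I$, $I\in S$, form the $n$ vertices of the simplex $P_S$, making the linear system invertible. By the argument of Theorem~\ref{thm:torus_action}(2) — using the distance function $d(S,J)$ and Lemmas~\ref{lemma:distance_property}--\ref{lemma:thirddistance_property} — every $\Delta_J(A')$ with $J\notin S$ satisfies $\Delta_J(A')<1$; in particular this holds for all $J\in\S\setminus S$, so $S$ itself is already an arrangement of largest minors in $\S$. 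Finally, to single out $\A\subsetneq S$, apply the same $\epsilon$-perturbation as at the end of the proof of Theorem~\ref{thm:sorted}: choose a rescaling $A''_\epsilon$ with $\Delta_I(A''_\epsilon)=1$ for $I\in\A$ and $\Delta_I(A''_\epsilon)=1-\epsilon$ for $I\in S\setminus\A$; by continuity of all Plücker coordinates in $\epsilon$, for small $\epsilon>0$ every $\Delta_J(A''_\epsilon)$ with $J\in\S\setminus\A$ is $<1$. Hence $\A$ is an arrangement of largest minors in $\S$.

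The third step handles the enumerative claims. Combining the two directions, the arrangements of largest minors in $\S$ are exactly the sorted subsets of $\S$; the maximal ones by inclusion are the maximal sorted subsets of $\S$, which are the vertex sets of the top-dimensional alcoves of the triangulation of $P_\S$, each having exactly $n$ elements by Theorem~\ref{thm:LP}(3). The number of such maximal arrangements is therefore the number of alcoves in the triangulation of $P_\S$. Since each alcove is a unimodular simplex in the hyperplane $H$, its normalized volume is $1$, so the number of alcoves equals the normalized volume $\Vol(P_\S)$ as defined via the projection $p$; this is precisely the statement that the alcove triangulation computes the normalized volume of an alcoved polytope, which is part of the \cite{LP} theory (and generalizes Theorem~\ref{thm:LP}(3) from the hypersimplex to arbitrary full-dimensional alcoved polytopes).

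The main obstacle I anticipate is the ``$\Leftarrow$'' construction: one must verify that the distance-function induction of Theorem~\ref{thm:torus_action}(2) goes through when the ambient set is $\S$ rather than all of $[n]\choose k$. The key point — which needs care — is that sort-closedness of $\S$ is exactly what makes Corollary~\ref{cor:Skandera} applicable inside $\S$ (the sorting $I',J'$ of a pair in $\S$ stays in $\S$), and that completing $\A$ to a maximal sorted subset $S$ \emph{within} $\S$ is guaranteed by the alcoved structure of $P_\S$ (every face of the alcove triangulation lies in a top-dimensional alcove because $P_\S$ is full-dimensional). Once these two structural facts are in place, the induction on $d(S,J)$ is identical to the one already carried out, and the $\epsilon$-perturbation step is routine by continuity.
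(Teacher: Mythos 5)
Your proposal is correct and follows essentially the same route as the paper: the forward direction via Corollary~\ref{cor:Skandera} (with sort-closedness keeping the sorting inside $\S$), the reverse direction by completing to the vertex set of a top-dimensional alcove of $P_\S$, rescaling by the torus action, running the distance induction of Theorem~\ref{thm:torus_action}, and finishing with the $\epsilon$-perturbation, with the count of maximal arrangements given by the number of alcoves, i.e.\ $\Vol(P_\S)$. The only (cosmetic) difference is that you invoke Theorem~\ref{thm:torus_action} globally for the completed maximal sorted set and then restrict to $\S$, whereas the paper simply reruns the same induction within $\S$, noting that sortings of pairs in $\S$ stay in $\S$.
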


\begin{proof}
We can apply the same proof as for Theorem~\ref{thm:sorted}

The proof of the $\Rightarrow$ direction of the first claim is exactly the same,
see Section~\ref{sec:inequalities_products_minors}.
For the $\Leftarrow$ direction of the first claim, we can apply the same inductive
argument as in the proof of Theorem~\ref{thm:torus_action}.
Indeed, in Section~\ref{sec:construction_largest},
we only used inequalities of the form $\Delta_I\, \Delta_J < \Delta_{I'}\, \Delta_{J'}$.
If $I$ and $J$ are in a sort-closed set $\S$, then so do their sortings $I'$ and $J'$.  So
the same argument works for sort-closed sets.

The maximal arrangements of largest minors correspond to top-dimensional simplices of the triangulation
of the polytope $P_\S$ into alcoves.  Thus each of these arrangements contains $n$ elements, and the
number of such arrangements is the number of alcoves in $P_\S$, which is equal to $\Vol(P_\S)$.
\end{proof}

This result can be easily extended to the case when $P_\S$ is not full
dimensional, that is, $d(\S)<n-1$.  Let $U$ be the affine subspace spanned by the polytope $P_\S$.
Then intersections of the hyperplanes $x_i+\cdots+x_j=l$, $l\in \Z$, with $U$ form a
$d(\S)$-dimensional affine Coxeter arrangement in $U$.  Let us define the normalized volume form $\Vol_U$ on
$U$ so that the smallest volume of an integer simplex in $U$ is 1.  See \cite{LP} for more details.

Theorem~\ref{thmvolume} holds without assuming that $d(\S)=n-1$.

\begin{corollary}
Let $\S\subset{[n]\choose k}$ be any sort-closed set.
A subset $\A\subset\S$ is an arrangement of largest minors in $\S$ if and only if $\A$ is sorted.
All maximal (by inclusion) arrangements of largest minors in $\S$ contain exactly $d(\S)+1$ elements.
The number of maximal arrangements of largest minors in $\S$ equals $\Vol_U(P_\S)$.
\label{cor:sort_closed_all_dim}
\end{corollary}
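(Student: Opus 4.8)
The plan is to run the proof of Theorem~\ref{thmvolume} (equivalently, of Theorem~\ref{thm:sorted}) essentially verbatim, but working inside the affine span $U$ of $P_\S$ instead of the hyperplane $\{x_1+\cdots+x_n=k\}$, and using $\Vol_U$ in place of the normalized volume. The one genuinely new feature is that the linear systems controlling the torus action become underdetermined rather than square, so I first have to check they are still solvable. The $\Rightarrow$ direction requires nothing new, since it involves no dimension count: if $I,J\in\A$ are not sorted then Corollary~\ref{cor:Skandera} gives $\Delta_{I'}\Delta_{J'}>\Delta_I\Delta_J$ on all of $Gr^+(k,n)$, where $I',J'$ is the sorting of $I,J$; as $\S$ is sort-closed, $I',J'\in\S$, so the common value of the minors on $\A$ cannot be the largest among the $\Delta_K$, $K\in\S$. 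Hence every arrangement of largest minors in $\S$ is sorted.

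For the $\Leftarrow$ direction I would fix a sorted $\A\subseteq\S$. By the cited result of \cite{LP}, $P_\S$ is alcoved, so the affine Coxeter hyperplanes $x_i+\cdots+x_j=l$ cut it into $d(\S)$-dimensional alcoves; complete $\A$ to the vertex set $S$ of a top-dimensional alcove, a maximal sorted subset of $\S$ with $|S|=d(\S)+1$. As in Lemma~\ref{lemma:torus_action_first_part}, rescaling the $n$ columns by positive numbers $t_i=e^{z_i}$ so that the minors $\Delta_I$, $I\in S$, take prescribed values amounts to solving a linear system $\sum_{i\in I}z_i=c_I$ over $I\in S$, whose coefficient rows are the vectors $e_I$, $I\in S$. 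These $d(\S)+1$ vectors are the vertices of a simplex lying in the hyperplane $\{x_1+\cdots+x_n=k\}$, which does not pass through the origin, so they are linearly independent; the system is therefore consistent, with a solution set of dimension $n-d(\S)-1$, and I pick any solution. This produces a point $A'\in Gr^+(k,n)$ with $\Delta_I(A')=1$ for all $I\in S$.

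To see that $\Delta_J(A')<1$ for $J\in\S\setminus S$ I would repeat the distance argument of Section~\ref{sec:construction_largest}, replacing ${[n]\choose k}$ by $\S$ and the hyperplane $H$ by $U$: define $d(S,J)$ to be the number of affine Coxeter hyperplanes that separate the alcove $P_S$ from $e_J$. Lemma~\ref{lemma:distance_property} still holds for $J\in\S$, because if $J\notin S$ then $e_J\in P_\S\subseteq U$ lies strictly outside the simplex $P_S$, which is full-dimensional in $U$, hence $e_J$ violates one of its $d(\S)+1$ facet inequalities, and that facet lies on some hyperplane $x_i+\cdots+x_j=r$ since $P_S$ is an alcove. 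Lemmas~\ref{lemma:seconddistance_property} and~\ref{lemma:thirddistance_property} then carry over verbatim, using that for $J\in\S\setminus S$ some $I\in S$ fails to be sorted with $J$ (otherwise $S\cup\{J\}$ would be a larger sorted subset of $\S$), and that sort-closedness keeps every sorting $I',J'$ inside $\S$. The induction on $d(S,J)$ from the proof of Theorem~\ref{thm:torus_action} then yields $\Delta_J(A')<1$ for all $J\in\S\setminus S$. To drop the maximality hypothesis on $\A$, I would solve the same linear system to get $A''_\epsilon$ with $\Delta_I(A''_\epsilon)=1$ for $I\in\A$ and $\Delta_I(A''_\epsilon)=1-\epsilon$ for $I\in S\setminus\A$; since $A''_\epsilon\to A'$ as $\epsilon\to0$ and all minors depend continuously on $\epsilon$, a small $\epsilon>0$ makes $\Delta_J(A''_\epsilon)<1$ for every $J\in\S\setminus\A$, so $\A$ is an arrangement of largest minors in $\S$.

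For the remaining assertions, a subset of $\S$ is sorted and maximal by inclusion precisely when it is the vertex set of a top-dimensional alcove of the triangulation of $P_\S$; each such alcove has $d(\S)+1$ vertices, and their number equals $\Vol_U(P_\S)$ by the definition of the normalized volume form on $U$. I expect the main obstacle to be exactly the lower-dimensional versions of the distance lemmas, and especially the analog of Lemma~\ref{lemma:distance_property}: one must ensure that a lattice point of $P_\S$ lying outside an alcove is cut off from it by an honest affine Coxeter hyperplane $x_i+\cdots+x_j=r$, rather than merely by a supporting hyperplane of the ambient subspace $U$. Once this point is secured, everything else is routine bookkeeping transferred from the full-dimensional case.
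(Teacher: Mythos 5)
Your proposal is correct and takes essentially the same route as the paper, which obtains Corollary~\ref{cor:sort_closed_all_dim} by rerunning the proofs of Theorem~\ref{thmvolume} and Theorem~\ref{thm:torus_action} inside the affine span $U$ of $P_\S$ with the restricted affine Coxeter arrangement and the volume form $\Vol_U$, citing \cite{LP} for the alcove triangulation of $P_\S$. You in fact supply the two details the paper leaves implicit (consistency of the now-underdetermined rescaling system, via linear independence of the vectors $e_I$, $I\in S$, and separation by genuine hyperplanes $H_{i,j,r}$ arising from facets of alcoves of $P_\S$ in $U$); the only cosmetic refinement is to select the solutions defining $A''_\epsilon$ through a fixed right inverse of that system so that they depend continuously on $\epsilon$.
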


\section{Example: Equalities of matrix entries}
\label{sec:matrix_entries}

Under the map $\phi:\Mat(k,n-k)\to Gr(k,n)$ defined in Section~\ref{sec:from_Mat_to_Gr},
the matrix entries $a_{ij}$ of a $k\times (n-k)$ matrix $A$ correspond to a special subset
of the Pl\"ucker coordinates of $\phi(A)$.
For $i\in[k]$ and $j\in[n-k]$, let $I(i,j) := ([k]\setminus\{k+1-i\})\cup\{j+k\}$.
Then $a_{ij} = \Delta_{I(i,j)}(\phi(A))$.  Let
$$
\S_{k,n} = \{I(i,j) \mid  i\in[k] \textrm{ and } j\in[n-k]\}\subset {[n]\choose k}.
$$

\begin{lemma}\label{sortedcloselemmaone}
Let $i_1, i_2\in[k]$, and $j_1,j_2\in[n-k]$ such that $i_1\leq i_2$.
Then the pair $I(i_1,j_1)$, $I(i_2,j_2)$ is sorted if and only if $j_1\leq j_2$.

If the pair $I(i_1,j_1)$, $I(i_2,j_2)$ is not sorted then its sorting is the pair
$I(i_1,j_2)$, $I(i_2,j_1)$.
\end{lemma}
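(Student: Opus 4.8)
The plan is to read off both claims from an explicit computation of the two set-differences of $I(i_1,j_1)$ and $I(i_2,j_2)$. Write $p_1:=k+1-i_1$ and $p_2:=k+1-i_2$, so that $p_1,p_2\in[k]$ and the hypothesis $i_1\le i_2$ is equivalent to $p_1\ge p_2$. Since
\[
I(i_1,j_1)=\bigl([k]\setminus\{p_1\}\bigr)\cup\{j_1+k\},\qquad I(i_2,j_2)=\bigl([k]\setminus\{p_2\}\bigr)\cup\{j_2+k\},
\]
and both $j_1+k$ and $j_2+k$ exceed every element of $[k]$, one checks directly that if $i_1=i_2$ or $j_1=j_2$ then one of the two set-differences has at most one element, so the pair is automatically sorted; I would dispose of these degenerate cases first. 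In the remaining case $i_1<i_2$ (so $p_1>p_2$) and $j_1\ne j_2$ the set-differences are
\[
I(i_1,j_1)\setminus I(i_2,j_2)=\{p_2,\ j_1+k\},\qquad I(i_2,j_2)\setminus I(i_1,j_1)=\{p_1,\ j_2+k\},
\]
and they satisfy $p_2<p_1\le k<\min(j_1,j_2)+k$.

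For the sortedness criterion, observe that among these four elements the two smallest are $p_2$ (in the first set) and $p_1$ (in the second set), so any interlacing must begin $p_2<p_1<\cdots$, and it extends to a full interlaced chain precisely when the remaining element $j_1+k$ of the first set comes before the remaining element $j_2+k$ of the second set, i.e.\ precisely when $j_1<j_2$. Together with the degenerate subcase $j_1=j_2$ this shows that the pair $I(i_1,j_1),I(i_2,j_2)$ is sorted if and only if $j_1\le j_2$.

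For the second statement assume $i_1<i_2$ and $j_1>j_2$ (by the first part this is exactly the non-sorted case), and compute the sorting by writing the multiset union $I(i_1,j_1)\cup I(i_2,j_2)$ in weakly increasing order: every element of $[k]\setminus\{p_1,p_2\}$ appears twice, $p_1$ and $p_2$ appear once each, and $j_2+k<j_1+k$ appear once each at the very end. Each doubled element contributes one copy to $I'$ and one to $J'$, so $[k]\setminus\{p_1,p_2\}$ lies in both $I'$ and $J'$; it then suffices to find the parities of the positions of $p_2,p_1,j_2+k,j_1+k$. Exactly $2(p_2-1)$ entries precede $p_2$ and exactly $2(p_1-1)-1$ entries precede $p_1$ (one copy of $p_2$ is missing among the entries below $p_1$), so $p_2$ sits in an odd position and $p_1$ in an even position; and since $j_2+k,j_1+k$ are the last two entries, $j_2+k$ is in an odd position and $j_1+k$ in an even position. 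Hence $I'=([k]\setminus\{p_1\})\cup\{j_2+k\}=I(i_1,j_2)$ and $J'=([k]\setminus\{p_2\})\cup\{j_1+k\}=I(i_2,j_1)$, which is the asserted sorting.

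There is no real obstacle here: the whole argument is finite bookkeeping in $[n]$. The only step that needs a moment of care is the parity count in the last paragraph — verifying that $p_1$ and $p_2$ land in positions of opposite parity in the sorted multiset union — but this is immediate from the counts $2(p_2-1)$ and $2(p_1-1)-1$ just recorded, which have opposite parities.
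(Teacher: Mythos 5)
Your proof is correct and takes essentially the same route as the paper: after disposing of the degenerate case $i_1=i_2$, both arguments come down to an explicit computation of the sorted pair, the paper by asserting $I'=([k]\setminus\{k+1-i_1\})\cup\{k+\min\{j_1,j_2\}\}$ and using that a pair is sorted iff it coincides with its own sorting, you by checking interlacing of the two-element differences $\{p_2,j_1+k\}$, $\{p_1,j_2+k\}$ and then locating $p_1,p_2,j_2+k,j_1+k$ by parity in the multiset union. Your parity bookkeeping simply makes explicit the one-line description of $I',J'$ that the paper states without detail.
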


\begin{proof}
If $i_1=i_2$, then the statement holds trivially.
Assume that $i_1 < i_2$. By definition, $I=I(i_1,j_1)$, $J=I(i_2,j_2)$ are sorted if and only if
their sorting, $I', J'$ satisfy $I=I'$ and $J=J'$ or $I=J'$ and $J=I'$.
We have $I'=([k]\setminus\{k+1-i_1\})\cup\{k+\min\{j_1,j_2\}\}$, and hence
the pair $I(i_1,j_1)$, $I(i_2,j_2)$ is sorted if and only if
$\min\{j_1,j_2\}=j_1$, or equivalently $j_1\leq j_2$. The second part of the
statement follows directly from the description of $I'$.
\end{proof}

\begin{remark}
Assume that $i_1<i_2$ and $j_1<j_2$.
The positivity of $2\times 2$ minors
of a totally positive $k\times (n-k)$ matrix $A$
means that
$a_{i_1,j_1} a_{i_2,j_2}> a_{i_1,j_2} a_{i_2,j_1}$.
Equivalently, for the positive Grassmannian $Gr^+(k,n)$, we have
$$
\Delta_{I(i_1,j_1)}
\Delta_{I(i_2,j_2)}
>
\Delta_{I(i_1,j_2)}
\Delta_{I(i_2,j_1)}.
$$
\end{remark}

The next lemma follows immediately from Lemma~\ref{sortedcloselemmaone}
\begin{lemma}
The set $\S_{k,n}$ is sort-closed.
\end{lemma}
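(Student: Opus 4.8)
The claim is that $\S_{k,n}$ is sort-closed, and the plan is to deduce it directly from Lemma~\ref{sortedcloselemmaone}. Recall that $\S_{k,n}$ consists of the sets $I(i,j)$ with $i\in[k]$ and $j\in[n-k]$, and that sort-closedness requires: for every pair of elements of $\S_{k,n}$, the two sets making up their sorting again lie in $\S_{k,n}$.

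So I would take an arbitrary pair of elements of $\S_{k,n}$, say $I(i_1,j_1)$ and $I(i_2,j_2)$, and without loss of generality assume $i_1\leq i_2$ (the roles of the two elements are symmetric, so this is just a relabelling). If this pair is already sorted, then by definition the sorting is the pair itself (up to order), hence both sorted elements are $I(i_1,j_1), I(i_2,j_2)\in\S_{k,n}$, and there is nothing to check. If the pair is not sorted, then Lemma~\ref{sortedcloselemmaone} tells us explicitly that its sorting is the pair $I(i_1,j_2), I(i_2,j_1)$. Since $i_1,i_2\in[k]$ and $j_1,j_2\in[n-k]$, both of these sets are again of the form $I(i,j)$ with the indices in the required ranges, so both lie in $\S_{k,n}$. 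This exhausts the cases and proves the lemma.

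The only thing to be a little careful about is the reduction ``without loss of generality $i_1\le i_2$'': the definition of sort-closed quantifies over unordered pairs $\{I,J\}$ from $\S$, and Lemma~\ref{sortedcloselemmaone} is stated under the hypothesis $i_1\leq i_2$, so one should note that every pair of elements of $\S_{k,n}$ can be written as $I(i_1,j_1),I(i_2,j_2)$ with $i_1\le i_2$ (if the two first indices are equal the lemma's first sentence handles it, recording that the pair is automatically sorted since $j_1\le j_2$ or $j_2\le j_1$). There is no real obstacle here — the content has all been done in Lemma~\ref{sortedcloselemmaone}, which computes the sorting of $I(i_1,j_1),I(i_2,j_2)$ explicitly; the present lemma is just the observation that the $\S_{k,n}$ family is closed under that operation. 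Accordingly the proof is a one-line deduction, exactly as the text's ``follows immediately'' phrasing suggests.
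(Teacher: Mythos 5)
Your deduction is correct and is exactly the argument the paper intends: the paper gives no written proof beyond the remark that the lemma ``follows immediately from Lemma~\ref{sortedcloselemmaone},'' and your case analysis (sorted pairs are their own sorting; unsorted pairs have sorting $I(i_1,j_2)$, $I(i_2,j_1)$, which again lies in $\S_{k,n}$) is precisely that immediate deduction, with the harmless WLOG $i_1\le i_2$ handled correctly.
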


Note that polytope $P_{\S_{k,n}}$ is the product of two simplices
$\Delta^{k-1}\times \Delta^{n-k-1}$.  Its normalized volume is
${n-2\choose k-1}$.

The $k\times (n-k)$ {\it grid graph\/} $G_{k,n-k}= P_k \,\Box\, P_{n-k}$ is
the Cartesian product of two path graphs $P_k$ and $P_{n-k}$ (with $k$ and $n-k$
vertices respectively).
We can draw $G_{k,n-k}$ as the lattice that
has $k$ rows and $n-k$ vertices in each row.  Denote the vertices in
$G_{k,n-k}$ by $v_{ij}$, $1 \leq i \leq k$, $1 \leq j \leq n-k$. A {\it lattice
path\/} is a shortest path in $G_{k,n-k}$ that starts at $v_{11}$ and ends at $v_{k,n-k}$.
Clearly, any lattice path in $G_{k,n-k}$ contains exactly $n-1$ vertices.

Let us associate the element $I(i,j)\in \S_{k,n}$ to a vertex $v_{ij}$ of the grid graph $G_{k,n-k}$.
Then a lattice path corresponds to a subset of $\S_{k,n}$ formed by the elements $I(i,j)$ for
the vertices $v_{ij}$ of the lattice path.

The following result follows Theorem~\ref{thmvolume}.

\begin{corollary}\label{cor:maximalentries}
Every maximal arrangement of largest minors in $\S_{k,n}$ (that is, a maximal
arrangement of largest entries of a totally positive $k\times(n-k)$ matrix $A$)
contains exactly $n-1$ elements.  There are $n-2 \choose k-1$ such maximal arrangements.
They correspond to the lattice paths in the grid $G_{k,n-k}$.
\end{corollary}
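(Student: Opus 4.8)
The plan is to derive Corollary~\ref{cor:maximalentries} as an immediate specialization of Theorem~\ref{thmvolume} to the sort-closed set $\S_{k,n}$, using the combinatorial identifications already set up in this section. First I would invoke Theorem~\ref{thmvolume}: since $\S_{k,n}\subset{[n]\choose k}$ is sort-closed (by the lemma preceding this corollary) and the polytope $P_{\S_{k,n}}=\Delta^{k-1}\times\Delta^{n-k-1}$ is full-dimensional inside the hyperplane $x_1+\cdots+x_n=k$ — indeed $d(\S_{k,n})=(k-1)+(n-k-1)=n-2=n-1$ once we note the product of simplices sits in the appropriate affine subspace, so that we are in the $d=n-1$ case — the theorem tells us that maximal arrangements of largest minors in $\S_{k,n}$ are exactly the maximal sorted subsets of $\S_{k,n}$, that each such subset has exactly $n$ elements... but here I must be careful: $P_{\S_{k,n}}$ has dimension $n-2$, not $n-1$, so the correct reference is Corollary~\ref{cor:sort_closed_all_dim}, which gives maximal arrangements of size $d(\S_{k,n})+1=n-1$ and count $\Vol_U(P_{\S_{k,n}})$.

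Next I would identify the combinatorial data. Using Corollary~\ref{cor:sort_closed_all_dim}, every maximal arrangement of largest minors in $\S_{k,n}$ has exactly $d(\S_{k,n})+1 = (n-2)+1 = n-1$ elements, which matches the stated count $n-1$. For the number of such arrangements, I would use that it equals the normalized volume $\Vol_U(P_{\S_{k,n}})$ of the product of simplices $\Delta^{k-1}\times\Delta^{n-k-1}$; the remark just before the corollary states this normalized volume is ${n-2\choose k-1}$, which is the classical formula for the number of lattice paths (equivalently, the number of top-dimensional alcoves in the alcoved triangulation of $\Delta^{k-1}\times\Delta^{n-k-1}$, i.e.\ shuffles of two chains).

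Finally I would establish the bijection with lattice paths explicitly. Under the assignment $v_{ij}\mapsto I(i,j)$, a subset $\A\subset\S_{k,n}$ is sorted if and only if the corresponding set of vertices $\{v_{ij}\}$ forms a chain in the grid poset: by Lemma~\ref{sortedcloselemmaone}, $I(i_1,j_1)$ and $I(i_2,j_2)$ (with $i_1\le i_2$) are sorted precisely when $j_1\le j_2$, i.e.\ precisely when $(i_1,j_1)\le(i_2,j_2)$ componentwise. Hence maximal sorted subsets correspond to maximal chains in the $k\times(n-k)$ grid poset from $(1,1)$ to $(k,n-k)$, which are exactly lattice paths; each contains $(k-1)+(n-k-1)+1 = n-1$ vertices, confirming the cardinality, and there are ${n-2\choose k-1}$ of them, confirming the count. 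The main (very minor) obstacle is purely bookkeeping: making sure the dimension count $d(\S_{k,n})=n-2$ is handled by the non-full-dimensional version (Corollary~\ref{cor:sort_closed_all_dim}) rather than Theorem~\ref{thmvolume} verbatim, and checking that the normalized volume $\Vol_U$ on the affine span of $\Delta^{k-1}\times\Delta^{n-k-1}$ indeed equals ${n-2\choose k-1}$ as claimed in the preceding remark; everything else is a direct translation through Lemma~\ref{sortedcloselemmaone}.
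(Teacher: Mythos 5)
Your proposal is correct and follows essentially the same route as the paper: the corollary is deduced from the sort-closed machinery of Section~8 applied to $\S_{k,n}$, with the normalized volume ${n-2\choose k-1}$ of $\Delta^{k-1}\times\Delta^{n-k-1}$ giving the count and Lemma~\ref{sortedcloselemmaone} identifying maximal sorted subsets with lattice paths. Your observation that $P_{\S_{k,n}}$ has dimension $n-2$, so the citation should really be Corollary~\ref{cor:sort_closed_all_dim} rather than Theorem~\ref{thmvolume} verbatim, is a sensible (and correct) refinement of the paper's terse attribution.
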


Equivalently, maximal arrangements of largest minors in $\S_{k,n}$
are in bijection with non-crossing spanning trees in the complete bipartite graph
$K_{k,n-k}$.

More generally, let us say  that a bipartite graph $G\subset K_{k,n-k}$ is {\it sort-closed\/} if whenever $G$ contain
a pair of edges $(i_1,j_1)$ and $(i_2,j_2)$ with $i_1<i_2$ and $j_1>j_2$, it also contains two edges $(i_1,j_2)$ and
$(i_2,j_1)$.

The previous result can be generalized to sort-closed graphs.  Let $\S_G=\{I(i,j)\mid (i,j)\in E(G)\}$.
The polytope $P_{\S_G}$ is called the {\it root polytope\/} of the graph $G$.
The following claim follows from Corollary~\ref{cor:sort_closed_all_dim}.

\begin{corollary}
Let $G\subset K_{k,n-k}$ be a sort-closed graph.
Any  maximal arrangement of largest minors in $\S_G$ contains exactly $n-c$ elements, where $c$ is the number of
connected components in $G$.   The number of maximal arrangements of largest minors in $\S_G$ equals the normalized
volume of the root polytope of the graph $G$.
\end{corollary}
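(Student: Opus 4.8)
The plan is to reduce the whole statement to Corollary~\ref{cor:sort_closed_all_dim}; the only real work will be to identify the quantities $d(\S)$ and $\Vol_U(P_\S)$ appearing there with $n-c$ and with the normalized volume of the root polytope of $G$. First I would check that $\S_G$ is a sort-closed subset of ${[n]\choose k}$. Take $I(i_1,j_1),I(i_2,j_2)\in\S_G$ and, without loss of generality, assume $i_1\le i_2$. By Lemma~\ref{sortedcloselemmaone}, if $j_1\le j_2$ the pair is already sorted, so its sorting lies in $\S_G$ trivially; and if $i_1<i_2$ and $j_1>j_2$, the sorting of the pair is $I(i_1,j_2),I(i_2,j_1)$, and since $G$ is sort-closed it contains the edges $(i_1,j_2)$ and $(i_2,j_1)$, so both of these sets lie in $\S_G$. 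Hence $\S_G$ is sort-closed, Corollary~\ref{cor:sort_closed_all_dim} applies, and it already yields: $\A\subset\S_G$ is an arrangement of largest minors if and only if $\A$ is sorted, every maximal such arrangement has $d(\S_G)+1$ elements, and the number of them equals $\Vol_U(P_{\S_G})$. Since $P_{\S_G}$ is by definition the root polytope of $G$ and $\Vol_U$ is by definition the normalized volume form on its affine span $U$, this last quantity is exactly the normalized volume of the root polytope. So the remaining task is to prove $d(\S_G)=n-c-1$.

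For the dimension computation I would argue directly with coordinates. Since $I(i,j)=([k]\setminus\{k+1-i\})\cup\{j+k\}$, the vertices of $P_{\S_G}$ are the vectors $e_{I(i,j)}=e_{[k]}-e_{k+1-i}+e_{j+k}$ for $(i,j)\in E(G)$. A linear functional $x\mapsto\sum_{\ell=1}^{n} c_\ell x_\ell$ is constant on this vertex set precisely when, writing $\alpha_i:=c_{k+1-i}$ for $i\in[k]$ and $\beta_j:=c_{j+k}$ for $j\in[n-k]$, the number $\beta_j-\alpha_i$ takes one common value over all edges $(i,j)$ of $G$; the dimension of the space of such functionals is then $n-d(\S_G)$. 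Counting solutions of this linear system: there is one free parameter for the common value of $\beta_j-\alpha_i$; within each connected component of $G$ that contains an edge the $\alpha$'s and $\beta$'s are pinned down by that component, up to a single additive constant, by propagating the relation along a spanning tree (the non-tree edges impose no new condition, since every cycle in a bipartite graph has even length), giving one more parameter per such component; and each isolated vertex of $G$ contributes one further free coordinate. As $c$ counts isolated vertices among the components, the space of functionals constant on $P_{\S_G}$ has dimension $1+c$, whence $d(\S_G)=n-1-c$ and every maximal arrangement of largest minors in $\S_G$ has $n-c$ elements. (Equivalently, this recovers the standard dimension formula for the edge/root polytope of a bipartite graph, which one could cite instead of reproving.)

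I do not expect a genuine obstacle: the substantive ingredient, Corollary~\ref{cor:sort_closed_all_dim} and behind it the alcoved-polytope machinery of \cite{LP}, is already available, and what remains is the bookkeeping above. The one step needing care is exactly that dimension count together with its conventions: that ``number of connected components'' in the statement is understood to include the isolated vertices of $G$ (this is precisely what makes $n-c$ come out uniformly, as the count transparently shows), and the degenerate case $E(G)=\emptyset$, where $\S_G=\emptyset$, $c=n$, and $n-c=0$, which should simply be recorded separately. Everything else is a direct application of the results already proved in this section.
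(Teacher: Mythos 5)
Your proposal is correct and follows essentially the same route as the paper, which simply deduces this corollary from Corollary~\ref{cor:sort_closed_all_dim} after noting (via Lemma~\ref{sortedcloselemmaone} and the definition of a sort-closed graph) that $\S_G$ is sort-closed. The only addition is that you spell out the dimension count $d(\S_G)=n-c-1$ for the root polytope, which the paper leaves implicit as a standard fact, and your count (one parameter for the common value $\beta_j-\alpha_i$, one per component with an edge, one per isolated vertex) is accurate, including the convention that $c$ counts isolated vertices.
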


We conclude this section with a statement regarding arrangement of smallest
minors in $\S_{k,n}$.  We say that a {\it transposed lattice path\/} is a shortest path in $G_{k,n-k}$
that starts at $v_{1,n-k}$ and ends at $v_{k1}$.

\begin{theorem}
Every maximal arrangement of smallest minors in $\S_{k,n}$ (that is, a maximal
arrangement of smallest matrix entries of totally positive $k\times(n-k)$ matrix $A$)
contains exactly $n-1$ elements.  There are exactly $n-2 \choose k-1$ such maximal arrangements.
They correspond to transposed lattice paths in $G_{k,n-k}$.
 \end{theorem}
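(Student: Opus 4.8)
The plan is to reduce the statement to a combinatorial fact about subsets of the grid graph $G_{k,n-k}$, using the dictionary $a_{ij}=\Delta_{I(i,j)}(\phi(A))$, and then to invoke Theorem~\ref{thm:WSeparated} for the existence half, in the same spirit in which Theorem~\ref{thmvolume} handled the largest minors. Identify a subset $\A\subseteq\S_{k,n}$ with the vertex set $\{v_{ij}: I(i,j)\in\A\}$ of $G_{k,n-k}$, and call $\A$ a \emph{staircase} if it contains no pair $v_{i_1j_1},v_{i_2j_2}$ with $i_1<i_2$ and $j_1<j_2$. The first step is the (routine) observation that staircase subsets are exactly the chains of the grid poset in which $v_{ij}\preceq v_{i'j'}$ whenever $i\leq i'$ and $j\geq j'$: this poset is a product of two chains, its minimum is $v_{1,n-k}$ and its maximum is $v_{k,1}$, its maximal chains are precisely the transposed lattice paths, each maximal chain has $(k-1)+(n-k-1)+1=n-1$ vertices, and the number of maximal chains is ${n-2 \choose k-1}$. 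So it remains to show that the arrangements of smallest minors in $\S_{k,n}$ are exactly the staircase subsets.

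For the ($\Rightarrow$) direction I would argue as follows. Suppose $\A$ is an arrangement of smallest minors in $\S_{k,n}$, realized by $A\in Gr^+(k,n)$, and let $m$ be the common minimal value of $\Delta_I(A)$ over $I\in\A$; writing $a_{ij}=\Delta_{I(i,j)}(A)$ for the entries of the totally positive matrix $\phi^{-1}(A)$, each $a_{ij}$ is $\geq m$, with equality exactly on $\A$. If $\A$ contained $v_{i_1j_1},v_{i_2j_2}$ with $i_1<i_2$ and $j_1<j_2$, then positivity of the $2\times 2$ minor on rows $\{i_1,i_2\}$ and columns $\{j_1,j_2\}$ gives $a_{i_1j_1}a_{i_2j_2}>a_{i_1j_2}a_{i_2j_1}$, i.e.\ $m^2>a_{i_1j_2}a_{i_2j_1}\geq m^2$, a contradiction. (Equivalently, one may invoke Lemma~\ref{sortedcloselemmaone} together with Corollary~\ref{cor:Skandera}: such a pair is not sorted, and its sorting is $I(i_1,j_2),I(i_2,j_1)$.) Hence every arrangement of smallest minors in $\S_{k,n}$ is a staircase.

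The ($\Leftarrow$) direction is the crux. Here the plan is to prove that every staircase $\A\subseteq\S_{k,n}$ is a \emph{weakly separated} subset of $[n]\choose k$, and then to quote Theorem~\ref{thm:WSeparated}. The main computation: for $I(i_1,j_1),I(i_2,j_2)$ with $i_1<i_2$ one has $I(i_1,j_1)\setminus I(i_2,j_2)=\{k+1-i_2,\,j_1+k\}$ and $I(i_2,j_2)\setminus I(i_1,j_1)=\{k+1-i_1,\,j_2+k\}$; since $k+1-i_2<k+1-i_1\leq k<k+1\leq\min\{j_1+k,\,j_2+k\}$, these two two-element sets are nested (hence separated by a chord of the $n$-gon) when $j_1>j_2$, and when $j_1=j_2$ the set-differences are singletons, while if $i_1=i_2$ the differences are singletons as well. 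Consequently any two elements of a staircase $\A$ are weakly separated. Theorem~\ref{thm:WSeparated} then produces $A\in Gr^+(k,n)$ with $\Delta_I(A)$ equal for all $I\in\A$ and strictly smaller than $\Delta_J(A)$ for every $J\notin\A$; translating through $\phi$, the totally positive matrix $\phi^{-1}(A)$ attains its minimal entry value exactly at the positions $(i,j)$ with $v_{ij}\in\A$. Hence every staircase is an arrangement of smallest minors in $\S_{k,n}$.

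Putting the two halves together, the arrangements of smallest minors in $\S_{k,n}$ are precisely the chains of the grid poset; a maximal (by inclusion) such arrangement is contained in a maximal chain, which is itself an arrangement by ($\Leftarrow$), so it must equal that maximal chain. Therefore the maximal arrangements of smallest minors in $\S_{k,n}$ are exactly the transposed lattice paths in $G_{k,n-k}$, each with $n-1$ elements, and there are ${n-2 \choose k-1}$ of them. I expect the main obstacle to be the ($\Leftarrow$) step — specifically, getting the identification of staircase subsets with weakly separated subsets right by computing the set-differences $I(i_1,j_1)\setminus I(i_2,j_2)$ and checking the separation condition in all cases — since this is exactly what licenses the use of Theorem~\ref{thm:WSeparated}; everything else is either routine poset combinatorics or a one-line consequence of positivity of $2\times 2$ minors.
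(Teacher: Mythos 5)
Your proposal is correct, but it takes a genuinely different route from the paper. The paper does not characterize arrangements of smallest entries directly: it constructs a bijection between arrangements of largest entries in $\S_{n-k,n}$ and arrangements of smallest entries in $\S_{k,n}$ by taking entrywise reciprocals of a totally positive matrix, rotating by $90$ degrees, and then invoking the Fallat--Johnson theorem on Hadamard powers to restore total positivity; the count and the lattice-path description are then imported from Corollary~\ref{cor:maximalentries} on largest entries. You instead argue directly on $\S_{k,n}$: the forward direction via positivity of $2\times 2$ minors (equivalently Lemma~\ref{sortedcloselemmaone} plus Corollary~\ref{cor:Skandera}), and the converse by computing the set differences $I(i_1,j_1)\setminus I(i_2,j_2)$ to show that every ``staircase'' (chain in the grid poset) is weakly separated in ${[n]\choose k}$, then quoting Theorem~\ref{thm:WSeparated}; your computation of the differences and of the separation pattern $a_1<b_1<b_2<a_2$ is correct, as is the identification of maximal chains with transposed lattice paths and the count $\binom{n-2}{k-1}$. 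What each approach buys: yours yields a complete characterization of \emph{all} (not only maximal) arrangements of smallest entries as staircases, and even the stronger statement that all other Pl\"ucker coordinates (not just other entries) exceed the common value, at the price of invoking the cluster-algebra machinery behind Theorem~\ref{thm:WSeparated}; the paper's argument is shorter and transfers everything from the largest-entry case by a clean duality, but depends on the external Hadamard-power result and only addresses the maximal arrangements needed for the statement.
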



\begin{proof}
We will describe a bijection between arrangements of largest minors in
$\S_{n-k,n}$ and arrangements of smallest minors in $\S_{k,n}$. Let $A$ be an
$(n-k)\times k$ totally positive matrix in which the maximal entries form an
arrangement of largest minors in $\S_{n-k,n}$, and assume without loss of
generality that the maximal entry is 1.  Consider the matrix $B$ obtained from $A$
inverting its entries and rotating it by 90 degrees.
That is, the entries of $B$ are $b_{ij}=\frac{1}{a_{j,k-i+1}}$. Since $A$ is totally positive, its entries
and $2\times 2$ minors are also positive. Therefore, the entries of $B$ are positive
as well. Moreover, the $2\times 2$ minors of $B$ are also positive, since
$$
\left| \begin{array}{cc}
        b_{ij} & b_{iy} \\
        b_{xj} & b_{xy} \\
      \end{array}
    \right|
=
 \left| \begin{array}{cc}
                    \frac{1}{a_{j,k-i+1}} & \frac{1}{a_{y,k-i+1}} \\
                    \frac{1}{a_{j,k-x+1}} & \frac{1}{a_{y,k-x+1}} \\
                  \end{array}
                \right|=\frac{1}{a_{j,k-i+1}a_{y,k-x+1}}-\frac{1}{a_{y,k-i+1}a_{j,k-x+1}}>0.
$$
The last inequality follows from the fact that
$
\left|
      \begin{array}{cc}
        a_{j,k-x+1} & a_{j,k-i+1} \\
        a_{y,k-x+1} & a_{y,k-i+1} \\
      \end{array}
    \right|>0$.

From \cite[Theorem 7]{Fallat}, it follows that since the entries and the $2\times 2$ minors
of $B$ are positive, there exists some positive integer $m$ such that the
$m$-th Hadamard power of $B$ is totally positive. That is, the matrix $C$ with
entries $c_{ij}=b_{ij}^m$ is totally positive. Note that the largest
entries in $A$ correspond to the smallest entries in $C$. Similarly, we could
start from a matrix $A$ that form maximal arrangement of smallest minors in
$\S_{k,n}$, and by the same procedure obtain a matrix $C$ that form maximal
arrangement of largest minors in $\S_{n-k,n}$. Hence, we obtained a bijection
between arrangements of largest minors in $\S_{n-k,n}$ and arrangements of
smallest minors in $\S_{k,n}$. The proof now follows from
Corollary~\ref{cor:maximalentries}.
\end{proof}

\section{The case of the nonnegative Grassmannian}
\label{sce:nonnegative_Grass}

The next natural step is to extend the structures discussed above to the case of the
nonnegative Grassmannian $Gr^{\geq }(k,n)$.
In other words, let us now allow some subset of Pl\"ucker coordinates to be zero, and try to describe
possible arrangements of smallest (largest) positive Pl\"ucker coordinates.

Many arguments that we used for the positive Grassmannian, will not work for the nonnegative
Grassmannian.  For example, if some Pl\"ucker coordinates are allowed to be zero, then we can no longer conclude
from the 3-term Pl\"ucker relation that $\Delta_{13}\Delta_{24} > \Delta_{12}\Delta_{34}$.

Let us describe these structures in the case $k=2$.
The combinatorial structure of the nonnegative Grassmannian $Gr^{\geq}(2,n)$ is relatively easy.
Its positroid cells \cite{Pos2} are represented by $2\times n$ matrices $A=[v_1,\dots,v_n]$, $v_i\in\R^2$,
with some (possibly empty) subset of zero columns $v_i=0$, and some (cyclically) consecutive columns
$v_r,v_{r+1},\dots,v_s$ parallel to each other.  One can easily remove the zero columns; and assume that $A$ has
no zero columns.  Then this combinatorial structure is given by a decomposition of the set $[n]$
into a disjoint union of cyclically consecutive intervals $[n]=B_1\cup \dots\cup B_r$.
The Pl\"ucker coordinate $\Delta_{ij}$ is strictly positive if $i$ and $j$ belong to two different intervals $B_l$'s;
and $\Delta_{ij}=0$ if $i$ and $j$ are in the same interval.

The following result can be deduced from the results of Section~\ref{sec:k=2}.

\begin{theorem}\label{thmcolumnduplication}
Maximal arrangements of smallest (largest) positive minors correspond to triangulations (thrackles) on the $r$ vertices $1,\dots,r$.
Whenever a triangulation (thrackle) contains an edge $(a,b)$, the corresponding arrangement contains all Pl\"ucker coordinates
$\Delta_{ij}$\,, for $i\in B_a$ and $j\in B_b$.
\end{theorem}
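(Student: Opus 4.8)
The plan is to reduce the statement to the case $k=2$ treated in Section~\ref{sec:k=2} via an ``interval contraction'' trick. By the cyclic symmetry of $Gr^{\geq}(2,n)$ we may cut the $n$-cycle at a boundary between two blocks and so assume that $B_1,\dots,B_r$ are honest integer intervals $B_1=\{1,\dots,n_1\}$, $B_2=\{n_1+1,\dots,n_2\}$, and so on (and we may take $r\geq 3$, the cases $r\leq 2$ being trivial). For a point $A=[v_1,\dots,v_n]$ of this positroid cell, write $v_i=\lambda_i\,w_a$ with $\lambda_i>0$ for $i\in B_a$; then the contracted matrix $\bar A=[w_1,\dots,w_r]$ lies in $Gr^+(2,r)$, and for $i\in B_a$, $j\in B_b$ with $a<b$ one has $\Delta_{ij}(A)=\lambda_i\lambda_j\,\Delta_{ab}(\bar A)>0$, while $\Delta_{ij}(A)=0$ when $a=b$. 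Verifying that $\bar A$ is genuinely a point of $Gr^+(2,r)$ — using total nonnegativity of $A$ together with the fact that columns of $A$ from different blocks are not parallel — is the one place where the normalization of the cell is used, and is the main technical point; once it is in place everything else is routine.

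First I would record that the \emph{underlying graph} $E\subset{[r]\choose 2}$ of any arrangement of smallest positive minors — the set of pairs $\{a,b\}$ such that some $\Delta_{ij}$ with $i\in B_a$, $j\in B_b$ lies in the arrangement — is non-crossing. This is the same computation as in the $\Rightarrow$ direction of Theorem~\ref{thm:non_crossing}: if $\{a,c\}$ and $\{b,d\}$ were crossing edges of $E$ with $a<b<c<d$, pick witnesses $i\in B_a$, $j\in B_b$, $k\in B_c$, $l\in B_d$ occurring in the arrangement; since $B_a,B_b,B_c,B_d$ are consecutive integer intervals we have $i<j<k<l$, and all six Pl\"ucker coordinates on $\{i,j,k,l\}$ are positive, so the $3$-term Pl\"ucker relation $\Delta_{ik}\Delta_{jl}=\Delta_{ij}\Delta_{kl}+\Delta_{il}\Delta_{jk}$ forces the right-hand side to be at least $2c^2$, where $c$ is the common smallest value, contradicting $\Delta_{ik}\Delta_{jl}=c^2$. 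Dually, the underlying graph of an arrangement of largest positive minors is a thrackle on $1,\dots,r$.

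Next I would show that for every non-crossing graph $E$ on $[r]$ the set $\mathcal{J}_E:=\{\,\{i,j\}\mid i\in B_a,\ j\in B_b,\ \{a,b\}\in E\,\}$ is an arrangement of smallest positive minors. Using Theorem~\ref{thm:non_crossing} for $Gr^+(2,r)$, choose $\bar A=[w_1,\dots,w_r]\in Gr^+(2,r)$ whose smallest minors are exactly the $\Delta_{ab}(\bar A)$ with $\{a,b\}\in E$ (all equal to $1$, the rest strictly larger), and then duplicate each column $w_a$ over the block $B_a$ taking all scalars $\lambda_i=1$. The resulting $2\times n$ matrix lies in the prescribed positroid cell, its within-block minors vanish, and its positive minors equal $1$ precisely on $\mathcal{J}_E$; so $\mathcal{J}_E$ is indeed an arrangement of smallest positive minors. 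Running the identical argument with Theorem~\ref{thm:crossing} and a thrackle $H$ on $[r]$ produces an arrangement of largest positive minors equal to $\mathcal{J}_H$.

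Finally I would combine these. If $\mathcal{J}$ is a maximal arrangement of smallest positive minors with underlying graph $E$, extend $E$ to a triangulation $T$ of the $r$-gon; then $\mathcal{J}\subseteq\mathcal{J}_E\subseteq\mathcal{J}_T$, and since $\mathcal{J}_T$ is itself an arrangement of smallest positive minors, maximality of $\mathcal{J}$ forces $\mathcal{J}=\mathcal{J}_T$. In particular $E=T$ is a triangulation and $\mathcal{J}$ contains every $\Delta_{ij}$ with $i\in B_a$, $j\in B_b$ for $\{a,b\}\in T$, which is the asserted ``all copies'' statement. Conversely each $\mathcal{J}_T$ is maximal, since any strictly larger arrangement would, by the first step, have a non-crossing underlying graph strictly containing the triangulation $T$, which is impossible. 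As $T\mapsto\mathcal{J}_T$ is injective (distinct triangulations have distinct edge sets, hence distinct $\mathcal{J}_T$), this yields the desired bijection between maximal arrangements of smallest positive minors and triangulations on $1,\dots,r$; the thrackle/largest-minor case is word-for-word the same with Theorem~\ref{thm:crossing} in place of Theorem~\ref{thm:non_crossing}. The only real obstacle is the bookkeeping in the contraction step — in particular taming the cyclic structure of the blocks so that the contracted point sits in $Gr^+(2,r)$ with the displayed multiplicativity of minors — after which the result is an immediate consequence of Section~\ref{sec:k=2}.
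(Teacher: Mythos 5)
Your argument is correct and is essentially the paper's intended proof: the paper gives no written argument for this theorem, saying only that it ``can be deduced from the results of Section~\ref{sec:k=2}'', and your column duplication/contraction reduction, the three-term Pl\"ucker relation applied to witnesses in four distinct blocks (where all six relevant coordinates are positive, so the vanishing minors cause no trouble), and the maximality argument via $\mathcal{J}\subseteq\mathcal{J}_E\subseteq\mathcal{J}_T$ constitute exactly that deduction. The one step you flag as the ``main technical point'' (that the $\lambda_i$ may be taken positive so that $\bar A\in Gr^+(2,r)$) is easy --- two columns of the same block with scalars of opposite sign would give minors of opposite signs against any column of another block --- and in fact is never used in your final argument, which only needs the Pl\"ucker relation on $A$ itself and the duplication construction in the other direction.
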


We can think that vertices $1,\dots,r$ of a triangulation (thrackle) $G$ have the multiplicities $n_a = |B_a|$.
The total sum of the multiplicities should be $\sum n_a = n$.
The number of minors in the corresponding arrangement of smallest (largest) minors equals the sum
$$
\sum_{(ab)\in E(G)} n_a n_b
$$
over all edges $(a,b)$ of $G$.

Remark that it is no longer true that all maximal (by containment)
arrangements of smallest (or largest) equal minors contain the same number of minors.

\begin{theorem}
A maximal (by size)
arrangement of smallest minors or largest minors in $Gr^{\geq }(2,n)$ contains the following number of elements:
$$
\left\{
\begin{array}{cl}
3m^2 & \textrm{if } n = 3m\\
m(3m+2) & \textrm{if } n = 3m+1 \\
(m+1)(3m+1) & \textrm{if } n = 3m+2
\end{array}
\right.
$$
\end{theorem}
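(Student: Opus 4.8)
By Theorem~\ref{thmcolumnduplication}, a maximal-by-size arrangement of smallest (resp.\ largest) positive minors in $Gr^{\geq}(2,n)$ is obtained by choosing a triangulation (resp.\ thrackle) $G$ on vertices $1,\dots,r$ together with a composition $n = n_1 + \cdots + n_r$ with all $n_a \geq 1$ (the block sizes $|B_a|$), and then the number of minors equals $N(G;n_1,\dots,n_r) := \sum_{(ab)\in E(G)} n_a n_b$. So the task is purely combinatorial: maximize $\sum_{(ab)\in E(G)} n_a n_b$ over all triangulations/thrackles $G$ of $\{1,\dots,r\}$ (for every $r$) and all positive integer compositions of $n$. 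First I would handle the two cases (triangulation, thrackle) and observe they give the same optimum: for a fixed vertex-weighting, a triangulation of the $r$-gon has $2r-3$ edges while a thrackle has only $r$ edges, so at first glance triangulations look better; but one must compare $\sum_{(ab)\in E} n_a n_b$, and I would check that the maximum of the thrackle sum, optimized over $r$ and weights, matches the maximum of the triangulation sum. (Indeed for small $r$ — the triangle $r=3$ is simultaneously the $3$-star thrackle and the unique triangulation — both reduce to maximizing $n_1 n_2 + n_2 n_3 + n_1 n_3$.)

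\textbf{The key reduction.} The plan is to show that the optimum is always attained at $r=3$, i.e.\ the best arrangement comes from splitting $[n]$ into three cyclic blocks and taking a triangle (equivalently the $3$-star thrackle). For triangulations: given any triangulation $G$ of the $r$-gon with weights $n_a$, I would argue that one can repeatedly merge blocks to reduce $r$ without decreasing $N(G;\cdot)$. Concretely, a triangulation has an "ear'' — a vertex $i$ in exactly one triangle $\{i-1,i,i+1\}$; its incident edges contribute $n_i(n_{i-1}+n_{i+1})$, and merging $i$ into a neighbour (say deleting vertex $i$ and adding $n_i$ to $n_{i-1}$, which adds the new edge weight $n_i n_{i+1}$ already present via the chord $(i-1,i+1)$... ) requires a short case check that the total is nondecreasing; more robustly, I would note that the whole sum is bounded by $\binom{n}{2}$ minus the sum over \emph{non-edges}, and that non-edges can only help. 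A cleaner route: for fixed $r$, the sum $\sum_{(ab)\in E(G)} n_a n_b$ over a triangulation is maximized by concentrating weight, and once $r\geq 4$ one can always find a merge that is weakly improving, so the problem collapses to $r=3$. For thrackles the analogous statement is easier since one can attach leaves freely and an odd $(2r+1)$-star with weights reduces, after optimization, to the triangle.

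\textbf{The $r=3$ optimization.} Once reduced to $r=3$, the count is $n_1 n_2 + n_2 n_3 + n_1 n_3$ with $n_1+n_2+n_3 = n$, $n_a \geq 1$ integers, and by symmetry/convexity this is maximized when the $n_a$ are as equal as possible. Writing $n = 3m+s$ with $s\in\{0,1,2\}$: for $s=0$ take $(m,m,m)$ giving $3m^2$; for $s=1$ take $(m,m,m+1)$ giving $m^2 + 2m(m+1) = 3m^2+2m = m(3m+2)$; for $s=2$ take $(m,m+1,m+1)$ giving $m(m+1) + (m+1)^2 + m(m+1) = (m+1)(3m+1)$. A standard exchange argument (moving a unit from the largest block to the smallest weakly increases the sum) shows these balanced partitions are optimal, matching the three cases in the statement.

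\textbf{Main obstacle.} The delicate point is the reduction from general $r$ to $r=3$ for \emph{triangulations}: unlike the thrackle case, a triangulation has many more edges, and it is not a priori obvious that merging blocks never loses more (in lost edges) than it gains (in larger products). I expect this to require either a careful "ear-contraction is weakly monotone'' lemma with a short inequality check (something like $n_i(n_{i-1}+n_{i+1}) \leq$ the gain from adding $n_i$ to the larger of $n_{i-1},n_{i+1}$, using that the chord $(i-1,i+1)$ is already an edge), or a direct global bound showing no triangulation on $r\geq 4$ vertices can beat the best triangle. This monotonicity step is where the real work lies; everything downstream is a routine optimization of a symmetric quadratic over integer compositions.
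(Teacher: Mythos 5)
Your reduction via Theorem~\ref{thmcolumnduplication} to maximizing $\sum_{(ab)\in E(G)} n_a n_b$ over weighted triangulations/thrackles, and your balanced-partition computation at $r=3$, match the paper and are fine. The gap is in your key step, the claim that for $r\geq 4$ ``one can always find a merge that is weakly improving.'' As stated this is false. Take $r=4$, the triangulation of the square with diagonal $(1,3)$, and weights $n_1=n_3=10$, $n_2=n_4=1$ (so $n=22$). The objective is $10+10+10+10+100=140$, but every ear contraction (merging vertex $2$ or $4$ into either neighbor) yields the triangle with weights $(11,10,1)$ up to relabeling, whose value is $110+10+11=131<140$; in general the change under merging an ear $i$ into $i-1$ is $n_i\bigl(\sum_{b\sim(i-1),\,b\notin\{i,i+1\}} n_b - n_{i-1}\bigr)$, which is negative whenever $n_{i-1}$ dominates. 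So no weakly improving merge exists at this configuration, even though the global optimum (the balanced triangle $(8,7,7)$, value $161=\lfloor 22^2/3\rfloor$) is indeed larger. Your fallback remark that the sum is $\binom{n}{2}$ minus non-edge contributions and ``non-edges can only help'' is not a proof either (non-edges strictly reduce the sum; minimizing that loss is the original problem restated).

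The paper avoids this trap by arguing at a maximizer rather than at an arbitrary configuration: treating the weights as nonnegative reals with $\sum n_a=n$, the first-order (Lagrange) conditions at an optimum say that $\sum_{b\sim v} n_b$ is the same constant for every vertex $v$; for a triangulation with $r\geq 5$ these conditions force some $n_x=0$ (reducing to $r-1$ vertices), the case $r=4$ is settled by a direct computation showing the value is at most $\lfloor n^2/3\rfloor$, and $r=3$ gives the stated formulas. The thrackle case is handled similarly (leaves force zero weights, leaving an odd cycle with optimum $n^2/r\leq n^2/3$). To repair your argument you would either have to run your merging/exchange step only at a global maximizer for fixed $r$ (which essentially reproduces the paper's Lagrange analysis), or replace it by a global bound; for instance, since a polygon triangulation contains no $K_4$ and an odd cycle of length at least $5$ contains no triangle, the Motzkin--Straus theorem gives the continuous bounds $n^2/3$ and $n^2/4$ respectively, and integrality then yields $\lfloor n^2/3\rfloor$. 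As written, though, the central reduction to $r=3$ is unproven and its stated form is incorrect.
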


\begin{proof}
We start with smallest minors. By Theorem~\ref{thmcolumnduplication}, we can
assume that the graph $G$ described above corresponds to a triangulation (since
adding an edge to $G$ cannot decrease the expression $ \sum_{(ab)\in E(G)} n_a
n_b $), and we would like to maximize $\sum_{(ab)\in E(G)} n_a n_b$, subject to
the constraint $\sum n_a = n$ (keeping in mind that all the variables are
nonnegative integers). We will use Lagrange multipliers.  Define
$$
f(n_1,n_2,\ldots,n_r)=\sum_{(ab)\in E(G)} n_a n_b-\lambda\left(\sum_{a=1}^r n_a - n\right).
$$
Taking partial derivatives with respect to the variables
$n_1,n_2,\ldots,n_r, \lambda$,
we
get, for every $v \in V(G)$, an equality of the form $\sum_{(vb)\in E(G)}
n_b=\lambda$. We also get $\sum n_a = n$. Now consider several
cases.

(1) $r=3$. In this case, $G$ is a triangle, and the equalities are
$$
n_1+n_2=n_1+n_3=n_2+n_3, \ n_1+n_2+n_3=n.
$$
Thus, if $n=0\pmod{3}$, the solution is $n_1=n_2=n_3=n/3$, and
$n_1n_2+n_1n_3+n_2n_3={n^2}/3$. If $n=1\pmod{3}$ then let $n=3m+1$. Since
$n_1,n_2,n_3$ are integers, the maximal possible value for
$n_1n_2+n_1n_3+n_2n_3$ is $\lfloor n^2/3 \rfloor$, which we attain by choosing
$n_1=n_2=m$, $n_3=m+1$. Finally, if $n=2\pmod{3}$, let $n=3m+2$. Then by choosing
$n_1=n_2=m+1$, $n_3=m$ we obtain again $n_1n_2+n_1n_3+n_2n_3=\lfloor n^2/3 \rfloor$.

(2) $r=4$. In this case, $G$ is $K_4 \backslash e$, and the equalities are
$$
n_1+n_2+n_4=n_2+n_3+n_4=n_1+n_3, \ n_1+n_2+n_3+n_4=n.
$$
Hence $n_1=n_3=n_2+n_4$ and thus, if $n=0\pmod{3}$, the maximal value achieved
at $n_1=n_3=n/3, n_2+n_4=n/3$.  We have
$$
\begin{array}{l}
n_1n_3+n_1n_2+n_2n_3+n_3n_4+n_1n_4=\\[.1in]
\qquad =n^2/9+(n_2+n_4)(n_1+n_3)= n^2/9+(2n/3)(n/3)=n^2/3.
\end{array}
$$
Note that for $n=1,2\pmod{3}$, the maximal value of
$n_1n_3+n_1n_2+n_2n_3+n_3n_4+n_1n_4$ (subject to the constraints) cannot exceed
$\lfloor n^2/3 \rfloor$, and thus for $r=4$ we obtain at most the same maximal
value as in the case $r=3$.

(3) $r\geq 5$. In this case, let $v$ be a vertex of degree 2 in the
triangulation, and let $a$ and $b$ be its neighbors, so $a$ and $b$ are
connected. Note that the edge $(a,b)$ is an ``inner edge'' in the triangulation
(since $r\geq 5$), and hence it is part of another triangle. Let $p \neq v$ be
the vertex that forms, together with $a$ and $b$, this additional triangle, and
hence $p$ is connected to both $a$ and $b$. Since $r\geq 5$, the degree of $p$
is at least 3, so there exists a vertex $x \notin \{a,b,p,v\}$ that is
connected to $p$. Therefore we get in particular that $n_a+n_b \geq
n_a+n_b+n_x$, and since all the $n_i$'s are nonnegative (since those are the
only cases that we consider) we get $n_x=0$. Thus we could equivalently
consider a triangulation on $r-1$ vertices instead of $r$ vertices (having even
less constraints, so the maximal value can only increase). Since this process
holds for any triangulation on at least 5 vertices, we obtain a reduction to
the case $r=4$.

After considering all the possible cases, we conclude that the maximal
arrangement of smallest minors in $Gr^{\geq }(2,n)$ contains the number of
elements that stated in the theorem.

Now consider an arrangement of largest minors. In this case, $G$ is a maximal
thrackle. It is easy to check that if $G$ contains leaves then there exists a
vertex $v$ for which $n_v=0$, and hence we get reduction to smaller number of
vertices. Thus we can assume that $G$ does not contain leaves, and hence $G$ is
an odd cycle. In this case, applying Lagrange multipliers we get that
$n_1=n_2=\ldots=n_r=n/r$, and hence the maximal value of the expression
$\sum_{(ab)\in E(G)} n_a n_b$ is $n^2/r$. Thus we get that the maximal value
achieved in the case $r=3$ (where $G$ is a triangle). We can analyze the cases
$n=0,1,2\pmod{3}$ in the same way as above, and hence we are done.
\end{proof}

\section{Construction of arrangements of smallest minors which are not weakly separated}
\label{sec:not_weakly_separated}

In this section, we discuss properties of pairs of minors which are not weakly separated
but still can be equal and smallest. In order to construct such pairs,
we will use plabic graphs from \cite{Pos2}.
A bijection between plabic graphs and weakly separated sets was constructed in \cite{OPS}.

\subsection{Plabic graphs}
Let us give some definitions and theorems from \cite{Pos2, OPS}.
See these papers for more details.

\begin{definition}
A {\it plabic graph\/} (planar bicolored graph) is a planar undirected graph $G$ drawn inside a disk
with vertices colored in black or white colors.
The vertices on the boundary of the disk, called the {\it boundary vertices,}
are labeled in clockwise order by $[n]$.
\end{definition}

\begin{definition}
Let $G$ be a plabic graph.  A {\it strand} in $G$ is a directed path $T$ such
that $T$
satisfies the following rules of the road: At every black
vertex turn right, and at a white vertex turn left.
\end{definition}

\begin{definition}
\label{definition:reduced}
A plabic graph is called {\it reduced} if the following holds:
\begin{enumerate}
  \item (No closed strands)
The strands cannot be closed loops in the interior of the graph.
  \item
(No self-intersecting strands)
No strand passes through itself. The only exception is that we allow simple
loops that start and end at a boundary vertex $i$.
  \item (No bad double crossings) For any two strands $\alpha$ and $\beta$, if $\alpha$ and $\beta$ have two common vertices $A$ and
$B$, then one strand, say $\alpha$, is directed from $A$ to $B$, and the other strand $\beta$
is directed from $B$ to $A$. (That is, the crossings of $\alpha$ and $\beta$ occur in opposite
orders in the two strands.)
\end{enumerate}
\end{definition}

Any strand in a reduced plabic graph $G$ connects two boundary vertices.

\begin{definition}
\label{def:decorated_strand_perm}
We associate the {\it decorated strand permutation\/} $\pi_G \in S_n$ with a reduced plabic graph $G$,
such that $\pi_G(i)=j$ if the strand that starts at the boundary vertex $i$ ends at the boundary
vertex $j$.  A strand is labelled by $i\in[n]$ if it ends at the boundary vertex $i$
(and starts at the boundary vertex $\pi_G^{-1}(i)$).

The fixed points of $\pi_G$ are colored in two colors, as follows.
If $i$ is a fixed point of $\pi_G$, that is $\pi_G(i)=i$, then the boundary
vertex $i$ is attached to a vertex $v$ of degree 1.   The color of $i$ is the color
of the vertex $v$.
\end{definition}

Let us describe a certain labeling of faces of a reduced plabic graph $G$ with
subsets of $[n]$.
Let $ i \in [n]$ and consider the strand labelled by $i$. By
definition~\ref{definition:reduced}(2), this strand divides the disk into two
parts. Place $i$ in every face $F$ that lies to the left of strand $i$. Apply
the same process for every $i$ in $[n]$. We then say that the label of $F$ is
the collection of all $i$'s that placed inside $F$. Finally, let $F(G)$ be the
set of labels that occur on each face of the graph $G$. In \cite{Pos2} it was
shown that all the faces in $G$ are labeled by the same number of strands,
which we denote by $k$. The following theorem is from~\cite{OPS}.

\begin{theorem} \cite{OPS}
Each maximal weakly separated collection $C \subset {[n]\choose k}$ has the form
$C=F(G)$ for some reduced plabic graph $G$ with decorated strand permutation
$\pi(i) = i + k \pmod n$, $i=1,\dots,n$.  \end{theorem}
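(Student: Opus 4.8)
The plan is to prove the two directions separately: (i) for every reduced plabic graph $G$ with decorated strand permutation $\pi_G=\pi_{k,n}$, where $\pi_{k,n}(i)=i+k\pmod n$, the set $F(G)$ of face labels is a maximal weakly separated collection in ${[n]\choose k}$; and (ii) conversely, every maximal weakly separated collection is of the form $F(G)$ for such a $G$. Direction (i) produces a well-defined map from these plabic graphs to maximal weakly separated collections, and direction (ii) is exactly the surjectivity asserted by the theorem.

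For (i), I would start from the fact recalled above, due to \cite{Pos2}, that every face of $G$ lies to the left of the same number of strands, equal to $k$, so each label in $F(G)$ is a $k$-subset of $[n]$. The substantive step is that any two labels $I,J\in F(G)$ are weakly separated. I would prove this by choosing a path in the dual graph from the face $F_I$ to the face $F_J$: crossing one edge changes the label only by inserting or deleting the element carried by the strand through that edge, so the labels evolve one element at a time, and one reads off that $I\setminus J$ and $J\setminus I$ are exactly the labels of the strands separating $F_I$ from $F_J$. What remains is a statement about the circular order of the boundary endpoints of these separating strands: using the turn-left/turn-right rules of the road together with the three reducedness axioms (no closed strands, no self-intersections, no bad double crossings), one shows these endpoints cannot interleave in the forbidden pattern $a_1<b_1<a_2<b_2$ with the $a$'s in $I\setminus J$ and the $b$'s in $J\setminus I$. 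Finally I would count faces: a reduced plabic graph for the $k(n-k)$-dimensional top positroid cell has exactly $k(n-k)+1$ faces, since by \cite{Pos2} the number of faces of a reduced plabic graph exceeds the dimension of its cell by one. Since every maximal weakly separated collection also has exactly $k(n-k)+1$ elements by purity \cite{OPS, DKK}, a weakly separated collection of that size is automatically maximal, and therefore $F(G)$ is maximal.

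For (ii), the surjectivity, I would argue by connectivity under local moves, and this is the step I expect to be the crux. A square move at a square face of a reduced plabic graph preserves the decorated strand permutation and replaces a single face label $\{a,c\}\cup R$ by $\{b,d\}\cup R$, which is precisely a mutation of the weakly separated collection $F(G)$; the remaining local moves, namely contractions of unicolored edges and removals of degree-two vertices, leave $F(G)$ unchanged. By the move-connectedness theorem of \cite{Pos2}, all reduced plabic graphs with a fixed decorated strand permutation are connected to one another by these moves, so $\{F(G):\pi_G=\pi_{k,n}\}$ is contained in a single mutation-equivalence class of maximal weakly separated collections, and this set is nonempty because one can exhibit an explicit reduced plabic graph of type $\pi_{k,n}$, for instance the one coming from the L-diagram of the top positroid cell, whose face labels already include all $n$ cyclic intervals $\{j,j+1,\dots,j+k-1\}$. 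Invoking the mutation-connectedness of maximal weakly separated collections --- the second half of Leclerc--Zelevinsky's purity conjecture, established in \cite{DKK} and \cite{OPS} --- every maximal weakly separated collection lies in that single class, hence equals $F(G)$ for some reduced plabic graph $G$ with $\pi_G=\pi_{k,n}$.

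The hardest point is the weak-separation lemma in (i): converting the rules of the road into a statement about the circular order of boundary endpoints and excluding the bad crossing configuration inside the pair $I\setminus J$, $J\setminus I$. A secondary concern is whether the mutation-connectedness of maximal weakly separated collections may legitimately be used as a black box; if one wants a self-contained proof of (ii), one must instead reconstruct the plabic graph directly from a maximal weakly separated collection $\mathcal{C}$ --- starting from the fact that $\mathcal{C}$ must contain all $n$ cyclic intervals and then assembling the graph region by region in the spirit of the combined tilings of \cite{DKK} --- and essentially all of the remaining technical effort would go into that reconstruction.
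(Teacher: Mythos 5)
The statement you are proving is quoted in this paper directly from \cite{OPS} with no proof given here, so the only fair comparison is with the argument in \cite{OPS} itself, and against that benchmark your proposal has a genuine gap precisely where the quoted statement lives. The statement is the surjectivity claim --- every maximal weakly separated collection equals $F(G)$ --- which is your direction (ii), and your argument for (ii) rests on the mutation-connectedness of maximal weakly separated collections. But, as this paper itself records, ``Leclerc-Zelevinsky's purity conjecture and the mutation connectedness conjecture follow from the properties of plabic graphs proved in \cite{Pos2}'' --- that is, in \cite{OPS} mutation-connectedness is a \emph{corollary} of the very correspondence you are trying to establish, obtained by transporting the move-connectedness of plabic graphs through the bijection. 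Using it as a black box is therefore circular unless you supply an independent proof; the independent result you can safely cite from \cite{DKK} is purity, and if you want to lean on connectedness you must check carefully that it is proved there without reference to plabic graphs. You do flag this concern yourself, but your fallback --- reconstructing $G$ directly from a maximal weakly separated collection $C$ --- is exactly the substance of the theorem (in \cite{OPS} it is done by an intricate induction, starting from the fact that $C$ contains the $n$ cyclic intervals and building the graph piece by piece; in \cite{DKK} the analogous work is the construction of combined/generalized tilings), and you explicitly leave all of that work undone. So as written, the core of the quoted statement is not proved.

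Direction (i) is also only a sketch at its crucial point: the claim that two face labels of a reduced plabic graph cannot interleave in the forbidden pattern is a nontrivial lemma in \cite{OPS}, requiring a genuine analysis of how the separating strands can meet under the reducedness axioms, and ``one shows these endpoints cannot interleave'' does not yet contain that analysis. (Two smaller points: crossing an internal edge exchanges one element of the face label for another --- each edge carries two strands --- rather than inserting or deleting a single element; and your maximality argument via the face count $k(n-k)+1$ plus purity is fine, provided purity is cited from \cite{DKK} rather than \cite{OPS} to avoid the same circularity.) In summary, the overall architecture (faces of $G$ give a maximal weakly separated collection; moves realize mutations; connectedness gives surjectivity) is a reasonable reorganization, but the two hard ingredients --- the weak-separation lemma for face labels and a non-circular proof of surjectivity --- are precisely the parts deferred or assumed, so the proposal does not yet constitute a proof.
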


Let us describe 3 types of moves on a plabic graph:

\begin{itemize}
\item[(M1)]
Pick a square with vertices alternating in colors, such that all vertices have
degree 3. We can switch the colors of all the vertices as described in Figure~\ref{move1}.

\begin{figure}[h!]
\includegraphics[height=0.5in]{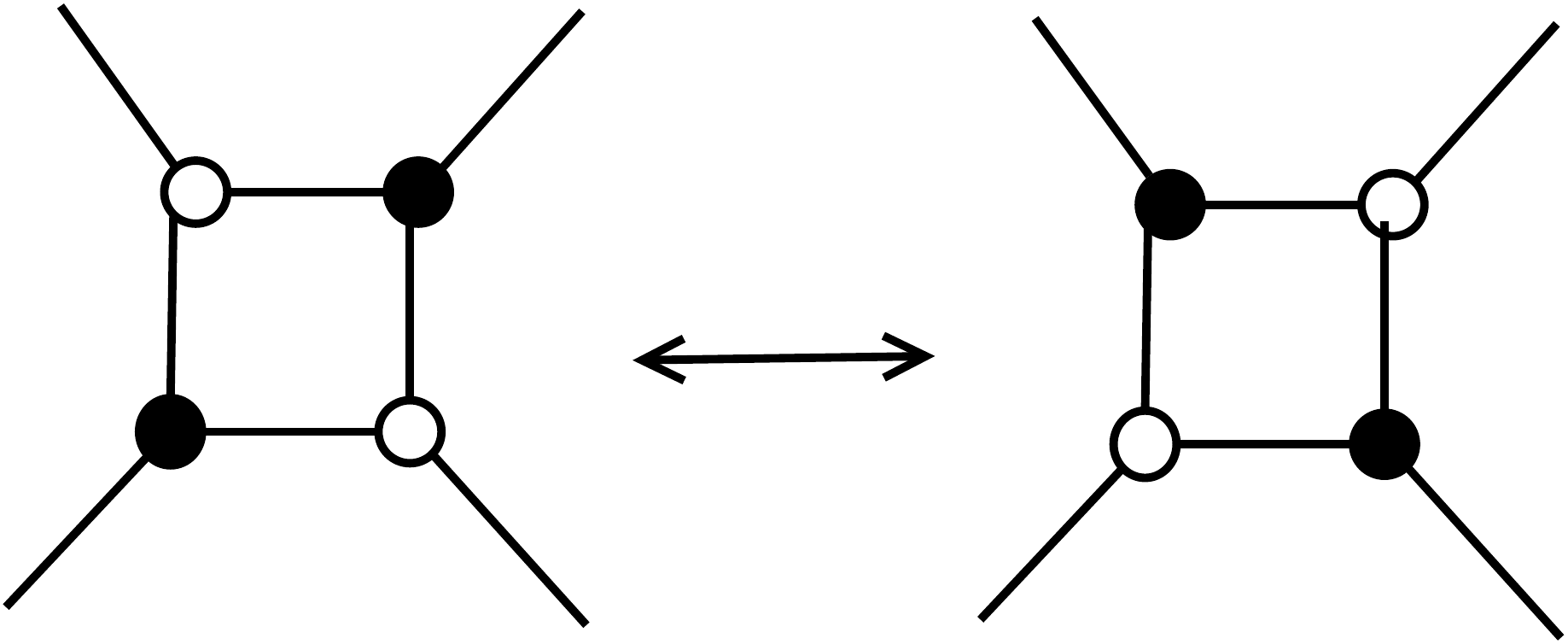}
\caption{(M1) square move}
\label{move1}
\end{figure}

\item[(M2)]
For two adjoint vertices of the same color, we can contract them into one
vertex. See Figure~\ref{move2}.

\begin{figure}[h!]
\includegraphics[height=0.5in]{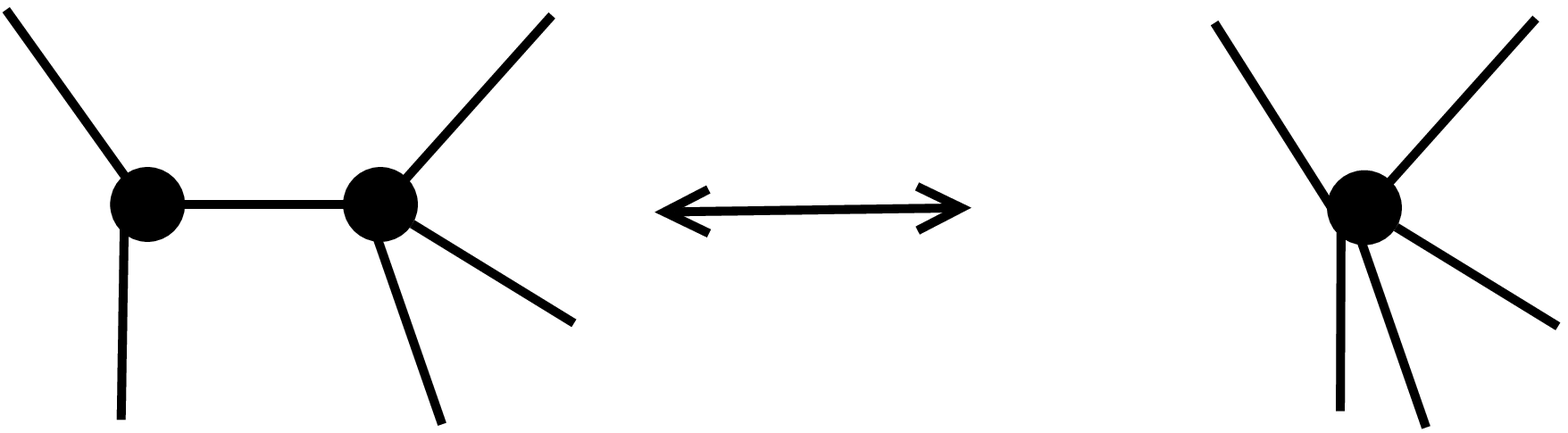}
\caption{(M2) unicolored edge contraction}
\label{move2}
\end{figure}

\item[(M3)]
We can insert or remove a vertex inside any edge. See Figure~\ref{move3}.

\begin{figure}[h!]
\includegraphics[height=0.1in]{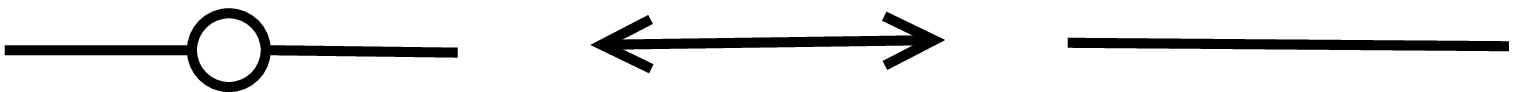}
\caption{(M3) vertex removal}
\label{move3}
\end{figure}
\end{itemize}

The moves do not change reducedness of plabic graphs.

\begin{theorem} {\rm \cite{Pos2}} \
Let $G$ and $G'$ be two reduced plabic graphs with the same number of boundary
vertices. Then $G$ and $G'$ have the same decorated strand permutation
$\pi_G= \pi_{G'}$ if and only if
$G'$ can be obtained from $G$ by a sequence of moves {\rm (M1)--(M3).}
\end{theorem}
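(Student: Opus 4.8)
The statement splits into an easy direction and a hard direction.

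For the direction $(\Leftarrow)$, it suffices to verify that each of the moves (M1)--(M3) leaves the decorated strand permutation unchanged. For (M3), inserting or deleting a degree-two vertex on an edge does not affect the rules of the road, so strands are literally unchanged. For (M2), when two adjacent vertices of the same color are contracted, a strand that used to make two consecutive turns of the same type (two right turns at black vertices, or two left turns at white vertices) now makes a single turn through the merged vertex; one checks in each of the finitely many local configurations that the sequence of edges traversed by every strand, hence its ordered pair of boundary endpoints, is preserved. For (M1), one does an explicit local case analysis on the four strands entering the square: before and after the recoloring, each of the four strands enters on one side and exits through an adjacent side, and the matching of incoming to outgoing half-edges is the same in both pictures; since nothing outside the square changes, $\pi_G$ is preserved. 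The decoration (the colors assigned to fixed points) is visibly preserved because lollipops are untouched by the moves.

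For the direction $(\Rightarrow)$, the strategy is to show that every reduced plabic graph $G$ with decorated strand permutation $\pi$ can be transformed, using moves (M1)--(M3), into one fixed canonical representative $G_\pi$ depending only on $\pi$; then any two reduced graphs with the same $\pi$ are connected through $G_\pi$. First I would use (M2) and (M3) to normalize $G$: absorb all degree-two vertices, then split higher-degree vertices so that the resulting graph is trivalent, with black and white vertices strictly alternating along every edge, and with the only degree-one vertices being the lollipops attached at the fixed points of $\pi$. At this point the strands of $G$ form, inside the disk, an arrangement of curves with $n$ endpoints on the boundary, labeled so that strand $i$ runs from boundary vertex $\pi^{-1}(i)$ to boundary vertex $i$; reducedness (no closed strands, no self-crossings, no bad double crossings) forces this arrangement to behave like a wiring diagram --- distinct strands cross at most once, and the crossing pattern realizes $\pi$ minimally. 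The canonical form $G_\pi$ I would take to be the plabic graph built from the L-diagram of $\pi$ (the object in bijection with decorated permutations in \cite{Pos2}), equivalently the graph obtained from a fixed reduced word for the associated Grassmann necklace data. The content of the hard direction is then the confluence claim: any trivalent reduced $G$ can be driven to $G_\pi$. I would prove this by induction on the number of faces of $G$, exhibiting for a non-canonical $G$ a sequence of moves that strictly decreases a suitable complexity statistic --- for instance the number of crossings that ``disagree'' with the canonical wiring diagram, or an area-type statistic as for reduced words. Here (M1) plays the role of the braid relation $s_i s_{i+1} s_i = s_{i+1} s_i s_{i+1}$ on strands, while (M2)--(M3) implement commutation of distant crossings and the freedom to re-route a strand around a trivalent vertex; the Matsumoto--Tits fact that reduced words for a permutation are connected by braid and commutation moves is the combinatorial engine, transported to the planar setting.

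\textbf{The main obstacle.} The delicate point is precisely this last confluence step. Unlike an ordinary wiring diagram, plabic strands live on a disk (so it is the cyclic, not linear, arrangement that matters), the permutation is \emph{decorated} with lollipops, and the local structure at a vertex is richer than a single crossing, so a naive appeal to the Matsumoto--Tits theorem does not literally apply. One must show that every local obstruction to matching the canonical wiring diagram --- a pair of strands crossing in the wrong order, a strand winding unnecessarily, a vertex with the wrong color pattern --- can be removed by a bounded sequence of (M1)--(M3) that does not create new obstructions elsewhere, i.e.\ one must establish a genuine Newman/diamond-lemma style confluence. Controlling the boundary of the disk, the degenerate strands (simple boundary loops), and the bookkeeping that keeps $\pi_G$ fixed throughout the reduction is where essentially all of the work lies; once the canonical form and the complexity measure are in place, the remainder of the argument is routine.
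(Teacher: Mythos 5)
This theorem is quoted in the paper from \cite{Pos2}; the paper itself offers no proof, so the only meaningful comparison is with Postnikov's original argument, which your plan resembles in outline (easy invariance check for the moves, then reduction of an arbitrary reduced graph to a canonical representative attached to the decorated permutation, e.g.\ the graph of the corresponding L-diagram).

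Your $(\Leftarrow)$ direction is fine. The problem is that in the $(\Rightarrow)$ direction the entire mathematical content is left as a promissory note. You correctly identify the confluence step as ``where essentially all of the work lies,'' but you never supply it: no concrete complexity statistic is defined, and there is no argument that a non-canonical reduced graph always admits a move (or bounded sequence of moves) strictly decreasing such a statistic without creating new obstructions elsewhere. Saying that ``the remainder of the argument is routine'' once these are in place inverts the difficulty --- producing the statistic and the decreasing move is the theorem. Two specifics make the gap visible. First, your proposed induction ``on the number of faces'' cannot drive anything: (M1) preserves the face count, (M2)--(M3) only merge or split faces trivially, and in fact all reduced plabic graphs with the same decorated permutation have the same number of faces, so this quantity cannot serve as an induction parameter or a complexity measure. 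Second, the appeal to Matsumoto--Tits is, as you yourself note, not applicable in the disk setting with decorated fixed points and bicolored vertices; but no substitute is offered. By contrast, the actual proof in \cite{Pos2} carries out a genuine reduction: one isolates a particular strand (e.g.\ the one attached to a chosen boundary vertex), shows by a local analysis using reducedness that moves can be applied to straighten it into a ``corridor'' along the boundary, and then inducts on a smaller plabic graph with one fewer boundary vertex, eventually reaching the canonical (L-diagram) graph. Your sketch gestures at this kind of argument but does not perform the local analysis that makes it work, so as written the hard direction is unproved.
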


\subsection{$p$-Interlaced sets}
Let us associate to each pair $I,J$ of $k$-element subset in $[n]$ a certain
lattice path.

\begin{definition}
Let $I,J\in {[n]\choose k}$ be two $k$-element sets, and let
$r= |I\setminus J| = |J\setminus I|$.
%
Let $(I\setminus J) \cup (J\setminus I)= \{c_1<c_2<\ldots<c_{2r-1}<c_{2r}\}$.
Define $P=P(I,J)$ to be the lattice path in $\Z^2$ that
starts at $P_0=(0,0)$, ends at $P_{2r}=(2r,0)$, and contains up steps $(1,1)$ and
down steps $(1,-1)$, such that if $c_i\in I\setminus J$ (resp.,
$c_i\in J\setminus I$) then the $i$th step of $P$ is an up step (resp., down step).
%
\end{definition}

For example, the paths
$P(\{1,4,7,8\},\{2,3,5,6\})$ and $P(\{1,2,3,6\},\{4,5,7,8\})$ are
shown in Figure~\ref{path}.

\begin{figure}[h!]
\includegraphics[height=0.4in]{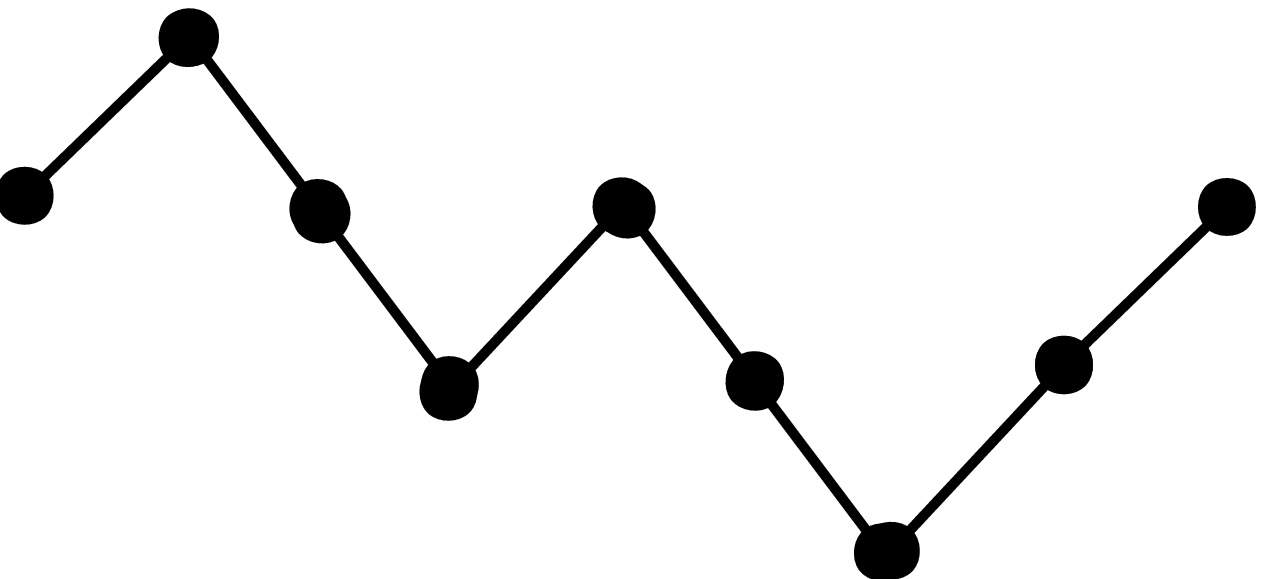}
\qquad\qquad
\includegraphics[height=0.4in]{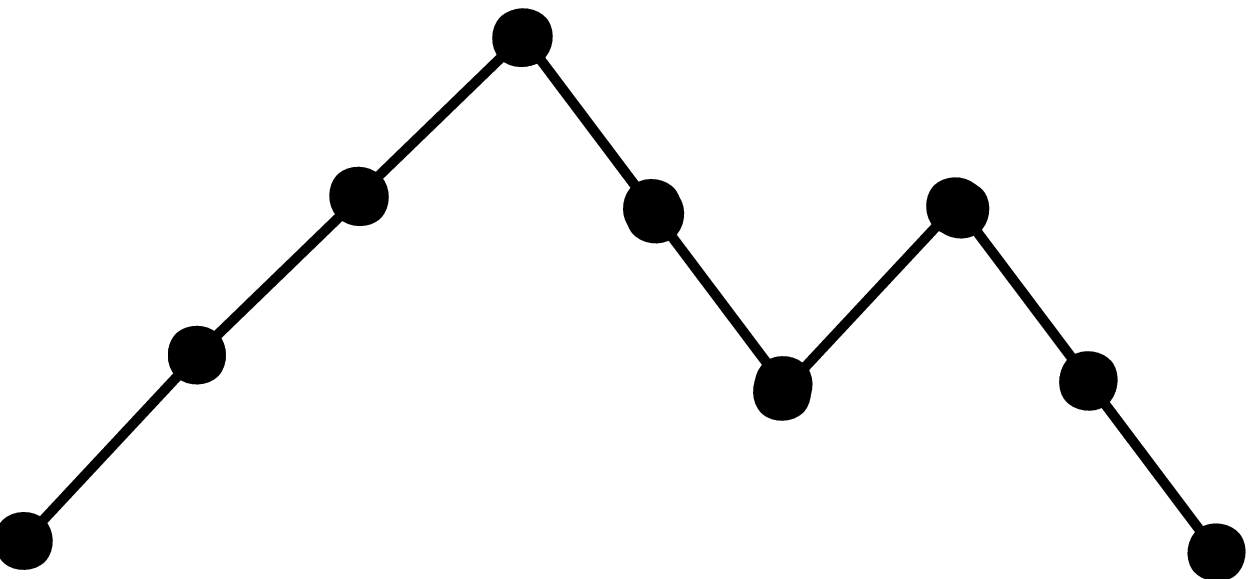}
\caption{The path $P(\{1,4,7,8\},\{2,3,5,6\})$ and its cyclic rotation
$P(\{1,2,3,6\},\{4,5,7,8\})$}
\label{path}
\end{figure}

Clearly, for any pair $I,J\in{[n]\choose k}$, there is a cyclic shift $I',J'$
such that the path $P(I',J')$
is a Dyck path, that is, it never goes below $y=0$.
In the following discussion we will assume, without loss of generality,
that $P(I,J)$ is a Dyck path.



\begin{definition}
A {\it pick} in the path $P=P(I,J)$ is an index $i\in [2r-1]$ such that the $i$th step in $P$
is an up step and the $(i+1)$st step of $P$ is a down step.

We say that the pair $I,J$
is
{\it $p$-interlaced\/} if the number of picks in $P(I,J)$ is $p$.
\end{definition}

For example, the pair $\{1,2,3,6\},\{4,5,7,8\}$ is 2-interlaced.

\begin{remark}
The pair $I,J\in {[n]\choose k}$ is weakly separated if and only if it is
1-interlaced.  The pair $I,J\in {[n]\choose k}$ for which $|I\setminus
J|=|J\setminus I|=r$ is sorted if and only if it is $r$-interlaced.
\end{remark}

For a $p$-interlaced pair $I,J$, the
{\it length parameters\/} $(\alpha_1,\beta_1,\alpha_2,\beta_2,\ldots,\alpha_p,\beta_p)$
are defined as the lengths the $2p$ straight line segments of $P(I,J)$.
(The $\alpha_i$ are the lengths of chains of up steps, and the $\beta_j$ are the
length of chains of down steps.)
For example the length parameters for the pair $\{1,2,3,6\},\{4,5,7,8\}$ are
$(3,2,1,2)$.


%



\subsection{Conjecture and results on pairs of smallest minors}
We are now ready to state a conjecture regarding the structure of pairs of
minors that can be equal and minimal.

\begin{conjecture}
\label{conjecture:minimal_minors}
Let $I,J\in {[n]\choose k}$ such that $P(I,J)$ is a Dyck path.
Then there exists an arrangement of smallest minors $S \subset {[n]\choose k}$
such that $I,J \in S$ if and only if one of the following holds:
\begin{enumerate}
  \item the pair $I,J$ is 1-interlaced (equivalently, it is weakly separated), or
  \item the pair $I,J$ is 2-interlaced and its length parameters
  $(\alpha_1,\beta_1,\alpha_2,\beta_2)$ satisfy $\alpha_i\ne \beta_j$, for any $i$ and $j$.
\end{enumerate}
\end{conjecture}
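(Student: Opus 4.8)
The plan is to prove the two implications separately. For the ``only if'' direction ($\Rightarrow$) I would argue exactly as in Section~\ref{sec:inequalities_products_minors} and translate the obstruction into a strict Skandera inequality. Order the unordered pairs $\{K,L\}\subset{[n]\choose k}$ with a fixed multiset union by declaring $\{K,L\}\preceq\{I,J\}$ iff $r(K,L;a,b)\le r(I,J;a,b)$ for every interval $[a,b]\subset[n]$; by Theorem~\ref{thm:Skandera_strict}, $\{K,L\}\prec\{I,J\}$ forces $\Delta_I\Delta_J>\Delta_K\Delta_L$ on all of $Gr^+(k,n)$, so if $\Delta_I=\Delta_J=a$ then some $\Delta_K<a$, and hence $\{I,J\}$ cannot lie in an arrangement of smallest minors unless $\{I,J\}$ is $\preceq$-minimal. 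It therefore suffices to prove the purely combinatorial statement: \emph{if $P(I,J)$ is a Dyck path that is neither $1$-interlaced nor $2$-interlaced with all $\alpha_i\ne\beta_j$, then $\{I,J\}$ is not $\preceq$-minimal.} Writing $P(I,J)$ via its height function, $r(I,J;a,b)$ is the absolute height difference between the two lattice points of $P(I,J)$ cut out by $[a,b]$, so $\preceq$ becomes pointwise domination of $|h_j-h_i|$ over $\pm1$ lattice paths with the same step multiset. I would prove the statement by a finite case analysis exhibiting an explicit dominating path: merging two adjacent ``mountains'' of $P(I,J)$ (this is what handles a coincidence $\alpha_1=\beta_2$), reflecting a mountain below the axis when $P(I,J)$ returns to $0$ between two mountains (this handles $\alpha_1=\beta_1$), and an analogous local modification when there are $\ge 3$ peaks, checking in each admissible shape that the coincidence (resp.\ the third peak) is exactly what makes the modified path dominate everywhere. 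This is in the same spirit as, and reproves, the $k\le 3$ computations of Section~\ref{sec:inequalities_products_minors}.

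For the ``if'' direction, case (1) is immediate: a $1$-interlaced pair is a weakly separated set, so Theorem~\ref{thm:WSeparated} already supplies a point of $Gr^+(k,n)$ (take $A_S$ for any maximal weakly separated $S\supseteq\{I,J\}$) at which $\Delta_I=\Delta_J$ is the common smallest value. Case (2) is the real content and the main obstacle: here $I,J$ are \emph{not} weakly separated, so they lie in no common maximal weakly separated set and the rigid points $A_S$ of Section~\ref{sec:cluster} are unavailable, and one must build an explicit point of $Gr^+(k,n)$ from scratch.

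For case (2) the plan is to proceed in three steps. (a) Using a coning/direct-sum construction, reduce to $I\cap J=\emptyset$, so that $I\cup J=\{c_1<\cdots<c_{2r}\}$ and $P(I,J)$ has two mountains with length parameters $(\alpha_1,\beta_1,\alpha_2,\beta_2)$, $\alpha_i\ne\beta_j$. (b) Start from a reduced plabic graph $G_0$ for the top cell of $Gr^+(k,n)$ with $I\in F(G_0)$, chosen so that the region controlling the pair is a \emph{honeycomb} graph (mostly hexagonal faces) with a controlled strip of square faces, and apply a prescribed sequence of square moves (M1) — a \emph{chain reaction} — sweeping this region and producing a reduced plabic graph $G_m$ with $J\in F(G_m)$. (c) Follow the Pl\"ucker coordinates through the chain reaction: each square move is one step of the octahedron recurrence, $\Delta_{\{a,c\}\cup R}\mapsto\bigl(\Delta_{\{a,b\}\cup R}\Delta_{\{c,d\}\cup R}+\Delta_{\{a,d\}\cup R}\Delta_{\{b,c\}\cup R}\bigr)/\Delta_{\{a,c\}\cup R}$, and one chooses the free parameters along the honeycomb so that every coordinate produced along the sweep is a subtraction-free Laurent polynomial that is $\ge 1$ while $\Delta_I$ and $\Delta_J$ are both held at $1$; this yields an arrangement of smallest minors containing $\{I,J\}$. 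The octahedron-and-tetrahedron / square-pyramid picture is the bookkeeping device: the coordinates occurring along the chain reaction sit on the lattice points of a stack of square pyramids, the chain reaction is a wave of square moves sweeping the stack, and the inequality $\alpha_i\ne\beta_j$ is exactly the condition that the wave can be launched and later absorbed without ever pushing one of the swept coordinates below $1$.

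The hard part — and the reason this is stated as a conjecture rather than a theorem — is establishing step (c) \emph{uniformly} in the length parameters: producing, for every admissible $(\alpha_1,\beta_1,\alpha_2,\beta_2)$, a chain reaction of square moves whose every intermediate Pl\"ucker coordinate stays $\ge 1$ while $\Delta_I=\Delta_J=1$. My plan is to carry this out completely for small $r$, which already gives the theorem for $k=4,5$ (and, via the isomorphism $Gr(k,n)\simeq Gr(n-k,n)$, for $k=n-4,n-5$), by writing the chain reactions down explicitly — the $Gr^+(4,8)$ and $Gr^+(5,10)$ constructions — and to leave the general case open. One should note that the $\preceq$-minimality lemma of the first paragraph does \emph{not} by itself finish the conjecture: it only rules out pairs outside families (1)--(2), whereas for pairs inside (1)--(2) the absence of a Skandera obstruction does not produce a realizing matrix, and that is precisely what the chain-reaction construction must supply.
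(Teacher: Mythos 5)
Your programme coincides with the paper's own partial treatment of this statement (which is, after all, left as a conjecture there): Skandera inequalities for necessity, Theorem~\ref{thm:WSeparated} for case (1), and honeycomb chain reactions of square moves producing explicit matrices for case (2), carried out only for $k\le 5$ as in Theorem~\ref{kequals4case}. But your reduction of the ``only if'' direction contains a genuine error beyond the gap you acknowledge. First, a sign slip: by Theorem~\ref{thm:Skandera_strict}, $r(K,L;a,b)\le r(I,J;a,b)$ for all intervals (with $\{K,L\}\ne\{I,J\}$) forces $\Delta_K\Delta_L>\Delta_I\Delta_J$, not the reverse; the obstruction to $\{I,J\}$ being smallest comes from a comparator pair that is \emph{less} sorted, i.e.\ with $r(K,L;a,b)\ge r(I,J;a,b)$ everywhere, so the relevant condition is $\preceq$-maximality, not minimality. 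More seriously, even after fixing the direction, the combinatorial lemma you reduce to --- that every pair outside families (1)--(2) admits such a Skandera comparator --- is false. Take the paper's own $k=5$ case (3) pair $I=\{1,2,3,6,8\}$, $J=\{4,5,7,9,10\}$ in ${[10]\choose 5}$, which is $3$-interlaced and which the conjecture asserts is not realizable. Writing the domination conditions on the height function of a candidate comparator path, the intervals $[1,3]$, $[4,5]$, $[1,6]$, $[4,7]$, $[1,8]$ successively force every step of that path to agree with $P(I,J)$ (the only other candidate, the single-mountain path, fails on $[4,7]$), so the unique pair dominating $\{I,J\}$ is $\{I,J\}$ itself and no strict Skandera inequality excludes it. This is precisely why the paper does not use Theorem~\ref{thm:Skandera_strict} for that case but instead expands $\Delta_J$ as a positive Laurent polynomial in the face variables of a specific plabic graph (the computation around Figure~\ref{proofk5}) and extracts the terms $X_1X_2\frac{DB}{AC}+X_3X_4\frac{AC}{DB}\ge 2$. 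So your ``finite case analysis on lattice paths'' can only establish necessity for $2$-interlaced pairs (where the paper already observes condition (2) is forced by Skandera); for $p\ge 3$-interlaced pairs a different mechanism is required, and that is an essential open part of the conjecture, not a checkable list of path surgeries.

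On the ``if'' direction your plan is the paper's: case (1) via Theorem~\ref{thm:WSeparated}, case (2) via chain reactions launched from the square face of a honeycomb, with the uniform positivity of all intermediate Pl\"ucker coordinates (your step (c)) exactly the content of the paper's Conjecture~\ref{conj:smallest_equal_pairs}, proved there only in explicit instances (the $Gr^+(4,8)$ and $Gr^+(5,10)$ matrices, plus an ad hoc $\alpha_2=2$ example). Since you leave the same general case open, your proposal does not prove the statement --- which is consistent with its status as a conjecture --- but the $\Rightarrow$ half of your plan, as written, would not even deliver the necessity direction in full.
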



According to strict Skandera's inequalities (Theorem~\ref{thm:Skandera_strict}),
for a 2-interlaced pair $I,J$, there exists a point
in $Gr^+(k,n)$ for which the product $\Delta_I\Delta_J$ is smaller than any
other product of complimentary minors if and only if $\alpha_i\ne \beta_j$, for any $i$ and $j$.
This shows that, for $2$-interlaced pairs, condition (2) is necessary.


Let us provide some evidence for the validity of the conjecture.
From Theorem~\ref{thm:weakly_separated_123}, the conjecture holds for $1 \leq k \leq
3$.  We will show in this section that the conjecture holds for $k=4,5$ as well,
and then suggest a possible way to generalize the proof for general $k$. The
idea behind the construction is that pairs $I,J$ that appear in the conjecture
are related in a remarkable way via a certain chain of moves of plabic graphs.

\begin{theorem}\label{kequals4case}
Conjecture~\ref{conjecture:minimal_minors} holds for $k\leq 5$ (or $k\geq n-5$)
and any $n$.
\end{theorem}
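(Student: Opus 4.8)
The statement is an equivalence, and its two directions have very different character. For the \emph{necessity} direction --- if $I,J$ lie together in some arrangement of smallest minors then $I,J$ is $1$-interlaced, or $2$-interlaced with $\alpha_i\ne\beta_j$ for all $i,j$ --- the point is that such a pair must be \emph{$r$-maximal}: there is no pair $\{K,L\}\ne\{I,J\}$ whose multiset union equals $I\cup J$ and with $r(I,J;a,b)\leq r(K,L;a,b)$ for every interval $[a,b]\subset[n]$. Indeed, if such $K,L$ existed, Theorem~\ref{thm:Skandera_strict} would give $\Delta_I\Delta_J>\Delta_K\Delta_L$ at every point of $Gr^+(k,n)$, which is impossible when $\Delta_I=\Delta_J$ is the smallest minor (then $\Delta_K,\Delta_L\geq\Delta_I$ forces $\Delta_K\Delta_L\geq\Delta_I\Delta_J$). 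It thus remains to check, for $k\leq5$, that every pair which is neither $1$-interlaced nor ($2$-interlaced with $\alpha_i\ne\beta_j$) fails to be $r$-maximal. For $2$-interlaced pairs this is exactly the computation recorded in the Remark following Conjecture~\ref{conjecture:minimal_minors}; for $p$-interlaced pairs with $p\geq3$ --- of which, for $k\leq5$, there are only finitely many shapes up to cyclic shift and the duality $Gr(k,n)\simeq Gr(n-k,n)$ --- one produces a dominating $\{K,L\}$ by a short direct case analysis (concretely, by swapping the up/down roles of two elements of $(I\setminus J)\cup(J\setminus I)$ straddling a valley of the lattice path $P(I,J)$, which raises some $r(\cdot\,;a,b)$ and lowers none).

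So the real content is the \emph{sufficiency} direction for $k=4,5$: for each $2$-interlaced pair $I,J$ with $\alpha_i\ne\beta_j$, construct a point $A\in Gr^+(k,n)$ at which $\Delta_I(A)=\Delta_J(A)$ equals the minimal Pl\"ucker coordinate. Since $P(I,J)$ depends only on $I\setminus J$ and $J\setminus I$ --- the common part $I\cap J$ and the gaps of $[n]$ merely pad the configuration --- and using cyclic symmetry together with $Gr(k,n)\simeq Gr(n-k,n)$, it suffices to treat a short list of model pairs: for $k=4$, essentially the single pair $I=\{1,2,3,6\}$, $J=\{4,5,7,8\}$ in $Gr^+(4,8)$ (length parameters $(3,2,1,2)$), and for $k=5$ a handful of analogous pairs. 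The plan is to realize the required points from plabic graphs. Start from a reduced plabic graph $G_0$ for the top cell of $Gr^+(k,n)$ --- a honeycomb-type graph with mostly hexagonal faces --- whose face labels form a maximal weakly separated set $S_0$; by Theorem~\ref{thm:WSeparated} (via Theorems~\ref{thm:cluster} and~\ref{thm:cluster_Laurent}), setting all cluster variables $\Delta_L=1$ for $L\in S_0$ produces the point $A_{S_0}$ at which $1$ is the smallest minor. One then applies a carefully chosen \emph{chain reaction} of square moves (M1), tracking the induced cluster mutations $\Delta_{\{a,c\}\cup R}\mapsto(\Delta_{\{a,b\}\cup R}\Delta_{\{c,d\}\cup R}+\Delta_{\{a,d\}\cup R}\Delta_{\{b,c\}\cup R})/\Delta_{\{a,c\}\cup R}$, with the starting values tuned so that the cascade successively exposes faces labelled $I$ and $J$, each still carrying value $1$.

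The heart of the proof --- and the main obstacle --- is verifying that after the chain reaction \emph{no} Pl\"ucker coordinate has fallen below $1$. Two features make this tractable for $k\leq5$. First, the chain reaction is \emph{local}: it is confined to a bounded sub-configuration of $G_0$ whose size depends only on the length parameters, so all minors supported away from the affected region are untouched and remain $\geq1$. Second, inside the region one writes every relevant $\Delta_K$ as a subtraction-free Laurent polynomial with nonnegative integer coefficients and at least two terms in the cluster at hand (Theorems~\ref{thm:cluster} and~\ref{thm:cluster_Laurent}) and combines this positivity with the strict Skandera inequalities (Corollary~\ref{cor:Skandera} and Theorem~\ref{thm:Skandera_strict}) to certify that $\Delta_I=\Delta_J$ is a minimum: the hypothesis $\alpha_i\ne\beta_j$ is precisely what makes the product $\Delta_I\Delta_J$ \emph{not} Skandera-forced to exceed any competing product, so it can indeed be pushed down to the common value. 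For general $k$ the web of Pl\"ucker coordinates affected by such chain reactions is too intricate to control, which is why only the cases $k\leq5$ get settled; for $k\leq5$ the interlacedness classification leaves finitely many model pairs and a bounded local region, so the check, though involved, terminates.

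It is illuminating, and it organizes the bookkeeping, to picture the chain reaction geometrically: its square moves are the octahedron/tetrahedron moves of the octahedron recurrence, arranged in a square pyramid whose base is the relevant part of $G_0$; this display makes the symmetric roles of $I$ and $J$ manifest. Finally, the passage from the minimal case $n=2r+|I\cap J|$ to arbitrary $n$ is handled by padding $G_0$ with generic columns and invoking once more the positivity of the Laurent expansions to see that padding never creates a minor below the target, while the $Gr(k,n)\simeq Gr(n-k,n)$ duality and cyclic symmetry reduce the number of model pairs to be examined by hand.
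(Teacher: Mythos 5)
Your necessity argument has a genuine gap, and it is located exactly at the hardest case of the theorem. You reduce necessity to showing that every pair which is neither $1$-interlaced nor ($2$-interlaced with $\alpha_i\ne\beta_j$) admits a dominating pair $\{K,L\}$ in the sense of Theorem~\ref{thm:Skandera_strict}. But for the $3$-interlaced pair $I=\{1,2,3,6,8\}$, $J=\{4,5,7,9,10\}$ in $Gr^{+}(5,10)$ (case (3) of the paper's proof) no such $\{K,L\}$ exists: the requirements $r(K,L;1,3)\geq 3$, $r(K,L;4,5)\geq 2$, $r(K,L;9,10)\geq 2$ force $\{1,2,3\}$, $\{4,5\}$, $\{9,10\}$ each to lie entirely in one block of the partition $\{K,L\}$ of $[10]$, and the remaining constraints coming from the intervals $[4,7]$ and $[7,10]$ then eliminate every admissible partition except $\{I,J\}$ itself. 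So Skandera's inequalities alone cannot exclude this pair, and your proposed mechanism is also false as stated: swapping the roles of $5$ and $6$ (the two elements straddling a valley of $P(I,J)$) \emph{lowers} $r$ on the interval $[4,5]$ from $2$ to $0$. The paper rules out case (3) by an entirely different argument: it fixes a reduced plabic graph whose face labels include $I$, uses Theorems~\ref{thm:cluster} and~\ref{thm:cluster_Laurent} to write $\Delta_J$ as a Laurent polynomial with positive coefficients in the $25$ remaining face variables, and observes that this expansion contains the terms $X_1X_2\frac{DB}{AC}+X_3X_4\frac{AC}{DB}\geq \frac{DB}{AC}+\frac{AC}{DB}>1$ once all face variables are $\geq 1$, contradicting $\Delta_J=1$. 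Without something of this kind, your necessity direction does not close.

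On the sufficiency side your outline follows the paper's route of discovery (honeycomb plabic graphs and chain reactions of square moves), but as written it is a plan rather than a proof: the assertion that the starting values can be ``tuned'' so that after the chain reaction $\Delta_I=\Delta_J=1$ while every other Pl\"ucker coordinate stays $\geq 1$ is precisely the content of the paper's unproven Conjecture~\ref{conj:smallest_equal_pairs}, not a consequence of Theorem~\ref{thm:cluster_Laurent} plus Corollary~\ref{cor:Skandera}. The paper closes this direction by exhibiting explicit matrices for the remaining model pairs --- one pair for $Gr^{+}(4,8)$ and two pairs for $Gr^{+}(5,10)$ --- and verifying directly that all other maximal minors are $\geq 1$ (e.g., in the $k=4$ case all other coordinates are Laurent polynomials in the honeycomb variable $T$ containing a summand $1$ or $T$, while $\Delta_J=6/T$, so $T=6$ works). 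You would need either such explicit certificates or a proof of the honeycomb conjecture to make this direction rigorous.
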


In order to prove this theorem, we will present several examples of matrices
with needed equalities and inequalities between the minors.  It is not hard to
check directly that these matrices satisfy the needed conditions.  However, it
was a quite nontrivial problem to find these examples.   After the proof we
will explain a general method that allowed us to construct such matrices using
plabic graphs.

\begin{proof}
Because of the duality of $Gr(k,n)\simeq Gr(n-k,n)$, the cases $k\geq n-5$ are
equivalent to the cases $k\leq 5$.  The case $k\leq 3$ follows from
Theorem~\ref{thm:weakly_separated_123}.

Let us assume that $k=4$. If $I \cap J \neq \emptyset$, then the problem reduces to
a smaller $k$ and the result follows from Theorem~\ref{thm:weakly_separated_123}.
Therefore, assume that $I \cap J = \emptyset$.
Without loss of generality we can assume that $n=8$. Using
the cyclic symmetry of the Grassmannian and the results from previous sections,
there is only one case to consider: $I=\{1,2,3,6\}, J=\{4,5,7,8\}$ (all the
other cases follow either from Theorem~\ref{thm:WSeparated}, or from
Theorem~\ref{thm:Skandera_strict}). The matrix bellow satisfies
$\Delta_I=\Delta_J=1$, and $\Delta_K \geq 1$ for all $K\in {[8]\choose 4}$.

$$
  \begin{pmatrix}
    1 & 0 & 0 & 0 & -1 & -7 & -\frac{37}{2} & -13 \\[.05in]
    0 & 1 & 0 & 0 & \frac{3}{2} & \frac{19}{2} & \frac{95}{4} & \frac{33}{2} \\[.05in]
    0 & 0 & 1 & 0 & -\frac{5}{2} & -\frac{27}{2} & -\frac{125}{4} & -\frac{43}{2} \\[.05in]
    0 & 0 & 0 & 1 & 1 & 1 & \frac{3}{2} & 1
  \end{pmatrix}
$$
This proves the case $k=4$.

Let us now assume that $k=5$. As before, if $I \cap J \neq \emptyset$ then we are done.
So assume that $I \cap J= \emptyset$.  Up to cyclic shifts and exchanging the roles of $I$ and $J$,
there are 3 cases to consider:
\begin{enumerate}
  \item $I=\{1,2,3,4,7\}, J=\{5,6,8,9,10\}$
  \item $I=\{1,2,3,4,8\}, J=\{5,6,7,9,10\}$
  \item $I=\{1,2,3,6,8\}, J=\{4,5,7,9,10\}$
\end{enumerate}

We need to show that the pair $I,J$ that appear in cases (1) and (2) can be
equal and minimal, while the pair that appears in case (3) cannot be equal and
minimal. Let $Q=-2955617 + \sqrt{8665656785065}$. Then the following two
matrices provide the constructions for cases (1) and (2) respectively. In each
one of them, $\Delta_I=\Delta_J=1$, and $\Delta_U \geq 1$ for all $U\in
{[10]\choose 5}$.

$$
  \begin{pmatrix}
    1 & 0 & 0 & 0 & 0 & 1 & 6 & 53 & 98311 + \frac{Q}{124} & 237904 \\[.05in]
    0 & 1 & 0 & 0 & 0 & -1 & -5 & -36 & -32768 & -79343 \\[.05in]
    0 & 0 & 1 & 0 & 0 & 1 & 4 & 20 & -\frac{Q}{372} & -19+-\frac{Q}{186} \\[.05in]
    0 & 0 & 0 & 1 & 0 & -1 & -3 & -5 & -6 & -7 \\[.05in]
    0 & 0 & 0 & 0 & 1 & 1 & 1 & 1 & 1 & 1
  \end{pmatrix}
$$

$$
  \begin{pmatrix}
    1 & 0 & 0 & 0 & 0 & 1 & 5 & 25 & 265 & 318 \\[.05in]
    0 & 1 & 0 & 0 & 0 & -1 & -4 & -17 & -128 & -\frac{4869}{32} \\[.05in]
    0 & 0 & 1 & 0 & 0 & 1 & 3 & 10 & 43 & \frac{123761}{2480} \\[.05in]
    0 & 0 & 0 & 1 & 0 & -1 & -2 & -4 & -9 & -10 \\[.05in]
    0 & 0 & 0 & 0 & 1 & 1 & 1 & 1 & 1 & 1
  \end{pmatrix}
$$

We will now consider case (3). Assume in contradiction that there exists $M \in
Gr^{+}(5,10)$ for which $\Delta_I(M)=\Delta_J(M)=1$, and all the other
Pl\"ucker coordinates of $M$ are at least 1. Let $G$ be the plabic graph
appears in Figure~\ref{proofk5}.

\begin{figure}[h!]
\includegraphics[height=3in]{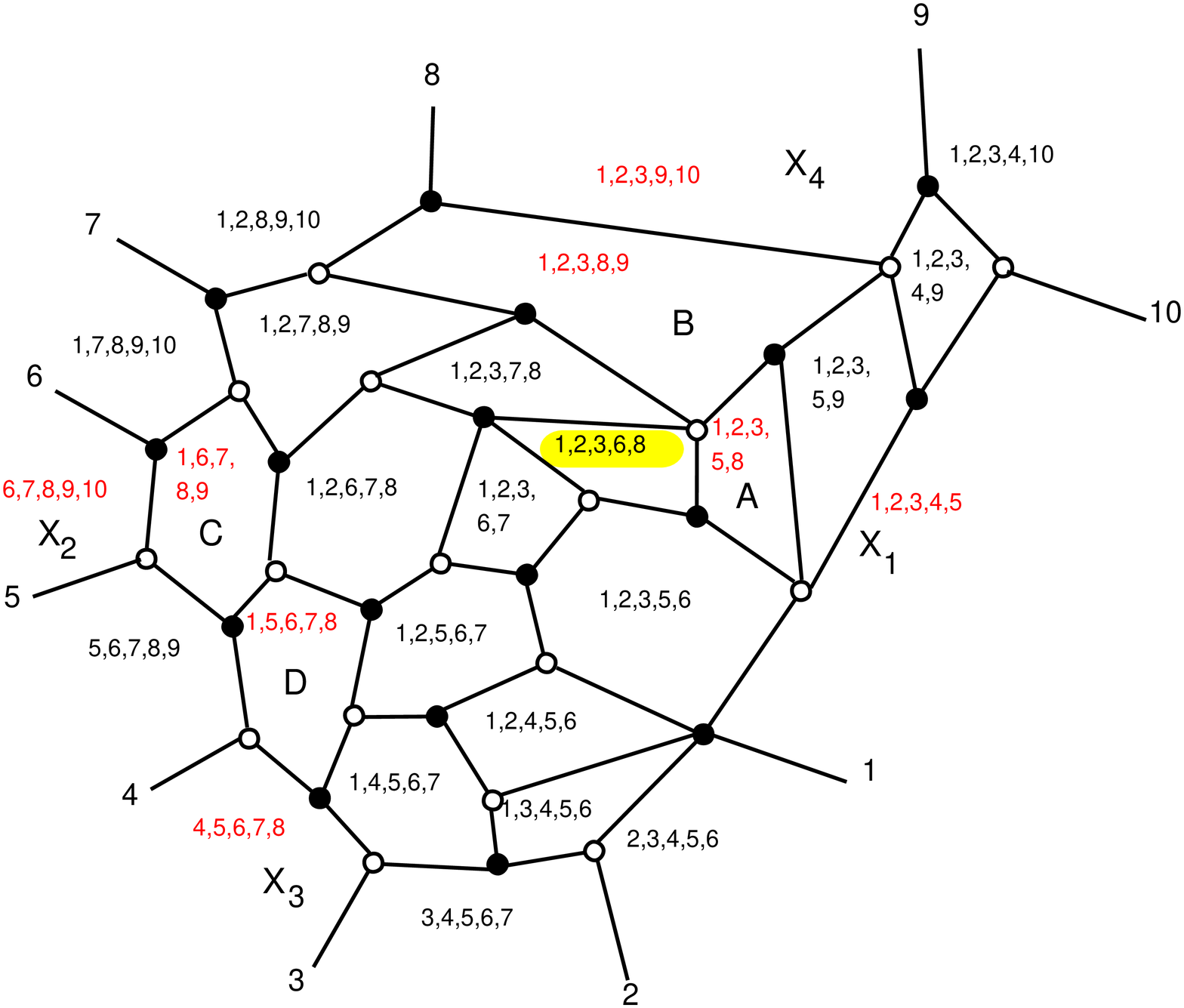}
\caption{The plabic graph $G$}
\label{proofk5}
\end{figure}

The faces of $G$ form a maximal weakly separated collection, and note that one
of the faces is labeled by $I$ (the face with the yellow background). We assume that $\Delta_I=1$, and assign 25
variables to the remaining 25 Pl\"ucker coordinates that correspond to the
faces of $G$ ($G$ has 26 faces). Among those 25 faces, 8 were of particular
importance for the proof, and we assign the following variables to the
corresponding 8 minors:
$\Delta_{\{1,6,7,8,9\}}=C$, $\Delta_{\{1,5,6,7,8\}}=D$, $\Delta_{\{1,2,3,8,9\}}=B$,
$\Delta_{\{1,2,3,5,8\}}=A$, $\Delta_{\{1,2,3,4,5\}}=X_1$, $\Delta_{\{6,7,8,9,10\}}=X_2$,
$\Delta_{\{4,5,6,7,8\}}=X_3$, $\Delta_{\{1,2,3,9,10\}}=X_4$ (those variables also appear
in the figure, where the relevant labels are written in red).

Recall that we
assume that all these variables are equal to or bigger than 1. By
Theorem~\ref{thm:cluster} and the discussion afterwards, any other Pl\"ucker
coordinate can be uniquely expressed through Laurent polynomials in those 25
variables with positive integer coefficients. Using the software {\tt Mathematica},
we expressed $\Delta_J$ in terms of these 25 variables.
The minor $\Delta_J$ is a sum of Laurent monomials\footnote{For the sake of brevity, we omitted here this
expression for $\Delta_J$.  The authors can provide it upon a request.}, and among
others, the following terms appear in the sum:
$X_1X_2\frac{DB}{AC}+X_3X_4\frac{AC}{DB}$. Note that since all the variables
are at least 1, we have

$$\Delta_J> X_1X_2\frac{DB}{AC}+X_3X_4\frac{AC}{DB} \geq
\frac{DB}{AC}+\frac{AC}{DB} > 1.
$$

Therefore, it is impossible to have $\Delta_I(M)=\Delta_J(M)=1$, and we are done.
\end{proof}


\subsection{The $2\times 2$ honeycomb and an example of arrangement of smallest minors which is not weakly
separated}

We would like to explain how we constructed the matrices in the proof above,
using properties of plabic graphs.  We think that these properties may be
generalized and lead to the proof of Conjecture~\ref{conjecture:minimal_minors}.
In addition, these properties also reveal a quite remarkable structure of
plabic graphs that is interesting on its own.

\begin{figure}[h!]
\includegraphics[height=2.2in]{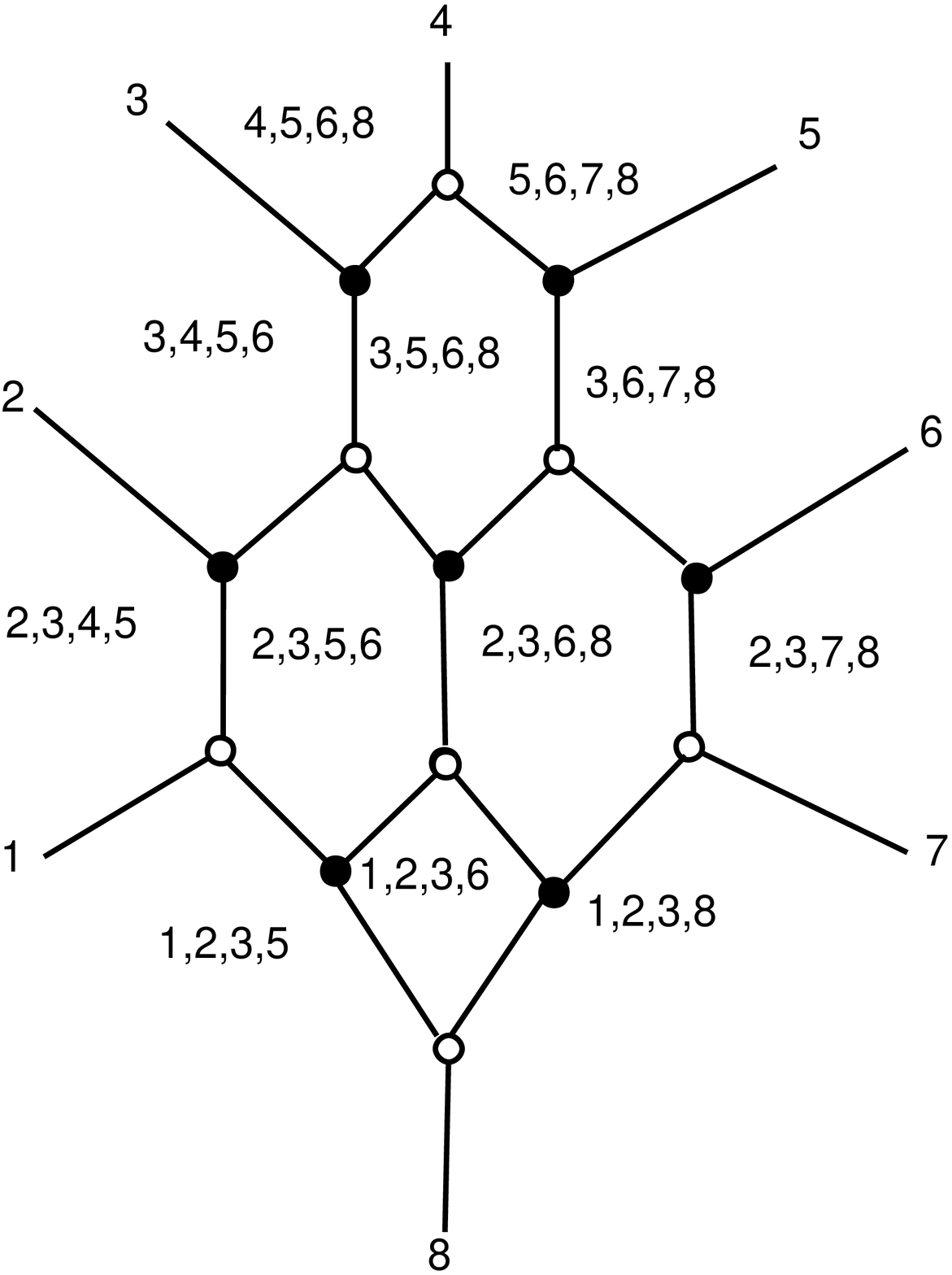}
\caption{The $2\times 2$ honeycomb}
\label{plabic1236}
\end{figure}

Let us first consider the case $k=4$.
Consider the plabic graph $G$ in Figure~\ref{plabic1236}.
The 12 faces of $G$ form a weakly separated collection $C=F(G)$, and one of the
faces (the square face) is labelled by $I=\{1,2,3,6\}$ (which is the minor that appeared in the
proof for the case $k=4$). Consider the four bounded faces in $G$. They
consists of a square face labeled with $I$, and 3 additional hexagonal faces.
We call such a plabic graph the $2\times 2$ honeycomb.
(We will show later how to generalize it.)
One way to complete $C=F(G)$ to a maximal weakly separated collection $C'$ in ${[8]\choose 4}$
is $C'=C \cup \{\{1,2,3,4\},\{4,5,6,7\},\{1,6,7,8\},\{1,2,7,8\},\{1,3,7,8\}\}$.

Assign the variable $T$ to the Pl\"ucker coordinates associated with the 3 hexagonal faces mentioned above, and
assign the value 1 to the Pl\"ucker coordinates of the rest of the faces in $C'$.
Using the software {\tt Mathematica}, we expressed all the other Pl\"ucker coordinates
$\Delta_K$, $K\in {[8]\choose 4}\setminus C'$, as functions (positive Laurent polynomials)
of $T$.  We checked that, for all $K \in {[8]\choose 4}$ such that $K \neq J=\{4,5,7,8\}$, the
Laurent polynomials that corresponds to $\Delta_K$ has either the summand $1$
or the summand $T$.
Therefore, if we require $T \geq 1$, then $\Delta_K \geq 1$
for all $K \neq \{4,5,7,8\}$. Finally, $\Delta_{\{4,5,7,8\}}=\frac{6}{T}$.
Therefore, by choosing $T=6$, we get an element in $Gr^+(4,8)$ for which
$\Delta_I=\Delta_J=1$.

The matrix in the proof is exactly the matrix that
corresponds to the construction described above. Moreover, the collection of
smallest minors in this matrix consists of 15 minors that correspond to
$C'\setminus \{\{2,3,5,6\},\{2,3,6,8\},\{3,5,6,8\}\} \cup \{4,5,7,8\}$.
We verified that this is a maximal arrangement of smallest minors.

\begin{remark}
Conjecture~\ref{conj:max_smallest} states that, for $k=4$, $n=8$ any maximal (by
size) arrangement of smallest minors is weakly separated and has 17 elements.
Here we constructed a maximal (by containment, but not by size) arrangement of smallest
minors $C'\setminus \{\{2,3,5,6\},\{2,3,6,8\},\{3,5,6,8\}\} \cup \{4,5,7,8\}$
that has 15 elements and contains a pair $I,J$, which is not weakly separated.
\end{remark}

\subsection{Mutation distance and chain reactions}


\begin{definition}
Let $I,J\in {[n]\choose k}$ be any two $k$-element subsets in $[n]$.
Define the {\it mutation distance\/} $D(I,J)$ as the minimal number of square moves (M1) needed
to transform a plabic graph $G$ that contains $I$ as a face label into a plabic graph $G'$ that
contains $J$ as a face label.  (The moves (M2) and (M3) do not contribute to the mutation distance.)
\end{definition}

Clearly, $D(I,J)=0$ if and only if $I$ and $J$ are weakly separated.
Indeed, any two weakly separated $k$-element subsets can appear as face labels in the same plabic graph.
The number $D(I,J)$ measures how far $I$ and $J$ are from being weakly separated.

Below we give several examples of pairs $I,J$ and shortest chains of square moves
between plabic graphs containing $I$ and $J$, respectively.




\begin{example}
In the previous subsection, we constructed an arrangement of smallest minors that included the
non weakly separated pair $I=\{1,2,3,6\}$ and $J=\{4,5,7,8\}$. In order to calculate $D(I,J)$,
let us describe a shortest chain of square moves between a pair of plabic graphs that contain
$I=\{1,2,3,6\}$ and $J=\{4,5,7,8\}$, respectively.  Since $I$ and $J$ are not weakly
separated, they cannot appear as face labels of the same plabic graph.  We
start with the plabic graph shown in Figure~\ref{plabic1236} (the $2\times 2$
honeycomb) that contains $I$ as the label of its square face.  We want to transform
it into another plabic that contains $J$ as a face label using minimal possible number
of square moves.  In order to do this, we first apply a square move (M1) on the
face $I=\{1,2,3,6\}$.  Then apply square moves on faces $\{2,3,4,6\}$ and
$\{2,3,6,7\}$ (those faces become squares after appropriate moves of type (M2),
so it is possible to apply a square move on them). Finally, apply a square move
on the face $\{3,4,6,7\}$.  The result is exactly $J=\{4,5,7,8\}$.

We verified, using a computer, that this is indeed a shortest chain of moves
that ``connects'' $I$ with $J$.  Moreover, this is the only shortest chain of moves
for this pair of subsets.
Therefore, $D(I,J)=4$ in this case.
\end{example}

The sequence of moves in the above example can be generalized as follows.   Pick a pair $I,J$ with
length parameters $(\alpha_1,\beta_1,\alpha_2,\beta_2)$ as in case (2) of
Conjecture~\ref{conjecture:minimal_minors} such that $\alpha_2=1$.  Consider
the $\beta_1\times \beta_2$ honeycomb. The structure of such honeycomb
should be clear from examples on Figures~\ref{plabic1236}, \ref{chainreaction}, and~\ref{larghoneycomb}.
This $\beta_1\times \beta_2$ honeycomb has $\beta_1\cdot \beta_2 - 1$ hexagonal
faces, and one square face on the bottom with label $I$.  The square face
serves as a ``catalyst'' of a ``chain reaction'' of moves.  First, we apply a
square move (M1) for $I$.  This transforms the neighbouring hexagons into
squares (after some (M2) moves).  Then we apply square moves for these new
squares, which in turn transforms their neighbours into squares, etc.   In the
end, we obtain a new honeycomb with all hexagonal faces except one square face
on the top with label $J$.


\begin{example}
Figure~\ref{chainreaction} presents an example for the pair $I=\{1,2,3,4,8\}$
and $J=\{5,6,7,9,10\}$.  The length parameters are
$(\alpha_1,\beta_1,\alpha_2,\beta_2)=(4,3,1,2)$.  In this example, the face $A$
of the first honeycomb has label $I$ and the face $F'$ of the last honeycomb
has label~$J$.    Figure~\ref{chainreaction}  shows a shortest chain of square moves
of length $D(I,J)=6$ ``connecting'' $I$ and $J$.
\end{example}

\begin{figure}[h!]
\includegraphics[height=7in]{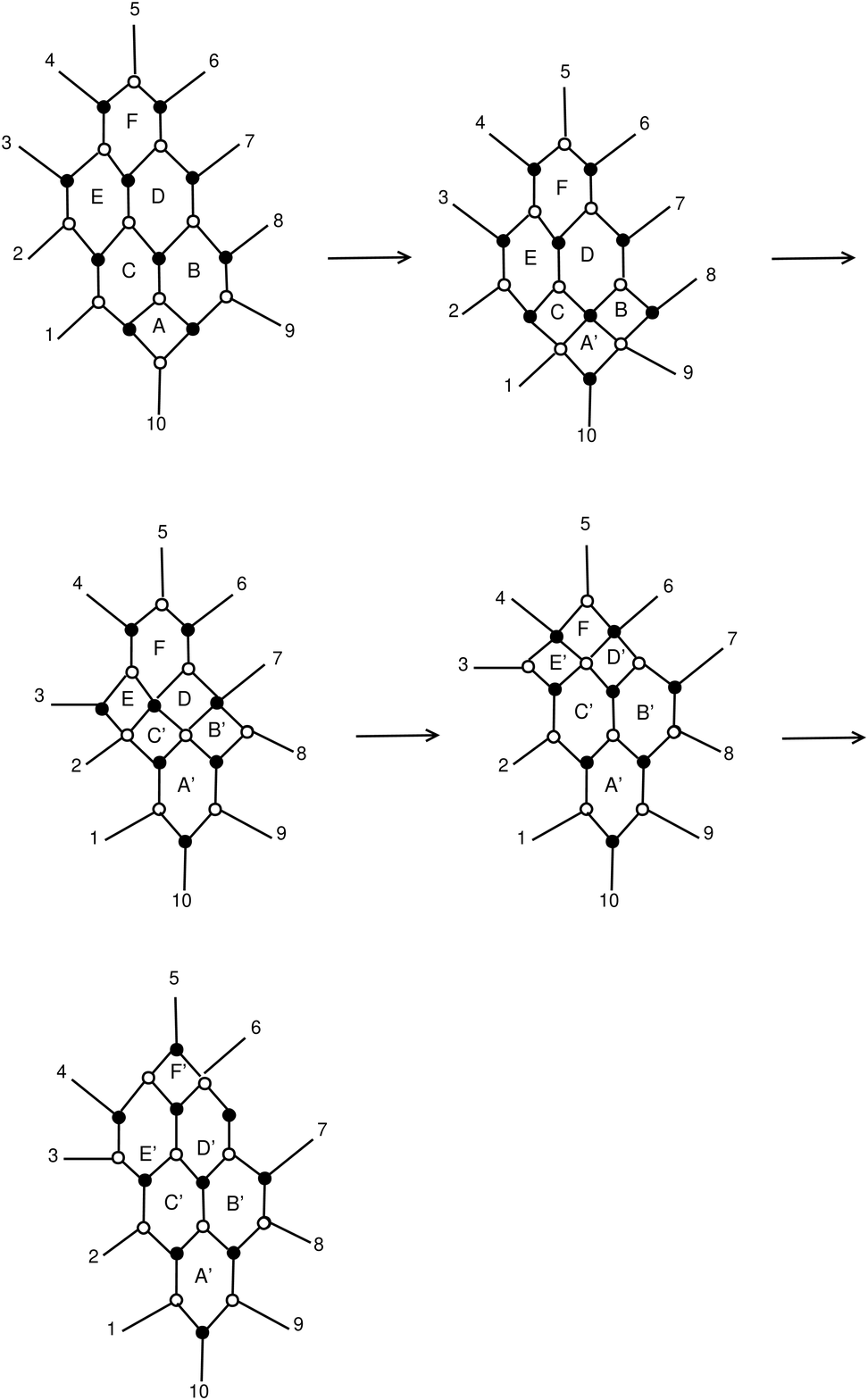}
\caption{The chain reaction in the $3\times 2$ honeycomb.}
\label{chainreaction}
\end{figure}

\begin{conjecture}
Let $G$ be a reduced plabic graph with the strand permutation
$\pi_G(i)=i+k\pmod n$ that contains an $a\times b$ honeycomb $H$
as a subgraph.  Let $I$ be the label of the square face of the honeycomb
$H$, and $J$ be the label of the square faces of the honeycomb $H'$ obtained
from $H$ by the chain reaction.  Assign the value  $T$ to the Pl\"ucker coordinates
corresponding to the hexagons in the honeycomb $H$, and the value 1
to the Pl\"ucker coordinates of the rest of the faces of $G$ (including $\Delta_I=1$).
Express any other Pl\"ucker coordinate $\Delta_K$ as a Laurent polynomial in $T$ with positive
integer coefficients.
Then the degree of the Laurent polynomial, for any $\Delta_K$, $K\ne J$, is at least 0;
that is, it contains at least one term $T^a$ with $a\geq 0$.
Also the degree of the polynomial for $\Delta_J$ is at most $-1$;
that is, it only contains terms $T^b$ with $b\leq -1$.
\label{conj:smallest_equal_pairs}
\end{conjecture}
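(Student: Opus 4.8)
The plan is to reduce the statement to bookkeeping of a single integer per face. By Theorems~\ref{thm:cluster} and~\ref{thm:cluster_Laurent}, the Pl\"ucker coordinates $\Delta_F$, $F\in F(G)$, form a cluster and every $\Delta_K$ is a Laurent polynomial in these $\Delta_F$ with nonnegative integer coefficients. After the specialization $\Delta_F=T$ on the hexagons of the honeycomb $H$ and $\Delta_F=1$ on all other faces of $G$, each $\Delta_K$ lies in $\Z_{\geq0}[T,T^{-1}]$, and the conjecture is exactly that $\deg_T\Delta_K\geq0$ for $K\neq J$ while $\deg_T\Delta_J\leq-1$, where $\deg_T$ denotes the largest exponent of $T$ occurring. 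The calculus I would use is elementary: for nonzero $f,g\in\Z_{\geq0}[T^{\pm1}]$ one has $\deg_T(fg)=\deg_T f+\deg_T g$ and $\deg_T(f+g)=\max(\deg_T f,\deg_T g)$ (positivity forbids cancellation of the top term), and if $g$ divides $f$ in $\Z[T^{\pm1}]$ then $\deg_T(f/g)=\deg_T f-\deg_T g$. A square move (M1) replaces a face label $\Delta_X$ by $\Delta_{X'}=(\Delta_P\Delta_Q+\Delta_R\Delta_S)/\Delta_X$, where $\{P,Q\}$ and $\{R,S\}$ are the two pairs of faces adjacent to the square along opposite edges; since $\Delta_{X'}$ is again a positive Laurent polynomial, writing $\delta_X:=\deg_T\Delta_X$ the calculus gives the tropical (max-plus) octahedron recurrence
\[
\delta_{X'}=\max\bigl(\delta_P+\delta_Q,\ \delta_R+\delta_S\bigr)-\delta_X .
\]
So it suffices to track the integers $\delta_X$ through the chain reaction.

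\textbf{Tracking the chain reaction.}
The chain reaction is a sequence of square moves $G=G_0\to\cdots\to G_m=G'$ (the intervening moves (M2)--(M3) do not change face labels), all confined to the $a\times b$ honeycomb region; faces outside it are never mutated, hence keep $\delta=0$ throughout. The initial data is $\delta=1$ on the $ab-1$ hexagons of $H$, $\delta=0$ on the square face (because $\Delta_I=1$), and $\delta=0$ on all remaining faces. Indexing the faces of the honeycomb by lattice points as suggested by Figures~\ref{plabic1236} and~\ref{chainreaction}, the chain reaction sweeps the region diagonal by diagonal and the recurrence above becomes an explicit piecewise-linear recursion on a triangular subset of $\Z^2$, which I would solve in closed form. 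Alternatively, one can identify the specialized Pl\"ucker coordinates arising along the chain reaction with the output of the octahedron recurrence with the above initial values and use its combinatorial (perfect-matching) solution to read off exponents directly. In either case one expects every label met along the chain reaction to have $\delta\geq0$ (in fact $\delta\in\{-1,0,1\}$ is plausible), with the sole exception of the terminal square label $J$.

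\textbf{The label $J$ and the remaining Pl\"ucker coordinates.}
Solving the recursion should give $\delta_J$ equal to a specific negative integer determined by $a$ and $b$ --- equal to $-1$ for the $2\times2$ honeycomb, matching the value $\Delta_{\{4,5,7,8\}}=6/T$ found in the proof of Theorem~\ref{kequals4case} --- and $\delta_K\geq0$ for every other face label $K$ occurring in some $G_i$. It then remains to handle a general $K\in{[n]\choose k}$ that is not a face label of any $G_i$. Such $K$ is a face label of some reduced plabic graph $G''$ with $\pi_{G''}(i)=i+k\pmod n$ reachable from $G'$ by square moves; one can run the same degree recurrence outward from $G'$, using that the only face with $\delta<0$ is the single face $J$ (its one mutation immediately raises $\delta$ to a nonnegative value, since all neighbours have $\delta\geq0$), to propagate $\delta\geq0$ to every $K\neq J$. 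Equivalently: the curve $T\mapsto M(T)\in Gr^+(k,n)$ has a limit as $T\to\infty$ in the closure of $Gr^{\geq}(k,n)$, $\delta_K$ is a valuation of $\Delta_K$ along this curve, the limit lies in some positroid cell, and one must show the valuation is $\geq0$ everywhere and $<0$ only at $J$.

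\textbf{Main obstacle.}
The reduction and the local degree calculus are routine on top of the cluster structure and Lee--Schiffler positivity already invoked. The genuinely new work is (i) solving the tropical octahedron recurrence started from the honeycomb initial data for general $a,b$ and proving it produces \emph{exactly one} cell of negative value, namely $J$; and (ii) controlling $\deg_T\Delta_K$ for $K$ far from the honeycomb, i.e.\ showing that degrees cannot dip negative again once the chain reaction has terminated. Combining (i) with (ii) uniformly in $k,n$ is where the argument must go beyond bookkeeping, and I expect (ii) to be the harder of the two.
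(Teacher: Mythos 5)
The statement you are addressing is stated in the paper as Conjecture~\ref{conj:smallest_equal_pairs}; the paper offers no proof of it (it only verifies particular instances by computer, e.g.\ the $2\times 2$ honeycomb in $Gr^+(4,8)$ and the examples in the proof of Theorem~\ref{kequals4case}), so there is no argument of the authors to compare yours against. Your reduction is sensible as far as it goes: since each specialized $\Delta_K$ is a Laurent polynomial in $T$ with nonnegative coefficients (Theorems~\ref{thm:cluster} and~\ref{thm:cluster_Laurent}), the exchange relation $\Delta_{X'}\Delta_X=\Delta_P\Delta_Q+\Delta_R\Delta_S$ does tropicalize without cancellation to $\delta_{X'}=\max(\delta_P+\delta_Q,\delta_R+\delta_S)-\delta_X$, and tracking top degrees through mutations is a legitimate way to attack the statement.

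However, what you have written is a strategy, not a proof, and you say so yourself: both items you label (i) and (ii) are exactly the content of the conjecture and are left open. Item (i) — solving the tropical octahedron recurrence from the honeycomb initial data for general $a,b$ and showing the chain reaction produces a single negative-degree face, namely $J$ — is asserted "in closed form" but no closed form, induction, or matching-model computation is supplied. Item (ii) is worse than you suggest: the propagation mechanism you sketch ("run the degree recurrence outward from $G'$, using that the only face with $\delta<0$ is $J$") does not work as stated, because the tropical recurrence can create new negative degrees from entirely nonnegative data (e.g.\ $\delta_P=\delta_Q=\delta_R=\delta_S=0$, $\delta_X=1$ gives $\delta_{X'}=-1$). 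So nonnegativity of the $\delta$'s on one cluster is not an invariant under mutation, and the claim that $\delta_K\geq 0$ for \emph{every} $K\neq J$ — quantified over all of ${[n]\choose k}$, including labels never seen along the chain reaction — requires a genuinely global argument (a valuation/positroid-degeneration analysis as you hint, or an explicit combinatorial formula for $\deg_T\Delta_K$), none of which is carried out. Also note the conjecture demands more than $\deg_T\Delta_J\leq -1$ being consistent with the $2\times2$ case: it must be proved for all $a\times b$ honeycombs sitting inside an arbitrary reduced plabic graph $G$ with $\pi_G(i)=i+k$, and your bookkeeping never uses, or controls, the part of $G$ outside the honeycomb. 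As it stands the proposal identifies the right reduction but leaves the conjecture unproved.
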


This conjecture means that there exists a unique positive value of $T$ such
that $\Delta_I=\Delta_J=1$, and all the other Pl\"ucker coordinates $\Delta_K\geq
1$.  This provides a construction of matrix for an arrangement of smallest
minors containing $I$ and $J$, for any pair $I,J$ as in part (2) of
Conjecture~\ref{conjecture:minimal_minors} with $\alpha_2=1$.



\begin{example}
Let us give another example for the case $\alpha_2=1$.  The $4\times 3$ honeycomb that
appears in Figure~\ref{larghoneycomb} corresponds to the pair
$I=\{1,2,3,4,5,6,11\}$ and $J=\{7,8,9,10,12,13,14\}$.  The length parameters of
$P(I,J)$ are $(6,4,1,3)$.   In this case we need $D(I,J)=12$ mutations.
\end{example}

\begin{figure}[h!]
\includegraphics[height=2.5in]{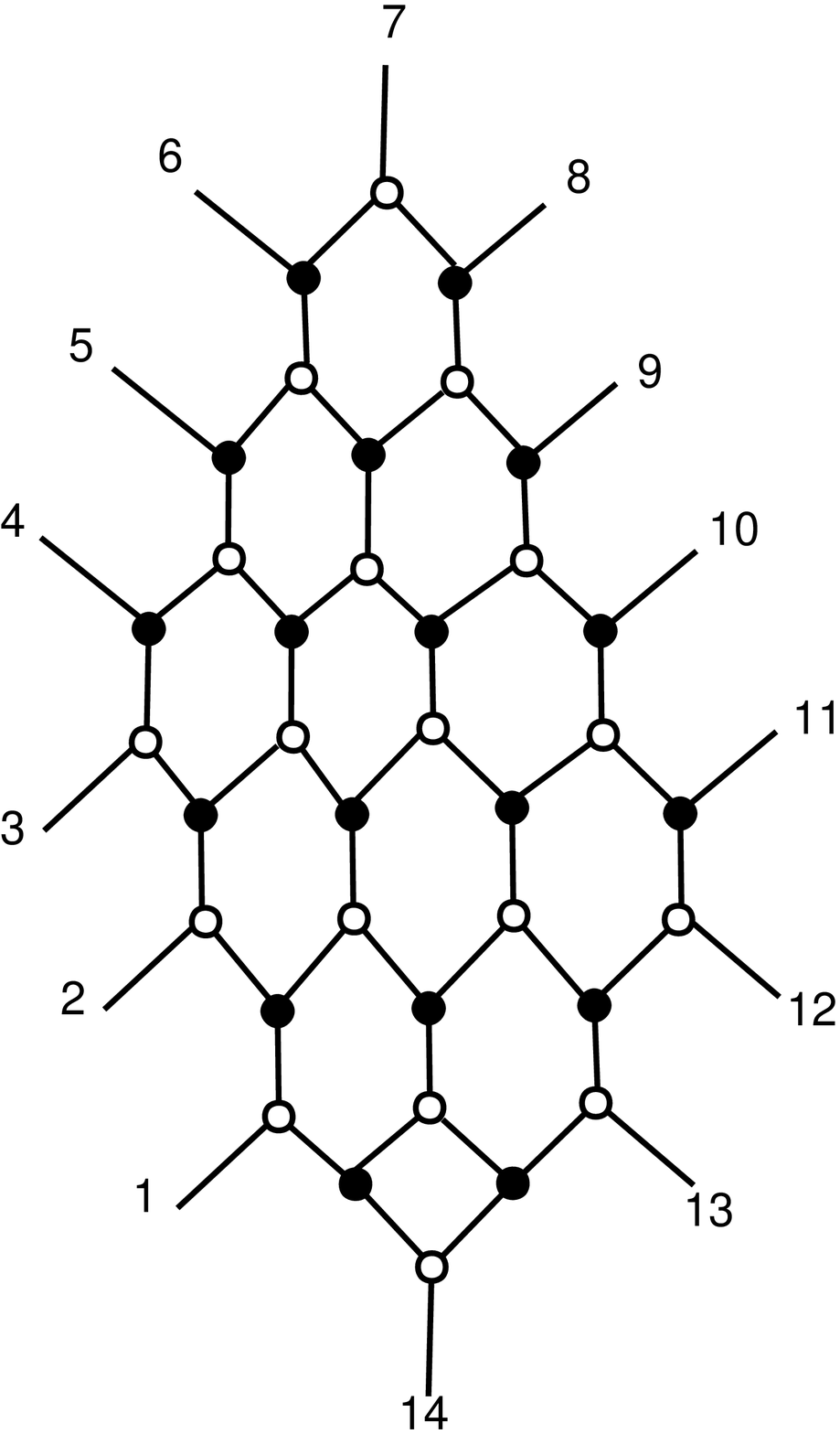}
\caption{The $4\times 3$ honeycomb.}
\label{larghoneycomb}
\end{figure}

\begin{figure}[h!]
\includegraphics[height=1.6in]{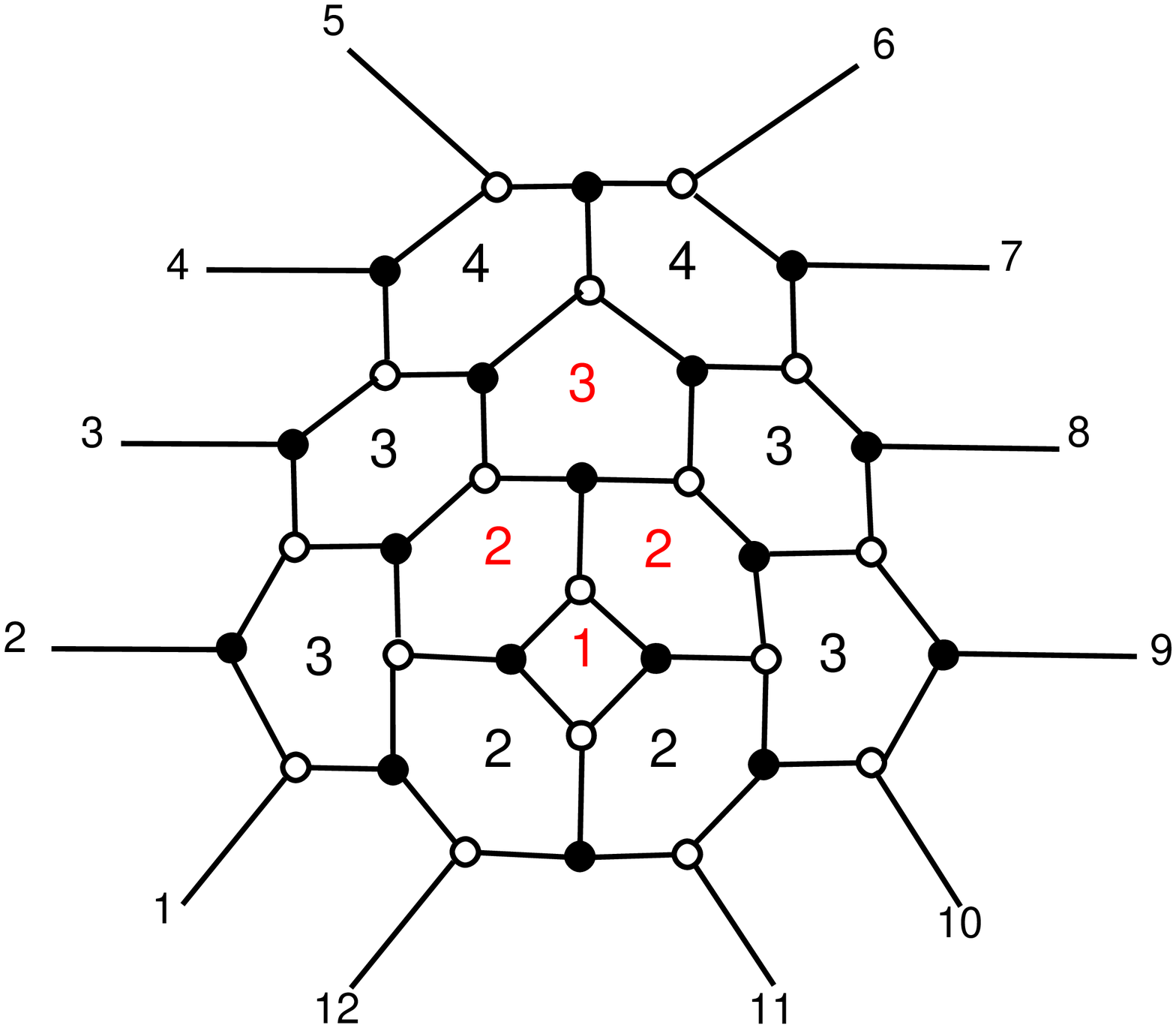}
\caption{A honeycomb with one layer.}
\label{fig:twolayerdhoneycomb}
\end{figure}

\begin{example}
Let us give an example for the case $\alpha_2=2$. Consider the
pair $I=\{1,2,3,4,8,9\}$, $J=\{5,6,7,10,11,12\}$. The length parameters of
$P(I,J)$ are $(\alpha_1,\beta_1,\alpha_2,\beta_2)=(4,3,2,3)$. We can obtain
the face $J$ via a chain reaction that starts with a plabic graph that contains
the face $I$ as follows.

Consider the plabic graph in Figure~\ref{fig:twolayerdhoneycomb}.  This plabic
graph consists of a $2\times 2$ honeycomb surrounded with one ``layer'' of
hexagonal faces. In this plabic graph, the square face (denoted by 1) has label
$I$. The chain reaction that enables us to obtain the face $J$ is the
following.
First, apply a square move on face 1.
Then (after some moves of type (M2)) apply square moves on faces 2 (in any
order). We continue with square moves on the faces denoted by 3 and then the faces denoted by 4. After this iteration, we
apply the chain reaction again, this time only on the internal faces
(with red labels).
Then the face denoted by 3 (in red color) will
have the label $J$.   We need $D(I,J)=16$ square moves.

In order to obtain an arrangement of smallest minors that
contains both $I$ and $J$, one can complete the graph $G$ in
Figure~\ref{fig:twolayerdhoneycomb} to a maximal weakly separated set and assign
the following values to its Pl\"ucker coordinates.
Assign the value 1 to all the coordinates that do not appear in $G$, and also to the square face of $G$.
Assign the value $T$ to the coordinates in $G$ that correspond to the ``layer.''
Assign the value $T^2$ to the coordinates of the $2\times 2$ honeycomb (shown in red),
excluding the square face.
We checked, using a computer, that there exists a unique $T$ for which
$\Delta_I$ and $\Delta_J$ are equal and minimal.
\end{example}

\subsection{Square pyramids and the octahedron/tetrahedron moves}

We conclude with a brief discussion of an alternative
geometric description for the chain reactions of honeycomb plabic graphs.
The objects described below are special cases of {\it membranes\/}
from the forthcoming paper \cite{LP2}. They are certain surfaces
associated with plabic graphs.



Define the following map $\pi_I$ from weakly separated sets to $\R^4$.
Subdivide $[n]$ into a disjoint union of four intervals
$[n]=T_1\cup T_2\cup T_3 \cup T_4$ such that
$T_1 =[1,a]$, $T_2 = [a+1, b]$, $T_3=[b+1,c]$, $T_4=[c+1,n]$, for some $1\leq a< b< c< n$.
Assume that $I=T_1\cup T_3\in {[n]\choose k}$.  Then $[n]\setminus I=T_2\cup T_4$.
Let $\pi_I: {[n]\choose k} \rightarrow \R^4$ be the projection given by
$$
\pi_I(W)=(|W \cap T_1|,|W \cap T_2|,|W \cap T_3|,|W \cap T_4|).
$$
For example, $\pi_I(I)=(a,0,c-b,0)$.

The image of $\pi_I(W)$ belongs to the 3-dimensional hyperplane
$\{x_1+x_2+x_3+x_4=k\}\simeq \R^3$ in $\R^4$.

For a plabic graph $G$ (whose face labels $W\in {[n]\choose k}$ form a weakly
separated set $F(G)$), the map $\pi_I$ maps the elements $W\in F(G)$ into integer
points on a 2-dimensional surface in $\R^3$.

\begin{figure}[h!]
\includegraphics[height=1.3in]{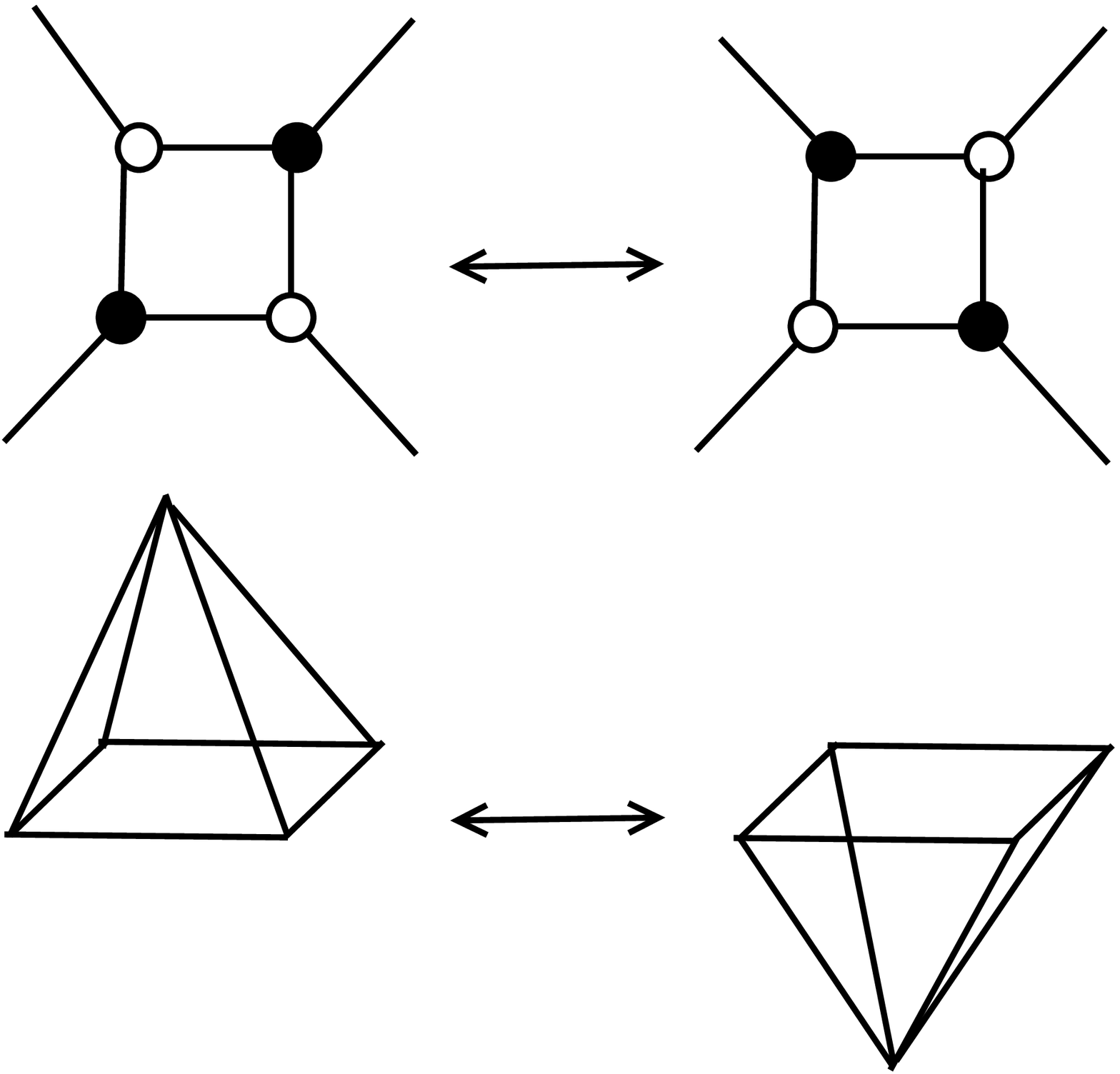}
\caption{the octahedron move}
\label{octahedronmove}
\end{figure}

\begin{figure}[h!]
\includegraphics[height=1.3in]{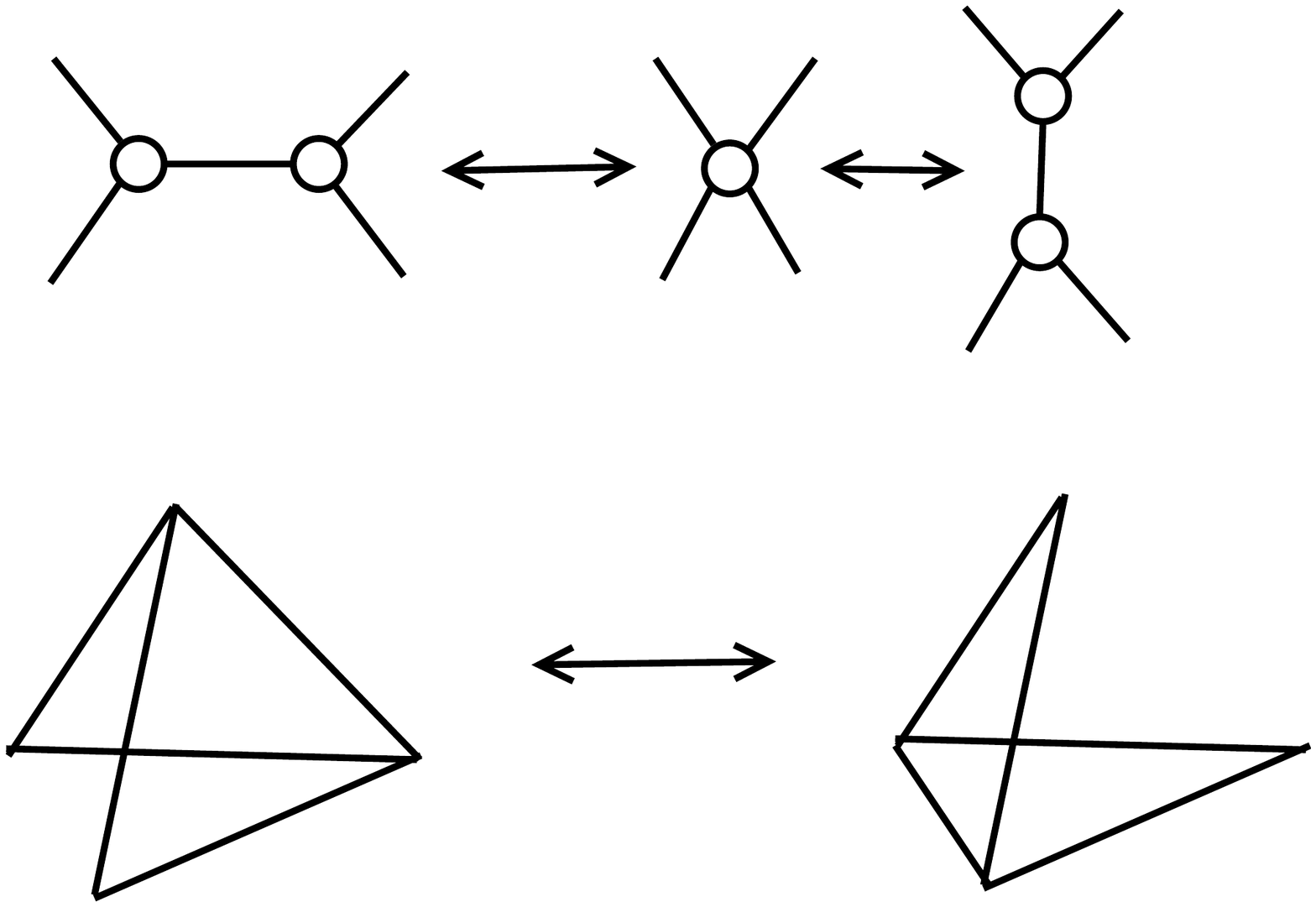}
\caption{the tetrahedron move}
\label{tetrahedronaction}
\end{figure}

The map $\pi_I$ transforms the moves (M1) and (M2) of plabic graphs
to the ``octahedron move'' and the ``tetrahedron move'' of the corresponding
2-dimensional surfaces, as shown on Figures~\ref{octahedronmove}
and~\ref{tetrahedronaction}.  For example, the ``octahedron move'' replaces a
part of the surface which is the upper boundary of an octahedron by the lower
part of the octahedron.  (This construction is a special case of a more general
construction that will appear in full details in \cite{LP2}.)

 \begin{figure}[h!]
    \subfloat[Initial surface]{{\includegraphics[height=1.6in]{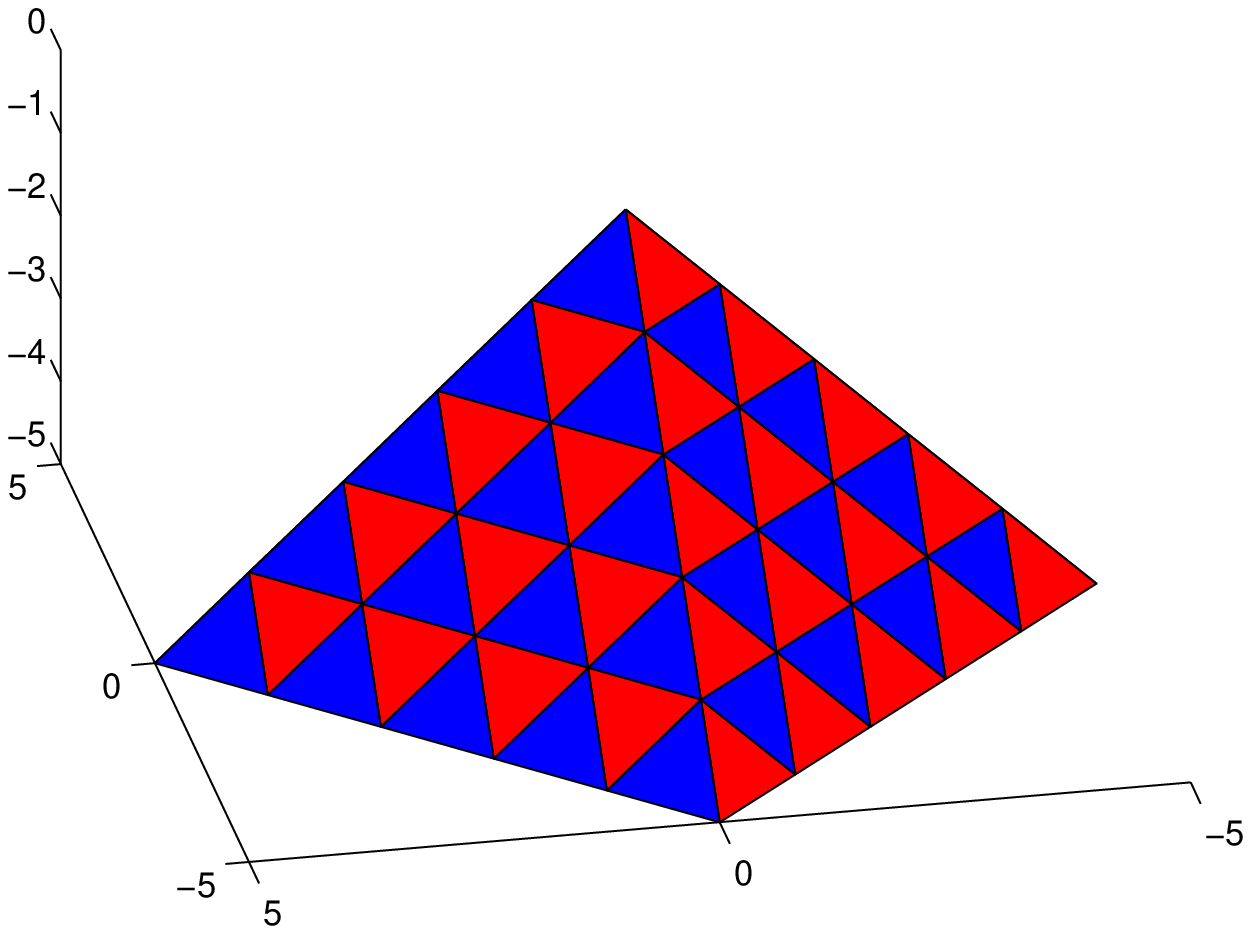} }}%
    \qquad
    \subfloat[1 octahedron move]{{\includegraphics[height=1.6in]{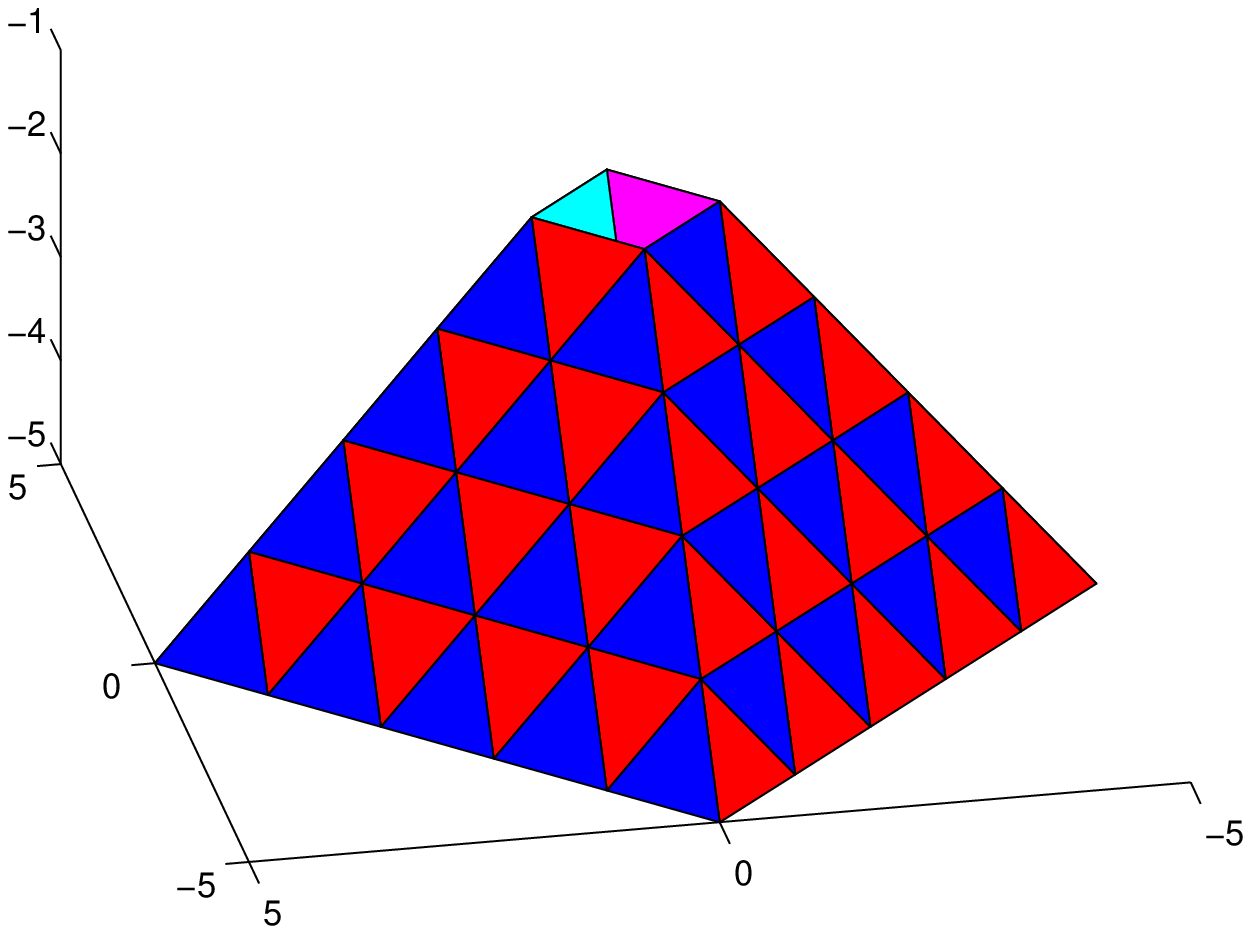} }}%
     \qquad
    \subfloat[3 tetrahedron moves]{{\includegraphics[height=1.6in]{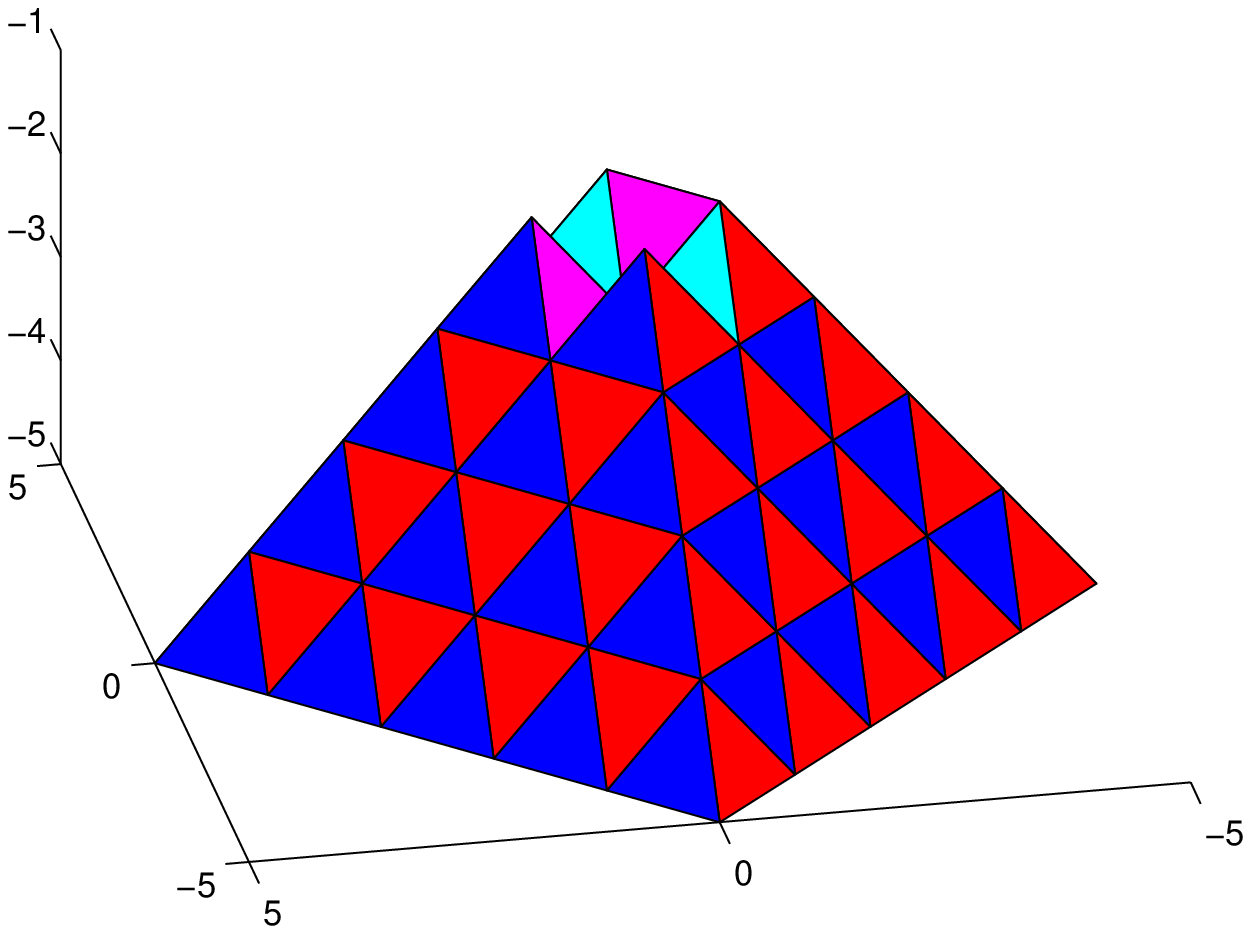} }}%
     \qquad
    \subfloat[2 octahedron moves]{{\includegraphics[height=1.6in]{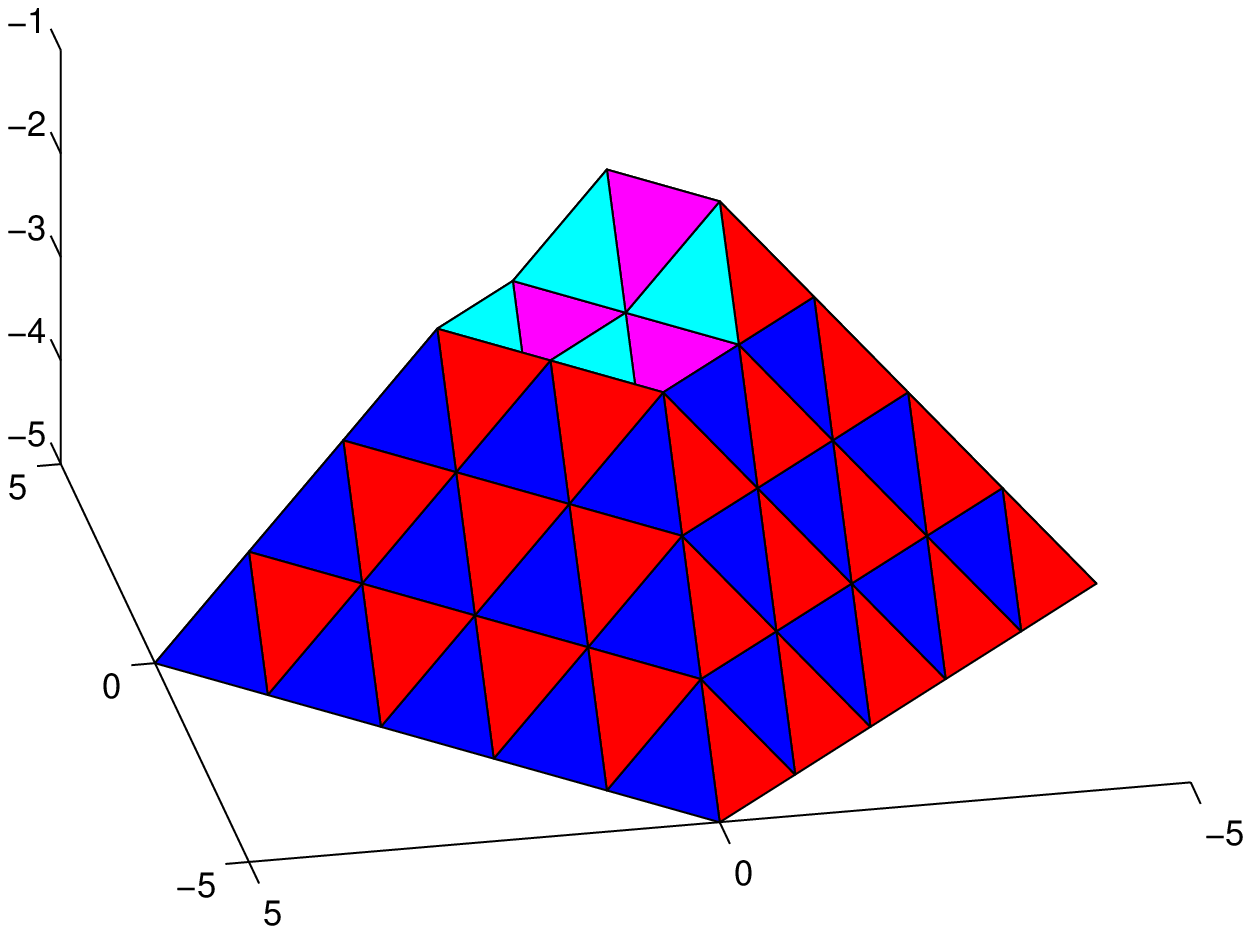} }}%
     \qquad
    \subfloat[4 tetrahedron moves]{{\includegraphics[height=1.6in]{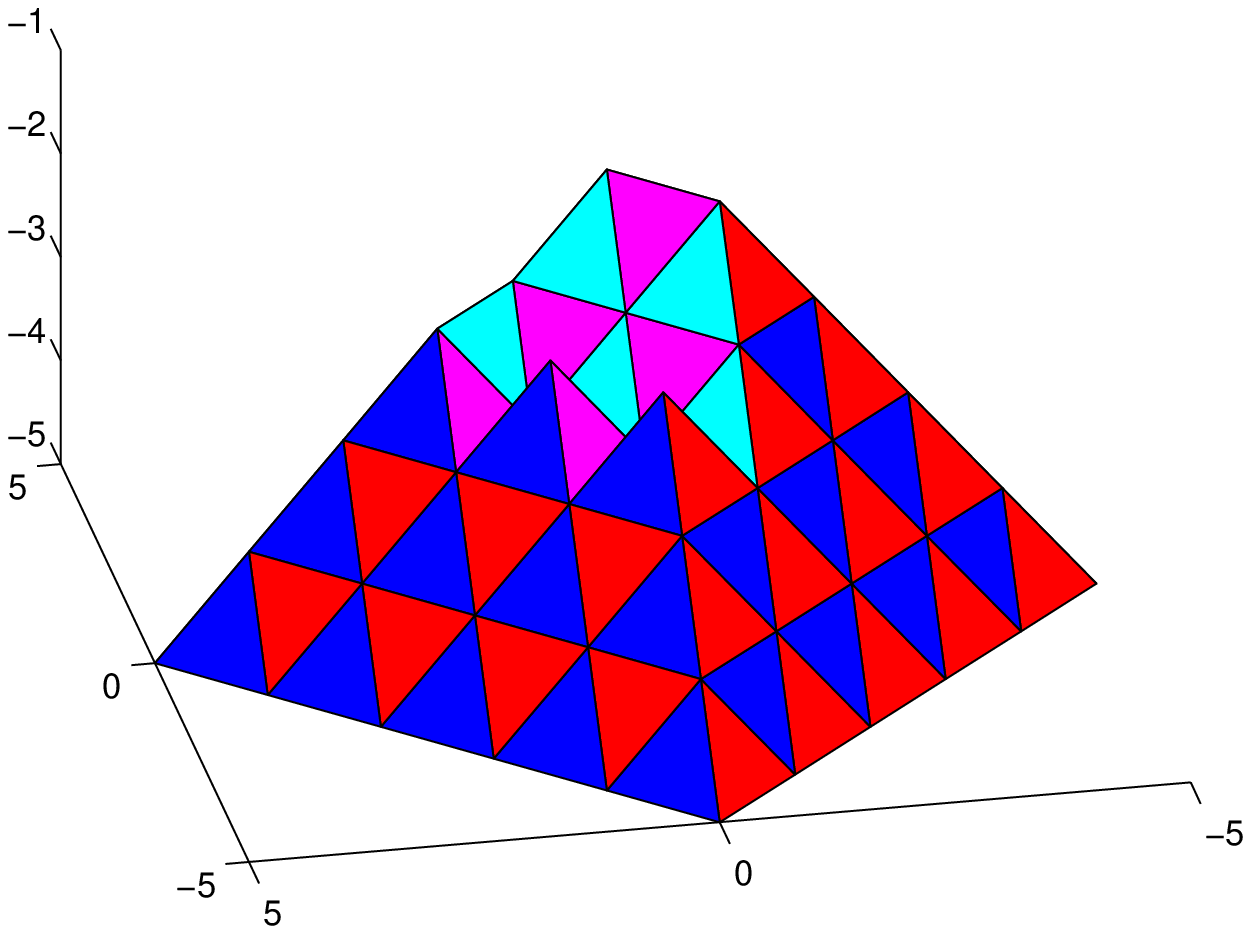} }}%
     \qquad
    \subfloat[2 octahedron moves]{{\includegraphics[height=1.6in]{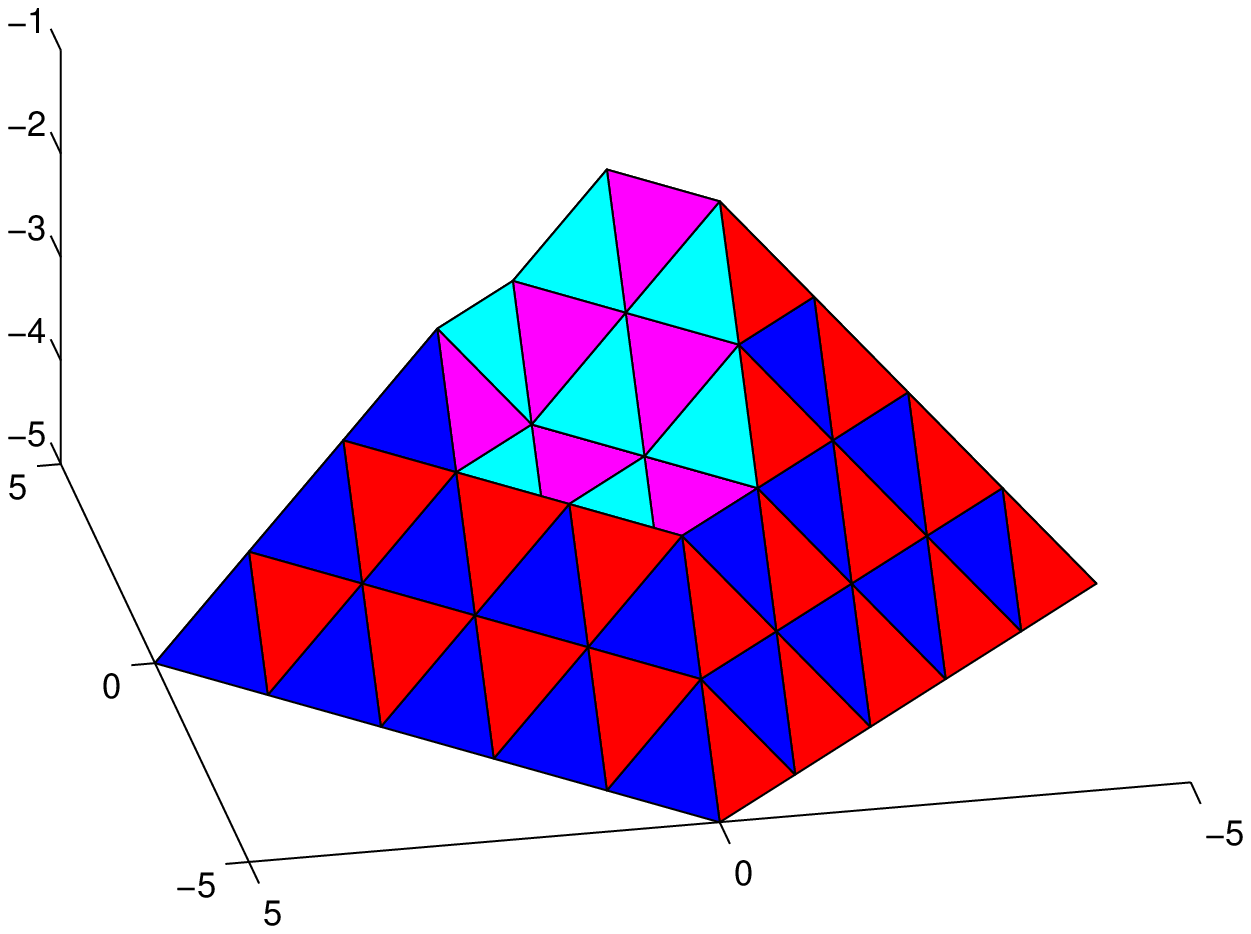} }}%
     \qquad
    \subfloat[3 tetrahedron moves]{{\includegraphics[height=1.6in]{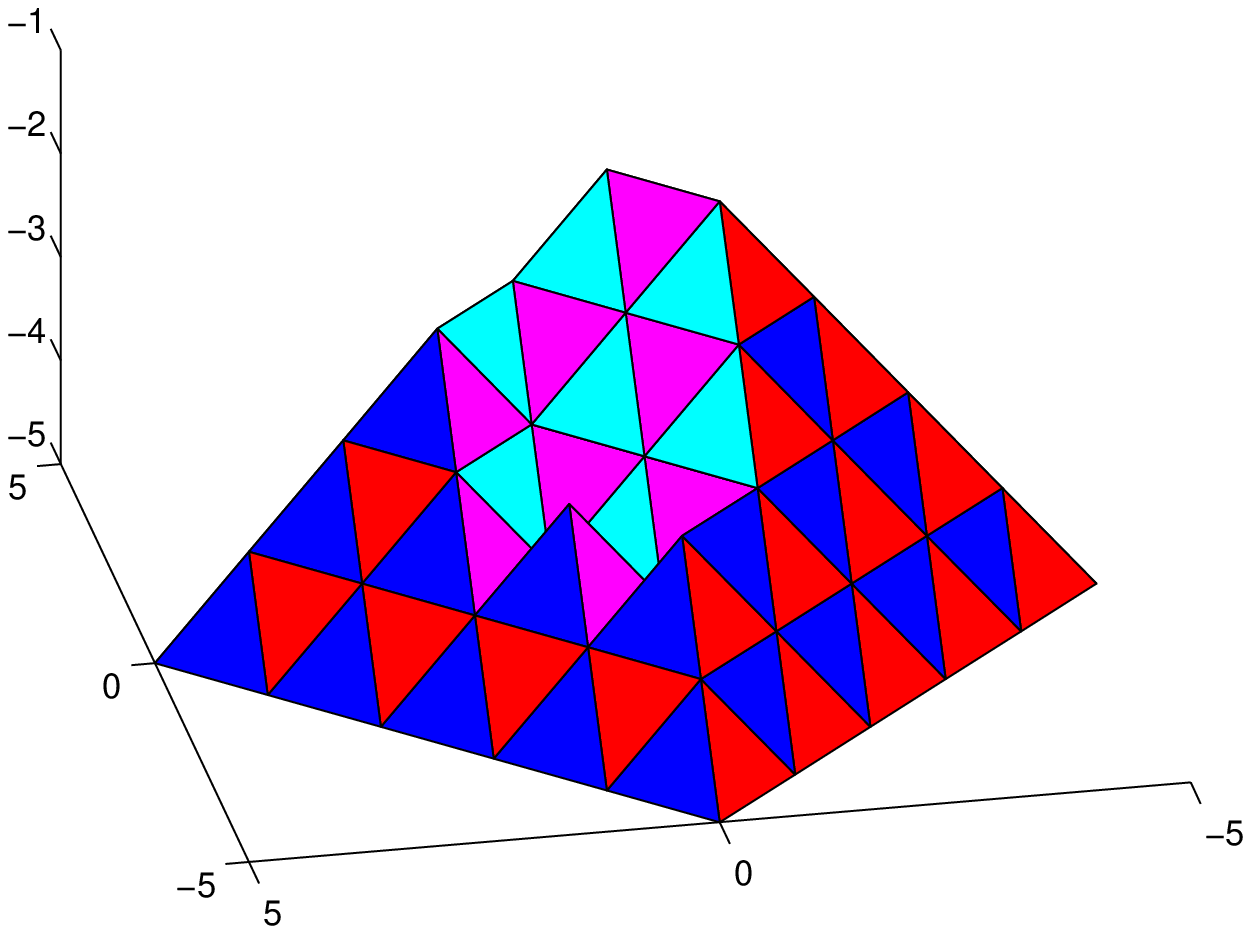} }}%
     \qquad
    \subfloat[1 octahedron move]{{\includegraphics[height=1.6in]{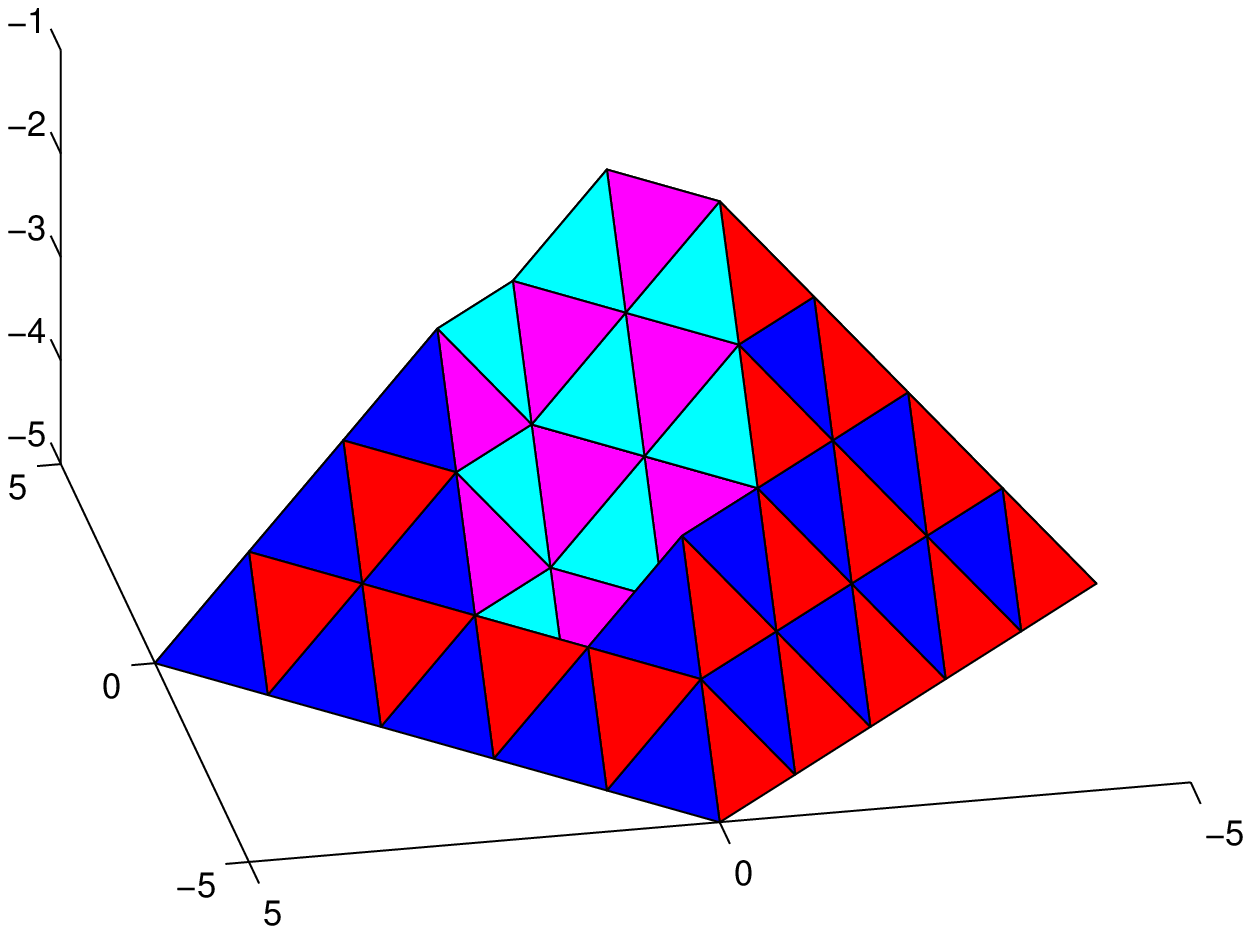} }}%
    \caption{The chain reaction in a $3\times 2$ honeycomb,
described using octahedron and tetrahedron moves.}%
    \label{chainreaction2}
\end{figure}

\begin{figure}[h!]
    \subfloat{{\includegraphics[height=1.6in]{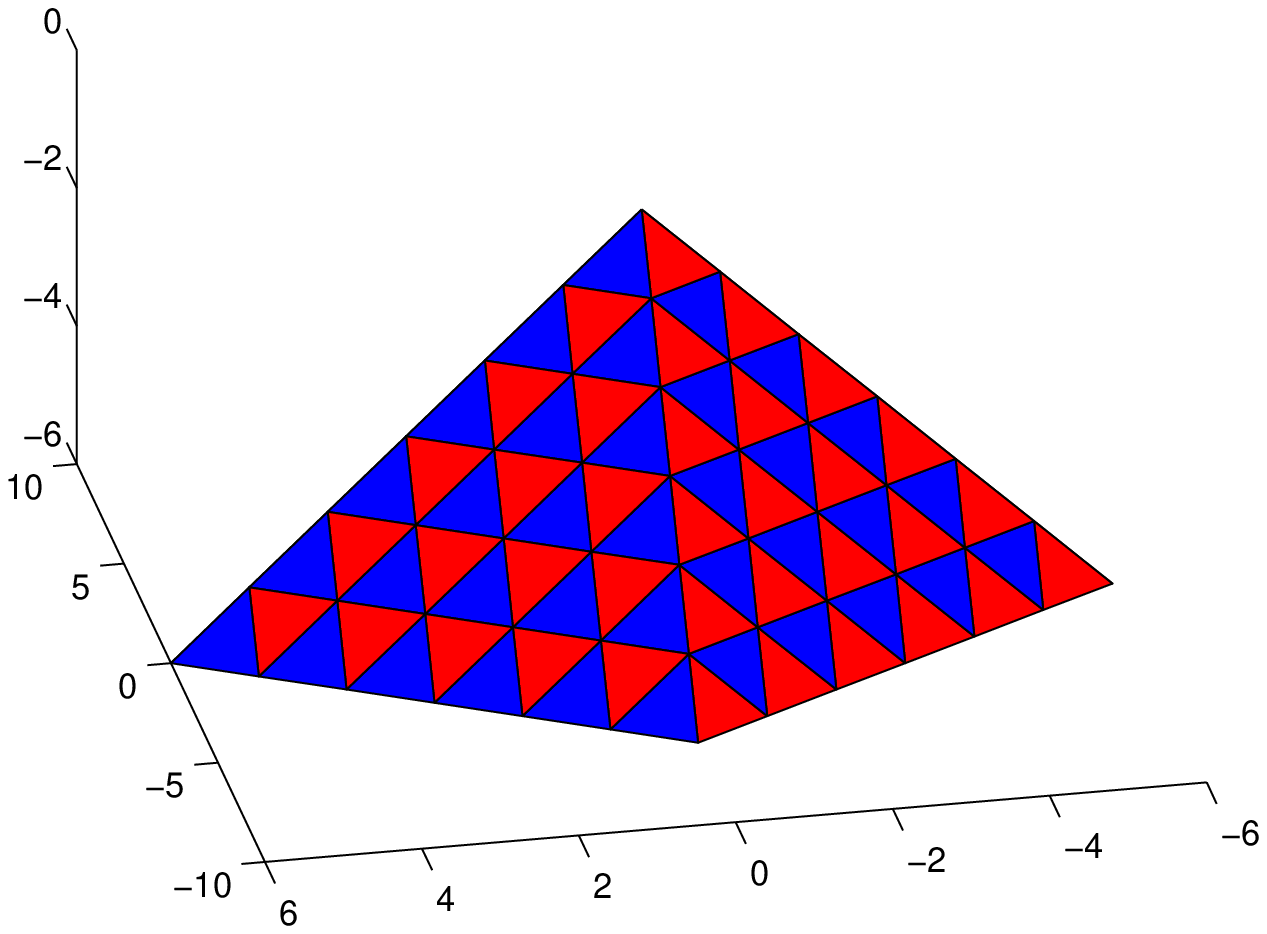} }}%
    \qquad
    \subfloat{{\includegraphics[height=1.6in]{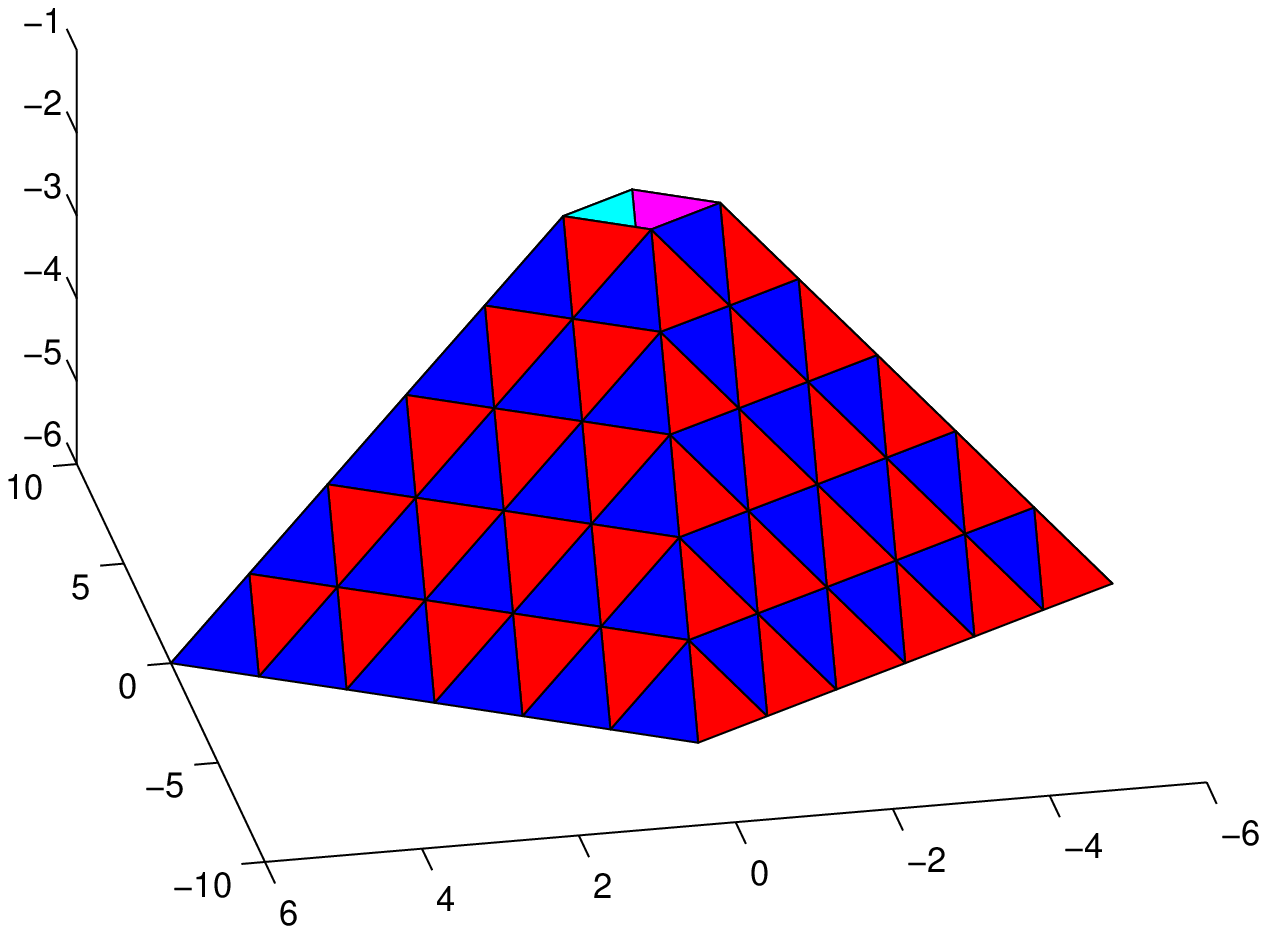} }}%
     \qquad
    \subfloat{{\includegraphics[height=1.6in]{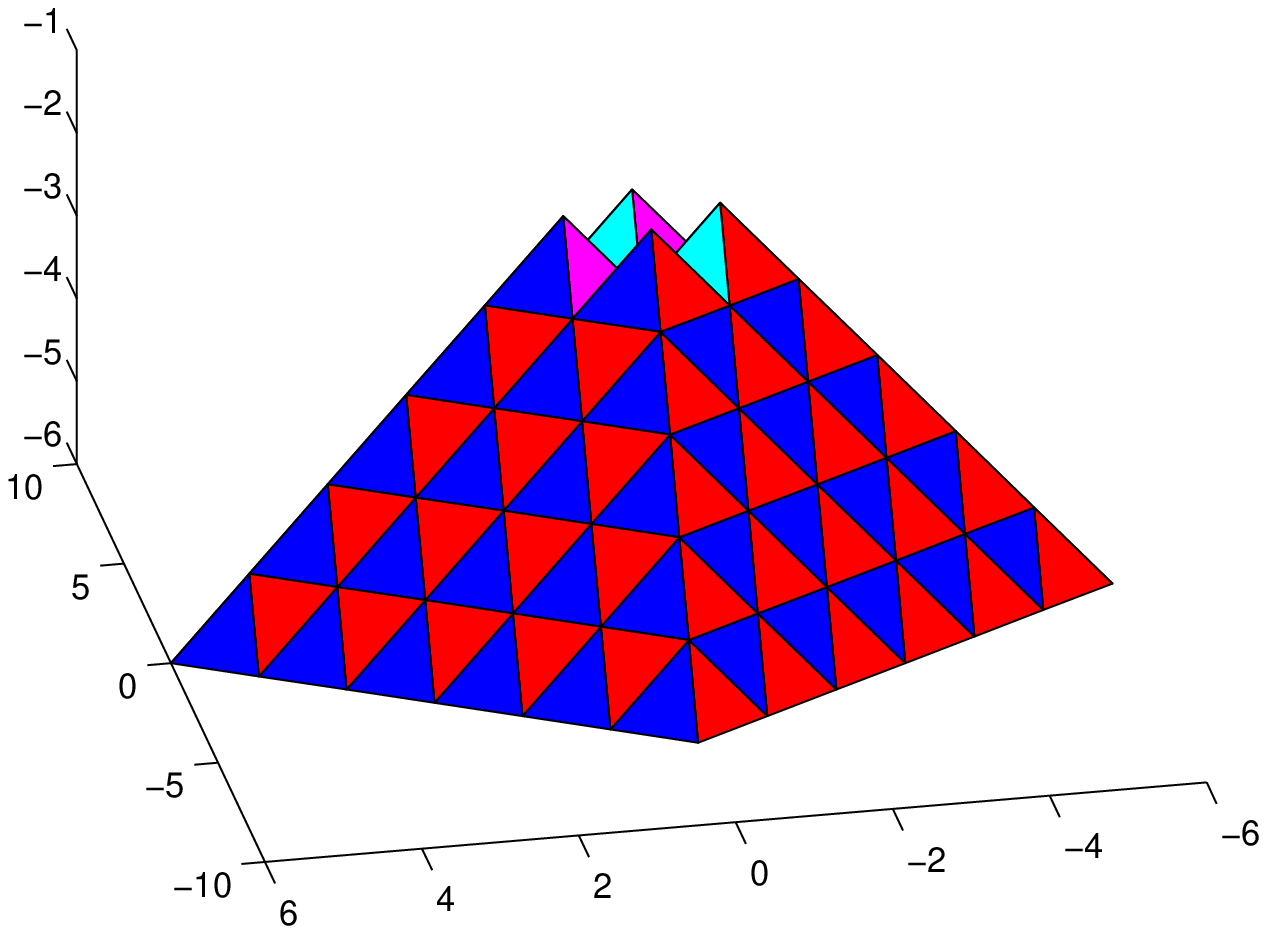} }}%
     \qquad
    \subfloat{{\includegraphics[height=1.6in]{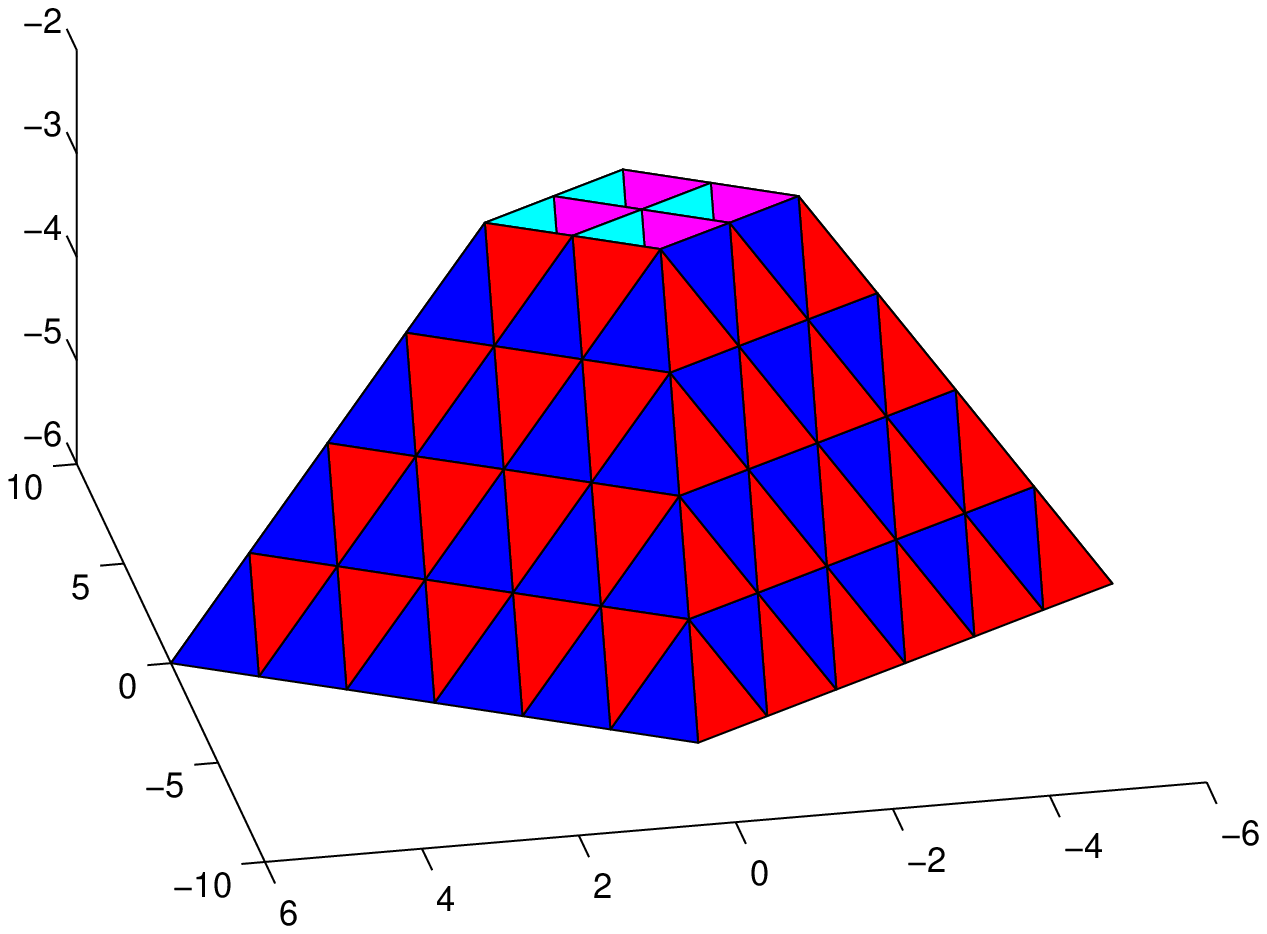} }}%
     \qquad
    \subfloat{{\includegraphics[height=1.6in]{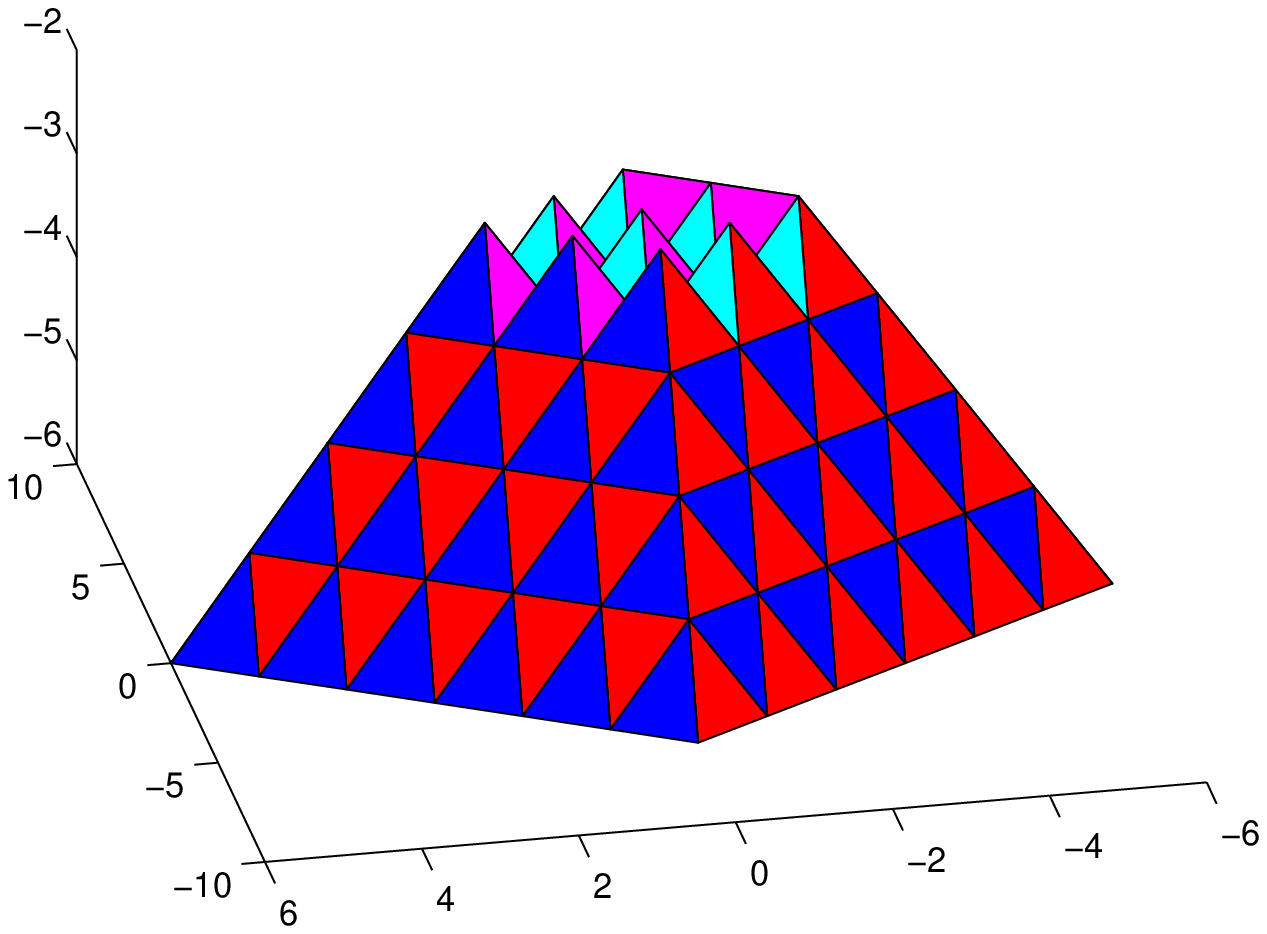} }}%
     \qquad
    \subfloat{{\includegraphics[height=1.6in]{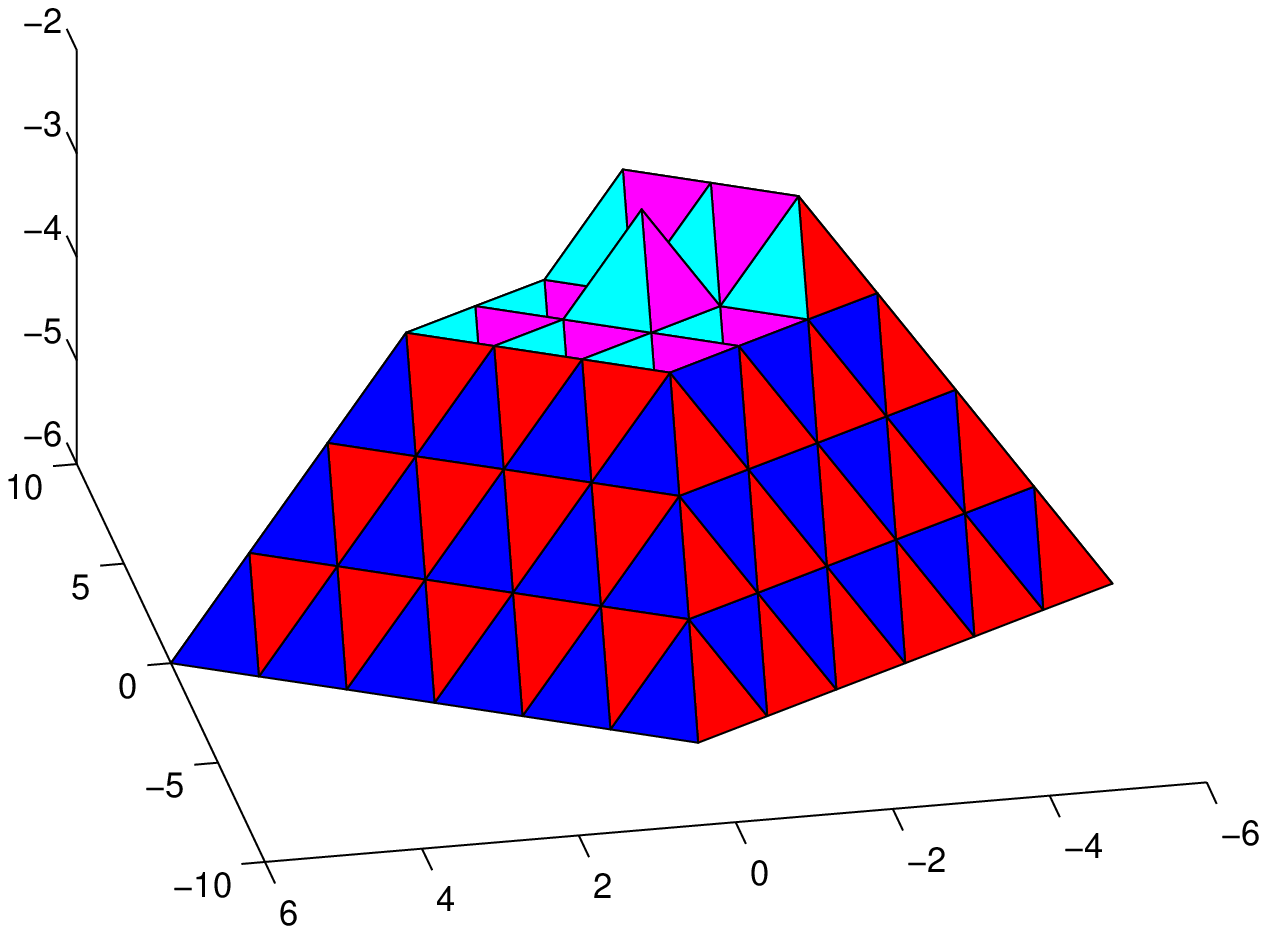} }}%
     \qquad
    \subfloat{{\includegraphics[height=1.6in]{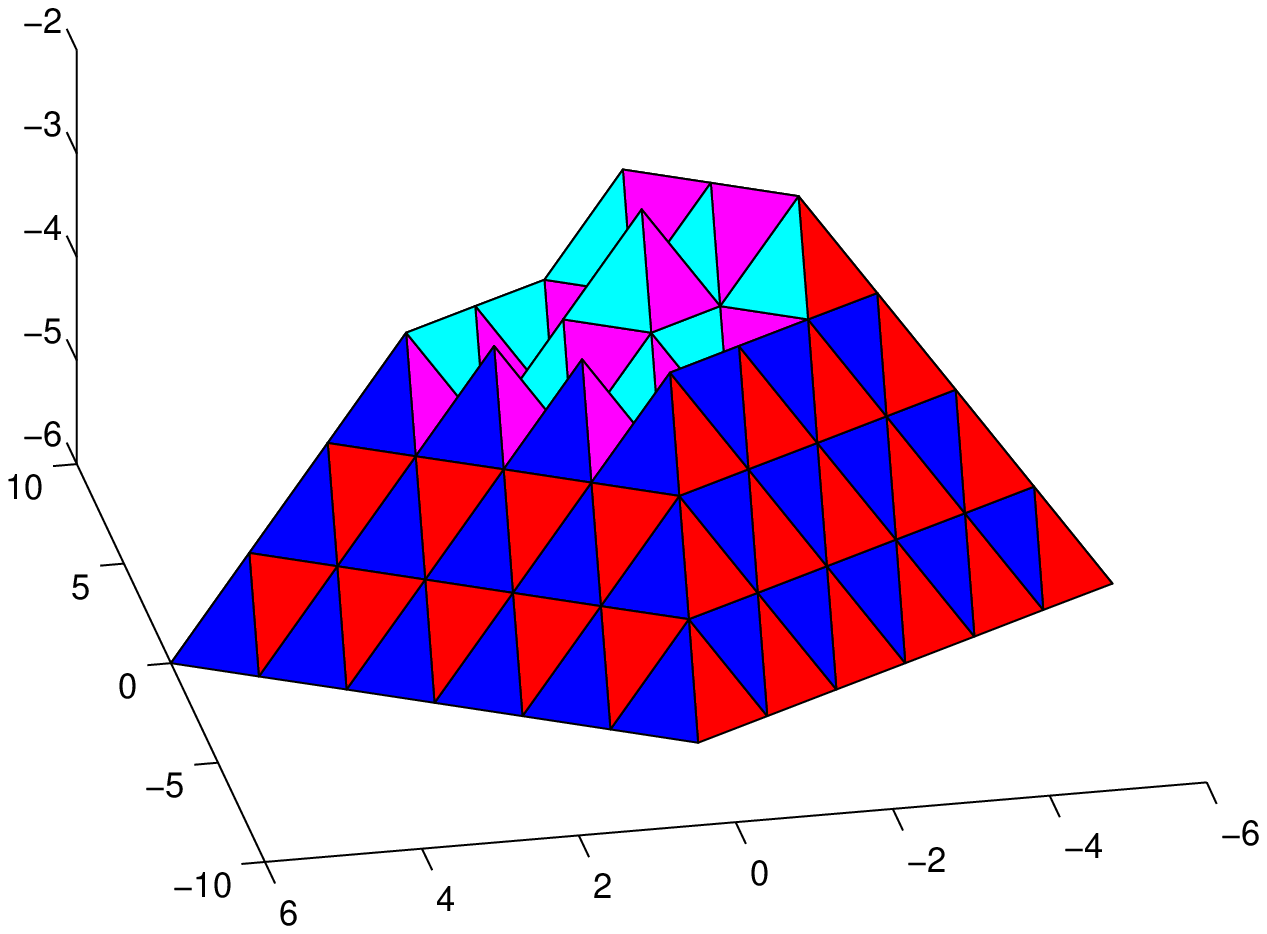} }}%
     \qquad
    \subfloat{{\includegraphics[height=1.6in]{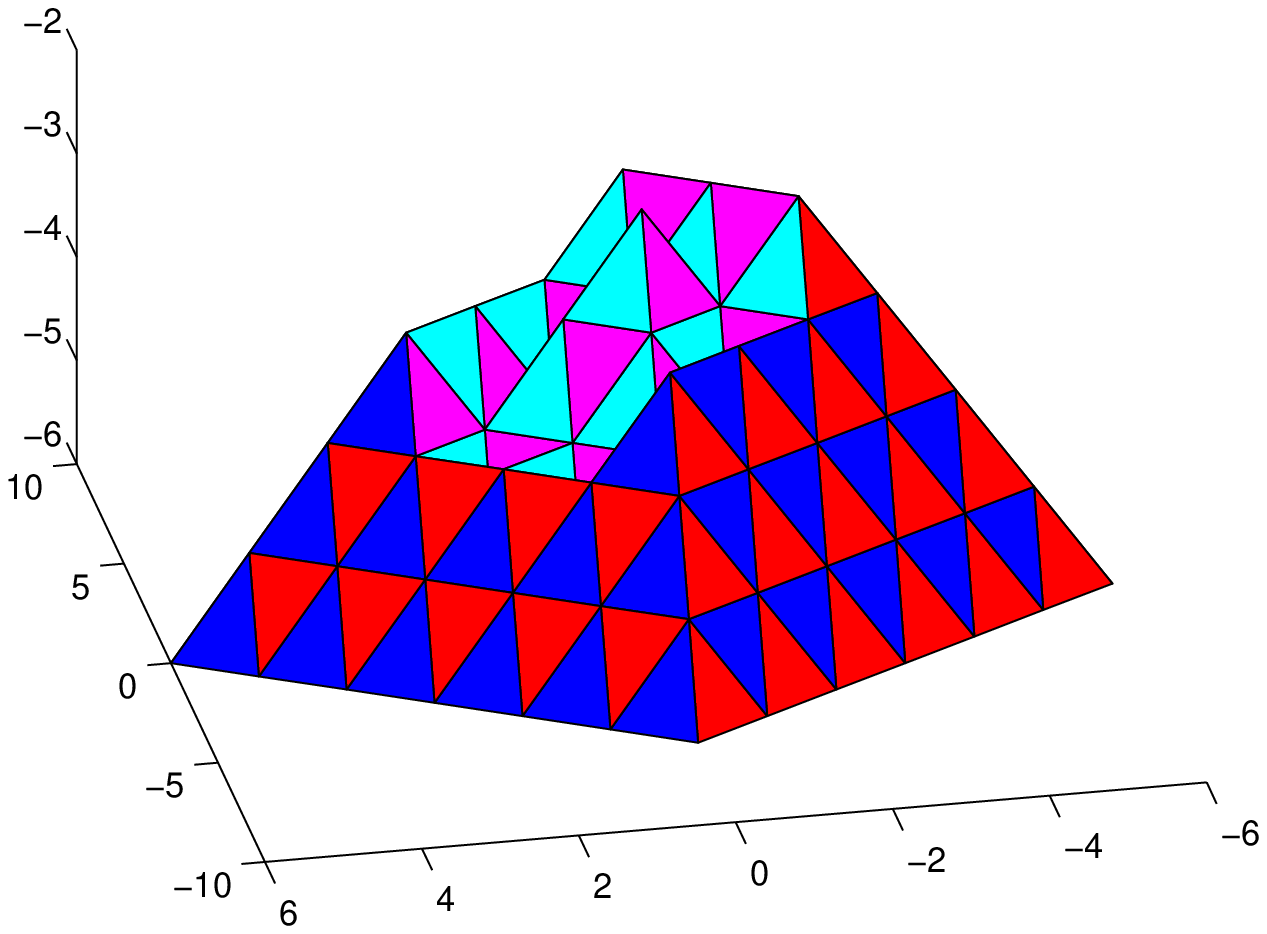} }}%
    \caption{First 8 steps in the chain reaction}%
    \label{pyramidreaction1}%
\end{figure}

\begin{figure}[h!]
    \subfloat{{\includegraphics[height=1.6in]{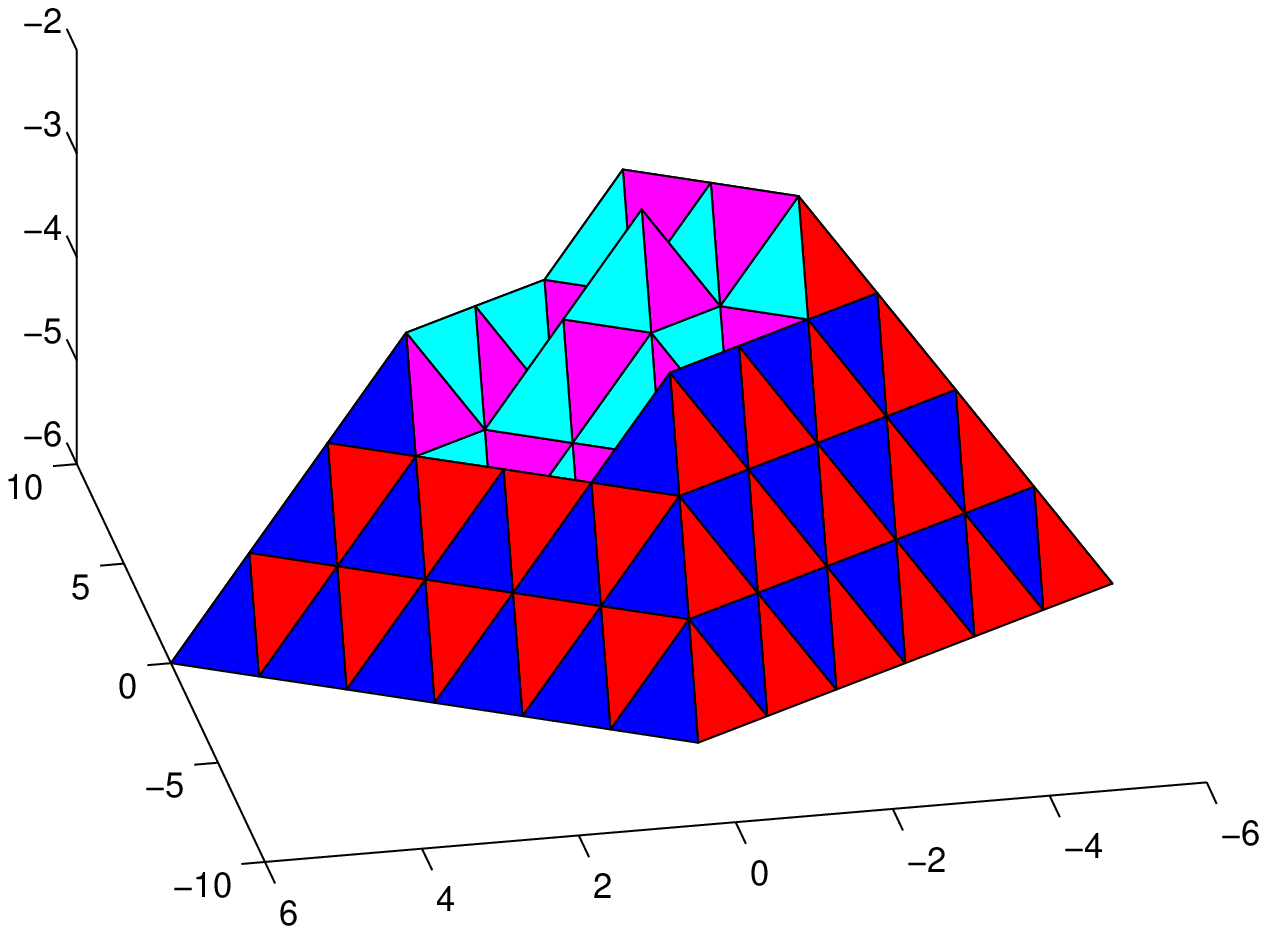} }}%
    \qquad
    \subfloat{{\includegraphics[height=1.6in]{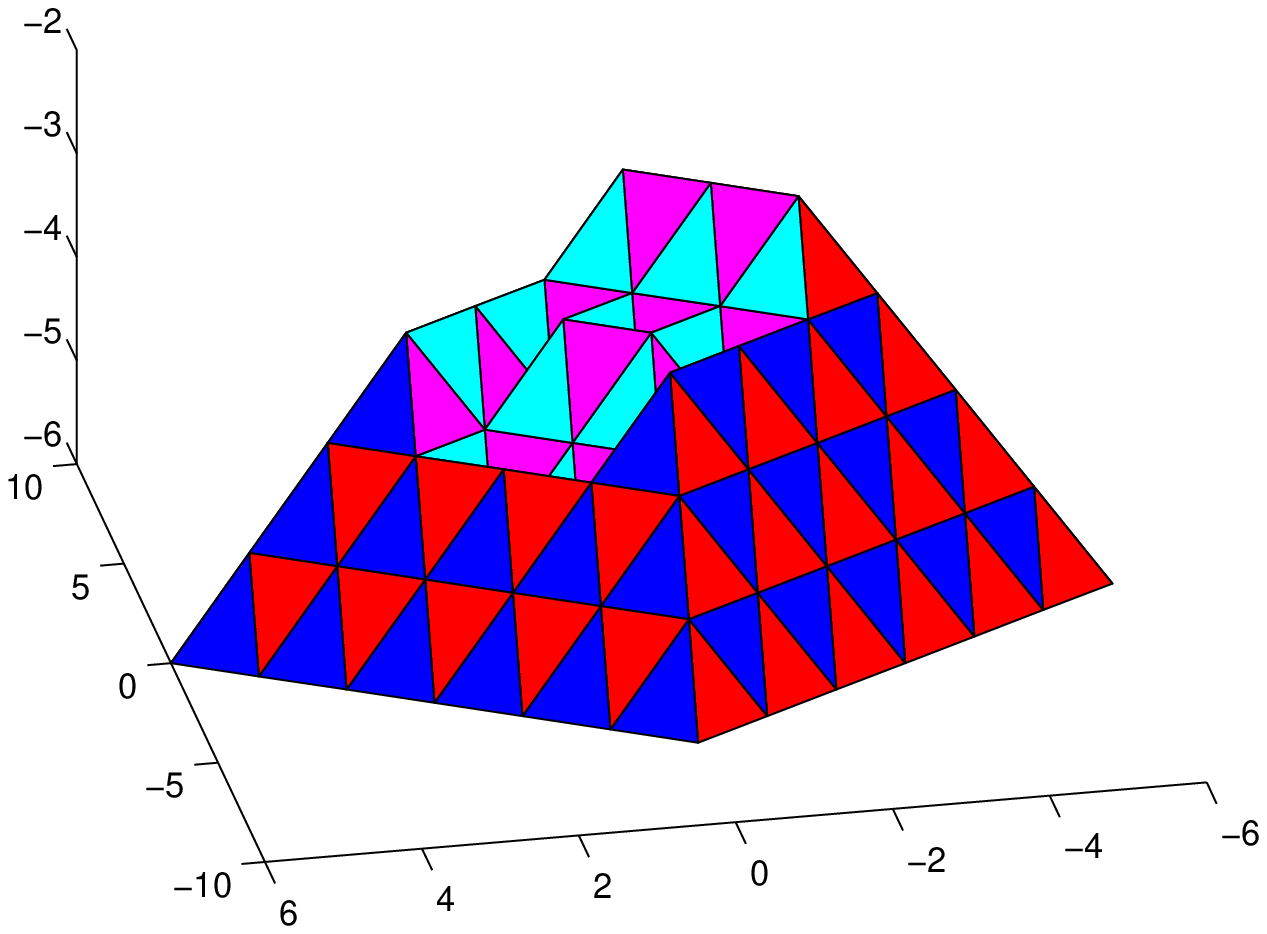} }}%
     \qquad
    \subfloat{{\includegraphics[height=1.6in]{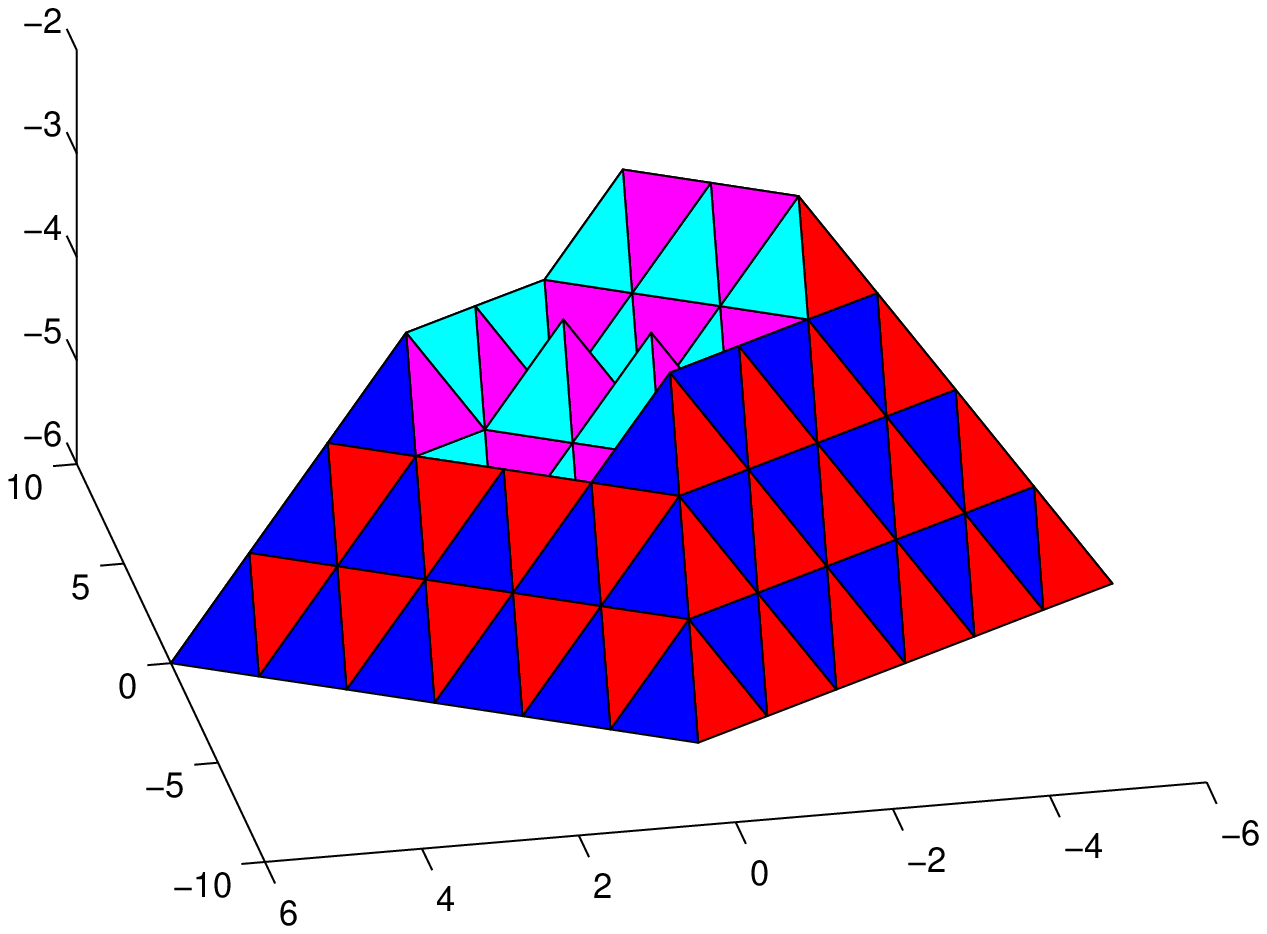} }}%
     \qquad
    \subfloat{{\includegraphics[height=1.6in]{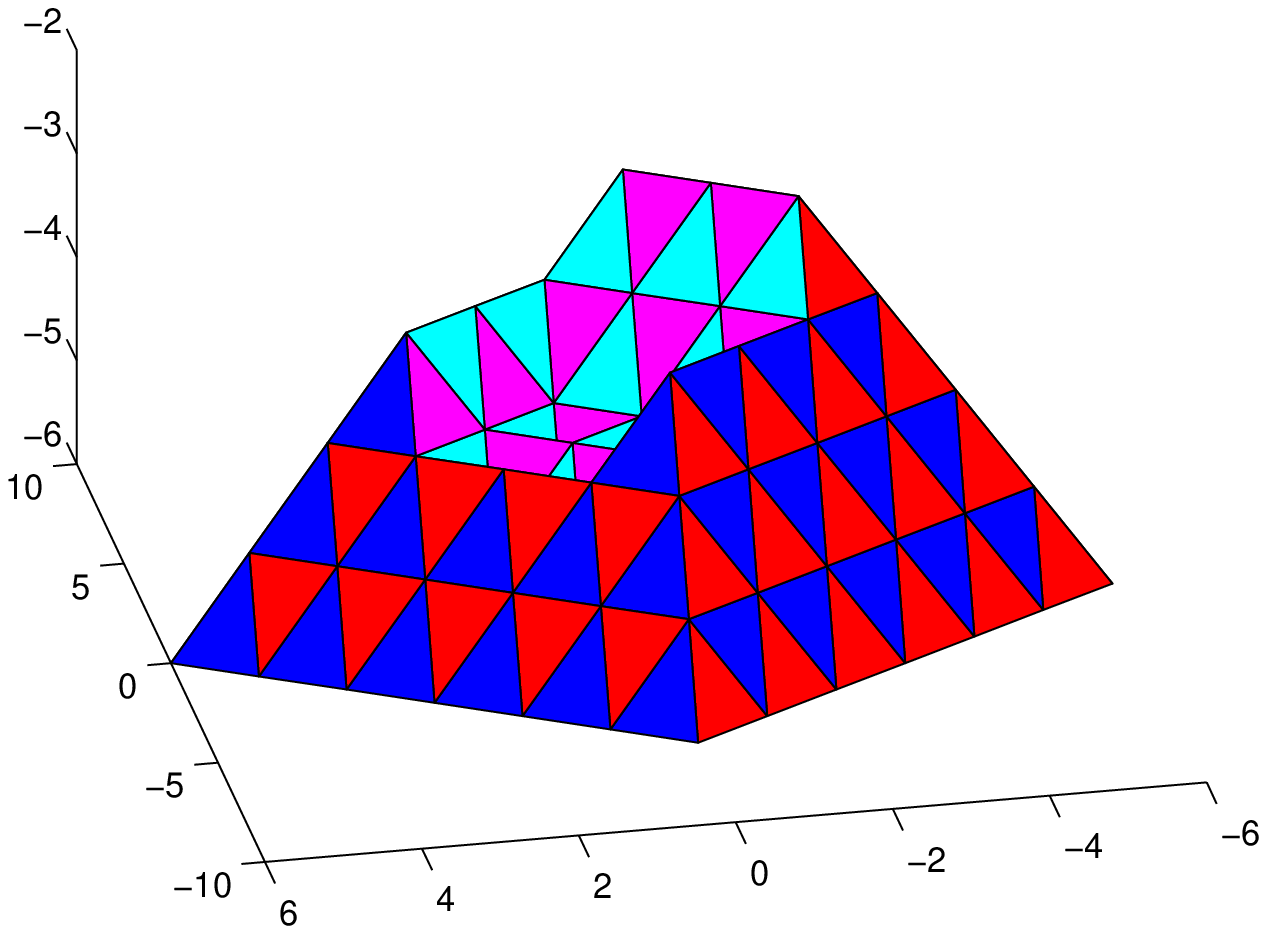} }}%
     \qquad
    \subfloat{{\includegraphics[height=1.6in]{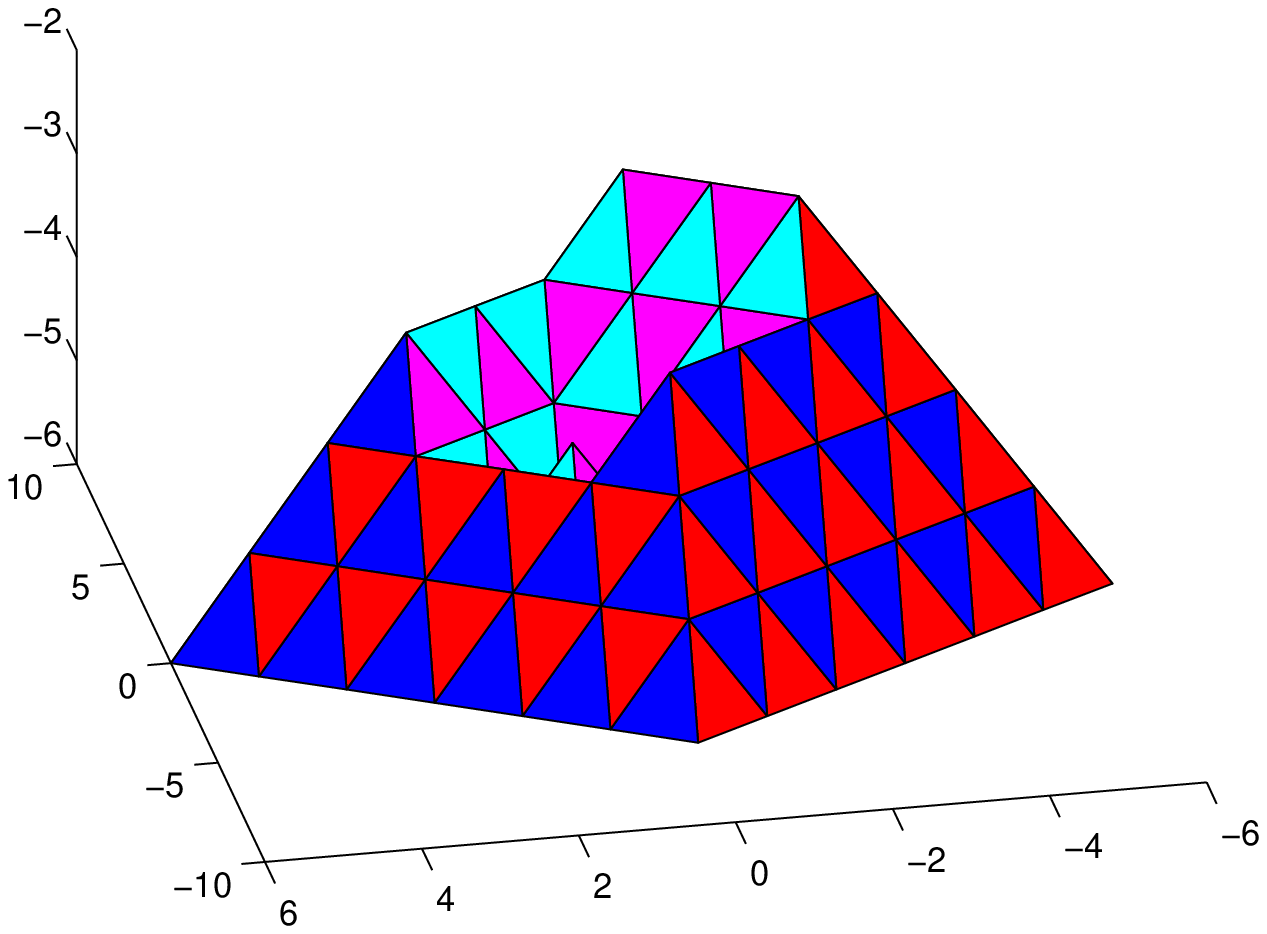} }}%
     \qquad
    \subfloat{{\includegraphics[height=1.6in]{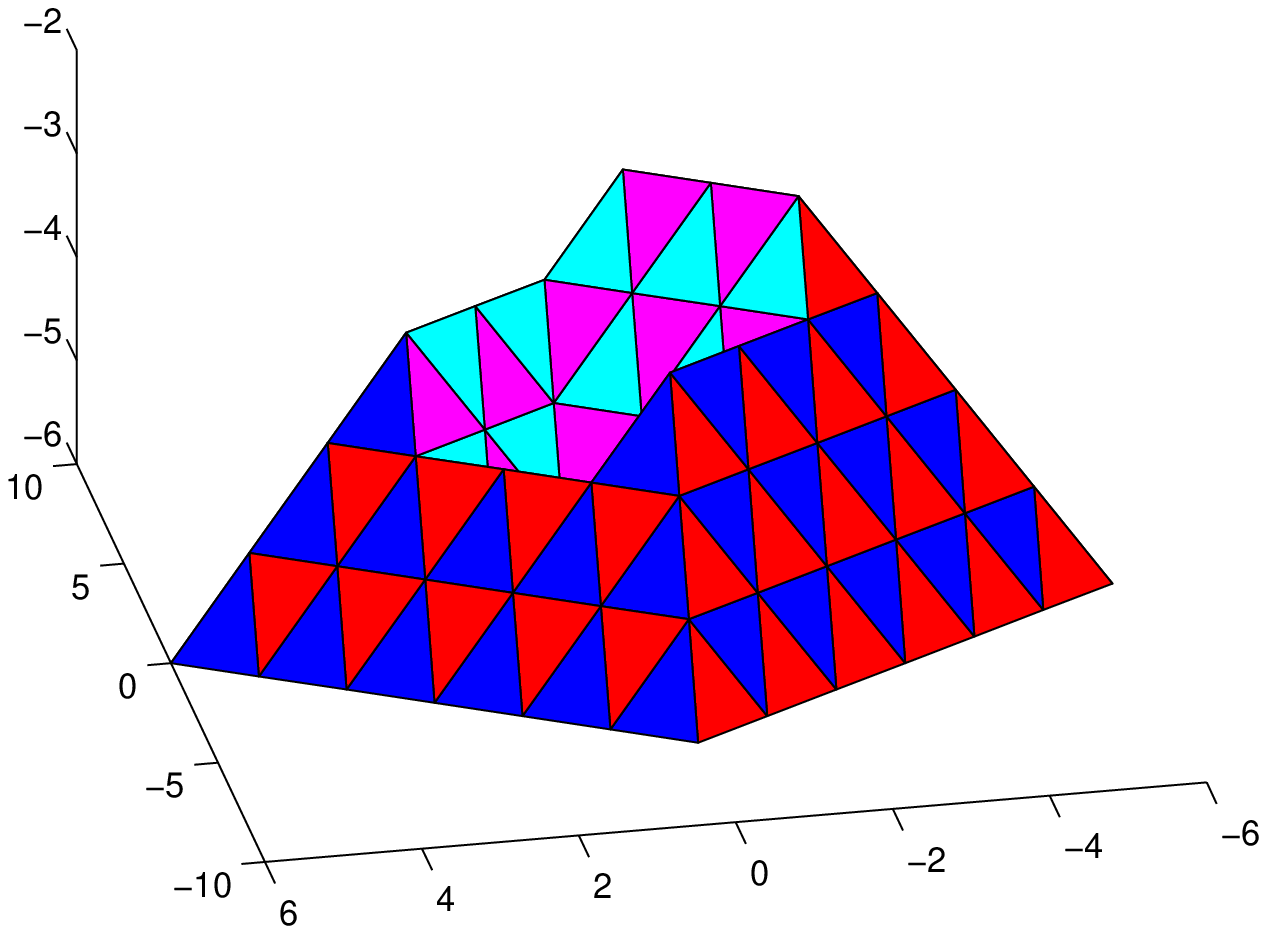} }}%
    \caption{Final 6 steps in the chain reaction}%
    \label{pyramidreaction2}%
\end{figure}

As an example, consider the sequence of plabic graphs in the chain reaction
shown on Figure~\ref{chainreaction}.   In this case $I=\{1,2,3,4,8\}$ and
$J=\{5,6,7,9,10\}$.  Let $G$ and $H$ the first and the last plabic graphs
(respectively) in this chain reaction.  Then $I \in F(G)$ and $J \in F(H)$. The
image $\pi_I(F(G))$ consists of integer points on the upper boundary of a square
pyramid with top vertex $\pi_I(I)$ (see part (A) of
Figure~\ref{chainreaction2}).



The map $\pi_I$ transforms the chain reaction shown in
Figure~\ref{chainreaction} into the sequence of 2-dimensional surfaces in $\R^3$
shown in Figure~\ref{chainreaction2}.  These surfaces are the upper boundaries of the
solids obtained from the square pyramid by repeatedly removing little octahedra and tetrahedra,
as shown in the figure.

Similarly, Figures~\ref{pyramidreaction1} and~\ref{pyramidreaction2} show the surfaces
for the chain reaction that corresponds to the plabic graph from
Figure~\ref{fig:twolayerdhoneycomb}.

\section{Final remarks}
\label{sec:final_remarks}



\subsection{Arrangements of t-th largest minors}
In the current work, we discussed arrangements of smallest and largest minors.
A forthcoming paper \cite{FM} gives some results regarding arrangements of
$t$-th largest minors, for $t \geq 2$. As in the case of the largest minors,
those arrangements are also closely related to the triangulation of the
hypersimplex.

\subsection{Schur positivity}
Skandera's inequalities \cite{Ska} for products of minors discussed in
Section~\ref{sec:inequalities_products_minors} and also results of
Rhoades-Skandera \cite{RS} on immanants are related
to {\it Schur positivity\/} of expressions in terms of the Schur functions the
form $s_\lambda s_\mu - s_\nu s_\kappa$.  In \cite{LPP}, several Schur
positivity results of this form were proved.
There are some parallels
between the current work on arrangements of equal minors and constructions from
\cite{LPP}.  It would be interesting to clarify this link.


\FloatBarrier

\end{document}